\documentclass[preprint,12pt]{elsarticle}

\pdfoutput=1

\usepackage[english]{babel}

\usepackage[letterpaper,top=2cm,bottom=2cm,left=3cm,right=3cm,marginparwidth=1.75cm]{geometry}

\usepackage{amsmath, amsthm}
\usepackage{graphicx}
\usepackage[colorlinks=true, allcolors=blue]{hyperref}

\usepackage{amsmath, amssymb, amsthm, fullpage, color, comment, mathtools, tikz, dsfont, tikz-cd}


\usepackage{adjustbox}


\theoremstyle{plain}
\newtheorem{theorem}{Theorem}[section]
\newtheorem{proposition}[theorem]{Proposition}
\newtheorem{lemma}[theorem]{Lemma}
\newtheorem{corollary}[theorem]{Corollary}

\theoremstyle{definition}

\theoremstyle{remark}
\newtheorem{remark}[theorem]{Remark}
\newtheorem{example}[theorem]{Example}

\setcounter{tocdepth}{1}






\journal{Advances in Mathematics}

\begin{document}

\begin{frontmatter}



\title{A degenerate version of Brion's formula}


\author[first]{Carsten Peterson} 

\affiliation[first]{organization={Sorbonne Université, Université Paris Cité, CNRS, IMJ-PRG},
            postcode={F-75005},
            city={Paris}, 
            country={France}}

\begin{abstract}

    Let $\mathfrak{p} \subset V$ be a polytope and $\xi \in V_{\mathbb{C}}^*$. We obtain an expression for $I(\mathfrak{p}; \alpha) := \int_{\mathfrak{p}} e^{\langle \alpha, x \rangle} dx$ as a sum of meromorphic functions in $\alpha \in V^*_{\mathbb{C}}$ parametrized by the faces $\mathfrak{f}$ of $\mathfrak{p}$ on which $\langle \xi, x \rangle$ is constant. Each term only depends on the local geometry of $\mathfrak{p}$ near $\mathfrak{f}$ (and on $\xi$) and is holomorphic at $\alpha = \xi$. When $\langle \xi, \cdot \rangle$ is only constant on the vertices of $\mathfrak{p}$ our formula reduces to Brion's formula.

    Suppose $\mathfrak{p}$ is a rational polytope with respect to a lattice $\Lambda$. We obtain an expression for $S(\mathfrak{p}; \alpha) := \sum_{\lambda \in \mathfrak{p} \cap \Lambda} e^{\langle \alpha, \lambda \rangle}$ as a sum of meromorphic functions parametrized by the faces $\mathfrak{f}$ on which $e^{\langle \xi, x \rangle} = 1$ on a finite index sublattice of $\text{lin}(\mathfrak{f}) \cap \Lambda$. Each term only depends on the local geometry of $\mathfrak{p}$ near $\mathfrak{f}$ (and on $\xi$ and $\Lambda$) and is holomorphic at $\alpha = \xi$. When $e^{\langle \xi, \cdot \rangle} \neq 1$ at any non-zero lattice point on a line through the origin parallel to an edge of $\mathfrak{p}$, our formula reduces to Brion's formula, and when $\xi = 0$, it reduces to the Ehrhart quasi-polynomial.

    Our formulas are particularly useful for understanding how $I(\mathfrak{p}(h); \xi)$ and $S(\mathfrak{p}(h); \xi)$ vary in a family of polytopes $\mathfrak{p}(h)$ with the same normal fan. When considering dilates of a fixed polytope, our formulas may be viewed as polytopal analogues of Laplace's method and the method of stationary phase. Such expressions naturally show up in analysis on symmetric spaces and affine buildings.  
\end{abstract}

\begin{keyword}
polytopes, lattice polytopes, Brion's formula, Ehrhart polynomial, Euler-Maclaurin formula, meromorphic functions, stationary phase

\MSC[2020] 52B11 \sep 52B20 \sep 52C07 \sep 11H06 \sep 32A17



\end{keyword}

\end{frontmatter}



\tableofcontents

\section{Introduction}
\subsection{Degenerate Brion's formula: continuous setting} Let $\mathfrak{p}$ be a polytope in a real vector space $V$ with an inner product $\langle \cdot, \cdot \rangle$. Let $\textnormal{Vert}(\mathfrak{p})$ denote the set of vertices of $\mathfrak{p}$, and let $\textnormal{Face}(\mathfrak{p})$ denote the set of faces. Each vertex $\mathfrak{v}$ has an associated \textit{tangent cone} $\mathfrak{t}_{\mathfrak{v}}^{\mathfrak{p}}$, namely the cone based at $\mathfrak{v}$ generated by the directions one can move in from $\mathfrak{v}$ and stay inside $\mathfrak{p}$. Let $\xi \in V^*_{\mathbb{C}}$. Brion's formula \cite{brion} concerns the Fourier-Laplace transform of the indicator function of a polytope. It says that
\begin{gather}
    \int_{\mathfrak{p}} e^{\langle \alpha, x \rangle} dx \ ``=" \sum_{\mathfrak{v} \in \textnormal{Vert}(\mathfrak{p})} \int_{\mathfrak{t}_{\mathfrak{v}}^{\mathfrak{p}}} e^{\langle \alpha, x \rangle} dx, \label{fake_brion}
\end{gather}
where $\alpha \in V^*_{\mathbb{C}}$. However, this formula as written does not make sense as there does not exist any $\alpha$ such that every integral on the right hand side converges absolutely. Rather one must interpret each term in the right hand side as arising from the meromorphic continuation in $\alpha$ of the function $I(\mathfrak{t}_{\mathfrak{v}}^{\mathfrak{p}}; \alpha) := \int_{\mathfrak{t}_{\mathfrak{v}}^{\mathfrak{p}}} e^{\langle \alpha, x \rangle} dx$ from the locus on which the integral converges absolutely. More generally, given a polyhedron $\mathfrak{q}$ not containing a straight line, one can define a meromorphic function $I(\mathfrak{q}; \cdot)$ in this way; otherwise we define $I(\mathfrak{q}; \cdot)$ to be zero. The right hand side of \eqref{fake_brion} should be properly interpreted as a sum of the meromorphic functions $I(\mathfrak{t}_{\mathfrak{v}}^{\mathfrak{p}}; \alpha)$, and Brion's formula says this sum is equal to the function on the left hand side.

Notice that the function on the left hand side of \eqref{fake_brion} is an entire function. Each function $I(\mathfrak{t}_{\mathfrak{v}}^{\mathfrak{p}}; \alpha)$ on the right hand side is non-singular for \textit{generic} choices of $\alpha$; it has singularities precisely when $\alpha$ is constant on some extremal ray of $\mathfrak{t}_{\mathfrak{v}}^{\mathfrak{p}}$, or equivalently on some edge of $\mathfrak{p}$ containing $\mathfrak{v}$. If one were to try to evaluate the left hand side of \eqref{fake_brion} by summing up term by term on the right hand side for such ``\textit{degenerate}'' choices of $\alpha$, one would encounter terms resembling ``$\infty - \infty$''. 

We shall follow the convention that $\alpha$ is a variable varying over $V^*_{\mathbb{C}}$, and $\xi$ denotes some fixed element in $V^*_{\mathbb{C}}$. The purpose of this work is to, given a fixed ``degenerate'' choice of $\xi$, re-express the right hand side of \eqref{fake_brion} as a sum of meromorphic functions which are non-singular at $\alpha = \xi$. Furthermore, we want this expression to be \textit{local} in the sense that each term only depends on local geometry of the polytope; for example in the usual Brion's formula, each term only depends on the local geometry near the corresponding vertex.

The terms appearing in our ``degenerate'' Brion's formula will be parametrized by the faces on which $\xi$ is constant. We call such faces \textit{$\xi$-constant}, and we denote the set of all $\xi$-constant faces by $\{\mathfrak{p}\}_\xi$. Let $\mathfrak{f}$ be such a face. We associate to $\mathfrak{f}$ a certain \textit{virtual cone} $\textnormal{LC}^\mathfrak{p}_\mathfrak{f}(\xi)$, namely a formal linear combination of cones with integral coefficients; we call this the \textit{alternating Levi cone}, as it is an alternating sum over terms which arise in a similar way to how Levi subgroups of parabolic subgroups of $\textnormal{SL}(n)$ show up (i.e., given the data of a flag, choosing a decomposition of a vector space into subspaces compatible with the flag). This cone lives in the space $V/\textnormal{lin}(\mathfrak{f}) \simeq \textnormal{lin}(\mathfrak{f})^\perp$, where $\textnormal{lin}(\mathfrak{f})$ is the linear subspace of $V$ parallel to $\mathfrak{f}$. It is a pointed virtual cone with vertex at $\mathfrak{f}^{\mathfrak{f}^\perp}$, i.e. the point in $\textnormal{lin}(\mathfrak{f})^\perp$ that $\mathfrak{f}$ gets maps to under orthogonal projection. Furthermore, this cone only depends on the local geometry of $\mathfrak{p}$ near $\mathfrak{f}$, namely on the tangent cone of $\mathfrak{f}$, or, equivalently, on the transverse cone. The \textit{transverse cone} of $\mathfrak{f}$ in $\mathfrak{p}$, denoted $\mathfrak{t}_{\mathfrak{f}}^{\mathfrak{p}}$, is defined as the orthogonal projection of the tangent cone of $\mathfrak{f}$ to $\textnormal{lin}(\mathfrak{f})^\perp$. When $\mathfrak{f}$ is \textit{$\xi$-maximal}, meaning it is $\xi$-constant and no bigger face containing it is also $\xi$-constant, then $\textnormal{LC}_{\mathfrak{f}}^{\mathfrak{p}}(\xi) = \mathfrak{t}_{\mathfrak{f}}^{\mathfrak{p}}$. More generally we have:
\begin{gather}
    \textnormal{LC}_{\mathfrak{f}}^{\mathfrak{p}}(\xi) := \sum_{\mathfrak{f} \subset \mathfrak{h}_1 \subset \dots \subset \mathfrak{h}_\ell} (-1)^\ell \mathfrak{t}_{\mathfrak{f}}^{\mathfrak{h}_1} \times \mathfrak{t}_{\mathfrak{h}_1}^{\mathfrak{h}_2} \times \dots \times \mathfrak{t}_{\mathfrak{h}_{\ell-1}}^{\mathfrak{h}_\ell} \times \mathfrak{t}_{\mathfrak{h}_\ell}^{\mathfrak{p}}, \label{levi_cone}
\end{gather}
where the sum is over all chains of  $\xi$-constant faces of any dimension satisfying $\mathfrak{f} \subset \mathfrak{h}_1 \subset \dots \subset \mathfrak{h}_\ell$. We use the notation ${}^0\textnormal{LC}^{\mathfrak{p}}_{\mathfrak{f}}(\xi)$ to denote the alternating Levi cone shifted to be based at the origin; we call this the \textit{shifted alternating Levi cone}. 

Part of the significance of the alternating Levi cone is explained by the following polytope decomposition, which generalizes the classical Brianchon-Gram decomposition and to the best of our knowledge has not been observed before.
\begin{theorem}[proof in Section \ref{subsec_polyhedral_decomp}] \label{thm_polytope_decomp_intro}
    Let $\mathfrak{p}$ be a polytope, and $\xi \in V^*_{\mathbb{C}}$. Modulo indicator functions of polyhedra containing straight lines (denoted $\equiv$), we have
    \begin{gather}
        \mathfrak{p} \equiv \sum_{\mathfrak{f} \in \{\mathfrak{p}\}_\xi} \mathfrak{f}^{\mathfrak{f}} \times \textnormal{LC}^\mathfrak{p}_\mathfrak{f}(\xi), \label{polytope_decomp}
    \end{gather}
    where $\mathfrak{f}^{\mathfrak{f}}$ is the orthogonal projection of $\mathfrak{f}$ onto $\textnormal{lin}(\mathfrak{f})$.
\end{theorem}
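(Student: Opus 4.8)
The plan is to pass to the polytope algebra modulo indicator functions of polyhedra containing a straight line --- writing $\equiv$ for equality there, and identifying a polyhedron with its indicator function as in the statement --- and to deduce \eqref{polytope_decomp} from Brion's formula by a chain-combinatorics argument. I would use two standard facts about this algebra: \textbf{(i)} Brion's formula takes the form $\mathfrak{q} \equiv \sum_{\mathfrak{v} \in \textnormal{Vert}(\mathfrak{q})} \mathfrak{t}_{\mathfrak{v}}^{\mathfrak{q}}$ for every polytope $\mathfrak{q}$ (applying the valuation $I(\,\cdot\,;\xi)$ to it recovers \eqref{fake_brion}); \textbf{(ii)} for an orthogonal decomposition $V = U \oplus W$ the product $(A, B) \mapsto A \times B$ descends to a bilinear map on the corresponding quotient algebras, since $A \times B$ contains a line in $U \oplus W$ whenever $A$ contains one in $U$ --- in particular $A \equiv 0$ in $U$ forces $A \times B \equiv 0$ in $V$, so an $\equiv$-identity in a subspace may be multiplied through by anything in a complementary subspace.

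First I would expand the right-hand side of \eqref{polytope_decomp} via \eqref{levi_cone}: its summand indexed by a $\xi$-constant face $\mathfrak{f}$ and a chain $\mathfrak{f} \subset \mathfrak{h}_1 \subset \dots \subset \mathfrak{h}_\ell$ of $\xi$-constant faces is $(-1)^\ell\, \mathfrak{f}^{\mathfrak{f}} \times \mathfrak{t}_{\mathfrak{f}}^{\mathfrak{h}_1} \times \dots \times \mathfrak{t}_{\mathfrak{h}_{\ell-1}}^{\mathfrak{h}_\ell} \times \mathfrak{t}_{\mathfrak{h}_\ell}^{\mathfrak{p}}$, a genuine polyhedron in $V$ sitting inside the iterated orthogonal splitting $\textnormal{lin}(\mathfrak{f}) \oplus \big(\textnormal{lin}(\mathfrak{h}_1) \cap \textnormal{lin}(\mathfrak{f})^{\perp}\big) \oplus \dots \oplus \textnormal{lin}(\mathfrak{h}_\ell)^{\perp} = V$ attached to the flag. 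Re-indexing by the single datum of a chain $C : \mathfrak{g}_0 \subsetneq \dots \subsetneq \mathfrak{g}_\ell$ of $\xi$-constant faces (so $\mathfrak{g}_0 = \mathfrak{f}$), writing $\Pi(C) := \mathfrak{g}_0^{\mathfrak{g}_0} \times \mathfrak{t}_{\mathfrak{g}_0}^{\mathfrak{g}_1} \times \dots \times \mathfrak{t}_{\mathfrak{g}_{\ell-1}}^{\mathfrak{g}_\ell}$ (a polyhedron in $\textnormal{lin}(\mathfrak{g}_\ell)$, equal to $\mathfrak{g}_0^{\mathfrak{g}_0}$ when $\ell = 0$), then grouping chains by their \emph{top} face $\mathfrak{g}_\ell =: \mathfrak{m}$ (again $\xi$-constant) and factoring out the common last factor $\mathfrak{t}_{\mathfrak{m}}^{\mathfrak{p}}$, I obtain
\[
\sum_{\mathfrak{f} \in \{\mathfrak{p}\}_\xi} \mathfrak{f}^{\mathfrak{f}} \times \textnormal{LC}_{\mathfrak{f}}^{\mathfrak{p}}(\xi) \;=\; \sum_{\mathfrak{m} \in \{\mathfrak{p}\}_\xi} Q_{\mathfrak{m}} \times \mathfrak{t}_{\mathfrak{m}}^{\mathfrak{p}}, \qquad Q_{\mathfrak{m}} \;:=\; \sum_{C\,:\ \mathfrak{g}_0 \subsetneq \dots \subsetneq \mathfrak{g}_\ell = \mathfrak{m}} (-1)^{\ell}\, \Pi(C),
\]
with $Q_{\mathfrak{m}}$ an element of the polytope algebra of $\textnormal{lin}(\mathfrak{m})$. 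Since every face of a $\xi$-constant face is $\xi$-constant, the sum defining $Q_{\mathfrak{m}}$ ranges over \emph{all} chains of faces of $\mathfrak{m}$ with top $\mathfrak{m}$, so $Q_{\mathfrak{m}}$ depends only on the polytope $\mathfrak{m}$ --- $\xi$ has vanished. When $\mathfrak{m}$ is a vertex the only such chain is $(\mathfrak{m})$, so $Q_{\mathfrak{m}} = \mathfrak{m}^{\mathfrak{m}}$ is a point and $Q_{\mathfrak{m}} \times \mathfrak{t}_{\mathfrak{m}}^{\mathfrak{p}} = \mathfrak{t}_{\mathfrak{m}}^{\mathfrak{p}}$.

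The crux is then the $\xi$-free claim that for every polytope $\mathfrak{m}$ with $\dim \mathfrak{m} \geq 1$ one has $Q_{\mathfrak{m}} \equiv 0$ in $\textnormal{lin}(\mathfrak{m})$. Granting it, and since every vertex of $\mathfrak{p}$ lies in $\{\mathfrak{p}\}_\xi$, the displayed identity and \textbf{(ii)} give $\sum_{\mathfrak{f} \in \{\mathfrak{p}\}_\xi} \mathfrak{f}^{\mathfrak{f}} \times \textnormal{LC}_{\mathfrak{f}}^{\mathfrak{p}}(\xi) \equiv \sum_{\mathfrak{v} \in \textnormal{Vert}(\mathfrak{p})} \mathfrak{t}_{\mathfrak{v}}^{\mathfrak{p}} \equiv \mathfrak{p}$ by \textbf{(i)}, which is \eqref{polytope_decomp}. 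To prove the claim I would induct on $d = \dim \mathfrak{m}$ (the step below also handling the base case $d = 1$). Each chain with top $\mathfrak{m}$ and length $\ell \geq 1$ arises uniquely by appending $\mathfrak{m}$ to a chain $C'$ with some top $\mathfrak{f} \subsetneq \mathfrak{m}$, and then $\Pi(C) = \Pi(C') \times \mathfrak{t}_{\mathfrak{f}}^{\mathfrak{m}}$, a product over $\textnormal{lin}(\mathfrak{m}) = \textnormal{lin}(\mathfrak{f}) \oplus \big(\textnormal{lin}(\mathfrak{m}) \cap \textnormal{lin}(\mathfrak{f})^{\perp}\big)$ with $\Pi(C')$ the corresponding summand of $Q_{\mathfrak{f}}$; collecting terms yields $Q_{\mathfrak{m}} = \mathfrak{m}^{\mathfrak{m}} - \sum_{\mathfrak{f} \subsetneq \mathfrak{m}} Q_{\mathfrak{f}} \times \mathfrak{t}_{\mathfrak{f}}^{\mathfrak{m}}$, the sum over proper faces. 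By induction $Q_{\mathfrak{f}} \equiv 0$ when $1 \leq \dim \mathfrak{f} < d$, while for a vertex $\mathfrak{f}$ one has $Q_{\mathfrak{f}} \times \mathfrak{t}_{\mathfrak{f}}^{\mathfrak{m}} = \mathfrak{t}_{\mathfrak{f}}^{\mathfrak{m}}$ (the transverse cone of a vertex is its tangent cone). Hence, by \textbf{(ii)}, $Q_{\mathfrak{m}} \equiv \mathfrak{m}^{\mathfrak{m}} - \sum_{\mathfrak{v} \in \textnormal{Vert}(\mathfrak{m})} \mathfrak{t}_{\mathfrak{v}}^{\mathfrak{m}} \equiv 0$ by \textbf{(i)} applied to the full-dimensional polytope $\mathfrak{m}^{\mathfrak{m}} \subset \textnormal{lin}(\mathfrak{m})$. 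This closes the induction, and with it the theorem.

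I expect the main obstacle to be the geometric bookkeeping rather than the combinatorics: one must verify that each iterated product of transverse cones is a full-dimensional polyhedron in the asserted orthogonal direct sum, that the factorization $\Pi(C) = \Pi(C') \times \mathfrak{t}_{\mathfrak{f}}^{\mathfrak{m}}$ is compatible with these splittings, and that the product genuinely descends to the polytope algebra modulo lines so that the $\equiv$-identities may be multiplied (fact \textbf{(ii)} and the attendant linear algebra). Once that is in place, the one real idea is that after regrouping a chain by its top face the inner coefficients $Q_{\mathfrak{m}}$ are themselves the same construction carried out intrinsically in the faces, so the identity is self-similar and telescopes onto Brion's formula by induction on dimension.
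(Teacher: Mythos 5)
Your argument is correct, but it is organized quite differently from the paper's. The paper first applies the Brianchon--Gram identity (Theorem \ref{brianchon_gram}) to each factor $\mathfrak{f}^{\mathfrak{f}}$, breaking it into tangent cones at the vertices of $\mathfrak{f}$, then regroups everything by the \emph{vertex} of $\mathfrak{p}$ and exhibits an exact, sign-reversing cancellation among the $\xi$-maladapted flags through that vertex (pairing a flag $\mathfrak{v} = \mathfrak{h}_0 \subset \mathfrak{h}_1 \subset \dots$ with the same flag re-read as starting at $\mathfrak{h}_1$), leaving a single ${}^0\mathfrak{t}_{\mathfrak{v}}^{\mathfrak{p}}$ per vertex; Brianchon--Gram is then invoked once more at the very end. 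You instead regroup by the \emph{top} of the chain, observe that the resulting coefficient $Q_{\mathfrak{m}}$ of $\mathfrak{t}_{\mathfrak{m}}^{\mathfrak{p}}$ is $\xi$-independent and is the same construction performed intrinsically in $\mathfrak{m}$, and kill it by induction on $\dim \mathfrak{m}$, invoking Brianchon--Gram once per face. Your route buys the structural observation that the identity is self-similar across the face lattice and avoids designing an explicit involution on flags; the paper's route buys an exact cancellation of virtual cones in the middle step, so that ``$\equiv$'' is only ever used in the ambient space $V$ rather than at every level of an induction. The one place where your write-up genuinely needs the bookkeeping you flag: the factors $\mathfrak{t}_{\mathfrak{g}_{j-1}}^{\mathfrak{g}_j}$ in $\Pi(C)$ must be read as their projections into the orthogonal summands $\textnormal{lin}(\mathfrak{g}_j) \cap \textnormal{lin}(\mathfrak{g}_{j-1})^\perp$ (equivalently, as the shifted cones ${}^0\mathfrak{t}$ with the single overall base point $\mathfrak{g}_0^{\mathfrak{m}}$), so that $\Pi(C)$ genuinely lands in the linear subspace $\textnormal{lin}(\mathfrak{m})$ and $Q_{\mathfrak{m}} \equiv \mathfrak{m}^{\mathfrak{m}} - \sum_{\mathfrak{v}} \mathfrak{t}_{\mathfrak{v}^{\mathfrak{m}}}^{\mathfrak{m}^{\mathfrak{m}}}$ is an identity inside $\textnormal{lin}(\mathfrak{m})$ to which Brianchon--Gram applies. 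If one instead took the literal transverse cones based at $\mathfrak{g}_{j-1}^{\mathfrak{g}_{j-1}^\perp}$, the terms would live in a translate of $\textnormal{lin}(\mathfrak{m})$ by $\mathfrak{m}^{\mathfrak{m}^\perp}$ while $\mathfrak{m}^{\mathfrak{m}}$ does not, and since translation does not act trivially modulo $\mathcal{P}_0$, the telescoping would fail; with the projected reading (which is the one forced by the orthogonal-direct-sum convention of \eqref{levi_cone2}) everything is consistent and your induction closes.
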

\noindent We interpret this equation as a statement about \textit{virtual polyhedra}, i.e. we work in the vector space generated by indicator functions of polyhedra. We remark that the definition of the alternating Levi cone and the decomposition in \eqref{polytope_decomp} bear some resemblance to the Brion-Vergne decomposition (Theorem 1.2 of \cite{brion_vergne2}; see also Theorem 19 of \cite{BBKV}).

Fixing $\xi \in V^*_{\mathbb{C}}$ and a face $\mathfrak{f} \in \{\mathfrak{p}\}_{\xi}$, the alternating Levi cone $\textnormal{LC}_{\mathfrak{f}}^{\mathfrak{p}}(\xi)$ lives in $\textnormal{lin}(\mathfrak{f})^\perp$, and $I(\textnormal{LC}_{\mathfrak{f}}^{\mathfrak{p}}(\xi); \tau)$ is a meromorphic function on $(\textnormal{lin}(\mathfrak{f})^\perp)^*_{\mathbb{C}}$. Since $\xi$ is zero on $\textnormal{lin}(\mathfrak{f})$, we may naturally view $\xi \in (\textnormal{lin}(\mathfrak{f})^\perp)^*_{\mathbb{C}}$.

\begin{theorem}[see Theorem \ref{thm_holomorphic} for a more refined version] \label{thm_holomorphic_cont}
    Let $\mathfrak{p}$ be a polytope, $\xi \in V^*_{\mathbb{C}}$, and $\mathfrak{f} \in \{\mathfrak{p}\}_{\xi}$. The function $I(\textnormal{LC}_{\mathfrak{f}}^{\mathfrak{p}}(\xi); \tau)$ with $\tau \in (\textnormal{lin}(\mathfrak{f})^\perp)^*_{\mathbb{C}}$ is holomorphic at $\tau = \xi$.
\end{theorem}
The proof of Theorem \ref{thm_holomorphic_cont} is the most delicate part of this paper. Note that if $\mathfrak{f}$ is not $\xi$-maximal, we have that $I(\mathfrak{k}; \cdot)$ is singular at $\xi$ for every individual cone $\mathfrak{k}$ involved in defining $\textnormal{LC}^\mathfrak{p}_\mathfrak{f}(\xi)$ in \eqref{levi_cone}.

Because the map $I(\cdot; \cdot)$ is a \textit{valuation}, i.e. a linear map on the space of virtual polyhedra, and it sends polyhedra containing lines to zero, Theorem \ref{thm_polytope_decomp_intro} immediately implies the following identity of meromorphic functions (again, treating $\xi$ as fixed and $\alpha$ as variable):
\begin{gather}
    I(\mathfrak{p}; \alpha) = \sum_{\mathfrak{f} \in \{\mathfrak{p}\}_{\xi}} I(\mathfrak{f}^\mathfrak{f} \times \textnormal{LC}_{\mathfrak{f}}^{\mathfrak{p}}(\xi); \alpha) = \sum_{\mathfrak{f} \in \{\mathfrak{p}\}_{\xi}} I(\mathfrak{f}^\mathfrak{f}; \alpha^{\mathfrak{f}}) \cdot I(\textnormal{LC}_{\mathfrak{f}}^{\mathfrak{p}}(\xi); \alpha^{\mathfrak{f}^\perp}), \label{eqn_degen_brion_decomp}
\end{gather}
where $\alpha^{\mathfrak{f}}$ is the restriction of $\alpha$ to $\textnormal{lin}(\mathfrak{f})$, and $\alpha^{\mathfrak{f}^\perp}$ is the restriction of $\alpha$ to $\textnormal{lin}(\mathfrak{f})^\perp$. Each term in \eqref{eqn_degen_brion_decomp} is clearly meromorphic in $\alpha$. When $\alpha = \xi$, then $\alpha^{\mathfrak{f}} = 0$ and $I(\mathfrak{f}^{\mathfrak{f}}; 0) = \textnormal{vol}(\mathfrak{f})$. Theorem \ref{thm_holomorphic_cont} tells us that $I(\textnormal{LC}_{\mathfrak{f}}^{\mathfrak{p}}(\xi); \alpha^{\mathfrak{f}^\perp})$ is in fact holomorphic at $\alpha = \xi$. We thus obtain:

\begin{corollary}[Degenerate Brion's formula: continuous setting] \label{thm_degen_brion_cont_intro}
    Let $\mathfrak{p}$ be a polytope, and let $\xi \in V_{\mathbb{C}}^*$. Then
    \begin{gather}
        \int_{\mathfrak{p}} e^{\langle \xi, x \rangle} dx = \sum_{\mathfrak{f} \in \{\mathfrak{p}\}_{\xi} } \textnormal{vol}(\mathfrak{f}) \cdot I(\textnormal{LC}_{\mathfrak{f}}^{\mathfrak{p}}(\xi); \xi) = \sum_{\mathfrak{f} \in \{\mathfrak{p}\}_{\xi} } \textnormal{vol}(\mathfrak{f}) \cdot I({}^0\textnormal{LC}_{\mathfrak{f}}^{\mathfrak{p}}(\xi); \xi) \cdot e^{\langle \xi, \mathfrak{f} \rangle}, \label{degenerate_brion_cont}
    \end{gather}
    and each term is well-defined.
\end{corollary}

Notice that because $\langle \xi, \cdot \rangle$ is constant on $\mathfrak{f}$, the expression $e^{\langle \xi, \mathfrak{f} \rangle}$ is well-defined. In case $\xi$ is generic, then the only $\xi$-constant faces are the vertices. In that case for each vertex $\mathfrak{v}$, we have $\textnormal{vol}(\mathfrak{v}) = 1$ and $\textnormal{LC}^\mathfrak{p}_\mathfrak{v}(\xi) = \mathfrak{t}_{\mathfrak{v}}^{\mathfrak{p}}$, so Corollary \ref{thm_degen_brion_cont_intro} simply reduces to Brion's formula \eqref{fake_brion}. 

Our degenerate Brion's formula is particularly useful when considering dilates of a fixed polytope; in that case we obtain
\begin{gather}
    I(t \cdot \mathfrak{p}; \xi) = \int_{t \cdot \mathfrak{p}} e^{\langle \xi, x \rangle} dx = \sum_{\mathfrak{f} \in \{\mathfrak{p}\}_{\xi}} \textnormal{vol}(\mathfrak{f}) \cdot I({}^0 \textnormal{LC}^\mathfrak{p}_\mathfrak{f}(\xi); \xi)  \cdot t^{\textnormal{dim}(\mathfrak{f})} \cdot e^{t \cdot \langle \xi, \mathfrak{f} \rangle}. \label{brion_cont_dilation}
\end{gather}
More generally, if we have a family of polytopes $\mathfrak{p}(h)$ with the same normal fan (for example, we may translate different supporting hyperplanes by different amounts), then we can naturally identify corresponding faces $\mathfrak{f}(h)$ in the family. The shifted alternating Levi cone ${}^0 \textnormal{LC}^{\mathfrak{p}(h)}_{\mathfrak{f}(h)}(\xi)$ is independent of $h$, so our formula is of a nice form for understanding how $I(\mathfrak{p}(h); \xi)$ varies in $h$.

\subsection{Degenerate Brion's formula: discrete setting}
The original version of Brion's formula \cite{brion} was about sums of exponential functions over lattice points in polytopes. Brion's proof applied the Lefschetz-Riemann-Roch theorem in equivariant $K$-theory to the toric variety associated to a lattice polytope. A nice elementary introduction to the discrete version of Brion's formula is \cite{beck_haase_sottile}. 

Suppose $V$ contains a lattice $\Lambda$, and suppose that $\mathfrak{p}$ is a rational polytope with respect to $\Lambda$. We shall also assume that our inner product $\langle \cdot, \cdot \rangle$ is rational, i.e. it takes rational values on pairs of lattice points. The discrete version of Brion's formula concerns the discrete Fourier-Laplace transform of the indicator function of the lattice points inside a polytope; it states that
\begin{gather}
    \sum_{\lambda \in \mathfrak{p} \cap \Lambda} e^{\langle \alpha,  \lambda \rangle} \ ``=" \sum_{\mathfrak{v} \in \textnormal{Vert}(\mathfrak{p})} \sum_{\lambda \in \mathfrak{t}_{\mathfrak{v}}^{\mathfrak{p}} \cap \Lambda} e^{\langle \alpha, \lambda \rangle}. \label{fake_brion_discrete}
\end{gather}
Again, this formula as written does not make sense as there does not exist any $\alpha$ such that every inner sum on the right hand side converges absolutely. We instead interpret each inner sum on the right hand side of \eqref{fake_brion_discrete} as arising from the meromorphic continuation in $\alpha$ of the function $S_\Lambda(\mathfrak{t}_{\mathfrak{v}}^{\mathfrak{p}}; \alpha) := \sum_{\lambda \in \mathfrak{t}_{\mathfrak{v}}^{\mathfrak{p}} \cap \Lambda} e^{\langle \alpha, \lambda \rangle}$ from the locus on which this expression converges absolutely. As before, we define $S_\Lambda(\mathfrak{q}; \cdot)$ for any rational polyhedron $\mathfrak{q}$ not containing a straight line through meromorphic continuation from the locus on which the expression converges absolutely; otherwise we set $S_\Lambda(\mathfrak{q}; \cdot)$ to be zero.

Like in the continuous case, the function on the left hand side of \eqref{fake_brion_discrete} is an entire function, whereas each inner sum on the right hand side is merely meromorphic. The function $S_\Lambda(\mathfrak{t}_{\mathfrak{v}}^{\mathfrak{p}}; \alpha)$ has singularities precisely when $e^{\langle \alpha, \cdot \rangle} = 1$ for all lattice points on a line through the origin parallel to an edge containing $\mathfrak{v}$. Note that this is weaker than $\langle \alpha, \cdot \rangle$ being constant on an edge containing $\mathfrak{v}$; for example if $\alpha \in 2 \pi i \cdot \Lambda^* \setminus \{0\}$, then $e^{\langle \alpha, \lambda \rangle} = 1$ for all $\lambda \in \Lambda$ even though $\alpha \neq 0$. 

For generic choices of $\xi$, we can evaluate the left hand side of \eqref{fake_brion_discrete} at $\alpha = \xi$ by summing up the right hand side term by term. However, for degenerate choices of $\xi$ this is no longer possible. Thus, given a fixed degenerate choice of $\xi$, it is desirable to re-express the right hand side of \eqref{fake_brion_discrete} as a sum of meromorphic functions in $\alpha$ such that each one is holomorphic at $\alpha = \xi$.

Since $S_{\Lambda}(\cdot, \cdot)$ is a valuation which sends polyhedra containing lines to zero, we may apply Theorem \ref{thm_polytope_decomp_intro} to obtain the following identity of meromorphic functions (again, with $\xi$ fixed and $\alpha$ varying):
\begin{gather}
    S_\Lambda(\mathfrak{p}; \alpha) = \sum_{\mathfrak{f} \in \{\mathfrak{p}\}_\xi} S_{\Lambda}(\mathfrak{f}^{\mathfrak{f}} \times \textnormal{LC}_{\mathfrak{f}}^{\mathfrak{p}}(\xi); \alpha). \label{eqn_intro_pre_brion}
\end{gather}
However, several new interlinked difficulties arise in comparison to the continuous setting. 
\begin{enumerate}
    \item[(1)] We could have that $e^{\langle \xi, \cdot \rangle} = 1$ at all lattice points on a face, even though $\langle \xi, \cdot \rangle$ is not constant on the face. In the most extreme example if we take $\xi \in 2 \pi i \cdot \Lambda^* \setminus \{0\}$, then $e^{\langle \xi, \lambda \rangle} = 1$ for all $\lambda \in \Lambda$, even though $\langle \xi, \cdot \rangle$ might not be constant on any face of $\mathfrak{p}$ other than the vertices. Then $S_{\Lambda}(\mathfrak{p}; \xi) = S_{\Lambda}(\mathfrak{p}; 0)$, but $\{\mathfrak{p}\}_\xi \neq \{\mathfrak{p}\}_0$. Thus $\{\mathfrak{p}\}_\xi$ is not quite the right set to study in the discrete setting.

    \item[(2)] Even if we find the right set to replace $\{\mathfrak{p}\}_\xi$, it is not clear that $S_{\Lambda}(\mathfrak{f}^{\mathfrak{f}} \times \textnormal{LC}_{\mathfrak{f}}^{\mathfrak{p}}(\xi); \alpha)$ is holomorphic at $\alpha = \xi$.

    \item[(3)] Unlike in \eqref{eqn_degen_brion_decomp}, we cannot split up the terms in \eqref{eqn_intro_pre_brion} into terms of the form ``$S_{\Lambda}(\mathfrak{f}^\mathfrak{f}; \alpha^{\mathfrak{f}}) \cdot S_{\Lambda}(\textnormal{LC}_{\mathfrak{f}}^{\mathfrak{p}}(\xi); \alpha^{\mathfrak{f}^\perp})$''. This notation does not even make sense because the spaces $\textnormal{lin}(\mathfrak{f})$ and $\textnormal{lin}(\mathfrak{f})^\perp$ contain (at least) two different natural lattices derived from $\Lambda$, namely the lattice obtained from intersecting $\Lambda$ with the subspace, and the lattice obtained from projection of $\Lambda$ onto the subspace.
\end{enumerate}

We shall overcome all of these issues. One of the main technical tools to do so, in particular to address Difficulty (2) above, is the local Euler-Maclaurin formula of Berline-Vergne \cite{berline_vergne}. Euler-Maclaurin formulas in general provide a precise relationship between integrating a function over a polytope and summing up the function at the lattice points in the polytope. The version of the Euler-Maclaurin formula in \cite{berline_vergne} builds on earlier work of Pukhlikov-Khovanskii \cite{pukhlikov_khovanskii}, Cappell-Shaneson \cite{cappell_shaneson}, and Brion-Vergne \cite{brion_vergne}. In particular Pukhlikov-Khovanskii \cite{pukhlikov_khovanskii} noticed that the Euler-Maclaurin formula for integral polytopes amounts to an explicit version of the Riemann-Roch theorem for the associated toric variety. In the case of a unimodular polytope associated to a smooth toric variety, the Euler-Maclaurin formula has a particularly simple form involving the so-called Todd operator $\textnormal{Td}(\frac{\partial}{\partial x})$, which is a differential operator whose symbol is $\frac{x}{1 - e^{-x}}$. This is also the exponential generating function for the Bernoulli numbers, which show up in the classical Euler-Maclaurin formula. See \cite{karshon_sternberg_weitsman} for a nice survey on the topic.

The approach of Berline-Vergne \cite{berline_vergne} is elementary and does not use the theory of toric varieties. Before describing it, we must first set some notation. Given a (not necessarily full-dimensional) polyhedron $\mathfrak{f}$, let $\Lambda^{\mathfrak{f}^\perp}$ denote the orthogonal projection of $\Lambda$ onto $\textnormal{lin}(\mathfrak{f})^\perp$, and let $\Lambda_\mathfrak{f}$ denote the intersection of $\Lambda$ with $\textnormal{lin}(\mathfrak{f})$. Let $I^{\Lambda_\mathfrak{f}}(\mathfrak{f}; \alpha)$ denote the integral of $e^{\langle \alpha, \cdot \rangle}$ over $\mathfrak{f}$ with the respect to the Lebesgue measure on $\mathfrak{f}^{\mathfrak{f}^\perp} + \textnormal{lin}(\mathfrak{f})$ normalized so that $\mathfrak{f}^{\mathfrak{f}^\perp} + \Lambda_{\mathfrak{f}}$ has covolume 1. Given the data of a vector space $W$, a lattice $\Gamma \subset W$, and a rational inner product $\langle \cdot, \cdot \rangle'$ on $W$, Berline-Vergne construct a map $\mu_{W}^{\Gamma, \langle \cdot, \cdot \rangle'}$ from rational cones in $W$ to meromorphic functions on $W^*_{\mathbb{C}}$. Furthermore $\mu_W^{\Gamma, \langle \cdot, \cdot \rangle'}(\gamma + \mathfrak{k}) = \mu_W^{\Gamma, \langle \cdot, \cdot \rangle'}(\mathfrak{k})$ for any rational cone $\mathfrak{k}$, and any $\gamma \in \Gamma$. We shall often suppress the data of the inner product in the superscript as it will be obvious from context what inner product we are using. These functions $\mu$ satisfy the following remarkable identity of meromorphic functions for any rational polyhedron $\mathfrak{q}$ in $V$:
\begin{gather}
    S_\Lambda(\mathfrak{q}; \alpha) = \sum_{\mathfrak{f} \in \textnormal{Face}(\mathfrak{q})} \mu_{\textnormal{lin}(\mathfrak{f})^\perp}^{\Lambda^{\mathfrak{f}^\perp}} (\mathfrak{t}_{\mathfrak{f}}^{\mathfrak{q}}; \alpha) \cdot I^{\Lambda_{\mathfrak{f}}}(\mathfrak{f}; \alpha). \label{euler_maclaurin}
\end{gather}
On each subspace $\textnormal{lin}(\mathfrak{f})^\perp$, we are using the restriction of the inner product on $V$. Each function $\mu_{\textnormal{lin}(\mathfrak{f})^\perp}^{\Lambda^{\mathfrak{f}^\perp}} (\mathfrak{t}_{\mathfrak{f}}^{\mathfrak{q}}; \alpha)$ is the extension of the function $\mu_{\textnormal{lin}(\mathfrak{f})^\perp}^{\Lambda^{\mathfrak{f}^\perp}} (\mathfrak{t}_{\mathfrak{f}}^{\mathfrak{q}}; \zeta)$ with $\zeta \in (\textnormal{lin}(\mathfrak{f})^\perp)^*_{\mathbb{C}}$ to $\alpha \in V^*_{\mathbb{C}}$ via pullback of the orthogonal projection $V^*_{\mathbb{C}} \to (\textnormal{lin}(\mathfrak{f})^\perp)^*_{\mathbb{C}}$.

We now address Difficulty (1) from above. Let $\xi \in V^*_{\mathbb{C}}$. As we discuss in Section \ref{subsec_xi_lambda}, in an elementary way we may find an element $\tilde{\xi} \in V^*_{\mathbb{C}}$ and a finite index sublattice $\tilde{\Lambda} \leq \Lambda$ satisfying the following properties. First off, $e^{\langle \xi, \lambda \rangle} = e^{\langle \tilde{\xi}, \lambda \rangle}$ for $\lambda \in \tilde{\Lambda}$. Furthermore, for any $\lambda \in \tilde{\Lambda}$ for which $e^{\langle \tilde{\xi}, \lambda \rangle} = 1$, we in fact that that $\langle \tilde{\xi}, \lambda \rangle = 0$. Finally, we have that
\begin{gather*}
    \{\mathfrak{p}\}_{\xi, \Lambda} := \{\mathfrak{f} \in \textnormal{Face}(\mathfrak{p}): e^{\langle \xi, \lambda \rangle} = 1 \textnormal{ on a finite index sublattice of } \textnormal{lin}(\mathfrak{f}) \cap \Lambda \} = \{\mathfrak{p}\}_{\tilde{\xi}}.
\end{gather*}
Using $\{\mathfrak{p}\}_{\xi, \Lambda}$ in place of $\{\mathfrak{p}\}_{\xi}$ will resolve Difficulty (1).

In general given a lattice $\Gamma$, we let $(V^*_{\mathbb{C}})^\Gamma$ denote those elements $\tau \in V^*_{\mathbb{C}}$ such that any time $e^{\langle \tau, \gamma \rangle} = 1$ with $\gamma \in \Gamma$, then in fact $\langle \tau, \gamma \rangle = 0$. This property will play an important role in analyzing holomorphicity of the functions $\mu$; see Proposition \ref{prop_mu_holomorphic}. Note that $\tilde{\xi} \in (V^*_{\mathbb{C}})^{\tilde{\Lambda}}$. Using the Berline-Vergne local Euler-Maclaurin formula, we may resolve Difficulty (2).
\begin{theorem}[see Theorem \ref{thm_levi_cone_euler_maclaurin} for a more refined statement]
    Let $\Lambda$ be a lattice, $\mathfrak{p}$ be a rational polytope, and $\xi \in (V_{\mathbb{C}}^*)^\Lambda$. Let $\mathfrak{f} \in \{\mathfrak{p}\}_{\xi}$ and $s \in \textnormal{lin}(\mathfrak{f})^\perp$ a rational point. The function $S_{s+ \Lambda_{\mathfrak{f}^\perp}}(\textnormal{LC}_{\mathfrak{f}}^{\mathfrak{p}}(\xi); \tau)$ with $\tau \in (\textnormal{lin}(\mathfrak{f})^\perp)^*_{\mathbb{C}}$ is holomorphic at $\tau = \xi$.
\end{theorem}

Given a rational point in $s \in V$, we shall use the notation
\begin{gather*}
    S_{s + \Lambda}(\mathfrak{q}; \alpha) := \sum_{\lambda \in \mathfrak{q} \cap (s + \Lambda)} e^{\langle \alpha, \lambda \rangle}.
\end{gather*}
In \eqref{eqn_reduce_to_adapted} of Section \ref{subsec_xi_lambda}, one finds a simple relationship between $S_{\Lambda}(\mathfrak{p}; \xi)$ and $S_{\gamma + \tilde{\Lambda}}(\mathfrak{p}; \tilde{\xi})$ for all $\gamma \in \Lambda/\tilde{\Lambda}$. Using this formula, we may thus always reduce to the case that $\xi \in (V^*_{\mathbb{C}})^\Lambda$. 

Finally, resolving Difficulty (3) is the content of Proposition \ref{prop_ortho_lattice}. See Section \ref{sec_decomposing} for the precise statement. As corollaries we obtain the following versions of the degeneration Brion's formula in the discrete setting. These are stated for $\xi \in (V^*_{\mathbb{C}})^\Lambda$. A version for arbitrary $\xi$ may be immediately obtained by combining any of the below formulas with \eqref{eqn_reduce_to_adapted}.

\begin{corollary} [Degenerate Brion's formula: discrete setting, version 1] \label{cor_discrete_1}
    Suppose $\mathfrak{p}$ is a rational polytope with respect to a lattice $\Lambda$. Let $\xi \in (V_{\mathbb{C}}^*)^{\Lambda}$. Then
    \begin{gather}
        S_{\Lambda}(\mathfrak{p}; \xi) = \sum_{\mathfrak{f} \in \{\mathfrak{p}\}_{\xi}} \sum_{[\omega] \in \Lambda/(\Lambda_{\mathfrak{f}} \oplus \Lambda_{\mathfrak{f}^\perp})} S_{[\omega^{\mathfrak{f}}] + \Lambda_\mathfrak{f}} (\mathfrak{f}^{\mathfrak{f}}; 0) \cdot S_{[\omega^{\mathfrak{f}^\perp}] + \Lambda_{\mathfrak{f}^\perp}} (\textnormal{LC}_{\mathfrak{f}}^{\mathfrak{p}}(\xi); \xi), \label{eqn_brion_1}
    \end{gather} 
    and each term is well-defined.
\end{corollary}
The finite sum over $\Lambda/(\Lambda_{\mathfrak{f}} \oplus \Lambda_{\mathfrak{f}^\perp})$ naturally arises in addressing Difficulty (3) from above. Note that $S_{[\omega^{\mathfrak{f}}] + \Gamma_{\mathfrak{f}}} (\mathfrak{f}^{\mathfrak{f}}; 0)$ simply counts the number of points of the shifted lattice $[\omega^{\mathfrak{f}}] + \Lambda_{\mathfrak{f}}$ inside of $\mathfrak{f}^{\mathfrak{f}}$. 

We can in turn apply the Euler-Maclaurin formula to each $S_{[\omega^{\mathfrak{f}}] + \Lambda_{\mathfrak{f}}} (\mathfrak{f}^{\mathfrak{f}}; 0)$. We shall use the notation $\mu^{s + \Lambda}_W(\mathfrak{q}; \cdot)$ as shorthand for $\mu^{\Lambda}_W(-s + \mathfrak{q}; \xi)$. We get:
\begin{corollary}[Degenerate Brion's formula: discrete setting, version 2] \label{thm_intro_degen_brion_2}
     Suppose $\mathfrak{p}$ is a rational polytope with respect to a lattice $\Lambda$. Let $\xi \in (V_{\mathbb{C}}^*)^{\Lambda}$. Then
     \begin{align}
         S_{\Lambda}(\mathfrak{p}; \xi) = & \sum_{\mathfrak{g} \in \{\mathfrak{p}\}_{\xi}} \textnormal{vol}^{\Lambda_{\mathfrak{g}}}(\mathfrak{g}) \notag \\
         \times & \Bigg( \sum_{\mathfrak{g} \subseteq \mathfrak{f} \in \{\mathfrak{p}\}_{\xi}} \sum_{[\omega] \in \Lambda/(\Lambda_{\mathfrak{f}} \oplus \Lambda_{\mathfrak{f}^\perp})} \mu^{([\omega^{\mathfrak{f}}] + \Lambda_{\mathfrak{f}})^{\mathfrak{g}^\perp}}_{\textnormal{lin}(\mathfrak{f}) \cap \textnormal{lin}(\mathfrak{g})^\perp} (\mathfrak{t}_{\mathfrak{g}}^{\mathfrak{f}}; 0) \cdot S_{[\omega^{\mathfrak{f}^\perp}] + \Lambda_{\mathfrak{f}^\perp}}(\textnormal{LC}_{\mathfrak{f}}^{\mathfrak{p}}(\xi); \xi) \Bigg), \label{eqn_intro_degen_brion_2}
     \end{align}
     and each term is well-defined.
\end{corollary}

Berline-Vergne prove that $\mu(\mathfrak{k}; \tau)$ is holomorphic at $\tau = 0$ with rational value for any rational cone $\mathfrak{k}$ in any rational inner product space. Given a lattice $\Gamma$ in a vector space, the notation $\textnormal{vol}^\Gamma$ refers to the usual volume form normalized so that $\Gamma$ has covolume 1. 

Notice that the innermost sum in \eqref{eqn_intro_degen_brion_2} can be interpreted as computing $S_{\Lambda^{\mathfrak{f}^\perp}}(\textnormal{LC}_{\mathfrak{f}}^{\mathfrak{p}}(\xi); \xi)$, but where we weight points in the coset $[\omega^{\mathfrak{f}^\perp}] + \Lambda_{\mathfrak{f}^\perp}$ by $\mu^{([\omega^{\mathfrak{f}}] + \Lambda_{\mathfrak{f}})^{\mathfrak{g}^\perp}}_{\textnormal{lin}(\mathfrak{f}) \cap \textnormal{lin}(\mathfrak{g})^\perp} (\mathfrak{t}_{\mathfrak{g}}^{\mathfrak{f}}; 0)$. When $\mathfrak{f} = \mathfrak{g}$, we have that $\mu^{([\omega^{\mathfrak{f}}] + \Lambda_{\mathfrak{f}})^{\mathfrak{g}^\perp}}_{\textnormal{lin}(\mathfrak{f}) \cap \textnormal{lin}(\mathfrak{g})^\perp} (\mathfrak{t}_{\mathfrak{g}}^{\mathfrak{f}}; 0) = 1$ for every $[\omega]$, so the innermost sum is exactly $S_{\Lambda^{\mathfrak{g}^\perp}}(\textnormal{LC}_{\mathfrak{g}}^{\mathfrak{p}}(\xi); \xi)$. If $\mathfrak{g}$ is $\xi$-maximal, then this is the only term in the coefficient of $\textnormal{vol}^{\Lambda_{\mathfrak{g}}}(\mathfrak{g})$. If all vertices are $\xi$-maximal, then the formula reduces to
\begin{gather*}
    S_{\Lambda}(\mathfrak{p}; \xi) = \sum_{\mathfrak{v} \in \textnormal{Vert}(\mathfrak{p})} S_{\Lambda}(\mathfrak{t}_{\mathfrak{v}}^{\mathfrak{p}}; \xi),
\end{gather*}
which is simply Brion's formula \eqref{fake_brion_discrete}. If $\xi = 0$, then this formula simply reduces to 
\begin{gather*}
    S_{\Gamma}(\mathfrak{p}; 0) = \sum_{\mathfrak{g} \in \textnormal{Face}(\mathfrak{p})} \textnormal{vol}^{\Gamma_{\mathfrak{g}}}(\mathfrak{g}) \cdot \mu^{\Lambda^{\mathfrak{g}^\perp}}_{\textnormal{lin}(\mathfrak{g})^{\perp}}(\mathfrak{t}_{\mathfrak{g}}^{\mathfrak{p}}; 0),
\end{gather*}
which is the formula obtained by Pommersheim-Thomas \cite{pommersheim_thomas} and rederived by Berline-Vergne \cite{berline_vergne}.

We also can apply the Euler-Maclaurin formula directly to $I(\mathfrak{p}; \xi)$. We obtain an alternate expression like in \eqref{eqn_intro_degen_brion_2} which is simpler but less directly geometric.
\begin{corollary}[Degenerate Brion's formula: discrete setting, version 3] \label{thm_intro_degen_brion_3}
    Suppose $\mathfrak{p}$ is a rational polytope with respect to a lattice $\Lambda$. Let $\xi \in (V_{\mathbb{C}}^*)^{\Lambda}$. Then
    \begin{align}
        S_{\Lambda}(\mathfrak{p}; \xi) = & \sum_{\mathfrak{g} \in \{\mathfrak{p}\}_{\xi}} \textnormal{vol}^{\Lambda_{\mathfrak{g}}}(\mathfrak{g}) \notag \\
        \times & \Bigg( \sum_{\mathfrak{f} \supseteq \mathfrak{g}, \  \mathfrak{f} = \mathfrak{g} \textnormal{ or } \mathfrak{f} \notin \{\mathfrak{p}\}_\xi} \mu^{\Lambda^{\mathfrak{f}^\perp}}_{\textnormal{lin}(\mathfrak{f})^\perp}(\mathfrak{t}_{\mathfrak{f}}^{\mathfrak{p}}; \xi) \cdot I^{(\Lambda^{\mathfrak{g}^\perp})_{\mathfrak{f}}}(\textnormal{LC}_{\mathfrak{g}}^{\mathfrak{f}}(\xi); \xi) \Bigg), \label{eqn_intro_degen_brion_3}
    \end{align}
    and each term is well-defined.
\end{corollary}
Notice that in \eqref{eqn_intro_degen_brion_3}, we sum over $\mathfrak{f} \supseteq \mathfrak{g}$ such that either $\mathfrak{f} = \mathfrak{g}$ or $\mathfrak{f} \notin \{\mathfrak{p}\}_{\xi}$. This is in contrast to \eqref{eqn_intro_degen_brion_2} where given $\mathfrak{g}$, we sum over $\mathfrak{f} \supseteq \mathfrak{g}$ such that $\mathfrak{f} \in \{\mathfrak{p}\}_{\xi}$. From \eqref{eqn_intro_degen_brion_3} we can also see that if $\mathfrak{g}$ is $\xi$-maximal, then, by the Euler-Maclaurin formula, the coefficient of $\textnormal{vol}^{\Lambda_{\mathfrak{g}}}(\mathfrak{g})$ is $S_{\Lambda^{\mathfrak{g}^\perp}}(\mathfrak{t}_{\mathfrak{g}}^{\mathfrak{p}}; \xi) = S_{\Lambda^{\mathfrak{g}^\perp}}(\textnormal{LC}_{\mathfrak{g}}^{\mathfrak{f}}(\xi); \xi)$. As before, if all vertices are $\xi$-maximal, then this formula simply reduces to Brion's formula \eqref{fake_brion_discrete}.

Suppose we take positive integer dilates of a fixed rational polytope $\mathfrak{p}$. We then get that
\begin{align}
    S_{\Lambda}(t \cdot \mathfrak{p}; \xi) = & \sum_{\mathfrak{g} \in \{\mathfrak{p}\}_\alpha} \textnormal{vol}^{\Lambda_{\mathfrak{g}}}(\mathfrak{g}) \cdot t^{\textnormal{dim}(\mathfrak{g})} e^{t \cdot \langle \xi, \mathfrak{g} \rangle} \notag \\
    \times & \Bigg( \sum_{\mathfrak{f} \supseteq \mathfrak{g}, \ \mathfrak{f} = \mathfrak{g} \textnormal{ or } \mathfrak{f} \notin \{\mathfrak{p}\}_\xi} \mu_{\textnormal{lin}(\mathfrak{f})^\perp}^{\Lambda^{\mathfrak{f}^\perp}}(t \cdot \mathfrak{f}^{\mathfrak{f}^\perp} + {}^0 \mathfrak{t}_{\mathfrak{f}}^{\mathfrak{p}}; 0) \cdot I^{(\Lambda^{\mathfrak{g}^\perp})_{\mathfrak{f}}} ({}^0 \textnormal{LC}_{\mathfrak{g}}^{\mathfrak{f}}(\xi); \xi) \Bigg). \label{brion_discrete_dilation}
\end{align}
The element $t \cdot \mathfrak{f}^{\mathfrak{f}^\perp}$ is periodic in $t$ inside of $\textnormal{lin}(\mathfrak{f})^\perp/(\Lambda^{\mathfrak{f}^\perp})$, and thus so is $\mu_{\textnormal{lin}(\mathfrak{f})^\perp}^{\Lambda^{\mathfrak{f}^\perp}}(t \cdot \mathfrak{f}^{\mathfrak{f}^\perp} + {}^0 \mathfrak{t}_{\mathfrak{f}}^{\mathfrak{p}}; 0)$. In case $\mathfrak{f}$ contains a lattice point, then its period is one. If all vertices are lattice points, then the period is one for all faces. Note that clearly $\textnormal{vol}^{\Lambda_{\mathfrak{g}}}(\mathfrak{g})$ and ${}^0 \textnormal{LC}_{\mathfrak{g}}^{\mathfrak{f}}(\xi)$ do not depend on $t$. Thus in the case of an integral polytope, \eqref{brion_discrete_dilation} is an expression involving a sum of terms of the form polynomial times exponential. In the case of a rational polytope, we instead have an expression as a sum of terms of the form quasi-polynomial times exponential. In case $\xi = 0$, we simply get the Ehrhart quasi-polynomial. Thus our degenerate Brion's formula simultaneously generalizes Brion's original formula and the Ehrhart quasi-polynomial. We point out that our formula is also of a nice form for understanding how $S_{\Lambda}(\mathfrak{p}(h); \xi)$ varies in $h$ over a family of polytopes with the same normal fan.

A much weaker version of the degenerate version of Brion's formula in the discrete case appeared in \cite{peterson}. There we proved the following: let $\mathfrak{p}$ be a polytope defined by $\bigcap_j \{\langle \eta_{\mathfrak{f}_j}^{\mathfrak{p}}, x \rangle \geq h^0_j\}$ and such that $\mathfrak{p}$ is integral with respect to a lattice $\Lambda$ (then $\eta_{\mathfrak{f}_j}^\mathfrak{p}$ is the outward normal to the facet $\mathfrak{f}_j$). Let $\mathfrak{p}(h)$ be the polytope defined by $\bigcap_j \{\langle \eta_{\mathfrak{f}_j}^{\mathfrak{p}}, x \rangle \geq h_j \}$ with $h = (h_1, \dots, h_k)$. Let $\mathcal{P}$ be the collection of integral polytopes of the form $\mathfrak{p}(h)$ which have the same normal fan as $\mathfrak{p}$. Let $\xi \in V^*$ be a real functional; this implies that $\xi \in (V^*_{\mathbb{C}})^\Lambda$. Then there exist polynomials $Q_\mathfrak{v}(h)$ in $h$ such that
\begin{gather}
    S_\Lambda(\mathfrak{p}(h); \xi) = \sum_{\mathfrak{v} \in \textnormal{Vert}(\mathfrak{p})} Q_\mathfrak{v}(h) e^{\langle \xi, \mathfrak{v}(h) \rangle}. \label{early_degen_brion}
\end{gather}
Furthermore $Q_{\mathfrak{v}}(h)$ has degree at most equal to the number of edges containing $\mathfrak{v}$ on which $\xi$ is constant. The proof involved an application of L'Hopital's rule to the generic case of Brion's formula. It required non-canonically choosing a direction $\zeta$ in which to perturb a degenerate choice of $\xi$ to make it generic, and the expression for $Q_\mathfrak{v}(h)$ was complicated and depended on $\zeta$. Furthermore, no geometric interpretation was given for $Q_{\mathfrak{v}}(h)$. Such a result as \eqref{early_degen_brion} follows immediately from Corollary \ref{thm_intro_degen_brion_2} or Corollary \ref{thm_intro_degen_brion_3}. Furthermore, Corollary \ref{thm_intro_degen_brion_2} and Corollary \ref{thm_intro_degen_brion_3} are much more geometric, precise, hold for general rational polytopes rather than merely integral ones, and (when combined with \eqref{eqn_reduce_to_adapted}) hold for any $\xi \in V^*_{\mathbb{C}}$. 

\subsection{A detailed example} \label{sec_example_intro}
We now discuss in detail an example illustrating the results of this paper. Let $\mathfrak{p}$ be the triangle in $\mathbb{R}^2$ with vertices given by $\mathfrak{v}_1 = (-1, 0)$, $\mathfrak{v}_2 = (1, 2)$, and $\mathfrak{v}_3 = (1, 0)$. Let $\mathfrak{e}$ be the edge between $\mathfrak{v}_1$ and $\mathfrak{v}_2$. Let $\alpha = (\alpha_1, \alpha_2)$. Let $\Lambda = \mathbb{Z}^2$. Brion's formula tells us that
\begin{align*}
    I(t \cdot \mathfrak{p}; \alpha) &= \frac{e^{-t \alpha_1}}{\alpha_1 (\alpha_1 + \alpha_2)} + \frac{e^{t(\alpha_1 + 2 \alpha_2)}}{(-\alpha_1 - \alpha_2)(-\alpha_2)} + \frac{e^{t \alpha_2}}{(-\alpha_1)(\alpha_2)}, \\
    S_{\Lambda}(t \cdot \mathfrak{p}; \alpha) &= \frac{e^{-t \alpha_1}}{(1 - e^{\alpha_1})(1 - e^{\alpha_1 + \alpha_2})} + \frac{e^{t(\alpha_1 + 2 \alpha_2)}}{(1 - e^{-\alpha_1 - \alpha_2})(1 - e^{-\alpha_2})} + \frac{e^{t \alpha_1}}{(1 - e^{-\alpha_1})(1 - e^{\alpha_2})}. 
\end{align*}
In both expressions, the terms in order correspond to the contributions from $\mathfrak{t}_{\mathfrak{v}_1}^{\mathfrak{p}}$, $\mathfrak{t}_{\mathfrak{v}_2}^{\mathfrak{p}}$, and $\mathfrak{t}_{\mathfrak{v}_3}^{\mathfrak{p}}$. Note that we are using the formulas \eqref{cone_integral} and \eqref{discrete_simplicial}.

Now suppose we set $\xi = (1, -1)$. Then $\{\mathfrak{p}\}_\xi = \{\mathfrak{e}, \mathfrak{v}_1, \mathfrak{v}_2, \mathfrak{v}_3\}$. See Figure \ref{fig_intro_example} for an illustration of the decomposition from Theorem \ref{thm_polytope_decomp_intro}. Concretely, we have that
\begin{align*}
    \textnormal{LC}_{\mathfrak{e}}^{\mathfrak{p}}(\xi) &= \mathfrak{t}_{\mathfrak{e}}^{\mathfrak{p}} = \mathds{1}\{(-0.5, 0.5) + t \cdot (1, -1) : t \in [0, \infty) \}, \\
    \textnormal{LC}_{\mathfrak{v}_1}^{\mathfrak{p}}(\xi) &= \mathfrak{t}_{\mathfrak{v}_1}^{\mathfrak{p}} - \mathfrak{t}_{\mathfrak{v}_1}^{\mathfrak{e}} \times \mathfrak{t}_{\mathfrak{e}}^{\mathfrak{p}} = - \mathds{1} \{(-1, 0) + t_1 \cdot (1,0) + t_2 \cdot (1, -1) : t_1 \in (0, \infty), t_2 \in [0, \infty) \}, \\
    \textnormal{LC}_{\mathfrak{v}_2}^{\mathfrak{p}}(\xi) &= \mathfrak{t}_{\mathfrak{v}_2}^{\mathfrak{p}} - \mathfrak{t}_{\mathfrak{v}_2}^{\mathfrak{e}} \times \mathfrak{t}_{\mathfrak{e}}^{\mathfrak{p}} = - \mathds{1} \{ (1, 2) + t_1 \cdot (0, -1) + t_2 \cdot (1, -1) : t_1 \in (0, \infty), t_2 \in [0, \infty) \}, \\
    \textnormal{LC}_{\mathfrak{v}_3}^{\mathfrak{p}}(\xi) &= \mathfrak{t}_{\mathfrak{v}_3}^{\mathfrak{p}} = \mathds{1} \{(1, 0) + t_1 \cdot (-1, 0) + t_2 \cdot (1, 0) : t_1, t_2 \in [0, \infty) \}.
\end{align*}

Furthermore, $\Lambda_{\mathfrak{e}}$ is generated by $(1, 1)$, the lattice $\Lambda_{\mathfrak{e}^\perp}$ is generated by $(1, -1)$, and $\Lambda_{\mathfrak{e}} \oplus \Lambda_{\mathfrak{e}^\perp}$ is index 2 inside of $\Lambda$. We can choose as our coset representatives $(0, 0)$ and $(1, 0)$. See Figure \ref{fig_lattices_example}. One can easily calculate:

\begin{minipage}{0.5\textwidth}
\begin{align*}
    S_{\Lambda_{\mathfrak{e}}}(t \cdot \mathfrak{e}^{\mathfrak{e}}; 0) &= 2t + 1, \\
    S_{\Lambda_{\mathfrak{e}}}(\textnormal{LC}_{\mathfrak{e}}^{\mathfrak{p}}(\xi); \xi) &= \frac{e^{-t}}{1 - e^2},
\end{align*}
\end{minipage}
\begin{minipage}{0.5\textwidth}
\begin{align*}
    S_{[(1, 0)] + \Lambda_{\mathfrak{e}}}(t \cdot \mathfrak{e}^{\mathfrak{e}}; 0) &= 2t, \\
    S_{[(1, 0)] + \Lambda_{\mathfrak{e}^\perp}}(\textnormal{LC}_{\mathfrak{e}}^{\mathfrak{p}}(\xi); \xi) &= \frac{e^{-t +1}}{1 - e^2}. 
\end{align*}
\end{minipage}\\

Using the formulas \eqref{degenerate_brion_cont} and \eqref{eqn_brion_1} we get
\begin{align}
    I(t \cdot \mathfrak{p}; \xi) &= \Big( 2 \sqrt{2} t \Big) \Big( \frac{\sqrt{2} e^{-t}}{2} \Big) - \frac{e^{-t}}{1 \cdot 2} - \frac{e^{-t}}{1 \cdot 2} + \frac{e^{t}}{(-1) \cdot (-1)}, \notag \\
    S_{\Lambda}(t \cdot \mathfrak{p}; \xi) &= \Bigg( (2t+1) \Big(\frac{e^{-t}}{1 - e^2} \Big) + (2 t) \Big( \frac{e^{-t+1}}{1 - e^2} \Big) \Bigg) \notag \\
    & - \Bigg( \frac{e^{-t}}{(1 - e^1)(1 - e^2)} - \frac{e^{-t}}{(1 - e^1)} \Bigg) - \Bigg( \frac{e^{-t}}{(1 - e^1)(1-e^2)} - \frac{e^{-t}}{(1-e^1)} \Bigg) \label{eqn_interesting_cones} \\
    & + \frac{e^t}{(1 - e^{-1})(1 - e^{-1})}. \notag 
\end{align}
We remark that $\textnormal{LC}_{\mathfrak{v}_1}^{\mathfrak{p}}(\xi)$ can be expressed as
\begin{gather*}
    - \mathds{1} \{ (-1, 0) + t_1 \cdot (1, 0) + t_2 \cdot (1, -1) : t_i \in [0, \infty) \} + \mathds{1} \{ (-1, 0) + t \cdot (1, 0) : t \in [0, \infty) \}.
\end{gather*}
A similar expression exists for $\textnormal{LC}_{\mathfrak{v}_2}^{\mathfrak{p}}$. When applying the valuation $I(\cdot; \cdot)$ to such as expression, the second term is zero because the corresponding cone is not full-dimensional. However, when applying $S_\Lambda(\cdot; \cdot)$ to it, both terms contribute non-trivially. This explains why each parenthesized term in \eqref{eqn_interesting_cones} consists of two individual terms.

\begin{figure} 
\begin{tikzpicture}[scale=0.75]
\fill[color = blue, opacity = 0.2] (-1, 0) -- (1,0) -- (1,2) -- cycle;
\draw[color = blue, line width = 1] (-1, 0) -- (1,0) -- (1, 2) -- cycle;
\node[xshift = -7mm] at (-1, 0) {$(-1, 0)$};
\node[yshift = -5mm] at (1, 0) {$(1, 0)$};
\node[xshift = -3mm, yshift = 3mm] at (0, 1) {$\mathfrak{e}$};
\node[yshift = 5mm] at (1, 2) {$(1, 2)$};

\node at (2.5, 1) {$\equiv$};

\begin{scope}[xshift = 5 cm]
    \fill[color = red, opacity = 0.2] (-1, 0) -- (1, 2) -- (3, 0) -- (1, -2) -- cycle; 
    \node[xshift = -7mm] at (-1, 0) {$(-1, 0)$};
    \node[yshift = 5mm] at (1, 2) {$(1, 2)$}; 
    \draw[color = red, line width = 1] (1, -2) -- (-1, 0) -- (1, 2) -- (3, 0);
\end{scope}

\node at (8.5, 1) {$-$};
\begin{scope}[xshift = 11cm]
    \fill[color = red, opacity = 0.2] (1, -2) -- (-1, 0) -- (1, 0) -- cycle;
    \draw[color = red, line width = 1] (-1, 0) -- (1, -2);
    \draw[color = red, line width = 1, dashed] (-1, 0) -- (1, 0);
    \draw[color = red] (-1, 0) circle (2 pt);
    \node[xshift = -7mm] at (-1, 0) {$(-1, 0)$};
\end{scope}

\node at (12.5, 1) {$-$};

\begin{scope}[xshift = 12cm]
    \fill[color = red, opacity = 0.2] (1, 0) -- (1, 2) -- (3, 0) -- cycle;
    \draw[color = red, line width = 1] (1, 2) -- (3, 0);
    \draw[color = red, line width = 1, dashed] (1, 2) -- (1, 0);
    \node[yshift = 5mm] at (1, 2) {$(1, 2)$};
    \draw[color = red] (1, 2) circle (2 pt);
\end{scope}

\node at (15.5, 1) {$+$};

\begin{scope}[xshift = 17cm]
    \fill[color = red, opacity = 0.2] (-1, 0) -- (1, 0) -- (1, 2) -- cycle;
    \draw[color = red, line width = 1] (-1, 0) -- (1, 0);
    \draw[color = red, line width = 1] (1, 0) -- (1, 2);
    \node[yshift = -5mm] at (1, 0) {$(1, 0)$};
\end{scope}
\end{tikzpicture}
\caption{Illustration of the decomposition of $\mathfrak{p}$ as in Theorem \ref{thm_polytope_decomp_intro} with respect to $\xi = (1, -1)$.}
\label{fig_intro_example}
\end{figure}

\begin{figure}
\centering
\begin{tikzpicture}[scale=1.5]

\node at (0,-1) [circle,fill,inner sep=2.0pt, color = brown]{};
\node at (1,-1) [circle,fill,inner sep=2.0pt, color = blue]{};
\node at (2,-1) [circle,fill,inner sep=2.0pt, color = brown]{};
\node at (3,-1) [circle,fill,inner sep=2.0pt, color = blue]{};
\node at (4,-1) [circle,fill,inner sep=2.0pt, color = brown]{};
\node at (0,0) [circle,fill,inner sep=2.0pt, color = blue]{};
\node at (1,0) [circle,fill,inner sep=2.0pt, color = brown]{};
\node at (2,0) [circle,fill,inner sep=2.0pt, color = blue]{};
\node at (3,0) [circle,fill,inner sep=2.0pt, color = brown]{};
\node at (4,0) [circle,fill,inner sep=2.0pt, color = blue]{};
\node at (0,1) [circle,fill,inner sep=2.0pt, color = brown]{};
\node at (1,1) [circle,fill,inner sep=2.0pt, color = blue]{};
\node at (2,1) [circle,fill,inner sep=2.0pt, color = brown]{};
\node at (3,1) [circle,fill,inner sep=2.0pt, color = blue]{};
\node at (4,1) [circle,fill,inner sep=2.0pt, color = brown]{};
\node at (0,2) [circle,fill,inner sep=2.0pt, color = blue]{};
\node at (1,2) [circle,fill,inner sep=2.0pt, color = brown]{};
\node at (2,2) [circle,fill,inner sep=2.0pt, color = blue]{};
\node at (3,2) [circle,fill,inner sep=2.0pt, color = brown]{};
\node at (4,2) [circle,fill,inner sep=2.0pt, color = blue]{};
\node at (0,3) [circle,fill,inner sep=2.0pt, color = brown]{};
\node at (1,3) [circle,fill,inner sep=2.0pt, color = blue]{};
\node at (2,3) [circle,fill,inner sep=2.0pt, color = brown]{};
\node at (3,3) [circle,fill,inner sep=2.0pt, color = blue]{};
\node at (4,3) [circle,fill,inner sep=2.0pt, color = brown]{};

\begin{scope}[xshift = 1cm, yshift = 1cm]
\fill[color = red, opacity = 0.2] (-1, 0) -- (1, 2) -- (4, -1) -- (2, -3) -- cycle; 
\node[xshift = -7mm] at (-1, 0) {$(-1, 0)$};
\node[yshift = 5mm] at (1, 2) {$(1, 2)$}; 
\draw[color = red, line width = 1] (2, -3) -- (-1, 0) -- (1, 2) -- (4, -1);
\node[xshift = -3mm, yshift = 3mm] at (0, 1) {$\mathfrak{e}$};
\end{scope}
\end{tikzpicture}
\caption{The red polyhedron corresponds to $\mathfrak{e}^\mathfrak{e} \times \textnormal{LC}_{\mathfrak{e}}^{\mathfrak{p}}(\xi)$. The lattice $\Lambda$ corresponds to the union of the blue and brown points. The blue points correspond to the sublattice $\Lambda_{\mathfrak{e}} \oplus \Lambda_{\mathfrak{e}^\perp}$, and the brown points to the shifted sublattice $(1, 0) + \Lambda_{\mathfrak{e}} \oplus \Lambda_{\mathfrak{e}^\perp}$. See also Figure \ref{fig_lattice}.}
\label{fig_lattices_example}
\end{figure}

\subsection{A view towards applications}
As in \cite{peterson}, our original motivation for seeking such formulas arose from harmonic analysis on symmetric spaces and affine buildings (such as Bruhat-Tits buildings). For example, suppose we have a symmetric space of non-compact type $G/K$ where $G$ is a semisimple Lie group and $K$ is a maximal compact subgroup. By the Cartan decomposition, a $K$-invariant function $f$ on $G/K$ is determined by its values on $\mathfrak{a}^+$, a corresponding Weyl chamber chosen in a way respecting the Cartan involution giving rise to $K$. We can thus instead integrate $f$ on $\mathfrak{a}^+$ with respect to a certain Jacobian factor which has an explicit expression as a finite sum of exponential functions \cite{helgason}:
\begin{gather}
    c \cdot \prod_{\alpha \in \Phi^+} (e^{\langle \alpha, x \rangle} - e^{\langle -\alpha, x \rangle})^{m_\alpha}, \label{symmetric_space_jacobian}
\end{gather}
where $\Phi^+ \subset \mathfrak{a}^*$ is the set of positive relative roots with respect to $\mathfrak{a}$, $m_\alpha$ is the multiplicity of the relative root $\alpha$, and $c$ is an explicit constant. Notice that the dominating term in \eqref{symmetric_space_jacobian} is $e^{\langle 2 \rho, x \rangle}$ where $2 \rho = \sum_{\alpha \in \Phi^+} m_\alpha \cdot \alpha$.

At times it is advantageous to consider polyhedral Finsler metrics on $G/K$ in place of the Riemannian metric (see, e.g., \cite{kapovich_leeb, horofunction_compactification, brumley_matz, lutsko_weich_wolf, davalo_riestenberg}); such metrics arise from polytopes inside of $\mathfrak{a}^+$. Then computing the volume of a ball of radius $t$ in the symmetric space in such a metric reduces to integrating a finite sum of exponential functions over the $t$-th dilate of a fixed polytope. One would like to use Brion's formula to compute this, but if the exponent is constant on some face of the polytope, then one cannot use it. However, one may in such an instance instead use the degenerate Brion's formula as in \eqref{brion_cont_dilation}.

Something similar happens in affine buildings. Suppose, for simplicity, $G$ is a split adjoint semisimple algebraic group over a non-archimedean local field $F$ with maximal torus $T$. Let $q$ be the order of the residue field. Let $K$ be a special maximal compact subgroup. Then $G/K$ is the collection of special vertices of the associated Bruhat-Tits building. If we have a $K$-invariant function on $G/K$, then by the Cartan decomposition, it is determined by its values on the special vertices inside a sector based at the unique vertex stabilized by $K$. These vertices are in turn naturally parametrized by the intersection of the coweight lattice with the Weyl chamber $\mathfrak{a}^+$ inside of the vector space $\mathfrak{a} := X_\bullet(T) \otimes_{\mathbb{Z}} \mathbb{R}$ where $X_\bullet(T)$ is the cocharacter lattice $\textnormal{Hom}_{\mathbb{Z}}(F^\times, T)$; such lattice points are called \textit{dominant coweights}. The cardinality of the preimage of a dominant coweight $\lambda$ under Cartan projection has an expression of the form 
\begin{gather}
    c(\lambda, q^{-1}) \cdot q^{\langle 2 \rho, \lambda \rangle}, \label{building_jacobian}
\end{gather}
where $c(\lambda, q^{-1})$ is constant on the interior of each face of the Weyl chamber \cite{macdonald_spherical}. One may interpret this coefficient as the ``Jacobian factor'' that arises when integrating a bi-$K$-invariant function on $G$. Like in the symmetric spaces setting, at times it is advantageous to work with polyhedral metrics on the building using polytopes in $\mathfrak{a}^+$ (see, e.g., \cite{landvogt, BT_theory_berkovich, suh, gao_2022, gao_thesis, peterson, gielen_2025}). To compute the size of a ball of radius $t \in \mathbb{N}$ in this metric, one must integrate \eqref{building_jacobian} over the lattice points in the defining polytope. One could use Brion's formula, but, as in the symmetric spaces case, at times one must instead use degenerate Brion's formula as in \eqref{brion_discrete_dilation}. We plan to further discuss applications of our formulas to computing volume growth on symmetric spaces and affine buildings in future work.

In the aforementioned work \cite{peterson}, we proved a rudimentary version of degenerate Brion's formula \eqref{early_degen_brion}. There we applied Brion's formula and its degenerate version in four distinct ways in service of proving a ``quantum ergodicity'' result on Bruhat-Tits buildings associated to $\textnormal{PGL}(3)$; more specifically the main result of \cite{peterson} is a delocalization result for joint eigenfunctions of the spherical Hecke algebra on compact quotients of the underlying Bruhat-Tits building. One way in which we used Brion's formula was to compute the volume of balls with respect to a polyhedral metric as alluded to in the previous paragraph. A more sophisticated application has to do with computing the eigenvalue of Hecke operators associated to polyhedral balls. Roughly speaking, this computation reduces to integrating the spherical function associated to a given choice of Satake parameters over lattice points lying in dilates of the polytope defining the metric. For $\textnormal{PGL}(n)$, the dominant coweights are naturally parametrized by partitions $\lambda = (\lambda_1, \dots, \lambda_{n-1}, 0)$ with $\lambda_1 \geq \dots \geq \lambda_{n-1} \geq 0$. The spherical functions can be expressed in terms of the Hall-Littlewood polynomials \cite{macdonald_symmetric}:
\begin{gather*}
    P_\lambda(x_1, \dots, x_n; t) := \frac{d(t)}{c(\lambda, t)} \sum_{\sigma \in S_n} \sigma.\Big(x_1^{\lambda_1} \dots x_n^{\lambda_n} \prod_{i < j} \frac{x_i - t x_j}{x_i - x_j} \Big),
\end{gather*}
where $d(t)$ is a constant depending only on $t$, $\sigma$ is an element of the symmetric group $S_n$ permuting the $x_i$'s, and $c(\lambda, t)$ is the same as in \eqref{building_jacobian}. To compute the eigenvalue of the Hecke operator associated to a polytopal ball, one must integrate the following expression over dominant coweights $\lambda$ lying in the defining polytope:
\begin{gather}
    d(q^{-1}) \cdot q^{\langle \rho, \lambda \rangle} \sum_{\sigma \in S_n} \sigma.\Big(q^{\langle s, \lambda \rangle} \prod_{i < j} \frac{q^{s_i} - q^{-1} q^{s_j}}{q^{s_i} - q^{s_j}}\Big), \label{spherical_fn}
\end{gather}
where $q^s = (q^{s_1}, \dots, q^{s_n})$ are the Satake parameters. When $q^{s}$ is fixed, \eqref{spherical_fn} varies in $\lambda$ as a finite sum of exponential functions, and thus one may use Brion's formula. This type of computation explains why it is desirable to work with $\xi \in V^*_{\mathbb{C}}$ rather than restrict to real functionals, as the Satake parameters (and in the setting of symmetric spaces, the spectral parameters), naturally vary over all of $V^*_{\mathbb{C}}$. The Satake parameters corresponding to purely imaginary functionals, i.e. the tempered ones, are in fact often the most important ones to study.

Finally, the most sophisticated application in \cite{peterson} which required the full strength of degenerate Brion's formula involves computing the size of the intersection of two polyhedral balls with different centers. The strategy is roughly as follows: first we parametrize points in the intersection with respect to the relative position of such points relative to the centers of the balls. The possible relative positions are then parametrized by lattice points in some polytope, and the number of points associated to a given relative position is exponential in these coordinates. Furthermore, as we change the radii of the balls or the location of the centers, the parametrizing polytope simply has its supporting hyperplanes shifted. Thus in particular the normal fan stays the same, which allows for a good uniform bound as we vary the radii and centers of the polytopal balls in the building. See \cite{thesis} for a visualization of the technique for the building of $\textnormal{PGL}(2)$, which is the infinite regular tree.

Brion's formula is already known to have applications in representation theory (see, e.g., \cite{finis_lapid, makhlin, feigin_makhlin, asgari_kiumars}). We believe our degenerate version of Brion's formula may have further applications to representation theory as well as to analysis on symmetric spaces and affine buildings. In particular, our degenerate Brion's formula may be seen in certain situations as a substitute for Laplace's method and the method of stationary phase in settings in which such methods are not applicable because of the non-differentiable nature of polytopes, or in the discrete case because we are taking sums of exponential functions rather than integrating. 

Since its inception, Brion's formula has been deeply linked to toric geometry and symplectic geometry, and we believe that our degenerate Brion's formula should have such a link as well. We briefly describe a result in equivariant cohomology which in some sense may also be interpreted as a ``degenerate Brion's formula''. The main source for this discussion is Chapter 7 of \cite{BGV}, particularly Theorem 7.13 (the Atiyah-Bott-Berline-Vergne localization theorem). Suppose $T$ is a torus with Lie algebra $\mathfrak{t}$. Let $V = \mathfrak{t}^*$. Let $M$ be a smooth toric variety with associated polytope $\mathfrak{p} \subset V$. Let $\omega$ be an equivariantly closed differential form on $M$. Let $\xi \in \mathfrak{t}$, and let $M_0$ denote the fixed points of the one parameter subgroup generated by $\xi$. Let $\mathcal{N}$ be the normal bundle to $M_0$ inside of $M$. Then for $\alpha \in \mathfrak{t}$ sufficiently close to $\xi$ we have that 
\begin{gather}
    \int_M \omega(\alpha) = \int_{M_0} (2 \pi)^{\textnormal{rank}(\mathcal{N})/2} \frac{\omega(\alpha)}{\chi_{\mathfrak{t}}(\mathcal{N})(\alpha)}, \label{eqn_localization}
\end{gather}
where $\chi_{\mathfrak{t}}(\mathcal{N})(\alpha)$ is the equivariant Euler class of $\mathcal{N}$. If we let $\omega$ be the exponential of the equivariant symplectic form on $M$ then, upon passing to action-angle coordinates, the LHS of \eqref{eqn_localization} turns into $I(\mathfrak{p}; \alpha)$. In case $\xi$ is generic, we have that $M_0$ consists of finitely many points (exactly the points mapping to the vertices of $\mathfrak{p}$ under the moment map), and the RHS of \eqref{eqn_localization} turns into the RHS of Brion's formula \eqref{fake_brion} (or really terms like in \eqref{cone_integral}). In this sense \eqref{eqn_localization} is a ``degenerate'' generalization of Brion's formula, as in general $\chi_\mathfrak{t}(\mathcal{N})(\alpha)$ is invertible near $\alpha = \xi$ so the RHS of \eqref{eqn_localization} makes sense. The locus $M_0$ corresponds to the preimage under the moment map of the union of the $\xi$-maximal faces, to the RHS of \eqref{eqn_localization} is really a sum over the $\xi$-maximal faces of some expression involving only the local geometry near each $\xi$-maximal face. The above formula suggests that the version of degenerate Brion's formula in our paper (Corollary \ref{thm_degen_brion_cont_intro}) may provide an explicit formula for the inverse of the equivariant Euler class $\chi_t(\mathcal{N})(\alpha)$, at least after an inner product is chosen. It would be very interesting to further explore the exact connection between these formulas, though we do not pursue such questions in the present work.

More generally our results contribute towards the analysis of multivariate meromorphic functions with hyperplane singularities which, in addition to the setting of this paper (and the associated connections to toric geometry and symplectic geometry), show up in renormalization in quantum field theory and the study of multiple zeta functions. See e.g. the works of Guo-Paycha-Zhang \cite{GPZ_zeta, GPZ_euler_maclaurin, GPZ_laurent, GPZ_locality}.

\section{Background and notation}
\subsection{Inner product spaces} Let $V$ be a real $n$-dimensional vector space with an inner product $\langle \cdot, \cdot \rangle$. This allows us to identify $V$ with $V^*$. We shall use $\langle \cdot, \cdot \rangle$ to denote both the inner product and the pairing between $V$ and $V^*$. We can extend the inner product on $V^*$ to a complex bilinear pairing on $V^*_{\mathbb{C}}$ which we still denote by $\langle \cdot, \cdot \rangle$. We can also define a natural Hermitian inner product on $V^*_{\mathbb{C}}$ defined by $\langle \alpha, \beta \rangle_H := \langle \bar{\alpha}, \beta \rangle$. This also defines a norm on $V^*_{\mathbb{C}}$ by $\|\alpha\|^2 := \langle \alpha, \alpha \rangle_H = \langle \bar{\alpha}, \alpha \rangle$. If $W$ is a subspace of $V$, let $P_W$ denote orthogonal projection onto $W$.

\subsection{Polyhedra, polytopes, and cones} \label{sec_polytopes} Recall that a \textit{polyhedron} $\mathfrak{q}$ in $V$ is the intersection of finitely many closed half-spaces in $V$, and a \textit{polytope} is a bounded polyhedron. We shall use ``mathfrak'' letters to denote polyhedra. If $\mathfrak{q}$ is a $k$-dimensional polyhedron, we shall at times write $\mathfrak{q}^{(k)}$ to emphasize its dimension. A codimension one face of a polyhedron is called a \textit{facet}. A \textnormal{cone} $\mathfrak{k}$ is a polyhedron of the form $\{x + \sum_{i} a_i v_i : a_i \geq 0, v_i \in V\}$. We let ${}^0\mathfrak{k}$ denote the same cone shifted to be based at the origin, i.e. ${}^0 \mathfrak{k} := \{\sum_i a_i v_i : a_i \geq 0, v_i \in V\}$. If $\mathfrak{k}$ contains no lines, then we call it a \textit{pointed cone}, and it has a unique vertex. A cone is called \textit{simplicial} if there are exactly $n$ defining vectors $v_i$, and they are all linearly independent. In that case we call $\{v_i\}$ a \textit{conical basis}. Such cones are necessarily pointed.

Let $\mathfrak{f}$ be a face of a polyhedron $\mathfrak{q}$. Let $\textnormal{aff}(\mathfrak{f})$ be the affine subspace generated by $\mathfrak{f}$, and let $\textnormal{lin}(\mathfrak{f})$ be the linear subspace parallel to $\textnormal{aff}(\mathfrak{f})$. Let $x$ be any point in the relative interior of $\mathfrak{f}$. The \textit{tangent cone }of $\mathfrak{f}$ in $\mathfrak{q}$ is defined as
\begin{gather*}
    \mathfrak{s}_{\mathfrak{f}}^{\mathfrak{q}} := \{x + w : x + \varepsilon \cdot w \in \mathfrak{p} \textnormal{ for $\varepsilon > 0$ small enough} \}.
\end{gather*}
The definition is independent of the choice of $x$. Intuitively, this is the cone generated by all directions we can move in from $x$ and stay inside of $\mathfrak{p}$. Note that unless $\mathfrak{f}$ is a vertex, this cone is not pointed as it contains $\textnormal{aff}(\mathfrak{f})$. 

Suppose $\mathfrak{f}$ and $\mathfrak{g}$ are polyhedra in $V$. We shall often use the notation $\mathfrak{f}^{\mathfrak{g}} := P_{\textnormal{lin}(\mathfrak{g})}(\mathfrak{f})$ and $\mathfrak{f}^{\mathfrak{g}^\perp} := P_{\textnormal{lin}(\mathfrak{g})^\perp}(\mathfrak{f})$. One of the most important definitions in this paper is that of the \textit{transverse cone} of $\mathfrak{f}$ in $\mathfrak{q}$ defined as
\begin{gather*}
    \mathfrak{t}_{\mathfrak{f}}^{\mathfrak{q}} := P_{\textnormal{lin}(\mathfrak{f})^\perp} (\mathfrak{s}_{\mathfrak{f}}^{\mathfrak{q}}) = (\mathfrak{s}_{\mathfrak{f}}^{\mathfrak{q}})^{\mathfrak{f}^\perp}. 
\end{gather*}
This cone is always pointed with vertex at the point $\mathfrak{f}^{\mathfrak{f}^\perp}$. If $\mathfrak{v}$ is a vertex, then $\mathfrak{s}_{\mathfrak{v}}^{\mathfrak{q}} = \mathfrak{t}_{\mathfrak{v}}^{\mathfrak{q}}$. A polytope $\mathfrak{p}$ is called \textit{simple} if $\mathfrak{t}_{\mathfrak{v}}^{\mathfrak{p}}$ is a simplicial cone for all vertices $\mathfrak{v}$.

Suppose $\mathfrak{p} \subset V$ is an $n$-dimensional polytope. For each facet $\mathfrak{f}_i$ of $\mathfrak{p}$, we have an associated supporting half-space $\langle - \eta_{\mathfrak{f}_i}^{\mathfrak{p}}, x \rangle + h_i^0 \geq 0$ where $\eta_{\mathfrak{f}_i}^{\mathfrak{p}}$ is the outward pointing unit normal. Suppose $\mathfrak{p}$ has $\ell$ facets. Let $h^0 = (h_1^0, \dots, h_\ell^0) \in \mathbb{R}^\ell$, and let $h = (h_1, \dots, h_\ell)$ denote an arbitrary vector in $\mathbb{R}^\ell$. Let $\mathfrak{p}(h)$ be defined as
\begin{gather*}
    \mathfrak{p}(h) := \{x \in V : \langle -\eta_{\mathfrak{f}_j}^\mathfrak{p}, x \rangle + h_j \geq 0 \textnormal{ for all $j$}\}.
\end{gather*}
We clearly have $\mathfrak{p} = \mathfrak{p}(h^0)$.

Recall that a \textit{polyhedral fan} is a finite collection of pointed cones such that the intersection of any two of the constituent cones is also in the set, and every face of each of the constituent cones is also in the set. The \textit{normal fan} of $\mathfrak{p}$, denoted $\mathcal{N}(\mathfrak{p})$, is the polyhedral fan in $V^*$ whose constituent cones are defined as follows. For each face $\mathfrak{f} \subseteq \mathfrak{p}$, we define $N_\mathfrak{f}$, which is the cone of linear functionals on $V$ whose restriction to $\mathfrak{p}$ is maximized on $\mathfrak{f}$:
\begin{gather*}
    N_\mathfrak{f} := \bigg\{ \xi \in V^* : \mathfrak{f} \subseteq \{x \in \mathfrak{p} : \langle \xi, x \rangle = \max_{y \in \mathfrak{p}} \langle \xi, y \rangle \} \bigg\}.
\end{gather*}
There is a natural inclusion-reversing poset isomorphism between the poset of faces of $\mathfrak{p}$ and the poset of cones in the normal fan $\mathcal{N}(\mathfrak{p})$. 

Let $N(h^0) \subset \mathbb{R}^\ell$ denote those $h$'s such that $\mathcal{N}(\mathfrak{p}(h)) = \mathcal{N}(\mathfrak{p}(h^0))$. Clearly $N(h^0)$ is invariant under scaling by $\mathbb{R}_+^*$ which corresponds to dilating the polytope. Given $h \in N(h^0)$, we can naturally identify each face $\mathfrak{f}$ of $\mathfrak{p} = \mathfrak{p}(h^0)$ with a face $\mathfrak{f}(h)$ of $\mathfrak{p}(h)$. Note that if $h \in N(h^0)$, then
\begin{gather*}
    \textnormal{lin}(\mathfrak{f}) = \{ x : \langle \xi, x \rangle = 0 \textnormal{ for all $\xi \in N_{\mathfrak{f}}$} \} = \textnormal{lin}(\mathfrak{f}(h)).
\end{gather*}
Therefore for all $h \in N(h^0)$, the face $\mathfrak{f}(h)$ is parallel to the face $\mathfrak{f}$. 

\subsection{The $\xi$-decomposition}

Let $\xi \in V^*_{\mathbb{C}}$. A face $\mathfrak{f} \subset \mathfrak{p}$ is called \textit{$\xi$-constant} if $\xi$ is constant on $\mathfrak{f}$; in that case, the value $\langle \xi, \mathfrak{f} \rangle$ is well-defined. The face $\mathfrak{f}$ is called \textit{$\xi$-maximal} if it is $\xi$-constant and there does not exist a bigger face $\mathfrak{h} \supset \mathfrak{f}$ which is also $\xi$-constant. Note that two distinct $\xi$-maximal faces may intersect non-trivially. We call a face \textit{$\xi$-good} if it is $\xi$-constant and is contained in a unique $\xi$-maximal face. Let $\{\mathfrak{p}\}_\xi$ denote the set of all $\xi$-constant faces; we call this the \textit{$\xi$-decomposition} of $\mathfrak{p}$. Given a fixed $\xi^0 \in V^*_{\mathbb{C}}$, we let $L(\xi^0)$ denote those $\xi \in V^*_{\mathbb{C}}$ such that $\{\mathfrak{p}\}_\xi = \{\mathfrak{p}\}_{\xi^0}$.

We now make several observations. If $\xi \in V^*$ is a real functional, then the union of all faces on which $\langle \xi, \cdot \rangle$ achieves its maximum forms a $\xi$-maximal face which is also $\xi$-good. The analogous statement is true with maximum replaced with minimum. If $\mathfrak{p}$ is a simple polytope, then every $\xi$-constant face is also $\xi$-good. A face $\mathfrak{f}$ is $\xi$-constant if and only if $\xi$ lies in the complex linear span of $N_{\mathfrak{f}}$. The $\xi$-decomposition is the same for all $\mathfrak{p}(h)$ with $h \in N(h^0)$. The set $L(\xi^0)$ is invariant under multiplication by $\mathbb{C}^*$.

\subsection{Flags}
A \textit{flag} in $\mathfrak{p}$ is a chain of faces $\mathfrak{h}_1 \subset \dots \subset \mathfrak{h}_\ell$. We say that the flag \textit{starts at} $\mathfrak{h}_1$ and \textit{ends at} $\mathfrak{h}_\ell$. Suppose $\mathfrak{h}_j$ is $i_j$-dimensional. We say that the flag is \textit{saturated} if $i_{j+1} = i_{j}+1$ for all $1 \leq j \leq \ell-1$. We let $\textnormal{sFl}_{\mathfrak{f}}^{\mathfrak{p}}$ denote the set of all saturated flags that start at $\mathfrak{f}$ and end at $\mathfrak{p}$. We say that a flag that starts at $\mathfrak{f}$ is \textit{$\xi$-adapted} if every face in the flag other than $\mathfrak{f}$ is not $\xi$-constant. We say that a flag is \textit{$\xi$-maladapted} if every face in the flag is $\xi$-constant. Let $\textnormal{sFl}_{\mathfrak{f}}^{\mathfrak{p}}(\xi)$ denote the set of all saturated $\xi$-adapted flags from $\mathfrak{f}$ to $\mathfrak{p}$. If $\xi = 0$, then if $\mathfrak{f} \subseteq \mathfrak{p}$ the set $\textnormal{sFl}_{\mathfrak{f}}^{\mathfrak{p}}(\xi)$ is empty unless $\mathfrak{f} = \mathfrak{p}$, in which case its only element is the flag $\mathfrak{p}$. Let $\textnormal{mFl}_{\mathfrak{f}}^{\mathfrak{p}}(\xi)$ denote the set of all $\xi$-maladapted flags that start at $\mathfrak{f}$. This set is empty if $\mathfrak{f}$ is not $\xi$-constant.

\subsection{Brion's formula, continuous version}
The material in this section may be found in \cite{barvinok_book_2}.

We have a natural Lebesgue measure on $V$ coming from the inner product, i.e. we define any parallelogram whose associated Gram matrix has determinant $1$ to have Lebesgue measure equal to 1. We shall denote this measure by $\textnormal{vol}$, or at times by $\textnormal{vol}^{V}$ to emphasize that it is the Lebesgue measure on $V$ (rather than on a subspace). Given a linear subspace $W$, we also have a natural Lebesgue measure coming from the restriction of the inner product to $W$. Similarly we have a Lebesgue measure on any affine subspace parallel to $W$. We shall denote this by $\textnormal{vol}^W$. If $W = 0$, then we set $\textnormal{vol}^{W}(W) = 1$. If $\mathfrak{f}$ is a face of a polytope $\mathfrak{q}$, then we use the notation $\textnormal{vol}^{\mathfrak{f}}$ both to denote the Lebesgue measure on $\textnormal{aff}(\mathfrak{f})$ and on $\textnormal{lin}(\mathfrak{f})$ (it will be clear from context which is meant). Later on when we also choose a lattice in $V$, there will be other natural normalizations of the Lebesgue measure on subspaces of $V$ and the relationship between them will be important. 

Let $\mathfrak{k}$ be an $n$-dimensional pointed cone with vertex $\mathfrak{v}$. Let $({}^0 \mathfrak{k})^*$ be the dual cone:
\begin{gather*}
    (^0\mathfrak{k})^* = \{\eta \in V^* : \langle \eta, x \rangle \geq 0 \textnormal{ for all $x \in {}^0\mathfrak{k}$} \}.
\end{gather*}
For $\alpha$ in the interior of $-(^0\mathfrak{k})^*$, we define
\begin{gather*}
    I(\mathfrak{k}; \alpha) := \int_{\mathfrak{k}} e^{\langle \alpha, x \rangle} d \textnormal{vol}(x). 
\end{gather*}
The integral converges absolutely for such $\alpha$. Notice that $I(\mathfrak{k}; \alpha) = e^{\langle \alpha, \mathfrak{v} \rangle} I({}^0\mathfrak{k}; \alpha)$.

Suppose $\mathfrak{k}$ is a simplicial cone with vertex at the origin and with conical basis $v_1, \dots, v_n$. Let $\square(v_1, \dots, v_n)$ denote the semi-open parallelpiped formed by $v_1, \dots, v_n$:
\begin{gather*}
    \square(v_1, \dots, v_n) := \Big\{\sum_j a_j v_j : a_j \in [0, 1) \Big\}.
\end{gather*}
Then for $\alpha \in \textnormal{Int}(-\mathfrak{k}^*)$ we have
\begin{gather}
    I(\mathfrak{k}; \alpha) = \textnormal{vol}(\square(v_1, \dots, v_n)) \prod_{j = 1}^n -\langle \alpha, v_j \rangle^{-1}. \label{cone_integral}
\end{gather}
This function has a natural meromorphic continuation to $\alpha \in V^*_{\mathbb{C}}$ with singularities on $\bigcup_j \{\langle \alpha, v_j \rangle = 0 \}$. More generally we have the following:
\begin{proposition}[\cite{barvinok}] \label{barvinok_homogeneous}
Let $\mathfrak{k}$ be a pointed cone. Then the function $I(\mathfrak{k}; \alpha)$ has a unique meromorphic continuation on $V^*_{\mathbb{C}}$ which is rational and $(-n)$-homogeneous in $\alpha$. Let $v_1, \dots, v_r$ denote a complete set of representatives of vectors on the extremal rays of ${}^0\mathfrak{k}$. Then the set of singular points of the analytic continuation of $I(\mathfrak{k}; \alpha)$ is the union of the hyperplanes $\{\alpha \in V^*_{\mathbb{C}} : \langle \alpha, v_j \rangle = 0 \}$. 
\end{proposition}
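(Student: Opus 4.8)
The plan is to reduce to an $n$-dimensional cone with apex at the origin — this is the essential case, because for a vertex $\mathfrak{v}$ one has $I(\mathfrak{k};\xi) = e^{\langle \xi, \mathfrak{v}\rangle}\,I({}^0\mathfrak{k};\xi)$ with an entire, nowhere-vanishing prefactor that relocates no singularities (and rationality and $(-n)$-homogeneity are of course meant for the origin-based cone) — and then to triangulate. Concretely, I would slice $\mathfrak{k}$ by a transverse hyperplane to obtain an $(n-1)$-polytope whose vertices are the ray representatives $v_1, \dots, v_r$, triangulate it without introducing new vertices, and cone off. This writes $\mathfrak{k} = \bigcup_i \mathfrak{k}_i$ as a union of $n$-dimensional simplicial cones whose conical bases lie in $\{v_1, \dots, v_r\}$ and which overlap only in Lebesgue-null sets. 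On the open cone $D := \textnormal{Int}(-({}^0\mathfrak{k})^*)$, which is nonempty precisely because $\mathfrak{k}$ is pointed and full-dimensional and which equals $\{\xi : \langle \xi, v_j\rangle < 0 \text{ for all } j\}$, the defining integral converges absolutely; since the integrand is nonnegative it splits over the pieces, giving $I(\mathfrak{k};\xi) = \sum_i I(\mathfrak{k}_i;\xi)$ for $\xi \in D$, and each $\mathfrak{k}_i \subseteq \mathfrak{k}$ forces $D$ into every $\mathfrak{k}_i$'s domain of convergence.

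Second, I would substitute the explicit simplicial formula \eqref{cone_integral} into each summand, so that $I(\mathfrak{k};\cdot)$ coincides on $D$ with the rational function $\sum_i \textnormal{vol}(\square_i)\prod_{v \in R_i}(-\langle \xi, v\rangle)^{-1}$, where $R_i \subseteq \{v_1,\dots,v_r\}$ is the conical basis of $\mathfrak{k}_i$. This function is manifestly homogeneous of degree $-n$ and holomorphic off $\bigcup_j H_j$, where $H_j := \{\xi \in V^*_{\mathbb{C}} : \langle \xi, v_j\rangle = 0\}$. Since $V^*_{\mathbb{C}}$ is connected and $D$ is open and nonempty, the identity theorem identifies this rational function as the unique meromorphic continuation of $I(\mathfrak{k};\cdot)$, which settles existence, uniqueness, rationality, homogeneity, and the inclusion $\textnormal{Sing}(I(\mathfrak{k};\cdot)) \subseteq \bigcup_j H_j$.

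The step I expect to be the real obstacle is the reverse inclusion: I must rule out the possibility that cancellation among the simplicial terms kills a pole along some $H_\ell$. My plan is to detect each such pole by a positivity estimate, which by its nature cannot cancel. Fix $\ell$. Because $\rho_\ell := \mathbb{R}_{\ge 0} v_\ell$ is an extremal ray, it is an exposed face, so dually the facet $\overline{D} \cap H_\ell$ of $\overline{D} = \{\xi : \langle \xi, v_j\rangle \le 0 \text{ for all } j\}$ has a relative-interior point $\xi_0$ with $\langle \xi_0, v_\ell\rangle = 0$ and $\langle \xi_0, v_j\rangle < 0$ for all $j \ne \ell$. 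I would then approach $\xi_0$ from within $D$ along $\xi_t := \xi_0 + t u$ with $\langle u, v_\ell\rangle < 0$, choose one simplicial piece $\mathfrak{k}_{i_0}$ having $v_\ell$ in its conical basis, and bound $I(\mathfrak{k};\xi_t) \ge I(\mathfrak{k}_{i_0};\xi_t)$ using positivity of the integrand; by \eqref{cone_integral} the right-hand side is a positive constant times $\prod_{v \in R_{i_0}}(-\langle \xi_t, v\rangle)^{-1}$, in which the factor for $v_\ell$ equals $t^{-1}(-\langle u, v_\ell\rangle)^{-1} \to +\infty$ while all other factors tend to positive finite limits, so $I(\mathfrak{k};\xi_t) \to +\infty$ as $t \to 0^+$. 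A function holomorphic at $\xi_0$ would stay bounded along $\xi_t \to \xi_0$, so $\xi_0$ is singular; and since the singular locus is the zero set of the reduced denominator of the rational function above — hence a union of some of the $H_j$'s — and $\xi_0$ lies on $H_\ell$ and on no other $H_j$, we conclude $H_\ell \subseteq \textnormal{Sing}(I(\mathfrak{k};\cdot))$. Letting $\ell$ vary completes the argument. The only inputs I would simply cite are standard: that a polytope admits a triangulation using no new vertices, the non-emptiness and half-space description of $D$, and the exposedness of extremal rays of a polyhedral cone.
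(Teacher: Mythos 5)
Your argument is correct, and the paper itself gives no proof of this proposition (it is quoted from Barvinok); your route — reduce to the apex-at-origin case, triangulate a compact cross-section without new vertices to decompose $\mathfrak{k}$ into simplicial cones with rays among the $v_j$, sum the explicit formula \eqref{cone_integral}, and then establish that no hyperplane $H_\ell$ is lost to cancellation via a positivity/blow-up estimate at a relative-interior point of the corresponding facet of $-({}^0\mathfrak{k})^*$ — is essentially the standard argument in the cited source. The only cosmetic caveat is that the conical bases of the pieces are positive multiples of the $v_j$ rather than the $v_j$ themselves, which changes nothing about the hyperplanes or the homogeneity.
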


If $\mathfrak{q}$ is a polyhedron, then it may be expressed as $\mathfrak{p} + \mathfrak{k}$ where $\mathfrak{p}$ is a polytope and $\mathfrak{k}$ is a cone. Furthermore the choice of $\mathfrak{k}$ is unique and is known as the \textit{recession cone}. The locus on which $\int_{\mathfrak{q}} e^{\langle \alpha, x \rangle} d \textnormal{vol}(x)$ converges absolutely is the interior of $-({}^0 \mathfrak{k})^*$, and thus if $\mathfrak{k}$ is pointed we can define $I(\mathfrak{q}; \alpha)$ through meromorphic continuation as before. The resulting function is meromorphic with singularities on the union of hyperplanes orthogonal to the extremal rays of ${}^0\mathfrak{k}$. If $\mathfrak{k}$ is not pointed, or equivalently if $\mathfrak{q}$ contains a line, then we set $I(\mathfrak{q}; \alpha) = 0$. If $\mathfrak{p}$ is a polytope, then clearly $I(\mathfrak{p}; \alpha)$ is an entire function. 

Let $[\mathfrak{q}]$ denote the indicator function of the polyhedron $\mathfrak{q}$. Any $\mathbb{R}$-linear combination of such indicator functions is called a \textit{virtual polyhedron}. An $\mathbb{R}$-linear combination of indicator functions of pointed cones with the same vertex is called a \textit{virtual cone}. Let $\mathcal{P}(V)$ denote the vector space of virtual polyhedra in $V$. Let $\mathcal{P}_0(V)$ denote the subspace generated by polyhedra containing lines. Given $f, g \in \mathcal{P}(V)$, we write $f \equiv g$ to mean that $f-g \in \mathcal{P}_0(V)$. A linear map from $\mathcal{P}(V)$ to another vector space is called a \textit{valuation}. The map $I(\cdot; \alpha)$ extends to a valuation on $\mathcal{P}(V)$ mapping to the space of meromorphic functions on $V_{\mathbb{C}}^{*}$. This follows almost immediately from the linearity of integration, except that we have \textit{defined} $I(\cdot; \alpha)$ to be zero for elements in $\mathcal{P}_0(V)$. One can show that if we write $\mathfrak{q} \in \mathcal{P}_0(V)$ as a sum of polyhedra not containing lines, then the sum of the associated meromorphic functions is indeed zero. Heuristically this can be thought of as a generalization of the following: the Fourier transform of a line $\mathbb{R} \cdot x$ is the ``Dirac delta'' supported on the subspace $\{\langle \alpha, x \rangle = 0: \alpha \in i V^*\}$. This ``generalized function'' is identically zero outside of this locus. If a meromorphic function is identically zero on some open subset of $i V^*$, then it must be identically zero. 

\begin{example} 
For example, the Fourier transform of $f(x) = 1$ on $\mathbb{R}$ is the Dirac delta at $0 \in i\mathbb{R}^*$. On the other hand $1 = \mathds{1}_{[0, \infty)} + \mathds{1}_{(-\infty, 0]} - \mathds{1}_{\{0\}}$. We have from \eqref{cone_integral}:
\begin{align*}
    I([0, \infty); \alpha) &= -\frac{1}{\alpha} \\
    I((-\infty, 0]; \alpha) &= \frac{1}{\alpha} \\
    I(\{0\}; \alpha) &= 0.
\end{align*}
\end{example}

We shall at times make use of the following proposition whose proof is immediate.
\begin{proposition} \label{prop_multiplicative}
    Suppose $V$ splits as an orthogonal direct sum: $V = W_1 \oplus W_2$. Suppose $\mathfrak{p}_i$ is a polyhedron in $W_i$. Suppose $\alpha \in V^*_{\mathbb{C}}$ splits as $\alpha = \alpha_1 + \alpha_2$ with respect to this orthogonal decomposition. Then we have the identity of meromorphic functions:
    \begin{gather*} 
    I(\mathfrak{p}_1 \times \mathfrak{p}_2; \alpha) = I(\mathfrak{p}_1; \alpha_1) \cdot I(\mathfrak{p}_2; \alpha_2).
    \end{gather*}
\end{proposition}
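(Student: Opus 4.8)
The plan is to verify the identity first on a non-empty open subset of $V^*$ on which all integrals converge absolutely, where it is just Tonelli's theorem, and then to propagate it to all of $V^*_{\mathbb{C}}$ by the identity theorem for meromorphic functions. The first thing I would do is dispose of the degenerate case: if $\mathfrak{p}_1$ or $\mathfrak{p}_2$ contains a straight line, then so does $\mathfrak{p}_1 \times \mathfrak{p}_2$, so the left-hand side is zero by our convention; but the corresponding factor on the right-hand side is also zero by the same convention, so both sides vanish and there is nothing to prove. So assume neither $\mathfrak{p}_i$ contains a line. Writing $\mathfrak{p}_i = \mathfrak{q}_i + \mathfrak{k}_i$ with $\mathfrak{q}_i$ a polytope and $\mathfrak{k}_i \subset W_i$ the (pointed) recession cone, the recession cone of $\mathfrak{p}_1 \times \mathfrak{p}_2$ is $\mathfrak{k}_1 \times \mathfrak{k}_2$, which is again pointed, so $I(\mathfrak{p}_1 \times \mathfrak{p}_2; \cdot)$ is a well-defined meromorphic function on $V^*_{\mathbb{C}}$ by Proposition \ref{barvinok_homogeneous} and the surrounding discussion, and likewise each $I(\mathfrak{p}_i; \cdot)$ is meromorphic on $W_{i,\mathbb{C}}^*$.

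Next I would check the identity on the locus of absolute convergence. For $\xi = \xi_1 + \xi_2$ with $\xi_i \in \textnormal{Int}\big(-({}^0\mathfrak{k}_i)^*\big) \subset W_i^*$ — a non-empty open subset of $V^* = W_1^* \oplus W_2^*$ — all three integrals converge absolutely. Since the decomposition $V = W_1 \oplus W_2$ is orthogonal, the Gram matrix of any basis adapted to it is block-diagonal of determinant $1$ when the two blocks have determinant $1$, so $\textnormal{vol}^V$ is the product measure $\textnormal{vol}^{W_1} \otimes \textnormal{vol}^{W_2}$; moreover $\langle \xi, (x_1, x_2) \rangle = \langle \xi_1, x_1 \rangle + \langle \xi_2, x_2 \rangle$. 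Tonelli's theorem (applicable because of absolute convergence) then factors the integral over $\mathfrak{p}_1 \times \mathfrak{p}_2$ as the product of the integral over $\mathfrak{p}_1$ and the integral over $\mathfrak{p}_2$, which is precisely $I(\mathfrak{p}_1 \times \mathfrak{p}_2; \xi) = I(\mathfrak{p}_1; \xi_1) \cdot I(\mathfrak{p}_2; \xi_2)$ on this region.

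Finally I would extend the identity to all of $V^*_{\mathbb{C}}$. Regarded as a function of $\xi \in V^*_{\mathbb{C}}$, the right-hand side is the product of the pullbacks of the meromorphic functions $I(\mathfrak{p}_i; \cdot)$ under the complexified orthogonal projections $V^*_{\mathbb{C}} \to W_{i,\mathbb{C}}^*$, hence is meromorphic on $V^*_{\mathbb{C}}$; the left-hand side is meromorphic by the previous paragraph. Two meromorphic functions on the connected complex manifold $V^*_{\mathbb{C}}$ that agree on a non-empty open subset agree everywhere, so the identity holds for every $\xi \in V^*_{\mathbb{C}}$. There is no serious obstacle here — as the statement indicates, the proof is essentially immediate; the only points requiring a moment's care are the normalization of Lebesgue measure under the orthogonal splitting (so that the product of the normalized measures is the normalized measure on $V$) and the legitimacy of invoking the identity theorem in several complex variables, both of which are routine.
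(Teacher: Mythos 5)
Your proof is correct and is exactly the argument the paper has in mind when it declares the proposition's proof ``immediate'': Fubini--Tonelli on the common domain of absolute convergence (where the orthogonality of the splitting makes the measure and the pairing factor), followed by uniqueness of meromorphic continuation. The care you take with the line-containing case and the measure normalization is appropriate but routine, as you note.
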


We may now state Brion's formula: continuous version.
\begin{theorem}[Brion's formula: continuous version \cite{brion}] \label{thm_brion}
    Let $\mathfrak{p} \subset V$ be an $n$-dimensional polytope. We have the following equality of meromorphic functions in $\alpha \in V^*_\mathbb{C}$:
    \begin{gather*}
        I(\mathfrak{p}; \alpha) = \sum_{\mathfrak{v} \in \textnormal{Vert}(\mathfrak{p})} I(\mathfrak{t}_{\mathfrak{v}}^{\mathfrak{p}}; \alpha) = \sum_{\mathfrak{v} \in \textnormal{Vert}(\mathfrak{p})} e^{\langle \alpha, \mathfrak{v} \rangle} I({}^0 \mathfrak{t}_{\mathfrak{v}}^{\mathfrak{p}}; \alpha).
    \end{gather*}
\end{theorem}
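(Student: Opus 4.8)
The plan is to reduce the meromorphic identity to the classical Brianchon--Gram relation among indicator functions and then exploit that $I(\cdot\,;\xi)$ is a valuation annihilating polyhedra that contain a straight line. Recall that a face $\mathfrak{f}$ of $\mathfrak{p}$ has a closed (in general non-pointed) tangent cone $\mathfrak{s}_{\mathfrak{f}}^{\mathfrak{p}}$, and that $\mathfrak{s}_{\mathfrak{v}}^{\mathfrak{p}}=\mathfrak{t}_{\mathfrak{v}}^{\mathfrak{p}}$ when $\mathfrak{v}$ is a vertex. Brianchon--Gram asserts the following honest identity of functions on $V$, equivalently of elements of $\mathcal{P}(V)$:
\[
[\mathfrak{p}] \;=\; \sum_{\emptyset\neq\mathfrak{f}\in\textnormal{Face}(\mathfrak{p})}(-1)^{\dim\mathfrak{f}}\,[\mathfrak{s}_{\mathfrak{f}}^{\mathfrak{p}}].
\]
Applying $I(\cdot\,;\xi)$ term by term, every face $\mathfrak{f}$ with $\dim\mathfrak{f}\geq 1$ contributes $0$, since $\mathfrak{s}_{\mathfrak{f}}^{\mathfrak{p}}$ contains $\textnormal{aff}(\mathfrak{f})$ and hence a line (in particular the term $\mathfrak{f}=\mathfrak{p}$, where $\mathfrak{s}_{\mathfrak{p}}^{\mathfrak{p}}=V$, dies). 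The surviving vertex terms each carry sign $(-1)^0=1$, so $I(\mathfrak{p};\xi)=\sum_{\mathfrak{v}\in\textnormal{Vert}(\mathfrak{p})}I(\mathfrak{t}_{\mathfrak{v}}^{\mathfrak{p}};\xi)$, which is the first asserted equality. The second equality follows by applying the relation $I(\mathfrak{t}_{\mathfrak{v}}^{\mathfrak{p}};\xi)=e^{\langle\xi,\mathfrak{v}\rangle}I({}^0\mathfrak{t}_{\mathfrak{v}}^{\mathfrak{p}};\xi)$, recorded just after the definition of $I$, to each summand.

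\textbf{Proving Brianchon--Gram.} I would establish the displayed identity pointwise in $x\in V$. The key observation is that $x\in\mathfrak{s}_{\mathfrak{f}}^{\mathfrak{p}}$ exactly when $x$ satisfies all the facet inequalities of $\mathfrak{p}$ attached to facets containing $\mathfrak{f}$ (equivalently, no facet of $\mathfrak{p}$ violated by $x$ contains $\mathfrak{f}$). If $x\in\mathfrak{p}$, then by convexity $x$ lies in $\mathfrak{s}_{\mathfrak{f}}^{\mathfrak{p}}$ for every nonempty face $\mathfrak{f}$, and the right-hand side is $\sum_{\emptyset\neq\mathfrak{f}}(-1)^{\dim\mathfrak{f}}=\chi(\mathfrak{p})=1$ by the Euler relation for the polytope. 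If $x\notin\mathfrak{p}$, let $B$ be the union of the facets of $\mathfrak{p}$ violated by $x$, i.e.\ the part of $\partial\mathfrak{p}$ visible from $x$; then $x\in\mathfrak{s}_{\mathfrak{f}}^{\mathfrak{p}}$ precisely for the nonempty faces $\mathfrak{f}$ not contained in $B$, so the right-hand side equals $\sum_{\emptyset\neq\mathfrak{f}}(-1)^{\dim\mathfrak{f}}-\sum_{\emptyset\neq\mathfrak{f}\subseteq B}(-1)^{\dim\mathfrak{f}}=\chi(\mathfrak{p})-\chi(B)=1-1=0$, where $\chi(B)=1$ because the visible region of a polytope seen from an exterior point is a topological $(n-1)$-ball. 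Both sides therefore agree at every point. (Alternatively, Brianchon--Gram may simply be quoted, e.g.\ from \cite{barvinok_book_2}.)

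\textbf{Main obstacle, and an alternative.} Essentially all the content sits in the Brianchon--Gram relation; everything else --- translating into a statement about indicator functions, the vanishing of the higher-dimensional cone contributions, and the passage from $\mathcal{P}(V)$ back to meromorphic functions --- is a formal consequence of the valuation property of $I(\cdot\,;\xi)$ and its vanishing on polyhedra with lines. Within the proof of Brianchon--Gram the only genuinely geometric input is that the visible boundary region $B$ is a topological ball, so that $\chi(B)=1$; the Euler relations then finish it. A structurally different route, which bypasses Brianchon--Gram, is to induct on $\dim V$ via the divergence theorem applied to the vector field $e^{\langle\xi,x\rangle}a$ with $a$ chosen so that $\langle\xi,a\rangle=1$ (valid for $\xi\neq 0$, then extended by continuity), used both for $\mathfrak{p}$ and --- after meromorphic continuation from the cone of absolute convergence --- for each $\mathfrak{t}_{\mathfrak{v}}^{\mathfrak{p}}$; one matches the resulting boundary contributions facet by facet (the outward facet normals of $\mathfrak{p}$ coincide with those of the relevant tangent cones along the corresponding facets) and invokes the inductive hypothesis on each facet, the base case being the interval $[a,b]\subset\mathbb{R}$, where both sides equal $(e^{\xi b}-e^{\xi a})/\xi$.
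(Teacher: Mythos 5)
Your proposal is correct and follows essentially the same route as the paper: the paper derives Theorem \ref{thm_brion} immediately from the Brianchon--Gram-type identity $[\mathfrak{p}]\equiv\sum_{\mathfrak{v}}[\mathfrak{t}_{\mathfrak{v}}^{\mathfrak{p}}]$ modulo $\mathcal{P}_0(V)$ together with the valuation property of $I(\cdot;\xi)$ and its vanishing on polyhedra containing lines, which is exactly your argument (your classical signed Brianchon--Gram identity reduces to the paper's Theorem \ref{brianchon_gram} once the non-vertex tangent cones are discarded as line-containing). The only difference is that you additionally sketch a proof of Brianchon--Gram and mention Barvinok's Stokes-theorem alternative, whereas the paper simply cites the background.
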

\noindent This theorem was first proven by Brion using toric varieties \cite{brion}. Barvinok \cite{barvinok} later gave a more elementary proof.

We shall also make use of the following theorem which may be seen as a generalization of the Brianchon-Gram theorem.
\begin{theorem} \label{brianchon_gram}
    Let $\mathfrak{q}$ be a polyhedron. Then, modulo $\mathcal{P}_0(V)$ we have
    \begin{gather*}
        [\mathfrak{q}] \equiv \sum_{\mathfrak{v} \in \textnormal{Vert}(\mathfrak{q})} [\mathfrak{t}_{\mathfrak{v}}^{\mathfrak{q}}].
    \end{gather*}
\end{theorem}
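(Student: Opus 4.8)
\emph{Proof proposal.} The plan is to bootstrap everything from the classical Brianchon--Gram(--Sommerville) theorem for polytopes. First dispose of the trivial case: if $\mathfrak{q}$ contains a line then $[\mathfrak{q}]\in\mathcal{P}_0(V)$ by definition while $\mathrm{Vert}(\mathfrak{q})=\varnothing$, so both sides of the asserted congruence are $\equiv 0$. Hence we may assume $\mathfrak{q}$ is line--free; then $\mathrm{Vert}(\mathfrak{q})\neq\varnothing$ and every $\mathfrak{t}_{\mathfrak{v}}^{\mathfrak{q}}$ is pointed. Since every term of the identity is supported on $\mathrm{aff}(\mathfrak{q})$ and a line in $\mathrm{aff}(\mathfrak{q})$ is already a line in $V$, we may also take $\mathfrak{q}$ full--dimensional. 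For a polytope $\mathfrak{p}$ the statement is then immediate: the classical Brianchon--Gram identity $[\mathfrak{p}]=\sum_{\varnothing\neq\mathfrak{f}\preceq\mathfrak{p}}(-1)^{\dim\mathfrak{f}}[\mathfrak{s}_{\mathfrak{f}}^{\mathfrak{p}}]$ holds as an honest identity in $\mathcal{P}(V)$, and for every face $\mathfrak{f}$ with $\dim\mathfrak{f}\geq 1$ the tangent cone $\mathfrak{s}_{\mathfrak{f}}^{\mathfrak{p}}$ contains $\mathrm{aff}(\mathfrak{f})$, hence a line, so $[\mathfrak{s}_{\mathfrak{f}}^{\mathfrak{p}}]\in\mathcal{P}_0(V)$; reducing modulo $\mathcal{P}_0(V)$ kills all such terms and leaves exactly $\sum_{\mathfrak{v}\in\mathrm{Vert}(\mathfrak{p})}[\mathfrak{t}_{\mathfrak{v}}^{\mathfrak{p}}]$, each with sign $(-1)^{0}=1$ and with $\mathfrak{s}_{\mathfrak{v}}^{\mathfrak{p}}=\mathfrak{t}_{\mathfrak{v}}^{\mathfrak{p}}$.

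For a general line--free polyhedron $\mathfrak{q}$ I would reduce to the polytope case by truncation together with an induction on the number of facets. Let $\mathfrak{c}$ be the (pointed) recession cone and choose $\phi\in V^{*}$ with $\langle\phi,v\rangle>0$ for all $v\in\mathfrak{c}\setminus\{0\}$, in general position (non--constant on every edge of $\mathfrak{q}$). For $N\gg 0$ the set $\mathfrak{q}_{N}:=\mathfrak{q}\cap\{\phi\leq N\}$ is a polytope, with $\mathrm{Vert}(\mathfrak{q}_{N})=\mathrm{Vert}(\mathfrak{q})\sqcup W_{N}$: the old vertices keep their tangent cones (the cut being far away), and $W_{N}=\mathrm{Vert}(\mathfrak{q}\cap\{\phi=N\})$, where each $\mathfrak{w}\in W_{N}$ lies in the relative interior of a unique unbounded edge $\mathfrak{g}_{\mathfrak{w}}$ of $\mathfrak{q}$, with $\mathfrak{t}_{\mathfrak{w}}^{\mathfrak{q}_{N}}=\mathfrak{s}_{\mathfrak{g}_{\mathfrak{w}}}^{\mathfrak{q}}\cap\{\phi\leq N\}$ and $\mathfrak{s}_{\mathfrak{g}_{\mathfrak{w}}}^{\mathfrak{q}}\cap\{\phi=N\}=\mathfrak{t}_{\mathfrak{w}}^{\mathfrak{q}\cap\{\phi=N\}}$. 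Applying the polytope case to $\mathfrak{q}_{N}$ and to the polytopal slice $\mathfrak{q}\cap\{\phi=N\}$, using $[\mathfrak{s}_{\mathfrak{g}}^{\mathfrak{q}}]\equiv 0$ for each unbounded edge $\mathfrak{g}$ (it contains the line $\mathrm{lin}(\mathfrak{g})$) together with $[\mathfrak{s}_{\mathfrak{g}}^{\mathfrak{q}}]=[\mathfrak{s}_{\mathfrak{g}}^{\mathfrak{q}}\cap\{\phi\leq N\}]+[\mathfrak{s}_{\mathfrak{g}}^{\mathfrak{q}}\cap\{\phi\geq N\}]-[\mathfrak{s}_{\mathfrak{g}}^{\mathfrak{q}}\cap\{\phi=N\}]$, and combining with $[\mathfrak{q}]=[\mathfrak{q}_{N}]+[\mathfrak{q}\cap\{\phi\geq N\}]-[\mathfrak{q}\cap\{\phi=N\}]$, one matches all pieces modulo $\mathcal{P}_0(V)$ and is reduced to the statement for $\mathfrak{q}^{+}:=\mathfrak{q}\cap\{\phi\geq N\}$, a polyhedron with the same recession cone but with all vertices on one hyperplane. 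One then removes the (original) facets of $\mathfrak{q}^{+}$ one at a time --- sliding a facet hyperplane (never the hyperplane $\{\phi=N\}$, so that the facet count strictly drops) off to infinity and comparing the congruence before and after by the same truncation analysis --- the base case being a polyhedron containing a line. This result for arbitrary polyhedra is also in \cite{barvinok_book_2}.

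The main obstacle is exactly this last step: the combinatorial bookkeeping of verifying that $\mathfrak{q}_{N}$ has the asserted face structure for $N$ large, that the cones $\mathfrak{s}_{\mathfrak{g}}^{\mathfrak{q}}\cap\{\phi\lessgtr N\}$ really coincide with the relevant tangent cones of $\mathfrak{q}_{N}$ and of $\mathfrak{q}^{+}$ modulo $\mathcal{P}_0(V)$, and --- most delicately --- that the facet--removal induction is genuinely well--founded (new vertices can appear when a facet is pushed to infinity, so the vertex count is not monotone, and one must confirm the recursion cannot circle back to $\mathfrak{q}$ itself).
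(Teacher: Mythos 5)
The paper itself offers no proof of Theorem \ref{brianchon_gram}: it is imported as background, with the whole subsection attributed to \cite{barvinok_book_2}. So the comparison is against the cited literature rather than an in-paper argument. Your treatment of the polytope case is correct and is the standard derivation: apply the classical Brianchon--Gram identity and note that the tangent cone at any face of positive dimension contains $\textnormal{aff}(\mathfrak{f})$, hence a line, so only the vertex terms survive modulo $\mathcal{P}_0(V)$. The preliminary reductions (line-containing and non-full-dimensional cases) are also fine, and the first truncation step does check out: applying the polytope case to $\mathfrak{q}\cap\{\phi\le N\}$ and to the slice $\mathfrak{q}\cap\{\phi=N\}$ really does show that the congruence for $\mathfrak{q}$ is equivalent to the congruence for $\mathfrak{q}^{+}=\mathfrak{q}\cap\{\phi\ge N\}$.

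The genuine gap is the one you flag yourself: the reduction to $\mathfrak{q}^{+}$ is not progress in any measurable sense, and the facet-removal induction that is supposed to finish the job is a plan, not an argument. By Minkowski--Weyl, $\mathfrak{q}^{+}=(\mathfrak{q}\cap\{\phi=N\})+\mathfrak{k}$ is again an arbitrary line-free polyhedron of the form (polytope) $+$ (pointed recession cone) --- which is exactly the class you started with --- with the same recession cone and a facet count that need not have dropped. The subsequent step, ``slide a facet hyperplane to infinity and compare the congruence before and after by the same truncation analysis,'' is never carried out: the effect of deleting a facet on both sides of the congruence is not computed (new vertices and new unbounded edges appear, and the deleted facet's contribution does not obviously land in $\mathcal{P}_0(V)$), no decreasing quantity is exhibited, and you concede the recursion might not terminate. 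As written, the unbounded case is therefore unproved. Two mitigating remarks. First, everything this paper actually uses is covered by what you did prove: Theorems \ref{thm_brion} and \ref{thm_brion_discrete} need only the polytope case, and the proof of Theorem \ref{thm_decomp} applies the statement to products $\mathfrak{p}\times\mathfrak{k}$ of a polytope with a pointed cone, which follow from the polytope case because $[\mathfrak{p}\times\mathfrak{k}]=[\mathfrak{p}]\cdot[\mathfrak{k}]$ and the exterior product of a line-containing polyhedron with anything still contains a line. Second, the full statement for arbitrary polyhedra is proved directly in \cite{barvinok_book_2}; if you want a self-contained argument rather than a citation, you need to replace the facet-removal scheme with an actual well-founded induction (for instance on the dimension or the number of extreme rays of the recession cone), since the truncation step alone only moves the problem to an equivalent one.
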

\noindent Notice that the valuation property of $I(\cdot; \xi)$, and the fact that $\mathcal{P}_0(V)$ is in the kernel, immediately imply Theorem \ref{thm_brion}.

\subsection{Rational inner product spaces}
Now suppose $V$ contains a lattice $\Lambda$. The points of $\Lambda$ are called \textit{integral}, and the rational points in $V$, denoted $V_{\mathbb{Q}}$, are defined to be elements of the form $\sum_i a_i \lambda_i$ with $a_i \in \mathbb{Q}$ and $\lambda_i \in \Lambda$. A basis of $V$ is called \textit{integral} if it is also a basis of $\Lambda$. The dual lattice $\Lambda^*$ is the lattice in $V^*$ defined by $\{\xi : \langle \xi, \lambda \rangle \in \mathbb{Z} \textnormal{ for all } \lambda \in \Lambda\}$.

A (real) subspace $W$ is called \textit{rational} if $W \cap \Lambda$ is a lattice in $W$. We furthermore assume that $\langle \cdot, \cdot \rangle$ takes rational values on $\Lambda$. This ensures that if $W$ is rational, then $W^\perp$ is also rational. An affine subspace is called rational if it is of the form $x + W$ where $x \in V_{\mathbb{Q}}$ and $W$ is a rational (linear) subspace. Note that rational affine subspaces might not contain any elements of $\Lambda$. 

Given a lattice $\Gamma$ in a subspace $W$, we define $\textnormal{gr}(\Gamma)$ to be the square root of the determinant of the Gram matrix associated to any basis of $\Gamma$. This is equal to the covolume of $\Gamma$ with respect to the restriction of the inner product to $W$, or phrased another way, $\textnormal{vol}^W(W/\Gamma) = \textnormal{gr}(\Gamma)$. We define $\textnormal{vol}^\Gamma$ to be the Lebesgue measure with respect to which $\Gamma$ has covolume 1. If $\mathfrak{q}$ is a polyhedron in an affine space parallel to $W$, we shall at times use the notation $I^\Gamma(\mathfrak{q}; \xi)$ to mean that we take the same integral as in the definition of $I(\mathfrak{q}; \xi)$, but we integrate with respect to $\textnormal{vol}^{\Gamma}$. We have the simple relation:
\begin{gather}
    I(\mathfrak{q}; \xi) = \textnormal{gr}(\Gamma) \cdot I^\Gamma(\mathfrak{q}; \xi). \label{eqn_renormalize_integral}
\end{gather}

Suppose $W$ is a rational subspace. Let $\Lambda_W := W \cap \Lambda$. Let $\Lambda^W := P_{W}(\Lambda)$. Because the inner product is rational, $\Lambda^W$ is a lattice which contains $\Lambda_W$ as a finite index sublattice. Thus $W$ carries three natural Lebesgue measures: $\textnormal{vol}^W, \textnormal{vol}^{\Lambda_W}$, and $\textnormal{vol}^{\Lambda^W}$. We shall say more about the relationship between these in Section \ref{sec_decomposing}.

\subsection{The $(\xi, \Lambda)$-decomposition} \label{subsec_xi_lambda}
Suppose $\mathfrak{p}$ is a rational polytope in $V$ with respect to $\Lambda$. Let $\xi \in V^*_{\mathbb{C}}$. Let $\bar{\Lambda}$ denote the sublattice of $\Lambda$ on which $e^{\langle \xi, \lambda \rangle} = 1$. Let $\tilde{V}$ denote the real subspace of $V$ spanned by $\bar{\Lambda}$. Let $\tilde{\xi}$ be the functional defined to be 0 on $\tilde{V}$ and equal to $\xi$ on $\tilde{V}^\perp$. Let $\tilde{\Lambda}$ be defined to be the sublattice of $\Lambda$ on which $e^{\langle \xi, \lambda \rangle} = e^{\langle \tilde{\xi}, \lambda \rangle}$.

Note that $\bar{\Lambda}$ is a finite index sublattice of $\Lambda_{\tilde{V}}$, and $\Lambda_{\tilde{V}}$ is a finite index sublattice of $\Lambda^{\tilde{V}}$. We have a map $\beta_1: \Lambda \to \Lambda^{\tilde{V}}$ given by orthogonal projection. We can then compose this with the map $\beta_2: \Lambda^{\tilde{V}} \to \Lambda^{\tilde{V}}/\bar{\Lambda}$. The kernel of $\beta_2 \circ \beta_1$ is exactly $\tilde{\Lambda}$, and thus $\tilde{\Lambda}$ is a finite index sublattice of $\Lambda$, and $\Lambda/\tilde{\Lambda} \simeq \Lambda^{\tilde{V}}/\bar{\Lambda}$. Let $\gamma_1, \dots, \gamma_k$ be a set of representatives for $\Lambda^{\tilde{V}}/\bar{\Lambda}$. Given $\lambda \in \Lambda$, we can write $\lambda = \lambda_1 + \lambda_2 + \gamma_j$ where $\lambda_1 \in \tilde{V}^\perp$ and $\lambda_2 \in \bar{\Lambda}$. Then
\begin{gather*}
    e^{\langle \xi, \lambda \rangle} = e^{\langle \xi, \lambda_1 + \lambda_2 + \gamma_j \rangle} = e^{\langle \tilde{\xi}, \lambda_1 + \lambda_2 + \gamma_j \rangle} \cdot e^{\langle \xi, \gamma_j \rangle} = e^{\langle \xi, \gamma_j \rangle} \cdot e^{\langle \tilde{\xi}, \lambda \rangle}.
\end{gather*}
Obviously the value of $e^{\langle \xi, \gamma_j \rangle}$ does not depend on the choice of representative $\gamma_j$. This implies the general formula:
\begin{gather}
    S_{\Lambda}(\mathfrak{p}; \xi) = \sum_{[\gamma] \in \Lambda/\tilde{\Lambda}} e^{\langle \xi, \gamma^{\tilde{V}} \rangle} S_{[\gamma] + \tilde{\Lambda}}(\mathfrak{p}; \tilde{\xi}). \label{eqn_reduce_to_adapted}
\end{gather}

The functional $\tilde{\xi}$ has the following crucial property: if $e^{\langle \tilde{\xi}, \lambda \rangle} = 1$ with $\lambda \in \tilde{\Lambda}$, then $\langle \xi, \lambda \rangle = 0$. This property will be important when analyzing the holomorphicity of the functions $\mu$ appearing in the Euler-Maclaurin formula. In general given a lattice $\Gamma$, define the set $(V^*_{\mathbb{C}})^\Gamma$ as those elements $\tau \in V^*_{\mathbb{C}}$ such that if $e^{\langle \tau, \gamma \rangle} = 1$ with $\gamma \in \Gamma$, then $\langle \tau, \gamma \rangle = 0$. Thus $\tilde{\xi} \in (V^*_{\mathbb{C}})^{\tilde{\Lambda}}$. If $\xi \in (V^*_{\mathbb{C}})^\Lambda$, then $\xi = \tilde{\xi}$. Note that any purely real functional is necessarily in $(V^*_{\mathbb{C}})^\Gamma$ for any lattice $\Gamma$. On the other hand the only purely imaginary functional in $(V^*_{\mathbb{C}})^\Gamma$ which is of the form $2 \pi i$ times a rational functional is the zero functional.

We define $\{\mathfrak{p}\}_{\xi, \Lambda} := \{\mathfrak{p}\}_{\tilde{\xi}}$ and call it the $(\xi, \Lambda)$-\textit{decomposition} of $\mathfrak{p}$. A face $\mathfrak{f} \in \{\mathfrak{p}\}_{\tilde{\xi}}$ if and only if there exists a finite index sublattice of $\Lambda^{\mathfrak{f}}$ on which $e^{\langle \xi, \lambda \rangle} = 1$. 

\subsection{Brion's formula, discrete version}
The material for this section can be found in \cite{barvinok_book_2}. Suppose $\mathfrak{k}$ is a pointed rational cone with vertex $\mathfrak{v}$. Then for $\alpha$ in the interior of $-({}^0 \mathfrak{k})^*$ we define
\begin{gather*}
    S_\Lambda(\mathfrak{k}; \alpha) := \sum_{\lambda \in \mathfrak{k} \cap \Lambda} e^{\langle \alpha, \lambda \rangle}.
\end{gather*}
For such $\alpha$ this expression converges absolutely.

Suppose $\mathfrak{k}$ is simplicial with conical basis $w_1, \dots w_n$. Because $\mathfrak{k}$ is rational, we know that $\Lambda \cap \mathbb{R} \cdot w_j$ is itself a lattice. Let $v_j$ be the unique primitive vector for this lattice which lies in $\mathbb{R}_+ \cdot w_j$. For $\alpha$ in the interior of $-({}^0 \mathfrak{k})^*$ we have
\begin{gather}
    S_\Lambda(\mathfrak{k}; \alpha) = \Big( \sum_{\lambda \in (\mathfrak{v}+\square(v_1, \dots, v_n)) \cap \Lambda} e^{\langle \alpha, \lambda \rangle} \Big) \prod_j \frac{1}{1 - e^{\langle \alpha, v_j \rangle}}. \label{discrete_simplicial}
\end{gather}
We call $\mathfrak{k}$ \textit{unimodular} if $\{v_j\}$ form an integral basis. This is equivalent to $\mathfrak{v} + \square(v_1, \dots, v_n) \cap \Lambda$ only containing a single lattice point.

The function \eqref{discrete_simplicial} has an obvious meromorphic continuation with singularities whenever $e^{\langle \alpha, v_j \rangle} = 1$. This condition may alternatively be expressed as $\langle \alpha, v_j \rangle = 2 \pi i k$ for some $k \in \mathbb{Z}$, or $\langle \alpha + \lambda^*, v_j \rangle = 0$ for some $\lambda^* \in \Lambda^*$.
More generally we have:
\begin{proposition} [\cite{barvinok}]
    Let $\mathfrak{k}$ be a pointed rational cone. Then $S_\Lambda(\mathfrak{k}; \alpha)$ has a meromorphic continuation to $V_{\mathbb{C}}^*$. More specifically: let $v_1, \dots, v_r$ denote the primitive lattice points along each extremal ray of ${}^0\mathfrak{k}$. Then there exists an expression
    \begin{gather*}
        S_\Lambda(\mathfrak{k}; \alpha) = \frac{R^{\mathfrak{k}}(\alpha)}{\prod_j 1 - e^{\langle \alpha, v_j \rangle}},
    \end{gather*}
    where $R^{\mathfrak{k}}(\alpha)$ is of the form
    \begin{gather*}
        R^{\mathfrak{k}}(\alpha) = \sum_{\lambda \in \Lambda} c_\lambda^{\mathfrak{k}} \cdot e^{\langle \alpha, \lambda \rangle}
    \end{gather*}
    with $c_\lambda^{\mathfrak{k}} \in \mathbb{Z}$ and $c_\lambda^{\mathfrak{k}} = 0$ for all by finitely many $\lambda$. The singular locus of $S_\Lambda(\mathfrak{k}; \alpha)$ is the union of the hyperplanes $\{\alpha \in V_{\mathbb{C}}^* : \langle \alpha, v_j \rangle = 2 \pi i k\}$ with $k \in \mathbb{Z}$.
\end{proposition}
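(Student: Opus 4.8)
The plan is to reduce to the simplicial case, write the generating function down explicitly there, recombine over a common denominator to obtain the stated rational shape, and finally check that no cancellation removes any hyperplane from the singular locus.

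\emph{Reduction to simplicial cones.} Since $\mathfrak{k}$ is pointed and rational it admits a triangulation into simplicial rational cones that introduces no new rays, so each piece has ray generators a subset of $\{v_1, \dots, v_r\}$. Refining this into a disjoint decomposition by half-open simplicial cones $\sigma_i$ gives $\mathfrak{k} \cap \Lambda = \bigsqcup_i (\sigma_i \cap \Lambda)$, hence $S_\Lambda(\mathfrak{k}; \xi) = \sum_i S_\Lambda(\sigma_i; \xi)$ on any common region of absolute convergence. (Alternatively one may use the valuation property of $S_\Lambda(\cdot; \xi)$ together with an inclusion--exclusion identity for $[\mathfrak{k}]$ in terms of closed simplicial subcones and their faces; either way the point is to pass to the simplicial situation.) For a half-open simplicial $\sigma$ with vertex $\mathfrak{w}$ and primitive lattice ray generators $v_{j_1}, \dots, v_{j_m}$, every lattice point of $\sigma$ is uniquely $\mu + \sum_a k_a v_{j_a}$ with $k_a \in \mathbb{Z}_{\geq 0}$ and $\mu$ ranging over the finitely many lattice points of a bounded half-open fundamental parallelepiped based at $\mathfrak{w}$; summing geometric series as in \eqref{discrete_simplicial} yields
\begin{gather*}
    S_\Lambda(\sigma; \xi) = \Big( \sum_{\mu} e^{\langle \xi, \mu \rangle} \Big) \prod_{a} \frac{e^{\varepsilon_a \langle \xi, v_{j_a} \rangle}}{1 - e^{\langle \xi, v_{j_a} \rangle}}, \qquad \varepsilon_a \in \{0, 1\},
\end{gather*}
with absolute convergence on $\textnormal{Int}(-({}^0\sigma)^*)$ and the right-hand side furnishing the meromorphic continuation. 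Each $\mu \in \Lambda$ and each $v_j \in \Lambda$.

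\emph{Recombination.} Summing over $i$ and placing everything over the common denominator $\prod_{j=1}^r (1 - e^{\langle \xi, v_j \rangle})$, the numerator becomes a finite $\mathbb{Z}$-combination of exponentials whose exponents are sums of a parallelepiped lattice point and a subset of $\{v_1, \dots, v_r\}$, hence lie in $\Lambda$. This gives the asserted expression with $R^{\mathfrak{k}}(\xi) = \sum_{\lambda \in \Lambda} c^{\mathfrak{k}}_\lambda e^{\langle \xi, \lambda \rangle}$, $c^{\mathfrak{k}}_\lambda \in \mathbb{Z}$ and all but finitely many zero, and a meromorphic continuation to $V^*_{\mathbb{C}}$ whose singular locus is \emph{contained} in $\bigcup_{j} \bigcup_{k \in \mathbb{Z}} \{\langle \xi, v_j \rangle = 2\pi i k\}$.

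\emph{Sharpness of the singular locus}, which is the delicate point. Both $S_\Lambda(\mathfrak{k}; \xi)$ and $R^{\mathfrak{k}}(\xi)$ are invariant under $\xi \mapsto \xi + 2\pi i \lambda^*$ with $\lambda^* \in \Lambda^*$, and since $v_j$ is primitive there is $\lambda^*$ with $\langle \lambda^*, v_j \rangle = 1$; so it suffices to show $R^{\mathfrak{k}}$ does not vanish identically on $H_j := \{\langle \xi, v_j \rangle = 0\}$ for each $j$. For this I would bypass the triangulation and organize $\mathfrak{k} \cap \Lambda$ into cosets of $\mathbb{Z} v_j$: because $\mathbb{R}_{\geq 0} v_j$ is an extremal ray, each coset meeting $\mathfrak{k}$ does so in $\{\hat\lambda + k v_j : k \in \mathbb{Z}_{\geq 0}\}$ for a unique lowest point $\hat\lambda$, giving
\begin{gather*}
    S_\Lambda(\mathfrak{k}; \xi) = \frac{1}{1 - e^{\langle \xi, v_j \rangle}} \sum_{\hat\lambda} e^{\langle \xi, \hat\lambda \rangle}
\end{gather*}
(outer sum over one $\hat\lambda$ per coset meeting $\mathfrak{k}$, convergent together with the original series). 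Hence $\big(1 - e^{\langle \xi, v_j \rangle}\big) S_\Lambda(\mathfrak{k}; \xi) = R^{\mathfrak{k}}(\xi) / \prod_{i \neq j}(1 - e^{\langle \xi, v_i \rangle})$ restricts on $H_j$ to $\sum_{\hat\lambda} e^{\langle \xi, \hat\lambda \rangle}$, which depends only on the classes $\hat\lambda \bmod \mathbb{R} v_j$ and equals $S_{\bar\Lambda}(\bar{\mathfrak{k}}; \bar\xi)$, the lattice-point generating function of the image $\bar{\mathfrak{k}}$ of $\mathfrak{k}$ under $V \to V/\mathbb{R} v_j$ (a \emph{pointed} rational cone, since $v_j$ was extremal) with respect to the image lattice $\bar\Lambda$. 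This is not identically zero (it is positive on the real relative interior of its convergence cone), and $\prod_{i \neq j}(1 - e^{\langle \xi, v_i \rangle})$ is not identically zero on $H_j$ because no $v_i$ is a scalar multiple of $v_j$; therefore $R^{\mathfrak{k}}$ is not identically zero on $H_j$, so $H_j$ and its $2\pi i \Lambda^*$-translates genuinely lie in the singular locus. The care required is in making the coset bookkeeping and the identification of the residue with $S_{\bar\Lambda}(\bar{\mathfrak{k}}; \cdot)$ precise, and in the (harmless) uniform treatment of the half-open decorations $\varepsilon_a$, whose leading-order contributions along each $H_j$ are insensitive to the choice.
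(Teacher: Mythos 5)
This proposition is quoted by the paper as background and attributed to Barvinok; the paper gives no proof of its own, so there is nothing internal to compare against. Your argument is correct and follows the standard route for the first two assertions: triangulate the pointed cone using only its extremal rays, pass to a disjoint half-open simplicial decomposition, sum geometric series over each piece as in \eqref{discrete_simplicial}, and clear denominators to the common product $\prod_j (1 - e^{\langle \xi, v_j \rangle})$; since the parallelepiped points and the $v_j$ all lie in $\Lambda$, the numerator has the stated form.

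For the sharpness of the singular locus, your coset argument along $\mathbb{Z} v_j$ is the right idea and is sound, but one point deserves to be made explicit. The identity $(1 - e^{\langle \xi, v_j \rangle}) S_\Lambda(\mathfrak{k}; \xi) = \sum_{\hat\lambda} e^{\langle \xi, \hat\lambda \rangle}$ is first established on the convergence domain $\textnormal{Int}(-({}^0\mathfrak{k})^*)$, which does \emph{not} meet $H_j$ (there one has $\langle \xi, v_j \rangle < 0$ strictly). To "restrict to $H_j$" you need to observe that the coset sum $\sum_{\hat\lambda} e^{\langle \xi, \hat\lambda \rangle}$ converges on the strictly larger domain determined by the pointed projected cone $\bar{\mathfrak{k}}$, whose intersection with $H_j$ is a nonempty open subset of $H_j$ (precisely because $\bar{\mathfrak{k}}$ is pointed, which your extremality argument guarantees); on that open set it visibly equals $S_{\bar\Lambda}(\bar{\mathfrak{k}}; \bar\xi) > 0$ for real $\bar\xi$ there, and then analytic continuation within $H_j$ identifies the restriction of the meromorphic continuation with $S_{\bar\Lambda}(\bar{\mathfrak{k}}; \cdot)$. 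With that step filled in, the conclusion that $R^{\mathfrak{k}}$ does not vanish identically on $H_j$ (and hence, by $2\pi i \Lambda^*$-periodicity and primitivity of $v_j$, on any translate $\{\langle \xi, v_j \rangle = 2\pi i k\}$) follows as you say. This is a genuinely careful treatment of the "exactly equals" clause, which many references leave implicit.
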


Like before, if $\mathfrak{q} = \mathfrak{p} + \mathfrak{k}$ is a rational polyhedron not containing a straight line and with recession cone $\mathfrak{k}$, then the locus on which $\sum_{\lambda \in \mathfrak{q} \cap \Lambda} e^{\langle \alpha, \lambda \rangle}$ converges absolutely is the interior of $-({}^0\mathfrak{k}^*)$, and thus we can define $S_\Lambda(\mathfrak{q}; \alpha)$ through meromorphic continuation as before. The resulting function is meromorphic with singularities on the union of hyperplanes $\{\langle \alpha, v_j \rangle = 2 \pi i k \}$ where $v_j$ is an extremal ray of the cone $\mathfrak{k}$ and $k \in \mathbb{Z}$. Otherwise we set $S_{\Lambda}(\mathfrak{q}; \cdot)$ to be identically zero. Notice that we always have $S_\Lambda(\mathfrak{q}; \alpha + \lambda^*) = S_{\Lambda}(\mathfrak{q}; \alpha)$ for $\lambda^* \in \Lambda^*$. Thus we may think of $S_{\Lambda}(\mathfrak{q}; \alpha)$ as a meromorphic function on $V^*_{\mathbb{C}}/\Lambda^*$, which is the space of characters of $\Lambda$.

Let $\mathcal{P}(V_{\mathbb{Q}})$ be the rational subspace of virtual polyhedra generated by rational linear combinations of rational polyehdra. Let $\mathcal{P}_{0}(V_{\mathbb{Q}})$ be the rational subspace generated by rational linear combinations of rational polyhedra containing lines. Like before, we have that $S_\Lambda(\cdot; \alpha)$ is a valuation, i.e. a linear map from $\mathcal{P}(V_{\mathbb{Q}})$ to meromorphic functions on $V^*_{\mathbb{C}}/\Lambda^*$. Also like before, this follows almost immediately from the linearity of summation, except that we have defined $S_{\Lambda}(\cdot; \alpha)$ to be zero for elements in $\mathcal{P}_0(V_{\mathbb{Q}})$. Heuristically, this can be thought of as a generalization of the following. Given any function on $\Lambda$, we can take its ``Fourier transform'' by integrating against unitary characters of $\Lambda$, i.e. integrating against $e^{\langle \alpha, \lambda \rangle}$ with $\alpha \in (i V^*)/(2 \pi i \Lambda^*)$. Let $\lambda$ be any primitive vector in $\Lambda$. Then the Fourier transform of the indicator function of $\mathbb{Z} \cdot \lambda$ is the ``Dirac delta'' supported on $\{\alpha : e^{\langle \alpha, \lambda \rangle} = 1\} \subset (i V^*)/(2 \pi i \Lambda^*)$. This ``generalized function'' is identically zero outside of this locus. If a meromorphic function on $V^*_{\mathbb{C}}/\Lambda^*$ is identically zero on some open subset of $(i V^*)/(2 \pi i \Lambda^*)$, then it must be identically zero. 

\begin{example} 
For example, the Fourier transform of $f(x) = 1$ on $\mathbb{Z}$ is the Dirac delta at $0 \in (i \mathbb{R})/(2 \pi i \mathbb{Z})$. On the other hand $f(x) = \mathds{1}_{[0, \infty) \cap \mathbb{Z}} + \mathds{1}_{(-\infty, 0] \cap \mathbb{Z}} - \mathds{1}_{\{0\}}$. We have from \eqref{discrete_simplicial}:
\begin{align*}
    S_{\mathbb{Z}}([0, \infty); \alpha) &= \frac{1}{1 - e^{\alpha}} \\
    S_{\mathbb{Z}}((-\infty, 0]; \alpha) &= \frac{1}{1 - e^{-\alpha}} \\
    S_{\mathbb{Z}}(\{0\}; \alpha) &= 1 \\
    \frac{1}{1 - e^{\alpha}} + \frac{1}{1 - e^{-\alpha}} - 1 &= 0.
\end{align*}
\end{example}

\begin{theorem}[Brion's formula: discrete version \cite{brion}] \label{thm_brion_discrete}
    Let $\mathfrak{p} \subset V$ be a rational polytope. We have the following equality of meromorphic functions in $\alpha \in V_{\mathbb{C}}^*$:
    \begin{gather*}
        S_\Lambda(\mathfrak{p}; \alpha) = \sum_{\mathfrak{v} \in \textnormal{Vert}(\mathfrak{p})} S_\Lambda(\mathfrak{t}_{\mathfrak{v}}^{\mathfrak{p}}; \alpha).
    \end{gather*}
\end{theorem}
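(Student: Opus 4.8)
The plan is to deduce Theorem \ref{thm_brion_discrete} from the Brianchon--Gram type identity of Theorem \ref{brianchon_gram} together with the valuation property of $S_\Lambda(\cdot;\xi)$, in exact parallel with the way Theorem \ref{thm_brion} follows from the same two ingredients in the continuous setting. First I would apply Theorem \ref{brianchon_gram} to the rational polytope $\mathfrak{q}=\mathfrak{p}$. Since $\mathfrak{p}$ is bounded, its vertices are genuine $0$-dimensional faces and $\mathfrak{t}_{\mathfrak{v}}^{\mathfrak{p}}=\mathfrak{s}_{\mathfrak{v}}^{\mathfrak{p}}$ is a pointed rational cone for each $\mathfrak{v}$, so the identity reads
\[
  [\mathfrak{p}] \equiv \sum_{\mathfrak{v}\in\textnormal{Vert}(\mathfrak{p})}[\mathfrak{t}_{\mathfrak{v}}^{\mathfrak{p}}] \pmod{\mathcal{P}_0(V_{\mathbb{Q}})},
\]
where one checks that the line-containing polyhedra witnessing the congruence may be taken rational, as they are assembled from the faces of the rational polytope $\mathfrak{p}$.

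Next I would invoke that $S_\Lambda(\cdot;\xi)$ is a valuation from $\mathcal{P}(V_{\mathbb{Q}})$ to meromorphic functions on $V^*_{\mathbb{C}}/\Lambda^*$ which vanishes on $\mathcal{P}_0(V_{\mathbb{Q}})$, as recorded just before the theorem. Applying this linear map to the displayed congruence annihilates the error term and yields
\[
  S_\Lambda(\mathfrak{p};\xi) = \sum_{\mathfrak{v}\in\textnormal{Vert}(\mathfrak{p})} S_\Lambda(\mathfrak{t}_{\mathfrak{v}}^{\mathfrak{p}};\xi)
\]
as an identity of meromorphic functions. On the left this is the entire function $\sum_{\lambda\in\mathfrak{p}\cap\Lambda}e^{\langle\xi,\lambda\rangle}$, while on the right each $\mathfrak{t}_{\mathfrak{v}}^{\mathfrak{p}}$ is pointed, so $S_\Lambda(\mathfrak{t}_{\mathfrak{v}}^{\mathfrak{p}};\xi)$ is the genuine meromorphic continuation of $\sum_{\lambda\in\mathfrak{t}_{\mathfrak{v}}^{\mathfrak{p}}\cap\Lambda}e^{\langle\xi,\lambda\rangle}$ from its cone of absolute convergence, not the zero function --- so no term is lost.

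The only genuinely substantive point is the one already flagged in the text: that $S_\Lambda(\cdot;\xi)$ really does descend to a well-defined valuation on $\mathcal{P}(V_{\mathbb{Q}})$ that is identically zero on indicator functions of rational polyhedra containing a line. I would establish this by writing such an indicator function as a $\mathbb{Q}$-linear combination of indicator functions of pointed rational cones (again via Brianchon--Gram, or by slicing transverse to a line contained in the polyhedron), and then checking that the associated sum of meromorphic functions vanishes; the prototype is the one-dimensional computation $\tfrac{1}{1-e^{\xi}}+\tfrac{1}{1-e^{-\xi}}-1=0$ from the example preceding the theorem, applied along a primitive lattice vector in the line. Such a combination then vanishes on a nonempty open subset of $(iV^*)/(2\pi i\Lambda^*)$, hence identically, since a meromorphic function on $V^*_{\mathbb{C}}/\Lambda^*$ vanishing on a nonempty open subset of that real locus is zero. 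I expect this verification of the kernel property --- rather than the purely formal deduction of Brion's identity from it --- to be the main obstacle; once it is in hand, the theorem is immediate. (Alternatively one could reproduce Brion's original argument via the Lefschetz--Riemann--Roch theorem on the toric variety of a lattice polytope, but the valuation-theoretic route above is both shorter and better adapted to the machinery developed here.)
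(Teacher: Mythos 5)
Your proposal is correct and is exactly the paper's argument: the theorem is deduced by applying the valuation $S_\Lambda(\cdot;\xi)$, which kills $\mathcal{P}_0(V_{\mathbb{Q}})$, to the Brianchon--Gram identity of Theorem \ref{brianchon_gram}. The kernel property you flag as the substantive point is indeed the real content, but the paper treats it as background material from \cite{barvinok_book_2} rather than reproving it.
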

\noindent Like in the continuous case, this follows immediately from combining the valuation property of $S_{\Lambda}(\cdot; \alpha)$, the fact that $\mathcal{P}_0(V_{\mathbb{Q}})$ is in the kernel, and Theorem \ref{brianchon_gram}.

\section{Degenerate Brion's formula: continuous setting} 

\subsection{Proto-degenerate Brion's formula}
Barvinok's proof of Brion's theorem \cite{barvinok} makes use of Stokes' theorem on polytopes. We shall also make use of an adapted version of Stokes' theorem for polytopes. A similar idea appears in \cite{barvinok3} and \cite{diaz_le_robins}.

 \begin{proposition} [\cite{barvinok}] \label{prop_stokes}
     Suppose $\xi \in V^*_{\mathbb{C}}$ with $\xi \neq 0$. Let $\{\mathfrak{f}_{j}\}$ be the facets of $\mathfrak{p}$. Then
     \begin{gather*}
         \int_{\mathfrak{p}} e^{\langle \xi, x \rangle} d \textnormal{vol}(x) = \frac{1}{\|\xi\|^2} \sum_j \langle \xi, \eta_{\mathfrak{f}_j}^{\mathfrak{p}} \rangle_H \int_{\mathfrak{f}_j} e^{\langle \xi, x \rangle} d \textnormal{vol}^{\mathfrak{f}_j}(x).
     \end{gather*}
 \end{proposition}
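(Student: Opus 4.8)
The plan is to exhibit a constant (complex) vector field on $V$ whose divergence is $e^{\langle \xi, x \rangle}$ and then apply the classical divergence theorem on $\mathfrak{p}$. Since the inner product identifies $V$ with $V^*$, for any constant $c \in V_{\mathbb{C}}$ the field $F(x) := c\, e^{\langle \xi, x \rangle}$ satisfies $\operatorname{div} F(x) = \langle \xi, c \rangle\, e^{\langle \xi, x \rangle}$, where $\langle \cdot, \cdot \rangle$ denotes the complex bilinear pairing. Because $\xi \neq 0$ we have $\|\xi\|^2 = \langle \bar\xi, \xi \rangle > 0$, and, using symmetry of the bilinear pairing, the choice $c := \bar\xi/\|\xi\|^2$ gives $\langle \xi, c \rangle = \langle \xi, \bar\xi \rangle / \|\xi\|^2 = \langle \bar\xi, \xi \rangle/\|\xi\|^2 = 1$; hence $\operatorname{div} F = e^{\langle \xi, x \rangle}$ identically.

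Next I would invoke the divergence theorem on the polytope $\mathfrak{p}$. A polytope has piecewise-linear (in particular Lipschitz) boundary, so this is valid; writing $F = \operatorname{Re} F + i\operatorname{Im} F$ and applying the real divergence theorem to each part accounts for $F$ being complex-valued. The boundary $\partial \mathfrak{p}$ is the union of the facets $\mathfrak{f}_j$ together with faces of codimension $\geq 2$, the latter having $(n-1)$-dimensional measure zero; on the relative interior of $\mathfrak{f}_j$ the outward unit normal is $\eta_{\mathfrak{f}_j}^{\mathfrak{p}}$ and the induced $(n-1)$-dimensional surface measure is exactly $\textnormal{vol}^{\mathfrak{f}_j}$. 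This gives
\[
\int_{\mathfrak{p}} e^{\langle \xi, x \rangle}\, d\textnormal{vol}(x) = \int_{\mathfrak{p}} \operatorname{div} F(x)\, d\textnormal{vol}(x) = \sum_j \int_{\mathfrak{f}_j} \langle F(x), \eta_{\mathfrak{f}_j}^{\mathfrak{p}} \rangle\, d\textnormal{vol}^{\mathfrak{f}_j}(x).
\]

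Finally I would simplify the integrand: since $\eta_{\mathfrak{f}_j}^{\mathfrak{p}} \in V$ is real, $\langle F(x), \eta_{\mathfrak{f}_j}^{\mathfrak{p}} \rangle = \tfrac{1}{\|\xi\|^2}\langle \bar\xi, \eta_{\mathfrak{f}_j}^{\mathfrak{p}} \rangle\, e^{\langle \xi, x \rangle} = \tfrac{1}{\|\xi\|^2}\langle \xi, \eta_{\mathfrak{f}_j}^{\mathfrak{p}} \rangle_H\, e^{\langle \xi, x \rangle}$ by the definition of $\langle \cdot, \cdot \rangle_H$, and substituting this into the previous display yields the claimed identity. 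There is no real obstacle here: the only points demanding care are the bookkeeping distinguishing the complex bilinear pairing $\langle \cdot, \cdot \rangle$ from the Hermitian pairing $\langle \cdot, \cdot \rangle_H$ (which is exactly where the hypothesis $\xi \neq 0$ is used, to ensure $\|\xi\|^2 \neq 0$ so that $c$ is well defined), and the routine justification that the divergence theorem applies to a polytope with its piecewise-linear boundary.
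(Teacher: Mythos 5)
Your proof is correct. It differs mildly in route from the paper's: the paper quotes Barvinok's identity
\begin{gather*}
\int_{\mathfrak{p}} e^{\langle \xi, x \rangle}\, d\textnormal{vol}(x) = \frac{1}{\langle \lambda, \xi \rangle} \sum_j \langle \lambda, \eta_{\mathfrak{f}_j}^{\mathfrak{p}} \rangle \int_{\mathfrak{f}_j} e^{\langle \xi, x \rangle}\, d\textnormal{vol}^{\mathfrak{f}_j}(x)
\end{gather*}
for real $\lambda$ with $\langle\xi,\lambda\rangle\neq 0$, observes that the right-hand side is analytic and constant in $\lambda$, and then analytically continues to the complex direction $\lambda=\bar\xi$. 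You instead bypass the citation and the continuation step by applying the divergence theorem directly to the complex constant-direction field $F(x)=\bar\xi\, e^{\langle\xi,x\rangle}/\|\xi\|^2$, which is the same mechanism that underlies Barvinok's identity in the first place. Your bookkeeping is right: $\|\xi\|^2=\langle\bar\xi,\xi\rangle>0$ for $\xi\neq 0$, symmetry of the bilinear pairing gives $\langle\xi,\bar\xi\rangle=\|\xi\|^2$ so that $\operatorname{div}F=e^{\langle\xi,x\rangle}$, and on each facet $F\cdot\eta_{\mathfrak{f}_j}^{\mathfrak{p}}=\|\xi\|^{-2}\langle\xi,\eta_{\mathfrak{f}_j}^{\mathfrak{p}}\rangle_H\,e^{\langle\xi,x\rangle}$ since the normal is real. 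What your version buys is self-containedness and the avoidance of the (slightly delicate) "constant on a real form, hence constant on the complexification" argument; what the paper's version buys is brevity by leaning on the cited lemma. Either is acceptable.
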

 \begin{proof}
    Barvinok \cite{barvinok} shows that for any $\lambda \in V \simeq V^*$ such that $\langle \xi, \lambda \rangle \neq 0$, we have
    \begin{gather*}
        \int_{\mathfrak{p}} e^{\langle \xi, x \rangle} d \textnormal{vol}(x) = \frac{1}{\langle \lambda, \xi \rangle} \sum_j \langle \lambda, \eta_{\mathfrak{f}_j}^{\mathfrak{p}} \rangle \int_{\mathfrak{f}_j} e^{\langle \xi, x \rangle} d\textnormal{vol}^{\mathfrak{f}_j}(x).
    \end{gather*}
    The right hand side is constantly equal to the left hand side as $\lambda$ varies. Furthermore, the right hand side is clearly an analytic function in $\lambda$, hence the identity must also hold true for $\lambda \in V^*_{\mathbb{C}}$. In particular, we can set $\lambda = \bar{\xi}$ which yields the result.
 \end{proof} 

Given $\xi \in V^*_\mathbb{C}$, let $\xi^{\mathfrak{g}}$ denote the restriction of $\xi$ to $\textnormal{lin}(\mathfrak{g})$. Thus $\xi^{\mathfrak{g}} \in \textnormal{lin}(\mathfrak{g})^*_{\mathbb{C}}$. The restriction of the inner product to $\textnormal{lin}(\mathfrak{g})$ induces an inner product on $\textnormal{lin}(\mathfrak{g})^*$. Thus the norm of $\xi^{\mathfrak{g}}$ is well-defined and is non-zero so long as $\xi$ is not constantly zero on $\textnormal{lin}(\mathfrak{g})$. 

By repeated application of Stokes' theorem above, we obtain:
\begin{proposition}[Proto-degenerate Brion's formula; see also \cite{diaz_le_robins}] \label{proto_brion}
    Let $\mathfrak{p}$ be an $n$-dimensional polytope, and let $\xi \in V^*_{\mathbb{C}}$. Then
    \begin{gather*}
        I(\mathfrak{p}; \xi) = \sum_{\mathfrak{g}^{(k)} \in \{\mathfrak{p}\}_\xi} e^{\langle \xi, \mathfrak{g} \rangle} \textnormal{vol}^{\mathfrak{g}}(\mathfrak{g}) \sum_{\mathfrak{h}_k \subset \dots \subset \mathfrak{h}_n \in \textnormal{sFl}^\mathfrak{p}_\mathfrak{g}(\xi)} \prod_{j = k+1}^n \frac{\langle \xi, \eta_{\mathfrak{h}_{j-1}}^{\mathfrak{h}_j} \rangle_H}{\|\xi^{\mathfrak{h}_j}\|^2}.
    \end{gather*}
\end{proposition}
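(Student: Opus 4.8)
The plan is to prove the formula by induction on dimension, using repeated application of the adapted Stokes' theorem (Proposition \ref{prop_stokes}). It is cleanest to prove the following slightly more general statement: for every face $\mathfrak{e}^{(m)}$ of $\mathfrak{p}$, regarded as a full-dimensional polytope inside $\textnormal{aff}(\mathfrak{e})$ with the inner product restricted from $V$,
\begin{gather*}
  \int_{\mathfrak{e}} e^{\langle \xi, x \rangle} d\textnormal{vol}^{\mathfrak{e}}(x) = \sum_{\substack{\mathfrak{g}^{(k)} \subseteq \mathfrak{e} \\ \mathfrak{g} \in \{\mathfrak{p}\}_\xi}} e^{\langle \xi, \mathfrak{g} \rangle}\, \textnormal{vol}^{\mathfrak{g}}(\mathfrak{g}) \sum_{\mathfrak{h}_k \subset \dots \subset \mathfrak{h}_m = \mathfrak{e} \in \textnormal{sFl}^{\mathfrak{e}}_{\mathfrak{g}}(\xi)} \prod_{j = k+1}^{m} \frac{\langle \xi, \eta_{\mathfrak{h}_{j-1}}^{\mathfrak{h}_j} \rangle_H}{||\xi^{\mathfrak{h}_j}||^2},
\end{gather*}
where $\textnormal{sFl}^{\mathfrak{e}}_{\mathfrak{g}}(\xi)$ is the set of saturated $\xi$-adapted flags from $\mathfrak{g}$ to $\mathfrak{e}$; this makes sense because whether a face is $\xi$-constant depends only on the face itself, not on the ambient polytope. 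The Proposition is the case $\mathfrak{e} = \mathfrak{p}$.

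For the base case $m=0$, the face $\mathfrak{e}$ is a vertex, hence $\xi$-constant; the only admissible $\mathfrak{g}$ is $\mathfrak{e}$, the only flag is the trivial one (empty product equal to $1$), and $\textnormal{vol}^{\mathfrak{e}}(\mathfrak{e}) = 1$ by convention, so both sides equal $e^{\langle \xi, \mathfrak{e} \rangle}$. In the inductive step, if $\mathfrak{e}$ is $\xi$-constant then the left side is $e^{\langle \xi, \mathfrak{e} \rangle}\textnormal{vol}^{\mathfrak{e}}(\mathfrak{e})$, while on the right side $\textnormal{sFl}^{\mathfrak{e}}_{\mathfrak{g}}(\xi) = \emptyset$ for every $\mathfrak{g} \subsetneq \mathfrak{e}$ (such a flag would contain the $\xi$-constant face $\mathfrak{e} \neq \mathfrak{g}$), so only the $\mathfrak{g} = \mathfrak{e}$ term survives and also equals $e^{\langle \xi, \mathfrak{e} \rangle}\textnormal{vol}^{\mathfrak{e}}(\mathfrak{e})$. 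If $\mathfrak{e}$ is not $\xi$-constant, then $\xi^{\mathfrak{e}} \neq 0$, hence $||\xi^{\mathfrak{e}}||^2 = \langle \overline{\xi^{\mathfrak{e}}}, \xi^{\mathfrak{e}} \rangle > 0$ because the Hermitian form is positive definite; I then apply Proposition \ref{prop_stokes} inside $\textnormal{lin}(\mathfrak{e})$ to the functional $\xi^{\mathfrak{e}}$, expressing $\int_{\mathfrak{e}} e^{\langle \xi, x \rangle} d\textnormal{vol}^{\mathfrak{e}}$ as $||\xi^{\mathfrak{e}}||^{-2}\sum_{\mathfrak{h}} \langle \xi, \eta_{\mathfrak{h}}^{\mathfrak{e}} \rangle_H \int_{\mathfrak{h}} e^{\langle \xi, x \rangle} d\textnormal{vol}^{\mathfrak{h}}$ over the facets $\mathfrak{h}$ of $\mathfrak{e}$, and substitute the inductive hypothesis for each such $\mathfrak{h}$.

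What remains is a combinatorial reorganization: for $\mathfrak{g}$ a $\xi$-constant face properly contained in $\mathfrak{e}$, attaching a facet $\mathfrak{h} \supseteq \mathfrak{g}$ of $\mathfrak{e}$ to a saturated $\xi$-adapted flag from $\mathfrak{g}$ to $\mathfrak{h}$ and then appending $\mathfrak{e}$ gives a bijection onto the saturated $\xi$-adapted flags from $\mathfrak{g}$ to $\mathfrak{e}$ (using that $\mathfrak{e}$ is not $\xi$-constant, so the extended flag is still $\xi$-adapted, that the penultimate face of any such flag is a facet of $\mathfrak{e}$ containing $\mathfrak{g}$, and that every proper face of $\mathfrak{e}$ lies in some facet); under this bijection the extra factor $\langle \xi, \eta_{\mathfrak{h}}^{\mathfrak{e}} \rangle_H/||\xi^{\mathfrak{e}}||^2$ is precisely the $j = m$ term of the product. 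Since each Stokes application strictly lowers dimension and all vertices are $\xi$-constant, the recursion terminates, which is what ensures the sum is finite and indexed exactly by $\textnormal{sFl}^{\mathfrak{e}}_{\mathfrak{g}}(\xi)$. I do not anticipate a genuine obstacle; the only points that need care are the non-vanishing of the denominators $||\xi^{\mathfrak{h}_j}||^2$ along an adapted flag (which holds exactly because $\xi$-adapted means every face other than the base is non-$\xi$-constant, and it is precisely those faces whose squared norms appear as denominators) and keeping the flag bookkeeping straight when merging the recursive expansions.
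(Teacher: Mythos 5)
Your proof is correct and follows essentially the same route as the paper: repeated application of the adapted Stokes formula (Proposition \ref{prop_stokes}) within the linear span of each successive face, with the terms organized by saturated $\xi$-adapted flags terminating at $\xi$-constant faces. The only difference is presentational — you run it as a bottom-up induction on face dimension with an explicit strengthened statement for each face $\mathfrak{e}$, whereas the paper phrases it as a top-down recursive expansion — and your bookkeeping (non-vanishing of $||\xi^{\mathfrak{h}_j}||^2$ along adapted flags, the facet-appending bijection on flags, and the reduction $\langle \xi^{\mathfrak{e}}, \eta_{\mathfrak{h}}^{\mathfrak{e}}\rangle_H = \langle \xi, \eta_{\mathfrak{h}}^{\mathfrak{e}}\rangle_H$) is sound.
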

\begin{proof}
    In case $\xi = 0$, then the contribution from the $\xi$-constant face $\mathfrak{p}$ gives the term $\textnormal{vol}(\mathfrak{p})$, and for any lower-dimensional face $\mathfrak{g}$ the set $\textnormal{sFl}_{\mathfrak{g}}^{\mathfrak{p}}(\xi)$ is empty and thus has trivial contribution. Thus the formula holds for $\xi = 0$.

    If $\xi \neq 0$, then we may apply Stokes' theorem. After the first application, we are left with terms that look like
    \begin{gather*}
        \frac{\langle \xi, \eta_{\mathfrak{f}}^{\mathfrak{p}} \rangle_H}{\|\xi\|^2} \int_{\mathfrak{f}^{(n-1)}} e^{\langle \xi, x \rangle} d\textnormal{vol}^{\mathfrak{f}},
    \end{gather*}
    where $\mathfrak{f}$ is a facet. If $\mathfrak{f}$ is $\xi$-constant, then this expression is simply equal to $e^{\langle \xi, \mathfrak{f} \rangle} \textnormal{vol}^{\mathfrak{f}}(\mathfrak{f}) \frac{\langle \xi, \eta_{\mathfrak{f}}^{\mathfrak{p}} \rangle_H}{\|\xi\|^2}$, and clearly $\mathfrak{f} \subset \mathfrak{p}$ is the only element in $\textnormal{sFl}_{\mathfrak{f}}^{\mathfrak{p}}(\xi)$.

    Else suppose that $\mathfrak{f}$ is not $\xi$-constant. Let $\{\mathfrak{g}_j\}$ be the codimension one faces of $\mathfrak{f}$. Let $y$ be any point in $\textnormal{aff}(\mathfrak{f})$. Then $\mathfrak{f} - y \subset \textnormal{lin}(\mathfrak{f})$. Therefore we can apply Stokes' theorem again:
    \begin{align*}
        \int_{\mathfrak{f}} e^{\langle \xi, x \rangle} d\textnormal{vol}^{\mathfrak{f}} = e^{\langle \xi, y \rangle} \int_{\mathfrak{f} - y} e^{\langle \xi^{\mathfrak{f}}, x \rangle} d \textnormal{vol}^{\mathfrak{f}} &= e^{\langle \xi, y \rangle} \frac{1}{\|\xi^{\mathfrak{f}}\|^2} \sum_{j} \langle \xi^{\mathfrak{f}}, \eta_{\mathfrak{g}_j}^{\mathfrak{f}} \rangle_H \int_{\mathfrak{g}_j - y} e^{\langle \xi, x \rangle} d \textnormal{vol}^{\mathfrak{g}_j} \\
        &= \frac{1}{\|\xi^{\mathfrak{f}}\|^2} \sum_{j} \langle \xi^{\mathfrak{f}}, \eta_{\mathfrak{g}_j}^{\mathfrak{f}} \rangle_H \int_{\mathfrak{g}_j} e^{\langle \xi, x \rangle} d \textnormal{vol}^{\mathfrak{g}_j}.
    \end{align*}

We may inductively continue this procedure on each $\mathfrak{g}_j$ until we reach $\xi$-constant faces. We can write $\xi = \xi^{\mathfrak{f}_j} + \xi^{\mathfrak{f}_j^\perp}$ with $\xi^{\mathfrak{f}_j^\perp} \in (\textnormal{lin}(\mathfrak{f}_j)^\perp)^*_{\mathbb{C}}$; we can naturally identify $(\textnormal{lin}(\mathfrak{f_j})^\perp)^*_{\mathbb{C}}$ with those elements in $V^*_{\mathbb{C}}$ which are zero on $\textnormal{lin}(\mathfrak{f}_j)$. We clearly have $\eta_{\mathfrak{f}_{j-1}}^{\mathfrak{f}_j} \in \textnormal{lin}(\mathfrak{f}_j)$, so we get that
\begin{gather*}
    \langle \xi, \eta_{\mathfrak{f}_{j-1}}^{\mathfrak{f}_j} \rangle_H = \langle \xi^{\mathfrak{f}_j} + \xi^{\mathfrak{f}_j^\perp}, \eta_{\mathfrak{f}_{j-1}}^{\mathfrak{f}_j} \rangle_H = \langle \xi^{\mathfrak{f}_j}, \eta_{\mathfrak{f}_{j-1}}^{\mathfrak{f}_j} \rangle_H.
\end{gather*}
Thus, for each $\xi$-constant face $\mathfrak{g}$, we exactly get a contribution like on the right hand side of the formula.

\end{proof}

Suppose $\mathfrak{g}$ is a $\xi$-constant face of $\mathfrak{p}$. We define
\begin{gather}
    {}^0 \textnormal{DBI}^\mathfrak{p}_\mathfrak{g}(\xi) := \sum_{\mathfrak{h}_k \subset \dots \subset \mathfrak{h}_n \in \textnormal{sFl}^\mathfrak{p}_\mathfrak{g}(\xi)} \prod_{j = k+1}^n \frac{\langle \xi, \eta_{\mathfrak{h}_{j-1}}^{\mathfrak{h}_j} \rangle_H}{\|\xi^{\mathfrak{h}_j}\|^2}. \label{dbi}
\end{gather}
We now wish to give a geometric interpretation to ${}^0\textnormal{DBI}^\mathfrak{p}_\mathfrak{g}(\xi)$.

Though Barvinok does not emphasize the following formula, in the course of his proof of Brion's formula he proves the following:
\begin{proposition} [\cite{barvinok}] \label{barvinok_prop}
Suppose $\mathfrak{v}$ is a vertex of $\mathfrak{p}$ such that $\mathfrak{v}$ is $\xi$-maximal. Then
\begin{gather*}
    {}^0\textnormal{DBI}^\mathfrak{p}_\mathfrak{v}(\xi) = \sum_{\mathfrak{h}_0 \subset \dots \subset \mathfrak{h}_n \in \textnormal{sFl}^\mathfrak{p}_\mathfrak{v}} \prod_{j = 1}^n \frac{\langle \xi, \eta_{\mathfrak{h}_{j-1}}^{\mathfrak{h}_j} \rangle_H }{\|\xi^{\mathfrak{h}_j}\|^2} = I({}^0 \mathfrak{t}_{\mathfrak{v}}^{\mathfrak{p}}; \xi).
\end{gather*}
\end{proposition}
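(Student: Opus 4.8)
The plan is to prove the two equalities in turn. The first, ${}^0\textnormal{DBI}^{\mathfrak{p}}_{\mathfrak{v}}(\xi) = \sum_{\mathfrak{h}_0 \subset \cdots \subset \mathfrak{h}_n \in \textnormal{sFl}_{\mathfrak{v}}^{\mathfrak{p}}} \prod_{j=1}^{n} \frac{\langle \xi, \eta_{\mathfrak{h}_{j-1}}^{\mathfrak{h}_j} \rangle_H}{\|\xi^{\mathfrak{h}_j}\|^2}$, is a matter of definitions: since $\mathfrak{v}$ is $\xi$-maximal, no face of $\mathfrak{p}$ strictly containing $\mathfrak{v}$ is $\xi$-constant, so every saturated flag from $\mathfrak{v}$ to $\mathfrak{p}$ is automatically $\xi$-adapted, i.e. $\textnormal{sFl}_{\mathfrak{v}}^{\mathfrak{p}}(\xi) = \textnormal{sFl}_{\mathfrak{v}}^{\mathfrak{p}}$; moreover $\|\xi^{\mathfrak{h}_j}\| \neq 0$ for $j \geq 1$ because $\mathfrak{h}_j$ is not $\xi$-constant, so every term of \eqref{dbi} is well defined and the equality follows.

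For the second equality I would induct on $n = \dim \mathfrak{p}$, using a cone analogue of the identity in Proposition \ref{prop_stokes}. The base case $n=0$ is trivial: $\mathfrak{p} = \{\mathfrak{v}\}$, ${}^0\mathfrak{t}_{\mathfrak{v}}^{\mathfrak{p}} = \{0\}$, $I(\{0\};\xi) = 1$, and the flag sum \eqref{dbi} reduces to a single empty product, also equal to $1$. For $n \geq 1$, the geometric input is the standard fact that the facets of the pointed cone ${}^0\mathfrak{t}_{\mathfrak{v}}^{\mathfrak{p}}$ are exactly the cones ${}^0\mathfrak{t}_{\mathfrak{v}}^{\mathfrak{f}}$ as $\mathfrak{f}$ ranges over the facets of $\mathfrak{p}$ containing $\mathfrak{v}$, and that the outward unit normal of such a facet inside $V = \textnormal{lin}(\mathfrak{p})$ is $\eta_{\mathfrak{f}}^{\mathfrak{p}}$. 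Applying the divergence theorem to the complex vector field $x \mapsto \langle \lambda, \xi \rangle^{-1} \lambda\, e^{\langle \xi, x \rangle}$ on the truncations ${}^0\mathfrak{t}_{\mathfrak{v}}^{\mathfrak{p}} \cap \bar{B}_R(0)$ and letting $R \to \infty$ — the spherical-cap contribution vanishing because $\textnormal{Re}\langle \xi, x \rangle \leq -c\,|x|$ on ${}^0\mathfrak{t}_{\mathfrak{v}}^{\mathfrak{p}}$ for $\xi$ in the nonempty open cone $\textnormal{Int}(-({}^0\mathfrak{t}_{\mathfrak{v}}^{\mathfrak{p}})^*)$, where all the relevant cone integrals also converge absolutely — yields, on that domain,
\[
    I({}^0\mathfrak{t}_{\mathfrak{v}}^{\mathfrak{p}}; \xi) = \frac{1}{\langle \lambda, \xi \rangle} \sum_{\substack{\mathfrak{f}\ \text{facet of}\ \mathfrak{p}\\ \mathfrak{v} \in \mathfrak{f}}} \langle \lambda, \eta_{\mathfrak{f}}^{\mathfrak{p}} \rangle\, I^{\mathfrak{f}}({}^0\mathfrak{t}_{\mathfrak{v}}^{\mathfrak{f}}; \xi^{\mathfrak{f}}).
\]
Exactly as in the proof of Proposition \ref{prop_stokes}, both sides are meromorphic in $\xi$ (using Proposition \ref{barvinok_homogeneous} and pullback for the facet terms) and rational in $\lambda$, so the identity persists by analytic continuation for all $(\xi,\lambda)$; specializing $\lambda = \bar\xi$ (legitimate wherever $\xi \neq 0$ and no term is singular) turns the prefactor $\langle \lambda, \cdot \rangle / \langle \lambda, \xi \rangle$ into $\langle \cdot, \cdot \rangle_H / \|\xi\|^2$.

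To close the induction, observe that since $\mathfrak{v}$ is $\xi$-maximal in $\mathfrak{p}$ it is $\xi^{\mathfrak{f}}$-maximal in each facet $\mathfrak{f}$ (and $\xi^{\mathfrak{f}} \neq 0$), so the inductive hypothesis gives $I^{\mathfrak{f}}({}^0\mathfrak{t}_{\mathfrak{v}}^{\mathfrak{f}}; \xi^{\mathfrak{f}}) = {}^0\textnormal{DBI}_{\mathfrak{v}}^{\mathfrak{f}}(\xi^{\mathfrak{f}})$. Since $(\xi^{\mathfrak{f}})^{\mathfrak{h}_j} = \xi^{\mathfrak{h}_j}$ and $\langle \xi^{\mathfrak{f}}, \eta \rangle_H = \langle \xi, \eta \rangle_H$ for $\eta \in \textnormal{lin}(\mathfrak{h}_j) \subseteq \textnormal{lin}(\mathfrak{f})$, and since $\langle \xi, \eta_{\mathfrak{f}}^{\mathfrak{p}} \rangle_H / \|\xi\|^2 = \langle \xi, \eta_{\mathfrak{h}_{n-1}}^{\mathfrak{h}_n} \rangle_H / \|\xi^{\mathfrak{h}_n}\|^2$ when $\mathfrak{h}_n = \mathfrak{p}$ and $\mathfrak{h}_{n-1} = \mathfrak{f}$, substituting into the displayed identity and invoking the bijection between $\textnormal{sFl}_{\mathfrak{v}}^{\mathfrak{p}}$ and pairs (facet $\mathfrak{f} \ni \mathfrak{v}$, flag in $\textnormal{sFl}_{\mathfrak{v}}^{\mathfrak{f}}$) reassembles the right-hand side into $\sum_{\mathfrak{h}_0 \subset \cdots \subset \mathfrak{h}_n \in \textnormal{sFl}_{\mathfrak{v}}^{\mathfrak{p}}} \prod_{j=1}^{n} \frac{\langle \xi, \eta_{\mathfrak{h}_{j-1}}^{\mathfrak{h}_j} \rangle_H}{\|\xi^{\mathfrak{h}_j}\|^2} = {}^0\textnormal{DBI}_{\mathfrak{v}}^{\mathfrak{p}}(\xi)$.

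The main obstacle I anticipate is the careful justification of the cone version of the Stokes identity: controlling the non-compactness of the cone (decay of the spherical-cap integrals and absolute convergence of the boundary integrals), verifying that the common domain of absolute convergence is nonempty, and routing everything through the auxiliary parameter $\lambda$ so that one never divides by the non-holomorphic quantity $\|\xi\|^2$ before taking the analytic continuation. The remaining ingredients — the identification of the facets of a vertex tangent cone, the compatibility of restrictions of $\xi$ and of $\langle \cdot, \cdot \rangle_H$ along a flag, and the combinatorial reassembly of the flag sum — are routine bookkeeping.
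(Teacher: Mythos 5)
Your proof is correct. The paper itself states this proposition without proof, citing it as a byproduct of Barvinok's argument in \cite{barvinok}, and your reconstruction — reducing to the case $\textnormal{sFl}_{\mathfrak{v}}^{\mathfrak{p}}(\xi)=\textnormal{sFl}_{\mathfrak{v}}^{\mathfrak{p}}$, then inducting on dimension via the cone version of the divergence-theorem identity with the auxiliary parameter $\lambda$ specialized to $\bar{\xi}$ only at the end — is exactly the mechanism underlying Proposition \ref{prop_stokes} and Barvinok's original proof, so no comparison beyond that is needed.
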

\noindent Note that if $\mathfrak{v}$ is $\xi$-maximal, then $\textnormal{sFl}_{\mathfrak{v}}^{\mathfrak{p}}(\xi) = \textnormal{sFl}_{\mathfrak{v}}^{\mathfrak{p}}$. Thus if every vertex is $\xi$-maximal, then Proposition \ref{proto_brion} immediately turns into Brion's formula, i.e. Theorem \ref{thm_brion}. 

As discussed in Ziegler \cite{ziegler}, the normal fan $\mathcal{N}(\mathfrak{p}^{\mathfrak{g}^\perp})$ is the intersection $\mathcal{N}(\mathfrak{p}) \cap (\textnormal{lin}(\mathfrak{g})^\perp)^*$, and the map $P_{\textnormal{lin}(\mathfrak{g})^\perp}$ induces an isomorphism of the poset of faces containing $\mathfrak{g}$ and the poset of faces containing the vertex $\mathfrak{g}^{\mathfrak{g}^\perp}$ in the polytope $\mathfrak{p}^{\mathfrak{g}^\perp}$. This is in turn isomorphic to the poset of faces containing $\mathfrak{g}^{\mathfrak{g}^\perp}$ in the transverse cone $\mathfrak{t}_{\mathfrak{g}}^{\mathfrak{p}}$. Thus if $\mathfrak{f}$ is a face of $\mathfrak{p}$ containing $\mathfrak{g}$, then there is an associated face of $\mathfrak{t}_{\mathfrak{g}}^{\mathfrak{p}}$ (which must itself be a pointed cone) and which can be naturally identified with $\mathfrak{t}_{\mathfrak{g}}^{\mathfrak{f}} \subset \textnormal{lin}(\mathfrak{f})/\textnormal{lin}(\mathfrak{g}) \subset V/\textnormal{lin}(\mathfrak{g}) \simeq \textnormal{lin}(\mathfrak{g})^\perp$. If we have $\mathfrak{g} \subseteq \mathfrak{f} \subset \mathfrak{f}'$ with $\mathfrak{f}$ a facet of $\mathfrak{f}'$, then we also have $\mathfrak{t}_{\mathfrak{g}}^{\mathfrak{f}}$ as a facet of $\mathfrak{t}_{\mathfrak{g}}^{\mathfrak{f}'}$. We then naturally have $\eta_{\mathfrak{t}_{\mathfrak{g}}^{\mathfrak{f}}}^{\mathfrak{t}_{\mathfrak{g}}^{\mathfrak{f}'}} = P_{\textnormal{lin}(\mathfrak{g})^\perp} \eta_{\mathfrak{f}}^{\mathfrak{f}'} =: (\eta_{\mathfrak{f}}^{\mathfrak{f}'})^{\mathfrak{g}^\perp}$.

\begin{proposition} \label{prop_maximal}
    Suppose $\mathfrak{g}$ is a $\xi$-maximal face of $\mathfrak{p}$. Then
    \begin{gather*}
        {}^0 \textnormal{DBI}^\mathfrak{p}_\mathfrak{g}(\xi) = I({}^0 \mathfrak{t}_{\mathfrak{g}}^{\mathfrak{p}}; \xi).
    \end{gather*}
\end{proposition}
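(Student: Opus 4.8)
The plan is to reduce the claim to the already-established case of a $\xi$-maximal vertex (Proposition \ref{barvinok_prop}) by passing to the transverse picture via the orthogonal projection $P_{\textnormal{lin}(\mathfrak{g})^\perp}$. Concretely, I would work in the inner product space $\textnormal{lin}(\mathfrak{g})^\perp$ (with the inner product restricted from $V$), with the projected polytope $\mathfrak{p}^{\mathfrak{g}^\perp} = P_{\textnormal{lin}(\mathfrak{g})^\perp}(\mathfrak{p})$ and the restricted functional $\xi^{\mathfrak{g}^\perp}$. Since $\xi$ is constant on $\mathfrak{g}$ it vanishes on $\textnormal{lin}(\mathfrak{g})$, so under the identification $V/\textnormal{lin}(\mathfrak{g}) \simeq \textnormal{lin}(\mathfrak{g})^\perp$ the functional $\xi^{\mathfrak{g}^\perp}$ agrees with $\xi$; moreover $\mathfrak{g}^{\mathfrak{g}^\perp}$ is a vertex of $\mathfrak{p}^{\mathfrak{g}^\perp}$ and, by the discussion preceding the proposition, $\mathfrak{t}_{\mathfrak{g}^{\mathfrak{g}^\perp}}^{\mathfrak{p}^{\mathfrak{g}^\perp}} = \mathfrak{t}_{\mathfrak{g}}^{\mathfrak{p}}$, so that $I({}^0\mathfrak{t}_{\mathfrak{g}}^{\mathfrak{p}}; \xi) = I({}^0\mathfrak{t}_{\mathfrak{g}^{\mathfrak{g}^\perp}}^{\mathfrak{p}^{\mathfrak{g}^\perp}}; \xi^{\mathfrak{g}^\perp})$, an integral over a cone living in $\textnormal{lin}(\mathfrak{g})^\perp$. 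Note also that $\mathfrak{p}^{\mathfrak{g}^\perp}$ is full-dimensional in $\textnormal{lin}(\mathfrak{g})^\perp$ because $\mathfrak{p}$ is full-dimensional in $V$ and $P_{\textnormal{lin}(\mathfrak{g})^\perp}$ is onto.

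First I would record that, since $\mathfrak{g}$ is $\xi$-maximal, no face strictly containing $\mathfrak{g}$ is $\xi$-constant, and hence $\textnormal{sFl}_{\mathfrak{g}}^{\mathfrak{p}}(\xi) = \textnormal{sFl}_{\mathfrak{g}}^{\mathfrak{p}}$; thus the sum defining ${}^0\textnormal{DBI}_{\mathfrak{g}}^{\mathfrak{p}}(\xi)$ in \eqref{dbi} runs over \emph{all} saturated flags $\mathfrak{g} = \mathfrak{h}_k \subset \dots \subset \mathfrak{h}_n = \mathfrak{p}$. By the poset isomorphism recalled above (following \cite{ziegler}), the assignment $\mathfrak{h}_j \mapsto \mathfrak{t}_{\mathfrak{g}}^{\mathfrak{h}_j}$ is a bijection from this set onto the set of saturated flags of $\mathfrak{p}^{\mathfrak{g}^\perp}$ starting at the vertex $\mathfrak{g}^{\mathfrak{g}^\perp}$ (with all dimensions lowered by $\dim\mathfrak{g}$). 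I would then compare the two sums term by term, and within each term factor by factor.

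For the factor matching, fix a step $\mathfrak{g} \subseteq \mathfrak{h}_{j-1} \subset \mathfrak{h}_j$ with $\mathfrak{h}_{j-1}$ a facet of $\mathfrak{h}_j$. The outward unit normal $\eta_{\mathfrak{h}_{j-1}}^{\mathfrak{h}_j}$ lies in $\textnormal{lin}(\mathfrak{h}_j)$ and is orthogonal to $\textnormal{lin}(\mathfrak{h}_{j-1}) \supseteq \textnormal{lin}(\mathfrak{g})$; therefore $\eta_{\mathfrak{h}_{j-1}}^{\mathfrak{h}_j}$ already lies in $\textnormal{lin}(\mathfrak{g})^\perp$, so $P_{\textnormal{lin}(\mathfrak{g})^\perp}\eta_{\mathfrak{h}_{j-1}}^{\mathfrak{h}_j} = \eta_{\mathfrak{h}_{j-1}}^{\mathfrak{h}_j}$ is still a unit vector and coincides with $\eta_{\mathfrak{t}_{\mathfrak{g}}^{\mathfrak{h}_{j-1}}}^{\mathfrak{t}_{\mathfrak{g}}^{\mathfrak{h}_j}}$; consequently $\langle\xi, \eta_{\mathfrak{h}_{j-1}}^{\mathfrak{h}_j}\rangle_H = \langle\xi^{\mathfrak{g}^\perp}, \eta_{\mathfrak{t}_{\mathfrak{g}}^{\mathfrak{h}_{j-1}}}^{\mathfrak{t}_{\mathfrak{g}}^{\mathfrak{h}_j}}\rangle_H$. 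For the denominator, $\textnormal{lin}(\mathfrak{h}_j) = \textnormal{lin}(\mathfrak{g}) \oplus (\textnormal{lin}(\mathfrak{h}_j) \cap \textnormal{lin}(\mathfrak{g})^\perp)$ is an orthogonal decomposition on whose first summand $\xi$ vanishes, so $||\xi^{\mathfrak{h}_j}||^2 = ||(\xi^{\mathfrak{g}^\perp})^{\mathfrak{t}_{\mathfrak{g}}^{\mathfrak{h}_j}}||^2$. Hence each factor of ${}^0\textnormal{DBI}_{\mathfrak{g}}^{\mathfrak{p}}(\xi)$ equals the corresponding factor of the expression \eqref{dbi} written for the polytope $\mathfrak{p}^{\mathfrak{g}^\perp}$ at its vertex $\mathfrak{g}^{\mathfrak{g}^\perp}$, so the two sums agree: ${}^0\textnormal{DBI}_{\mathfrak{g}}^{\mathfrak{p}}(\xi) = {}^0\textnormal{DBI}_{\mathfrak{g}^{\mathfrak{g}^\perp}}^{\mathfrak{p}^{\mathfrak{g}^\perp}}(\xi^{\mathfrak{g}^\perp})$. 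Finally, a face $\mathfrak{f} \supseteq \mathfrak{g}$ of $\mathfrak{p}$ is $\xi$-constant if and only if the corresponding face $\mathfrak{f}^{\mathfrak{g}^\perp}$ of $\mathfrak{p}^{\mathfrak{g}^\perp}$ is $\xi^{\mathfrak{g}^\perp}$-constant (again because $\xi$ vanishes on $\textnormal{lin}(\mathfrak{g})$), so the $\xi$-maximality of $\mathfrak{g}$ means $\mathfrak{g}^{\mathfrak{g}^\perp}$ is a $\xi^{\mathfrak{g}^\perp}$-maximal vertex of $\mathfrak{p}^{\mathfrak{g}^\perp}$; Proposition \ref{barvinok_prop}, applied in $\textnormal{lin}(\mathfrak{g})^\perp$, then gives ${}^0\textnormal{DBI}_{\mathfrak{g}^{\mathfrak{g}^\perp}}^{\mathfrak{p}^{\mathfrak{g}^\perp}}(\xi^{\mathfrak{g}^\perp}) = I({}^0\mathfrak{t}_{\mathfrak{g}^{\mathfrak{g}^\perp}}^{\mathfrak{p}^{\mathfrak{g}^\perp}}; \xi^{\mathfrak{g}^\perp}) = I({}^0\mathfrak{t}_{\mathfrak{g}}^{\mathfrak{p}}; \xi)$.

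I expect the main obstacle to be purely a matter of bookkeeping: verifying that the projection $P_{\textnormal{lin}(\mathfrak{g})^\perp}$ fixes the facet normals appearing in the flags (hence fixes the numerators) and leaves the restricted norms in the denominators unchanged, and that ``vertex'' and ``$\xi$-maximal'' transfer correctly to $\mathfrak{p}^{\mathfrak{g}^\perp}$. None of this is deep, since the needed poset isomorphism and the normal identity $\eta_{\mathfrak{t}_{\mathfrak{g}}^{\mathfrak{f}}}^{\mathfrak{t}_{\mathfrak{g}}^{\mathfrak{f}'}} = (\eta_{\mathfrak{f}}^{\mathfrak{f}'})^{\mathfrak{g}^\perp}$ are already in hand, and the zero-dimensional base case of the reduction is precisely Proposition \ref{barvinok_prop}.
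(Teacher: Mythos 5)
Your proposal is correct and follows essentially the same route as the paper: observe that $\xi$-maximality gives $\textnormal{sFl}_{\mathfrak{g}}^{\mathfrak{p}}(\xi) = \textnormal{sFl}_{\mathfrak{g}}^{\mathfrak{p}}$, match the flags and then the individual factors (numerators via the normals, denominators via the orthogonal splitting of $\textnormal{lin}(\mathfrak{h}_j)$ and the vanishing of $\xi$ on $\textnormal{lin}(\mathfrak{g})$), and conclude by Proposition \ref{barvinok_prop} applied in $\textnormal{lin}(\mathfrak{g})^\perp$. The only cosmetic difference is that you justify the numerator identity by noting $\eta_{\mathfrak{h}_{j-1}}^{\mathfrak{h}_j}$ already lies in $\textnormal{lin}(\mathfrak{g})^\perp$, whereas the paper invokes self-adjointness of the orthogonal projection; these amount to the same thing.
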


\begin{proof}
    First off note that because $\mathfrak{g}$ is $\xi$-maximal, there is no bigger face containing it which is also $\xi$-constant. Therefore $\textnormal{sFl}_{\mathfrak{g}}^{\mathfrak{p}}(\xi) = \textnormal{sFl}_{\mathfrak{g}}^{\mathfrak{p}}$. In light of Proposition \ref{barvinok_prop}, it therefore suffices to show that if $\mathfrak{g} \subseteq \mathfrak{f} \subset \mathfrak{f}'$ are faces such that $\mathfrak{f}$ is a facet of $\mathfrak{f}'$, then 
    \begin{gather*}
        \frac{\langle \xi, \eta_{\mathfrak{f}}^{\mathfrak{f}'} \rangle_H}{\|\xi^{\mathfrak{f}'}\|^2} = \frac{\langle \xi^{\mathfrak{g}^\perp}, \eta_{\mathfrak{t}_{\mathfrak{g}}^{\mathfrak{f}}}^{\mathfrak{t}_{\mathfrak{g}}^{\mathfrak{f}'}} \rangle_H}{\|(\xi^{\mathfrak{g}^\perp})^{\mathfrak{t}_{\mathfrak{g}}^{\mathfrak{f}'}}\|^2}.
    \end{gather*}

We have the orthogonal direct sum $V = \textnormal{lin}(\mathfrak{g}) \oplus (\textnormal{lin}(\mathfrak{f}') \cap \textnormal{lin}(\mathfrak{g})^\perp) \oplus \textnormal{lin}(\mathfrak{f}')^\perp$. We can write $\xi = \xi^{\mathfrak{g}} + \xi^{\mathfrak{f}' \cap \mathfrak{g}^\perp} + \xi^{\mathfrak{f}'}$ with respect to this decomposition. Then $\xi^{\mathfrak{f}'} = \xi^{\mathfrak{g}} + \xi^{\mathfrak{f}' \cap \mathfrak{g}^\perp}$. Since $\mathfrak{g}$ is $\xi$-constant, we have $\xi^{\mathfrak{g}} = 0$. Therefore, $\xi^{\mathfrak{f}'} = \xi^{\mathfrak{f}' \cap \mathfrak{g}^\perp}$. 

Similarly, we have $\xi^{\mathfrak{g}^\perp} = \xi^{\mathfrak{f}' \cap \mathfrak{g}^\perp} + \xi^{\mathfrak{f}'} = \xi$. We have the orthogonal decomposition $\textnormal{lin}(\mathfrak{g})^\perp = (\textnormal{lin}(\mathfrak{f}') \cap \textnormal{lin}(\mathfrak{g})^\perp) \oplus \textnormal{lin}(\mathfrak{f}')^\perp$, and we have $(\xi^{\mathfrak{g}^\perp})^{\mathfrak{t}_{\mathfrak{g}}^{\mathfrak{f}'}} = \xi^{\mathfrak{f}' \cap \mathfrak{g}^\perp}$. This is all to say that we naturally have $\xi^{\mathfrak{f}'} = (\xi^{\mathfrak{g}^\perp})^{\mathfrak{t}_{\mathfrak{g}}^{\mathfrak{f}'}}$ with both sides thought of as living inside of $(\textnormal{lin}(\mathfrak{g})^\perp)^*$. 

Lastly, because $P_{\textnormal{lin}(\mathfrak{g})^\perp}$ is an orthogonal projection and thus self-adjoint, we have
\begin{gather*}
    \langle \xi, \eta_{\mathfrak{f}}^{\mathfrak{f}'} \rangle_H = \langle \xi^{\mathfrak{g}^\perp}, \eta_{\mathfrak{f}}^{\mathfrak{f}'} \rangle_H = \langle \xi,  (\eta_{\mathfrak{f}}^{\mathfrak{f}'})^{\mathfrak{g}^\perp} \rangle_H = \langle \xi, \eta_{\mathfrak{t}_{\mathfrak{g}}^{\mathfrak{f}}}^{\mathfrak{t}_{\mathfrak{g}}^{\mathfrak{f}'}} \rangle_H = \langle \xi^{\mathfrak{g}^\perp}, \eta_{\mathfrak{t}_{\mathfrak{g}}^{\mathfrak{f}}}^{\mathfrak{t}_{\mathfrak{g}}^{\mathfrak{f}'}} \rangle_H.
\end{gather*}
\end{proof}

This gives a geometric interpretation to ${}^0\textnormal{DBI}^\mathfrak{p}_\mathfrak{g}(\xi)$ for $\mathfrak{g}$ which are $\xi$-maximal. However, we would like to give an analogous geometric interpretation for the other $\xi$-constant faces. Notice that the expression \eqref{dbi} is $\mathbb{C}$-homogeneous of degree $-n+k$ in $\xi$ which is consistent with being $I(\mathfrak{k}; \xi)$ for some $(n-k)$-dimensional cone $\mathfrak{k}$ based at the origin as in Proposition \ref{barvinok_homogeneous}. We shall indeed show that ${}^0\textnormal{DBI}^\mathfrak{p}_\mathfrak{g}(\xi) = I(\mathfrak{k}; \xi)$ where $\mathfrak{k}$ is an $(n-k)$-dimensional virtual cone in $\textnormal{lin}(\mathfrak{g})^\perp$. 

\subsection{The alternating Levi cone}
The collection of all $\xi$-constant faces of $\mathfrak{p}$ forms a poset with respect to inclusion. The maximal elements in this poset are the $\xi$-maximal faces. Suppose $\mathfrak{g}$ is $\xi$-constant. Let $\mathcal{C}(V)$ denote the space of virtual cones based at the origin in $V$. We define ${}^0\textnormal{LC}^\mathfrak{p}_\mathfrak{g}(\xi) \in \mathcal{C}(\textnormal{lin}(\mathfrak{g})^\perp)$ recursively as follows. If $\mathfrak{g}$ is $\xi$-maximal, then 
\begin{gather}
    {}^0\textnormal{LC}^\mathfrak{p}_\mathfrak{g}(\xi) := [{}^0\mathfrak{t}_{\mathfrak{g}}^{\mathfrak{p}}]. \label{eqn_xi_maximal_case}
\end{gather}
Otherwise we define
\begin{gather}
    {}^0\textnormal{LC}^\mathfrak{p}_\mathfrak{g}(\xi) := [{}^0\mathfrak{t}_{\mathfrak{g}}^{\mathfrak{p}}] - \sum_{\mathfrak{f} \in \{\mathfrak{p}\}_\xi \textnormal{ with } \mathfrak{g} \subset \mathfrak{f}} [{}^0\mathfrak{t}_{\mathfrak{g}}^{\mathfrak{f}}] \times {}^0\textnormal{LC}^\mathfrak{p}_\mathfrak{f}(\xi). \label{virtual_cone}
\end{gather}
Notice that if $\mathfrak{f} \subset \mathfrak{g}$, we have the orthogonal decomposition $\textnormal{lin}(\mathfrak{g})^\perp = (\textnormal{lin}(\mathfrak{f}) \cap \textnormal{lin}(\mathfrak{g})^\perp) \oplus \textnormal{lin}(\mathfrak{f})^\perp$. Thus inductively the expression in \eqref{virtual_cone} is naturally in $\mathcal{C}(\textnormal{lin}(\mathfrak{g})^\perp)$. We shall repeatedly use the following simple observation:
\begin{proposition} \label{prop_triv_or_one}
    Suppose $\xi = 0$. Then ${}^0 \textnormal{LC}_{\mathfrak{p}}^{\mathfrak{p}}(0) = \{0\}$ which is a cone inside a zero-dimensional space, and $I({}^0\textnormal{LC}_{\mathfrak{p}}^{\mathfrak{p}}(0); 0) = 1$. If $\mathfrak{f} \subset \mathfrak{p}$ then ${}^0 \textnormal{LC}_{\mathfrak{f}}^{\mathfrak{p}}(0) = \emptyset$ and $I({}^0 \textnormal{LC}_{\mathfrak{f}}^{\mathfrak{p}}(0); 0) = 0$. 
\end{proposition}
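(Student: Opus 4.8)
The plan is to unwind the recursive definition \eqref{virtual_cone} in the special case $\xi = 0$. First I would record the two elementary facts about the $\xi$-decomposition at $\xi = 0$: every face of $\mathfrak{p}$ is $0$-constant, so $\{\mathfrak{p}\}_0 = \textnormal{Face}(\mathfrak{p})$, and the unique $0$-maximal face is $\mathfrak{p}$ itself, since it is the only face admitting no strictly larger face (this is consistent with the remarks on $\textnormal{sFl}_{\mathfrak{f}}^{\mathfrak{p}}(\xi)$ for $\xi=0$ in the section on flags). Next I would handle the base clause of the recursion: $\mathfrak{p}$ being $0$-maximal, ${}^0\textnormal{LC}_{\mathfrak{p}}^{\mathfrak{p}}(0) = [{}^0\mathfrak{t}_{\mathfrak{p}}^{\mathfrak{p}}]$; and since the tangent cone $\mathfrak{s}_{\mathfrak{p}}^{\mathfrak{p}} = \textnormal{aff}(\mathfrak{p})$ (from a relative interior point every direction in $\textnormal{lin}(\mathfrak{p})$ stays in $\mathfrak{p}$ for small time), its orthogonal projection to $\textnormal{lin}(\mathfrak{p})^\perp$ is the single point $\mathfrak{p}^{\mathfrak{p}^\perp}$, so ${}^0\mathfrak{t}_{\mathfrak{p}}^{\mathfrak{p}} = \{0\}$, a cone inside the zero-dimensional space $\textnormal{lin}(\mathfrak{p}) \cap \textnormal{lin}(\mathfrak{p})^\perp$. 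Then $I({}^0\textnormal{LC}_{\mathfrak{p}}^{\mathfrak{p}}(0); 0) = I(\{0\}; 0) = \textnormal{vol}^{\{0\}}(\{0\}) = 1$ by the convention $\textnormal{vol}^W(W) = 1$ for $W = 0$.

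For a proper face $\mathfrak{f} \subsetneq \mathfrak{p}$ I would argue by induction on the codimension $c := \dim\mathfrak{p} - \dim\mathfrak{f} \geq 1$ that ${}^0\textnormal{LC}_{\mathfrak{f}}^{\mathfrak{p}}(0) = \emptyset$ (i.e. equals $0$ in $\mathcal{C}(\textnormal{lin}(\mathfrak{f})^\perp)$). In the recursion \eqref{virtual_cone} the sum runs over all faces $\mathfrak{h}$ with $\mathfrak{f} \subsetneq \mathfrak{h} \subseteq \mathfrak{p}$, all of which are $0$-constant. For $\mathfrak{h} \subsetneq \mathfrak{p}$ the inductive hypothesis gives ${}^0\textnormal{LC}_{\mathfrak{h}}^{\mathfrak{p}}(0) = \emptyset$, so those terms drop out; the only surviving term is $\mathfrak{h} = \mathfrak{p}$, where $[{}^0\mathfrak{t}_{\mathfrak{f}}^{\mathfrak{p}}] \times {}^0\textnormal{LC}_{\mathfrak{p}}^{\mathfrak{p}}(0) = [{}^0\mathfrak{t}_{\mathfrak{f}}^{\mathfrak{p}}] \times [\{0\}] = [{}^0\mathfrak{t}_{\mathfrak{f}}^{\mathfrak{p}}]$, the factor $\{0\}$ living in a zero-dimensional space and contributing nothing to the product. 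Hence ${}^0\textnormal{LC}_{\mathfrak{f}}^{\mathfrak{p}}(0) = [{}^0\mathfrak{t}_{\mathfrak{f}}^{\mathfrak{p}}] - [{}^0\mathfrak{t}_{\mathfrak{f}}^{\mathfrak{p}}] = 0$. The case $c = 1$ (facets) is simply the bottom instance of this same computation with an empty inductive hypothesis. Finally, since $I(\cdot; \xi)$ is a valuation and the indicator of the empty set is the zero function, $I({}^0\textnormal{LC}_{\mathfrak{f}}^{\mathfrak{p}}(0); 0) = 0$.

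I do not expect any serious obstacle; the statement is essentially a bookkeeping check that the recursion collapses at $\xi = 0$. The only points requiring care are: (a) correctly identifying the base case, i.e. that $\mathfrak{p}$ is the unique $0$-maximal face so that ${}^0\textnormal{LC}_{\mathfrak{p}}^{\mathfrak{p}}(0)$ is governed by the base clause rather than the recursive one; (b) the conventions $\textnormal{vol}^{\{0\}}(\{0\}) = 1$ and $[\emptyset] = 0$, which supply the numerical values $1$ and $0$; and (c) orienting the induction by increasing codimension of $\mathfrak{f}$ (equivalently, downward in dimension), since \eqref{virtual_cone} expresses ${}^0\textnormal{LC}_{\mathfrak{f}}^{\mathfrak{p}}$ in terms of the ${}^0\textnormal{LC}_{\mathfrak{h}}^{\mathfrak{p}}$ for the larger faces $\mathfrak{h} \supsetneq \mathfrak{f}$.
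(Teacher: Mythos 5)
Your argument is correct: the paper states this proposition without proof (as a "simple observation"), and your unwinding of the recursive definition \eqref{virtual_cone} — base clause for the unique $0$-maximal face $\mathfrak{p}$, then downward induction on dimension showing the single surviving term $[{}^0\mathfrak{t}_{\mathfrak{f}}^{\mathfrak{p}}]\times{}^0\textnormal{LC}_{\mathfrak{p}}^{\mathfrak{p}}(0)$ cancels $[{}^0\mathfrak{t}_{\mathfrak{f}}^{\mathfrak{p}}]$ — is exactly the intended verification, with the conventions $\textnormal{vol}^{\{0\}}(\{0\})=1$ and $[\emptyset]=0$ supplying the values $1$ and $0$.
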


We call ${}^0 \textnormal{LC}_{\mathfrak{g}}^{\mathfrak{p}}(\xi)$ the \textit{shifted alternating Levi cone}. When we unwind the recursive definition, we get the following non-recursive definition:
\begin{gather}
    {}^0\textnormal{LC}^\mathfrak{p}_\mathfrak{g}(\xi) = \sum_{\mathfrak{g} = \mathfrak{h}_0 \subset \dots \subset \mathfrak{h}_{\ell} \in \textnormal{mFl}^\mathfrak{p}_\mathfrak{g}(\xi)} (-1)^\ell [{}^0\mathfrak{t}_{\mathfrak{g}}^{\mathfrak{h}_1} \times {}^0\mathfrak{t}_{\mathfrak{h}_1}^{\mathfrak{h}_2} \times \dots \times {}^0\mathfrak{t}_{\mathfrak{h}_{\ell-1}}^{\mathfrak{h}_\ell} \times {}^0\mathfrak{t}_{\mathfrak{h}_\ell}^{\mathfrak{p}}]. \label{levi_cone2}
\end{gather}
Notice that we naturally have that if $V = W_1 \oplus W_2$ and if $\mathfrak{p}_1 \subset W_1$ and $\mathfrak{p}_2 \subset W_2$ are polyhedra, then $[\mathfrak{p}_1 \times \mathfrak{p}_2] = [\mathfrak{p}_1] \times [\mathfrak{p}_2]$. Also notice that given a element in $\mathfrak{g} = \mathfrak{h}_0 \subset \dots \subset \mathfrak{h}_\ell \in \textnormal{mFl}^\mathfrak{p}_\mathfrak{g}(\xi)$, we get a natural orthogonal decomposition
\begin{gather*}
    \textnormal{lin}(\mathfrak{g})^\perp = (\textnormal{lin}(\mathfrak{g})^\perp \cap \textnormal{lin}(\mathfrak{h}_1)) \oplus (\textnormal{lin}(\mathfrak{h}_1)^\perp \cap \textnormal{lin}(\mathfrak{h}_2)) \oplus \dots \oplus (\textnormal{lin}(\mathfrak{h}_{\ell-1})^\perp \cap \textnormal{lin}(\mathfrak{h}_\ell)) \oplus \textnormal{lin}(\mathfrak{h}_\ell)^\perp.
\end{gather*}
We clearly have that ${}^0 \mathfrak{t}_{\mathfrak{h}_j}^{\mathfrak{h}_{j+1}} \subset (\textnormal{lin}(\mathfrak{h}_j)^\perp \cap \textnormal{lin}(\mathfrak{h}_{j+1}))$.

Let
\begin{gather*}
    \textnormal{LC}^\mathfrak{p}_\mathfrak{g}(\xi) := \mathfrak{g}^{\mathfrak{g}^\perp} + {}^0 \textnormal{LC}^\mathfrak{p}_\mathfrak{g}(\xi).
\end{gather*}
In words, we have shifted ${}^0\textnormal{LC}^\mathfrak{p}_\mathfrak{g}(\xi)$ to be based at $\mathfrak{g}^{\mathfrak{g}^\perp}$ which is the orthogonal projection of $\mathfrak{g}$ onto the space $\textnormal{lin}(\mathfrak{g})^\perp$. We call this the \textit{alternating Levi cone}.

\begin{example}
As an example, suppose $\mathfrak{g}$ is of codimension 2 in the $\xi$-maximal face $\mathfrak{f}$, and suppose $\mathfrak{f}$ is the only $\xi$-maximal face containing $\mathfrak{g}$. Let $\mathfrak{h}_1$ and $\mathfrak{h}_2$ be the intermediate faces between $\mathfrak{g}$ and $\mathfrak{f}$ (there are necessarily exactly two such faces). We have
\begin{align*}
\textnormal{LC}_{\mathfrak{g}}^{\mathfrak{p}}(\xi) = [\mathfrak{t}_{\mathfrak{g}}^{\mathfrak{p}}] - [\mathfrak{t}_{\mathfrak{g}}^{\mathfrak{h}_1} \times \mathfrak{t}_{\mathfrak{h}_1}^{\mathfrak{p}}] + [\mathfrak{t}_{\mathfrak{g}}^{\mathfrak{h}_1} \times \mathfrak{t}_{\mathfrak{h}_1}^{\mathfrak{f}} \times \mathfrak{t}_{\mathfrak{f}}^{\mathfrak{p}}] -[\mathfrak{t}_{\mathfrak{g}}^{\mathfrak{h}_2} \times \mathfrak{t}_{\mathfrak{h}_2}^{\mathfrak{p}}] + [\mathfrak{t}_{\mathfrak{g}}^{\mathfrak{h}_2} \times \mathfrak{t}_{\mathfrak{h}_2}^{\mathfrak{f}} \times \mathfrak{t}_{\mathfrak{f}}^{\mathfrak{p}}] - [\mathfrak{t}_{\mathfrak{g}}^{\mathfrak{f}} \times \mathfrak{t}_{\mathfrak{f}}^{\mathfrak{p}}].
\end{align*}
\end{example}

We shall see that the alternating Levi cone provides a geometric interpretation for the quantity ${}^0 \textnormal{DBI}_{\mathfrak{g}}^{\mathfrak{p}}(\xi)$. Before discussing that we shall first discuss what we believe to be a new polytope decomposition involving the alternating Levi cones.

\subsection{Polyhedral decomposition of a polytope with respect to $\xi$} \label{subsec_polyhedral_decomp}

\begin{proof}[Proof of Theorem \ref{thm_polytope_decomp_intro}]
    Theorem \ref{brianchon_gram} tells us that
    \begin{gather}
        \mathfrak{g}^{\mathfrak{g}} \times \textnormal{LC}_{\mathfrak{g}}^{\mathfrak{p}}(\xi) \equiv \sum_{\mathfrak{v} \in \textnormal{Vert}(\mathfrak{g})} \mathfrak{t}_{\mathfrak{v}^{\mathfrak{g}}}^{\mathfrak{g}^{\mathfrak{g}}} \times \textnormal{LC}_{\mathfrak{g}}^{\mathfrak{p}}(\xi) = \sum_{\mathfrak{v} \in \textnormal{Vert}(\mathfrak{g})} \mathfrak{v} + {}^0 \mathfrak{t}_{\mathfrak{v}}^{\mathfrak{g}} \times {}^0 \textnormal{LC}_{\mathfrak{g}}^{\mathfrak{p}}(\xi). \label{vertex_expansion}
    \end{gather}
    Let $\textnormal{Face}_{\mathfrak{v}}^{\mathfrak{p}}(\xi)$ denote the set of $\xi$-constant faces of $\mathfrak{p}$ which contain $\mathfrak{v}$. Suppose we expand out all of the terms in \eqref{polytope_decomp} as in \eqref{vertex_expansion} and group them according to the vertices of $\mathfrak{p}$. Then for a fixed vertex $\mathfrak{v}$ we get as contribution the following virtual cone (shifted to be based at $\mathfrak{v}$):
    \begin{align}
        \sum_{\mathfrak{g} \in \textnormal{Face}_{\mathfrak{v}}^{\mathfrak{p}}(\xi)} & \sum_{(\mathfrak{g} = \mathfrak{h}_0 \subset \dots \subset \mathfrak{h}_\ell) \in \textnormal{mFl}_{\mathfrak{g}}^{\mathfrak{p}}(\xi)} (-1)^\ell \ \ {}^0\mathfrak{t}_{\mathfrak{v}}^{\mathfrak{g}} \times {}^0\mathfrak{t}_{\mathfrak{g}}^{\mathfrak{h}_1} \times \dots \times {}^0\mathfrak{t}_{\mathfrak{h}_\ell}^{\mathfrak{p}} \label{cancel1} \\
        &= \sum_{(\mathfrak{v} = \mathfrak{h}_0 \subset \dots \subset \mathfrak{h}_\ell) \in \textnormal{mFl}_{\mathfrak{v}}^{\mathfrak{p}}(\xi)} (-1)^\ell \ \ {}^0\mathfrak{t}_{\mathfrak{v}}^{\mathfrak{h}_1} \times {}^0\mathfrak{t}_{\mathfrak{h}_1}^{\mathfrak{h}_2} \times \dots {}^0\mathfrak{t}_{\mathfrak{h}_\ell}^{\mathfrak{p}} \label{cancel2} \\
        &+ \sum_{\mathfrak{g} \in \textnormal{Face}_{\mathfrak{v}}^{\mathfrak{p}}(\xi) \setminus \{\mathfrak{v}\}} \sum_{(\mathfrak{g} = \mathfrak{h}_1 \subset \dots \mathfrak{h}_\ell) \in \textnormal{mFl}_{\mathfrak{g}}^{\mathfrak{p}}(\xi)} (-1)^{\ell - 1} \ \ {}^0\mathfrak{t}_{\mathfrak{v}}^{\mathfrak{h}_1} \times {}^0\mathfrak{t}_{\mathfrak{h}_1}^{\mathfrak{h}_2} \times \dots {}^0\mathfrak{t}_{\mathfrak{h}_\ell}^{\mathfrak{p}} \label{cancel3} \\
        &= {}^0\mathfrak{t}_{\mathfrak{v}}^{\mathfrak{p}}. \label{cancel4}
    \end{align}
To go from \eqref{cancel1} to \eqref{cancel2} and \eqref{cancel3}, we have broken up the outer sum in \eqref{cancel1} into terms corresponding to $\mathfrak{v}$, and the terms coming from the remaining faces in $\textnormal{Face}_{\mathfrak{v}}^{\mathfrak{p}}(\xi)$. Every term in \eqref{cancel2} with $\ell \geq 1$ can be grouped with a unique term in \eqref{cancel3} which cancels it out. We are therefore left with a single term corresponding to the unique element in $\textnormal{mFl}_{\mathfrak{v}}^{\mathfrak{p}}(\xi)$ with $\ell = 0$, i.e. the flag $\mathfrak{v}$ which gives the cone ${}^0 \mathfrak{t}_{\mathfrak{v}}^{\mathfrak{p}}$. 

Finally, since $\mathfrak{t}_{\mathfrak{v}}^{\mathfrak{p}} = \mathfrak{v} + {}^0 \mathfrak{t}_{\mathfrak{v}}^{\mathfrak{p}}$, we obtain the theorem by again appealing to Theorem \ref{brianchon_gram} which tells us that $\mathfrak{p}$ is equal to the sum of the $\mathfrak{t}_{\mathfrak{v}}^{\mathfrak{p}}$ modulo indicator functions of polyhedra with lines.
\end{proof}

\subsection{An example}

In Section \ref{sec_example_intro} we gave an example of the decomposition in Theorem \ref{thm_polytope_decomp_intro}. In that example the $\xi$-maximal edge $\mathfrak{e}$ had the property that the tangent cone at both vertices of $\mathfrak{e}$ was acute. This resulted in the corresponding alternating Levi cones of the vertices being ``negative''. We now also consider a two-dimensional example with a $\xi$-maximal edge $\mathfrak{e}$ for which the cones at each vertex are obtuse, and thus the alternating Levi cones are ``positive''. 

Let $\mathfrak{p}$ be the polygon with vertices at $(0, 0), (0, 2), (1, 3), (3, 1), (2, -1)$. Let $\mathfrak{e}$ denote the edge from $(0, 0)$ to $(0, 2)$. Let $\xi = (1, 0)$. We then have that $\{\mathfrak{p} \}_\xi = \{\mathfrak{e}, (0, 0), (0, 2), (1, 3),$ $(3, 1), (2, -1)\}$. For the vertices that are $\xi$-maximal, i.e. $(1, 3), (3, 1), (2, -1)$, we have $\textnormal{LC}^\mathfrak{p}_\mathfrak{v}(\xi) = \mathfrak{t}_{\mathfrak{v}}^\mathfrak{p}$ like in the usual Brion's formula. We also have
\begin{align*}
    \textnormal{LC}^\mathfrak{p}_ \mathfrak{e}(\xi) &= \mathfrak{t}_{\mathfrak{e}}^{\mathfrak{p}} = \mathds{1}\{t \cdot (1, 0): t \in [0, \infty)\} \\
    \textnormal{LC}^\mathfrak{p}_{(0, 0)}(\xi) &= \mathfrak{t}_{(0, 0)}^{\mathfrak{p}} - \mathfrak{t}_{(0, 0)}^{\mathfrak{e}} \times \mathfrak{t}_{\mathfrak{e}}^{\mathfrak{p}} = \mathds{1}\{t_1 \cdot (2, -1) + t_2 \cdot (1, 0): t_1 \in [0, \infty), t_2 \in (0, \infty)\} \\
    \textnormal{LC}^\mathfrak{p}_{(0, 2)}(\xi) &= \mathfrak{t}_{(0, 2)}^{\mathfrak{p}} - \mathfrak{t}_{(0, 2)}^{\mathfrak{e}} \times \mathfrak{t}_{\mathfrak{e}}^{\mathfrak{p}} = \mathds{1}\{(0, 2) + t_1 \cdot (1, 1) + t_2 \cdot (1, 0): t_1 \in [0, \infty), t_2 \in (0, \infty)\}.
\end{align*}
The decomposition in Theorem \ref{thm_polytope_decomp_intro} for this polygon is illustrated in Figure \ref{fig_poly1}.

\begin{figure} 
\begin{tikzpicture}[scale=0.75]
\fill[color = blue, opacity = 0.2] (0, 0) -- (0, 2) -- (1, 3) -- (3, 1) -- (2, -1) -- cycle;
\draw[color = blue, line width = 1] (0, 0) -- (0, 2) -- (1, 3) -- (3, 1) -- (2, -1) -- cycle;
\node[xshift = -5mm] at (0, 0) {$(0, 0)$};
\node[xshift = -5mm] at (0, 2) {$(0, 2)$};
\node[yshift = 3mm] at (1, 3) {$(1, 3)$};
\node[xshift = 5mm] at (3, 1) {$(3, 1)$};
\node[yshift = -3mm] at (2, -1) {$(2, -1)$};
\node[xshift = -2mm] at (0, 1) {$\mathfrak{e}$};

\node at (5, 1) {$\equiv$};
\begin{scope}[xshift = 6.5 cm]
    \fill[color = red, opacity = 0.2] (0, 0) -- (0, 2) -- (3, 2) -- (3, 0) -- cycle;
    \node[xshift = -5mm] at (0, 0) {$(0, 0)$};
    \node[xshift = -5mm] at (0, 2) {$(0, 2)$};
    \node[xshift = -2mm] at (0, 1) {$\mathfrak{e}$};
    \draw[color = red, line width = 1] (3, 2) -- (0, 2) -- (0, 0) -- (3, 0);
\end{scope}

\node at (10, 1) {$+$};

\begin{scope}[xshift = 11.5 cm]
    \fill[color = red, opacity = 0.2] (0, 0) -- (3, 0) -- (3, -1.5) -- cycle;
    \node[xshift = -5mm] at (0, 0) {$(0, 0)$};
    \draw[color = red, line width = 1] (0, 0) -- (3, -1.5);
    \draw[color = red] (0, 0) circle (2 pt);
    \draw[color = red, line width = 1, dashed] (0, 0) -- (3, 0);
\end{scope}

\node at (15, 1) {$+$};

\begin{scope}[xshift = 16 cm]
    \fill[color = red, opacity = 0.2] (0, 2) -- (3, 5) -- (3, 2) -- cycle;
    \node[xshift = -5mm] at (0, 2) {$(0, 2)$};
    \draw[color = red, line width = 1] (0, 2) -- (3, 5);
    \draw[color = red] (0, 2) circle (2 pt);
    \draw[color = red, line width = 1, dashed] (0, 2) -- (3, 2);
\end{scope}

\node at (5, -4) {$+$};

\begin{scope}[xshift = 6.5 cm, yshift = -5 cm]
    \fill[color = red, opacity = 0.2] (1, 3) -- (3, 1) -- (-1, 1) -- cycle;
    \node[yshift = 3mm] at (1, 3) {$(1, 3)$};
    \draw[color = red, line width = 1] (1, 3) -- (3, 1);
    \draw[color = red, line width = 1] (1, 3) -- (-1, 1);
\end{scope}

\node at (10, -4) {$+$};

\begin{scope}[yshift = -5cm, xshift = 9 cm]
    \fill[color = red, opacity = 0.2] (3, 1) -- (2, -1) -- (2, 2) -- cycle;
    \node[xshift = 5mm] at (3, 1) {$(3, 1)$};
    \draw[color = red, line width = 1] (3, 1) -- (2, -1);
    \draw[color = red, line width = 1] (3, 1) -- (2, 2);
\end{scope}

\node at (14, -4) {$+$};

\begin{scope}[yshift = -5cm, xshift = 16.3cm]
    \fill[color = red, opacity = 0.2] (2, -1) -- (3, 1) -- (-2, 1) -- cycle;
    \node[yshift = -3mm] at (2, -1) {$(2, -1)$};
    \draw[color = red, line width = 1] (2, -1) -- (3, 1);
    \draw[color = red, line width = 1] (2, -1) -- (-2, 1);
\end{scope}

\end{tikzpicture}
\caption{Illustration of the polyhedral decomposition of Theorem \ref{thm_polytope_decomp_intro} for $\mathfrak{p}$.}
\label{fig_poly1}
\end{figure}

\begin{remark}
We see that in general in two dimensions, $\textnormal{LC}^\mathfrak{p}_\mathfrak{v}(\xi)$ is a cone with one of its rays missing and possibly weighted by -1. Thus this virtual cone is (up to lower dimensional cones and negative signs) an actual cone. More generally, if a $\xi$-constant face $\mathfrak{f}$ has codimension one, then either it is $\xi$-maximal, in which $\textnormal{LC}_{\mathfrak{f}}^{\mathfrak{p}}(\xi)$ is an actual cone by \eqref{eqn_xi_maximal_case}, or else $\mathfrak{p}$ is itself $\xi$-maximal, in which case $\textnormal{LC}_{\mathfrak{f}}^{\mathfrak{p}}(\xi) = \emptyset$ by Proposition \ref{prop_triv_or_one}. 

If $\mathfrak{f}$ is codimension two, then in case it is $\xi$-maximal $\textnormal{LC}_{\mathfrak{f}}^{\mathfrak{p}}(\xi)$ is clearly an actual cone, and in case $\mathfrak{p}$ is $\xi$-constant, then $\textnormal{LC}_{\mathfrak{f}}^{\mathfrak{p}}(\xi) = \emptyset$. Otherwise, it is necessarily contained in a unique $\xi$-maximal face which is codimension one, call it $\mathfrak{h}$. To see this, note that if $\xi$ were constant on two different facets of $\mathfrak{p}$ which intersected non-trivially, then necessarily $\xi = 0$. Thus the computation of $\textnormal{LC}_{\mathfrak{f}}^{\mathfrak{p}}(\xi)$ (which lives in the two-dimensional space orthogonal to $\mathfrak{f}$) reduces to a situation like in the above example or the example in Section \ref{sec_example_intro}. Thus we see that for codimension at most two, the alternating Levi cone is (again, up to lower dimensional cones and negative signs) an actual cone. It is worth investigating if this continues to be true for higher codimensions.
\end{remark}

\subsection{Geometric interpretation of ${}^0 \textnormal{DBI}_{\mathfrak{g}}^{\mathfrak{p}}(\xi)$}
We can group all elements in $\textnormal{sFl}_{\mathfrak{g}}^{\mathfrak{p}}$ according to the first $\xi$-constant face that occurs in the chain. We then naturally get
\begin{align}
    \textnormal{sFl}_{\mathfrak{g}}^{\mathfrak{p}} &= \bigsqcup_{\mathfrak{f} \in \{\mathfrak{p}\}_\xi \textnormal{ with } \mathfrak{g} \subseteq \mathfrak{f}} \textnormal{sFl}_{\mathfrak{g}}^{\mathfrak{f}} \cdot \textnormal{sFl}_{\mathfrak{f}}^{\mathfrak{p}}(\xi) \notag \\
    &= \textnormal{sFl}_{\mathfrak{g}}^{\mathfrak{p}}(\xi) \sqcup \bigsqcup_{\mathfrak{f} \in \{\mathfrak{p}\}_\xi \textnormal{ with } \mathfrak{g} \subset \mathfrak{f}} \textnormal{sFl}_{\mathfrak{g}}^{\mathfrak{f}} \cdot \textnormal{sFl}_{\mathfrak{f}}^{\mathfrak{p}}(\xi), \label{recursive_flags}
\end{align}
where the product ``$\cdot$'' means we take all the ways of concatenating a flag in $\textnormal{sFl}_{\mathfrak{g}}^{\mathfrak{f}}$ with a flag in $\textnormal{sFl}_{\mathfrak{f}}^{\mathfrak{p}}(\xi)$.  In comparing \eqref{virtual_cone} with \eqref{recursive_flags}, the definition of ${}^0\textnormal{LC}^\mathfrak{p}_\mathfrak{g}(\xi)$ naturally suggests the following:
\begin{theorem} \label{thm_holomorphic}
    Suppose $\mathfrak{g}$ is $\xi$-constant. Then
    \begin{gather*}
        {}^0\textnormal{DBI}^\mathfrak{p}_\mathfrak{g}(\xi) = I({}^0\textnormal{LC}^\mathfrak{p}_\mathfrak{g}(\xi); \xi).
    \end{gather*}
    By this we mean that the function $I({}^0\textnormal{LC}^\mathfrak{p}_\mathfrak{g}(\xi); \tau)$ with $\tau \in (\textnormal{lin}(\mathfrak{g})^\perp)^*_{\mathbb{C}}$ is holomorphic at $\tau = \xi$ with value equal to ${}^0\textnormal{DBI}^\mathfrak{p}_\mathfrak{g}(\xi)$. 
\end{theorem}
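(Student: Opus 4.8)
The plan is to prove Theorem \ref{thm_holomorphic} by downward induction on the poset of $\xi$-constant faces, the induction parameter being the length of the longest chain of $\xi$-constant faces strictly containing $\mathfrak{g}$; the case $\xi=0$ is trivial (by Proposition \ref{prop_triv_or_one} both sides vanish unless $\mathfrak{g}=\mathfrak{p}$), so assume $\xi\neq 0$. If $\mathfrak{g}$ is $\xi$-maximal then $\textnormal{sFl}_\mathfrak{g}^\mathfrak{p}(\xi)=\textnormal{sFl}_\mathfrak{g}^\mathfrak{p}$ and ${}^0\textnormal{LC}_\mathfrak{g}^\mathfrak{p}(\xi)=[{}^0\mathfrak{t}_\mathfrak{g}^\mathfrak{p}]$, so the asserted identity is exactly Proposition \ref{prop_maximal}, and $I({}^0\mathfrak{t}_\mathfrak{g}^\mathfrak{p};\cdot)$ is holomorphic at $\xi$ because, by Proposition \ref{barvinok_homogeneous}, its singular set is the union of the hyperplanes orthogonal to the extremal rays of ${}^0\mathfrak{t}_\mathfrak{g}^\mathfrak{p}$, i.e. to the edge directions of $\mathfrak{p}$ through $\mathfrak{g}$, none of which is $\xi$-constant. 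This is the base case.

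For the inductive step, apply the valuation $I(\cdot\,;\alpha)$ together with Proposition \ref{prop_multiplicative} (for each $\xi$-constant $\mathfrak{f}\supset\mathfrak{g}$, relative to the orthogonal splitting $\textnormal{lin}(\mathfrak{g})^\perp=(\textnormal{lin}(\mathfrak{f})\cap\textnormal{lin}(\mathfrak{g})^\perp)\oplus\textnormal{lin}(\mathfrak{f})^\perp$) to the defining relation \eqref{virtual_cone}, obtaining the identity of meromorphic functions on $(\textnormal{lin}(\mathfrak{g})^\perp)^*_{\mathbb{C}}$
\[
I({}^0\textnormal{LC}_\mathfrak{g}^\mathfrak{p}(\xi);\alpha)=I({}^0\mathfrak{t}_\mathfrak{g}^\mathfrak{p};\alpha)-\sum_{\mathfrak{g}\subsetneq\mathfrak{f}\in\{\mathfrak{p}\}_\xi}I({}^0\mathfrak{t}_\mathfrak{g}^\mathfrak{f};\alpha)\cdot I({}^0\textnormal{LC}_\mathfrak{f}^\mathfrak{p}(\xi);\alpha).
\]
By the inductive hypothesis each $I({}^0\textnormal{LC}_\mathfrak{f}^\mathfrak{p}(\xi);\cdot)$ with $\mathfrak{f}\supsetneq\mathfrak{g}$ is holomorphic at $\xi$ with value ${}^0\textnormal{DBI}_\mathfrak{f}^\mathfrak{p}(\xi)$, whereas each factor $I({}^0\mathfrak{t}_\mathfrak{g}^\mathfrak{f};\cdot)$ (including $\mathfrak{f}=\mathfrak{p}$) is in general singular at $\xi$, its polar hyperplanes through $\xi$ corresponding by Proposition \ref{barvinok_homogeneous} to the $\xi$-constant extremal rays of ${}^0\mathfrak{t}_\mathfrak{g}^\mathfrak{f}$. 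The heart of the matter is that these singular contributions cancel on the right-hand side, so that the right-hand side, hence $I({}^0\textnormal{LC}_\mathfrak{g}^\mathfrak{p}(\xi);\cdot)$, extends holomorphically across $\xi$ with value ${}^0\textnormal{DBI}_\mathfrak{g}^\mathfrak{p}(\xi)$ as in \eqref{dbi}. (In the non-recursive picture \eqref{levi_cone2}, after expanding every $I({}^0\mathfrak{t}^\bullet_\bullet;\alpha)$ by Barvinok's formula this is the statement that the alternating sum over $\xi$-maladapted flags annihilates precisely the poles.)

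To exhibit the cancellation I would expand $I({}^0\mathfrak{t}_\mathfrak{g}^\mathfrak{p};\alpha)$ by iterating Stokes' theorem (Proposition \ref{prop_stokes}, as in the proofs of Propositions \ref{proto_brion} and \ref{barvinok_prop}) on the pointed cone ${}^0\mathfrak{t}_\mathfrak{g}^\mathfrak{p}\subset\textnormal{lin}(\mathfrak{g})^\perp$ — whose faces are the ${}^0\mathfrak{t}_\mathfrak{g}^\mathfrak{h}$, with facet normals the orthogonal projections of the $\eta_\mathfrak{h}^{\mathfrak{h}'}$ as recalled before Proposition \ref{prop_maximal} — but halting each branch of the recursion the first time a $\xi$-constant face is met. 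Regrouping the resulting terms by the largest $\xi$-constant face they pass through, exactly along the lines of \eqref{recursive_flags}, rewrites $I({}^0\mathfrak{t}_\mathfrak{g}^\mathfrak{p};\alpha)$ as $\sum_{\mathfrak{g}\subseteq\mathfrak{f}\in\{\mathfrak{p}\}_\xi}W_\mathfrak{f}(\alpha)\,I({}^0\mathfrak{t}_\mathfrak{g}^\mathfrak{f};\alpha)$, where each weight $W_\mathfrak{f}$ is a sum over $\xi$-adapted saturated flags from $\mathfrak{f}$ to $\mathfrak{p}$ of products of Stokes factors $\langle\bar\alpha,\eta\rangle_H/\|\alpha^{\bullet}\|^2$; since a $\xi$-adapted flag meets only non-$\xi$-constant faces, these weights are real-analytic, hence locally bounded, near $\xi$, with $W_\mathfrak{g}(\xi)={}^0\textnormal{DBI}_\mathfrak{g}^\mathfrak{p}(\xi)$ and $W_\mathfrak{f}(\xi)={}^0\textnormal{DBI}_\mathfrak{f}^\mathfrak{p}(\xi)$. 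Substituting into the recursion, the $\mathfrak{f}=\mathfrak{g}$ term contributes the locally bounded $W_\mathfrak{g}(\alpha)$, and for $\mathfrak{f}\supsetneq\mathfrak{g}$ one is left with $I({}^0\mathfrak{t}_\mathfrak{g}^\mathfrak{f};\alpha)\bigl(W_\mathfrak{f}(\alpha)-I({}^0\textnormal{LC}_\mathfrak{f}^\mathfrak{p}(\xi);\alpha)\bigr)$; feeding in the inductive hypothesis at $\mathfrak{f}$ together with the explicit form of $W_\mathfrak{f}$ — the mismatch between the weight's denominators and those coming from $\mathfrak{f}$ is controlled by $\|\alpha^{\textnormal{lin}(\mathfrak{f})\cap\textnormal{lin}(\mathfrak{g})^\perp}\|^2$, which vanishes to second order at $\xi$ along all the relevant directions — one shows that these products, when handled together, are again locally bounded near $\xi$. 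A meromorphic function of several complex variables that is locally bounded is holomorphic (its polar set, when nonempty, is pure of codimension one and the function blows up along it), so $I({}^0\textnormal{LC}_\mathfrak{g}^\mathfrak{p}(\xi);\cdot)$ is holomorphic at $\xi$; its value there is then read off as the limit along a generic complex line through $\xi$, where the iterated Stokes identity is valid, and via \eqref{recursive_flags} and the inductive values at the $\mathfrak{f}\supsetneq\mathfrak{g}$ this limit equals ${}^0\textnormal{DBI}_\mathfrak{g}^\mathfrak{p}(\xi)$.

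The step I expect to be the genuine obstacle is exactly this pole cancellation, and inside it the bookkeeping of orthogonal projections: a $\xi$-adapted tail flag enters the truncated Stokes expansion of $I({}^0\mathfrak{t}_\mathfrak{g}^\mathfrak{p};\alpha)$ with denominators $\|\alpha^{\textnormal{lin}(\mathfrak{h}_j)\cap\textnormal{lin}(\mathfrak{g})^\perp}\|^2$, whereas the inductive hypothesis at $\mathfrak{f}$ naturally produces $\|\alpha^{\textnormal{lin}(\mathfrak{h}_j)\cap\textnormal{lin}(\mathfrak{f})^\perp}\|^2$; these agree at $\xi$ but not in a neighbourhood, and reconciling them requires choosing the Stokes multipliers in the tail segments to be $\overline{\alpha^{\textnormal{lin}(\mathfrak{h}_j)\cap\textnormal{lin}(\mathfrak{f})^\perp}}$ rather than $\overline{\alpha^{\textnormal{lin}(\mathfrak{h}_j)\cap\textnormal{lin}(\mathfrak{g})^\perp}}$ — legitimate, since the Hermitian pairing of the former with $\alpha^{\textnormal{lin}(\mathfrak{h}_j)\cap\textnormal{lin}(\mathfrak{g})^\perp}$ is $\|\alpha^{\textnormal{lin}(\mathfrak{h}_j)\cap\textnormal{lin}(\mathfrak{f})^\perp}\|^2\neq 0$ near $\xi$ — and organizing the iteration so that the checkpoint $\xi$-constant face is fixed before its tail is processed. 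For faces $\mathfrak{f}$ of codimension $\geq 3$ over $\mathfrak{g}$ a term-by-term bound is insufficient and the various $\mathfrak{f}$ (and the full alternating structure \eqref{levi_cone2}) must be combined, and non-simplicial transverse cones must first be triangulated; carrying all of this out is the bulk of the argument and is the point the introduction flags as the most delicate part of the paper.
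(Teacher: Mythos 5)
Your inductive scheme is a genuine departure from the paper's argument, and it is worth saying what the paper does instead: there, one starts from the non-recursive expression \eqref{levi_cone2}, expands every $I({}^0\mathfrak{t}_{\mathfrak{H}};\alpha)$ via Proposition \ref{prop_multiplicative} and Proposition \ref{barvinok_prop}, obtaining a double sum over $\xi$-maladapted flags and their saturated refinements, and then \emph{switches the order of summation}: the outer sum runs over $\mathfrak{M}\in\textnormal{sFl}_{\mathfrak{g}}^{\mathfrak{p}}$ and the inner sum, over $\xi$-maladapted subflags refined by $\mathfrak{M}$, is encoded by binary sequences $\varepsilon\in\mathcal{B}_{k(\mathfrak{M})}$. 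The entire holomorphicity statement then reduces, for each fixed $\mathfrak{M}$ separately, to the purely algebraic cancellation in Lemma \ref{lemma_alg3} (proved by the pairing $\varepsilon\leftrightarrow\varepsilon^\vee$ and Lemmas \ref{lemma_alg1}--\ref{lemma_alg2}), with no induction at all.

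Against that backdrop there is a concrete gap in your inductive step, and it is essentially the one you yourself flag. After substituting the halted Stokes expansion
$I({}^0\mathfrak{t}_\mathfrak{g}^\mathfrak{p};\alpha)=\sum_{\mathfrak{f}}W_\mathfrak{f}(\alpha)\,I({}^0\mathfrak{t}_\mathfrak{g}^\mathfrak{f};\alpha)$ into the recursion \eqref{virtual_cone}, you are left to bound $I({}^0\mathfrak{t}_\mathfrak{g}^\mathfrak{f};\alpha)\bigl(W_\mathfrak{f}(\alpha)-I({}^0\textnormal{LC}_\mathfrak{f}^\mathfrak{p}(\xi);\alpha^{\mathfrak{f}^\perp})\bigr)$ near $\xi$. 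By Proposition \ref{barvinok_homogeneous} the prefactor is $(-d)$-homogeneous in $\alpha^{\textnormal{lin}(\mathfrak{f})\cap\textnormal{lin}(\mathfrak{g})^\perp}$, $d=\dim\mathfrak{f}-\dim\mathfrak{g}$, so generically it blows up like $\|\alpha^{\textnormal{lin}(\mathfrak{f})\cap\textnormal{lin}(\mathfrak{g})^\perp}\|^{-d}$ as $\alpha\to\xi$. Your inductive hypothesis gives only that $I({}^0\textnormal{LC}_\mathfrak{f}^\mathfrak{p}(\xi);\cdot)$ is holomorphic at $\xi$ \emph{with a prescribed value there}; it carries no information about how fast the second factor vanishes, and the quadratic order coming from the $\|\alpha^{\textnormal{lin}(\mathfrak{h}_j)\cap\textnormal{lin}(\mathfrak{g})^\perp}\|^2$-versus-$\|\alpha^{\textnormal{lin}(\mathfrak{h}_j)\cap\textnormal{lin}(\mathfrak{f})^\perp}\|^2$ mismatch is not enough once $d\geq 3$. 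Your proposed remedies do not close this: the ``choose the tail Stokes multiplier to be $\overline{\alpha^{\textnormal{lin}(\mathfrak{h}_j)\cap\textnormal{lin}(\mathfrak{f})^\perp}}$'' maneuver cannot be executed inside the top-down Stokes recursion, because at the step where ${}^0\mathfrak{t}_\mathfrak{g}^{\mathfrak{h}_j}$ is processed the eventual checkpoint face $\mathfrak{f}$ of the branch is not yet determined (different descendants hit different $\mathfrak{f}$), so the halted expansion with the reconciled denominators is no longer an identity; and ``combining the various $\mathfrak{f}$ and the full alternating structure \eqref{levi_cone2}'' is precisely the content of the paper's re-summation plus Lemma \ref{lemma_alg3}, i.e.\ the whole theorem, so invoking it inside the inductive step renders the induction vacuous. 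To save the inductive route you would need to strengthen the inductive statement — e.g.\ prove that the difference above vanishes at $\xi$ to order $\geq d$ uniformly in the direction of approach, or control the relevant Taylor coefficients — and that requires a genuinely new estimate that is not in the proposal. Everything else (base case via Propositions \ref{prop_maximal} and \ref{barvinok_homogeneous}; passing from local boundedness to holomorphicity for the rational function $I({}^0\textnormal{LC}_\mathfrak{g}^\mathfrak{p}(\xi);\cdot)$; the $\xi=0$ degeneration) is fine.
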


We point out that if $\mathfrak{g}$ is not $\xi$-maximal, then for every single constituent cone $\mathfrak{k}$ in the definition of ${}^0 \textnormal{LC}^\mathfrak{p}_\mathfrak{g}(\xi)$ in \eqref{levi_cone2}, we have that $I(\mathfrak{k}; \tau)$ is \textit{not} holomorphic at $\tau = \xi$. We also point out that if $\mathfrak{g}$ is $\xi$-maximal, then Theorem \ref{thm_holomorphic} simply reduces to Proposition \ref{prop_maximal}.

\begin{example} \label{ex_calc}
    Suppose $\mathfrak{g}$ is of codimension 1 in the $\xi$-maximal face $\mathfrak{f}$, and suppose $\mathfrak{f}$ is the only $\xi$-maximal face containing $\mathfrak{g}$. Then ${}^0 \textnormal{LC}_{\mathfrak{g}}^{\mathfrak{p}}(\xi) = [{}^0 \mathfrak{t}_{\mathfrak{g}}^{\mathfrak{p}}] - [{}^0\mathfrak{t}_{\mathfrak{g}}^{\mathfrak{f}} \times {}^0 \mathfrak{t}_{\mathfrak{f}}^{\mathfrak{p}}]$. We wish to show that Theorem \ref{thm_holomorphic} holds in this case. Let $\tau \in (\textnormal{lin}(\mathfrak{g})^\perp)^*_{\mathbb{C}}$ be generic with respect to ${}^0 \textnormal{LC}_{\mathfrak{g}}^{\mathfrak{p}}(\xi)$. We have that
    \begin{align}
        I({}^0 \textnormal{LC}_{\mathfrak{g}}^{\mathfrak{p}}(\xi); \tau) &= \sum_{(\mathfrak{g} = \mathfrak{h}_0 \subset \mathfrak{h}_1 \neq \mathfrak{f} \subset \dots \subset \mathfrak{h}_\ell) \in \textnormal{sFl}_{\mathfrak{g}}^{\mathfrak{p}}} \prod_{j =1}^\ell \frac{\langle \tau, \eta_{\mathfrak{h}_{j-1} }^{\mathfrak{h}_j} \rangle}{\|\tau^{\mathfrak{h}_j}\|^2} + \sum_{(\mathfrak{g} = \mathfrak{h}_0 \subset \mathfrak{f} = \mathfrak{h}_1 \subset \dots \subset \mathfrak{h}_\ell) \in \textnormal{sFl}_{\mathfrak{g}}^{\mathfrak{p}}} \frac{\langle \tau, \eta_{\mathfrak{g}}^{\mathfrak{f}} \rangle}{\|\tau^\mathfrak{f}\|^2} \prod_{j = 2}^\ell \frac{\langle \tau, \eta_{\mathfrak{h}_{j-1} }^{\mathfrak{h}_j} \rangle}{\|\tau^{\mathfrak{h}_j}\|^2} \nonumber \\
        & - \frac{\langle \tau, \eta_{\mathfrak{g}}^{\mathfrak{f}} \rangle}{\|\tau^{\mathfrak{f}}\|^2} \sum_{(\mathfrak{f} = \mathfrak{h}_1 \subset \dots \subset \mathfrak{h}_\ell) \in \textnormal{sFl}_{\mathfrak{f}}^{\mathfrak{p}}} \prod_{j = 2}^\ell \frac{\langle \tau, \eta_{\mathfrak{h}_{j-1}}^{\mathfrak{h}_j} \rangle}{\|\tau^{\mathfrak{t}_{\mathfrak{f}}^{\mathfrak{h}_j}}\|^2}. \label{eqn_nicest}
    \end{align}

    Note that we have $\|\tau^{\mathfrak{h}_j}\|^2 = \|\tau^{\mathfrak{f}}\|^2 + \|\tau^{\mathfrak{t}_{\mathfrak{f}}^{\mathfrak{h}_j}}\|^2$. We therefore have
    \begin{align}
        & \sum_{(\mathfrak{g} = \mathfrak{h}_0 \subset \mathfrak{f} = \mathfrak{h}_1 \subset \dots \subset \mathfrak{h}_\ell) \in \textnormal{sFl}_{\mathfrak{g}}^{\mathfrak{p}}} \frac{\langle \tau, \eta_{\mathfrak{g}}^{\mathfrak{f}} \rangle}{\|\tau^\mathfrak{f}\|^2} \prod_{j = 2}^\ell \frac{\langle \tau, \eta_{\mathfrak{h}_{j-1} }^{\mathfrak{h}_j} \rangle}{\|\tau^\mathfrak{h}_j\|^2} - \frac{\langle \tau, \eta_{\mathfrak{g}}^{\mathfrak{f}} \rangle}{\|\tau^{\mathfrak{f}}\|^2} \sum_{(\mathfrak{f} = \mathfrak{h}_1 \subset \dots \subset \mathfrak{h}_\ell) \in \textnormal{sFl}_{\mathfrak{f}}^{\mathfrak{p}}} \prod_{j = 2}^\ell \frac{\langle \tau, \eta_{\mathfrak{h}_{j-1}}^{\mathfrak{h}_j} \rangle}{\|\tau^{\mathfrak{t}_{\mathfrak{f}}^{\mathfrak{h}_j}}\|^2} \nonumber \\
        &= \frac{\langle \tau, \eta_{\mathfrak{g}}^{\mathfrak{f}} \rangle}{\|\tau^{\mathfrak{f}}\|^2} \Big( \sum_{(\mathfrak{f} = \mathfrak{h}_1 \subset \dots \subset \mathfrak{h}_\ell \in \textnormal{sFl}_{\mathfrak{f}}^{\mathfrak{g}})} \prod_{j = 2}^\ell \langle \tau, \eta_{\mathfrak{h}_{j-1}}^{\mathfrak{h}_j} \rangle \big( \prod_{j = 2}^{\ell} \frac{1}{\|\tau^{\mathfrak{f}}\|^2 + \|\tau^{\mathfrak{t}_{\mathfrak{f}}^{\mathfrak{h}_j}}\|^2} - \prod_{j = 2}^\ell \frac{1}{\|\tau^{\mathfrak{t}_{\mathfrak{f}}^{\mathfrak{h}_j}}\|^2} \big) \Big). \label{eqn_nicer}
    \end{align}
    We also have that
    \begin{align}
        \frac{1}{\|\tau^{\mathfrak{f}}\|^2}\Big( \prod_{j = 2}^{\ell} \frac{1}{\|\tau^{\mathfrak{f}}\|^2 + \|\tau^{\mathfrak{t}_{\mathfrak{f}}^{\mathfrak{h}_j}}\|^2} - \prod_{j = 2}^\ell \frac{1}{\|\tau^{\mathfrak{t}_{\mathfrak{f}}^{\mathfrak{h}_j}}\|^2} \Big) &= \frac{1}{\|\tau^{\mathfrak{f}}\|^2} \Big(\frac{ \prod_{j = 2}^\ell \|\tau^{\mathfrak{t}_{\mathfrak{f}}^{\mathfrak{h}_j}}\|^2 - \prod_{j = 2}^\ell (\|\tau^{\mathfrak{f}}\|^2 + \|\tau^{\mathfrak{t}_{\mathfrak{f}}^{\mathfrak{h}_j}}\|^2)}{\prod_{j = 2}^\ell (\|\tau^{\mathfrak{t}_{\mathfrak{f}}^{\mathfrak{h}_j}}\|^2)\cdot (\|\tau^{\mathfrak{f}}\|^2 + \|\tau^{\mathfrak{t}_{\mathfrak{f}}^{\mathfrak{h}_j}}\|^2)} \Big) \nonumber \\
        &= \frac{1}{\|\tau^{\mathfrak{f}}\|^2} \Big( \frac{\|\tau^\mathfrak{f}\|^2 \cdot R}{\prod_{j = 2}^\ell (\|\tau^{\mathfrak{t}_{\mathfrak{f}}^{\mathfrak{h}_j}}\|^2)\cdot (\|\tau^{\mathfrak{f}}\|^2 + \|\tau^{\mathfrak{t}_{\mathfrak{f}}^{\mathfrak{h}_j}}\|^2)} \Big) \label{eqn_messy}
    \end{align}
    where $R$ is some polynomial in $\|\tau^{\mathfrak{f}}\|^2$ and $\|\tau^{\mathfrak{t}_{\mathfrak{h}_{j-1}}^{\mathfrak{h}_{j}}}\|^2$. Thus the expression in \eqref{eqn_messy} is well-defined even when $\tau^{\mathfrak{f}} = 0$ and no other term in the denominator vanishes, for example when $\tau = \xi$. This in turn implies that \eqref{eqn_nicer} is holomorphic in a neighborhood of $\xi$ and is zero at $\tau = \xi$. This then finally implies via \eqref{eqn_nicest} that $I({}^0 \textnormal{LC}_{\mathfrak{g}}^{\mathfrak{p}}(\xi); \xi) = {}^0 \textnormal{DBI}_\mathfrak{g}^{\mathfrak{p}}(\xi)$.
\end{example}

\subsection{An algebraic interlude}
In preparation for proving Theorem \ref{thm_holomorphic}, we make an algebraic interlude which captures and generalizes the essential algebraic ingredients used in Example \ref{ex_calc}. Let $a = (a_1, \dots, a_k)$ and $x = (x_1, \dots, x_m)$ be variables with $m \geq 1$. Let $\varepsilon = (\varepsilon_1, \varepsilon_2, \dots, \varepsilon_k)$ be a binary sequence. Let 
\begin{align*}
    \delta^{\varepsilon} &:= (\delta_1^\varepsilon, \dots, \delta_k^\varepsilon, \delta_{k+1}^\varepsilon, \dots, \delta_{k+m}^\varepsilon) = (1, \varepsilon_1, \dots, \varepsilon_{k-1}, \varepsilon_k, 0, \dots, 0) \\
    b &:= (b_1, \dots, b_{k+m}) = (a_1, \dots, a_k, x_1, \dots x_m).
\end{align*}
In words, $\delta^\varepsilon$ is the binary sequence $\varepsilon$ with a 1 prepended, and 0's appended so that the total length is $m$, and $b$ is simply the concatenation of $a$ and $x$. We define
\begin{gather*}
    L(\delta^\varepsilon, j) := \sup \{ i \leq j : \delta_i^\varepsilon = 1\}.
\end{gather*}
In words, $L(\delta^\varepsilon, j)$ is the biggest index less than or equal to $j$ such that $\delta^\varepsilon$ has a 1. We now define
\begin{align}
    M(\varepsilon; b; j) &:= \sum_{\ell = L(\delta^\varepsilon, j)}^{j} b_\ell \notag \\
    P(\varepsilon; b) &:= \prod_{j = 1}^{k+m} \frac{1}{M(\varepsilon; b; j)}. \label{p_fn}
\end{align}
Notice that $M(\varepsilon; b; j)$ is only a function of the $a_i$'s if $1 \leq j \leq k$.

\begin{example} 
These definitions are best digested with examples. Suppose $a = (a_1, \dots, a_4)$, $x = (x_1, x_2)$, $\varepsilon = (0, 0, 1, 0)$, and $\delta^\varepsilon = (1, 0, 0, 1, 0, 0)$. Then
\begin{align}
    M(\varepsilon; b; 1) &= a_1 \notag \\
    M(\varepsilon; b; 2) &= a_1 + a_2 \notag \\
    M(\varepsilon; b; 3) &= a_1 + a_2 + a_3 \notag \\
    M(\varepsilon; b; 4) &= a_4 \notag \\
    M(\varepsilon; b; 5) &= a_4 + x_1 \notag \\
    M(\varepsilon; b; 6) &= a_4 + x_1 + x_2 \notag \\
    P(\varepsilon; b) &= \frac{1}{a_1} \cdot \frac{1}{a_1 + a_2} \cdot \frac{1}{a_1 + a_2 + a_3} \cdot \frac{1}{a_4} \cdot \frac{1}{a_4 + x_1} \cdot \frac{1}{a_4 + x_1 + x_2}. \label{algebraic_example}
\end{align}
\end{example}

\begin{lemma} \label{lemma_alg1}
    The function $Q(c, y_1, \dots, y_n)$ defined by
    \begin{gather*}
        Q(c, y_1, \dots, y_n):= \frac{1}{c} \Big( \prod_{\ell = 1}^n \frac{1}{y_\ell + c} - \prod_{\ell = 1}^n \frac{1}{y_\ell} \Big)
    \end{gather*}
    has an expression as a rational function $f(c, y)/g(c, y)$ such that $g(c, y)$ does not contain a factor of $c$.
\end{lemma}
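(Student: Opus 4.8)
The plan is to clear denominators and exhibit the cancellation of $c$ explicitly. First I would rewrite the bracketed difference over a common denominator:
\[
\prod_{\ell = 1}^n \frac{1}{y_\ell + c} - \prod_{\ell = 1}^n \frac{1}{y_\ell} = \frac{\prod_{\ell = 1}^n y_\ell - \prod_{\ell = 1}^n (y_\ell + c)}{\left(\prod_{\ell = 1}^n y_\ell\right)\left(\prod_{\ell = 1}^n (y_\ell + c)\right)}.
\]
The key point is that the numerator $N(c, y) := \prod_{\ell} y_\ell - \prod_{\ell}(y_\ell + c)$, regarded as a polynomial in $c$ with coefficients in $\mathbb{Q}[y_1, \dots, y_n]$, satisfies $N(0, y) = 0$, hence is divisible by $c$ in $\mathbb{Q}[c, y_1, \dots, y_n]$. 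Concretely, expanding $\prod_\ell (y_\ell + c) = \prod_\ell y_\ell + \sum_{\emptyset \neq S \subseteq \{1, \dots, n\}} c^{|S|} \prod_{\ell \notin S} y_\ell$ gives $N(c, y) = c \cdot h(c, y)$ with $h(c, y) = -\sum_{\emptyset \neq S} c^{|S| - 1} \prod_{\ell \notin S} y_\ell \in \mathbb{Q}[c, y_1, \dots, y_n]$.

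Substituting this back, I would obtain
\[
Q(c, y_1, \dots, y_n) = \frac{1}{c} \cdot \frac{c \cdot h(c, y)}{\left(\prod_\ell y_\ell\right)\left(\prod_\ell (y_\ell + c)\right)} = \frac{h(c, y)}{\left(\prod_\ell y_\ell\right)\left(\prod_\ell (y_\ell + c)\right)},
\]
so one may take $f(c, y) = h(c, y)$ and $g(c, y) = \left(\prod_\ell y_\ell\right)\left(\prod_\ell (y_\ell + c)\right)$. It then remains to check that $g$ contains no factor of $c$: evaluating at $c = 0$ gives $g(0, y) = \left(\prod_\ell y_\ell\right)^2$, which is a nonzero element of $\mathbb{Q}[y_1, \dots, y_n]$, so $c \nmid g$ in $\mathbb{Q}[c, y_1, \dots, y_n]$, as required.

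There is no genuine obstacle here; the only subtlety worth stating precisely is the meaning of ``does not contain a factor of $c$'', namely that $g$ is not divisible by $c$ in the polynomial ring, which is exactly what the substitution $c = 0$ verifies. This lemma is the algebraic core of the computation in Example \ref{ex_calc}, and it will be used later with the $y_\ell$'s and $c$ specialized to squared norms of the form $\|\alpha^{\mathfrak{t}_{\mathfrak{f}}^{\mathfrak{h}_j}}\|^2$ and $\|\alpha^{\mathfrak{f}}\|^2$; the conclusion then shows that the apparent pole at $\alpha^{\mathfrak{f}} = 0$ is removable.
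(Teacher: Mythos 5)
Your proof is correct and follows essentially the same route as the paper's: bring the bracketed difference to a common denominator, observe that the numerator vanishes at $c=0$ and hence is divisible by $c$ as a polynomial, and cancel the $c$ against the prefactor $1/c$. The explicit subset expansion of $h(c,y)$ and the check that $g(0,y)=\bigl(\prod_\ell y_\ell\bigr)^2\neq 0$ are harmless additions to what is the identical argument.
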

\begin{proof}
    When we bring to a common denominator we get
    \begin{gather}
        \frac{1}{c} \frac{\prod_\ell y_\ell - \prod_\ell (y_\ell + c)}{\prod_\ell \big(y_\ell \cdot (y_\ell + c) \big)}. \label{common_denom}
    \end{gather}
    Notice then that
    \begin{gather*}
        \prod_{\ell = 1}^n (y_\ell + c) = (\prod_{\ell = 1}^n y_\ell) + c \cdot R(c, y_1, \dots, y_n),
    \end{gather*}
    where $R(c, y_1, \dots, y_n)$ is a polynomial. The expression in \eqref{common_denom} thus simplifies to
    \begin{gather*}
        \frac{-R(c, y_1, \dots, y_n)}{\prod y_\ell \cdot (y_\ell + c)}.
    \end{gather*}
\end{proof}
\noindent This is the same calculation that occurred in \eqref{eqn_messy}.

\begin{lemma} \label{lemma_alg2}
    Let $\varepsilon$ and $\varepsilon'$ be two binary sequences that differ only at position $r$. Then $P(\varepsilon; b) - P(\varepsilon'; b)$ has an expression as a rational function $f(x, a)/g(x, a)$ such that $g(x, a)$ does not contain the factor $M(\varepsilon; b; r) = M(\varepsilon'; b; r)$. 
\end{lemma}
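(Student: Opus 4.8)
The plan is to localize the difference $P(\varepsilon; b) - P(\varepsilon'; b)$ to a single contiguous block of indices and then quote Lemma~\ref{lemma_alg1}. Both the hypothesis and the conclusion are symmetric under interchanging $\varepsilon$ and $\varepsilon'$ (the difference merely changes sign), so I would assume without loss of generality that $\varepsilon_r = 0$ and $\varepsilon'_r = 1$; in terms of the prepended-and-padded sequences this says $\delta^{\varepsilon}$ and $\delta^{\varepsilon'}$ agree at every index except $r' := r+1$, where $\delta^{\varepsilon}_{r'} = 0$ and $\delta^{\varepsilon'}_{r'} = 1$. I would then set $p := L(\delta^{\varepsilon}, r)$, which also equals $L(\delta^{\varepsilon}, r')$ since $\delta^{\varepsilon}_{r'} = 0$, and let $s$ be the least index $> r'$ with $\delta^{\varepsilon}_s = 1$, with the convention $s := k+m+1$ if there is none; one always has $r' < s$, so the block $\{r', \dots, s-1\}$ is nonempty.

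The core step is index bookkeeping comparing $M(\varepsilon; b; j)$ with $M(\varepsilon'; b; j)$. Since the two sequences agree away from $r'$, one gets $L(\delta^{\varepsilon}, j) = L(\delta^{\varepsilon'}, j)$, hence $M(\varepsilon; b; j) = M(\varepsilon'; b; j)$, for every $j$ with $j \le r$ or $j \ge s$; while for $r' \le j \le s-1$ one has $L(\delta^{\varepsilon}, j) = p$ and $L(\delta^{\varepsilon'}, j) = r'$. Writing $c := M(\varepsilon; b; r) = \sum_{\ell = p}^{r} b_\ell$ (equal to $M(\varepsilon'; b; r)$, as in the statement) and $y_j := M(\varepsilon'; b; j) = \sum_{\ell = r'}^{j} b_\ell$ for $r' \le j \le s-1$, the relation $r' = r+1$ yields $M(\varepsilon; b; j) = c + y_j$ on this block. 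Factoring out the common factor $1/c$ (the $j = r$ term) and collecting the remaining common factors into $\tilde{D} := \prod M(\varepsilon; b; j)$ over $1 \le j \le r-1$ and $s \le j \le k+m$, I obtain
\[
P(\varepsilon; b) = \frac{1}{c\,\tilde{D}}\prod_{j=r'}^{s-1}\frac{1}{c + y_j}, \qquad P(\varepsilon'; b) = \frac{1}{c\,\tilde{D}}\prod_{j=r'}^{s-1}\frac{1}{y_j},
\]
whence $P(\varepsilon; b) - P(\varepsilon'; b) = \tilde{D}^{-1}\, Q(c, y_{r'}, \dots, y_{s-1})$ with $Q$ precisely the function of Lemma~\ref{lemma_alg1} in $n = s - r' \ge 1$ variables.

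Applying Lemma~\ref{lemma_alg1}, together with the explicit denominator $g = \prod_\ell y_\ell(y_\ell + c)$ read off from its proof, I can write $P(\varepsilon; b) - P(\varepsilon'; b) = f / \big(\tilde{D}\, \prod_{j=r'}^{s-1} M(\varepsilon'; b; j)\, M(\varepsilon; b; j)\big)$, and it remains to check that $c$ does not divide the denominator. Now $c$ is a nonzero linear form with $\{0,1\}$ coefficients in the $b_\ell$, hence irreducible in the UFD $\mathbb{Q}[a, x]$, so it is enough to verify that $c$ coincides with none of the factors $M(\varepsilon; b; j)$, $M(\varepsilon'; b; j)$ appearing in the denominator: for $j \le r-1$ the form $M(\varepsilon; b; j)$ omits the variable $b_r$ that $c$ contains; for $j \ge s$ the forms $M(\varepsilon; b; j)$, $M(\varepsilon'; b; j)$ contain a $b_\ell$ with $\ell \ge s > r$ that $c$ omits; and for $r' \le j \le s-1$ the forms $\sum_{\ell=p}^{j} b_\ell$ and $\sum_{\ell=r'}^{j} b_\ell$ are both distinct from $c = \sum_{\ell=p}^{r} b_\ell$ because $j \ge r' > r$. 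This produces the required expression.

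The main thing to be careful about is the index analysis of the second paragraph — establishing $L(\delta^{\varepsilon}, j) = p$ and $L(\delta^{\varepsilon'}, j) = r'$ on $\{r', \dots, s-1\}$ and confirming the boundary configurations cause no trouble (namely $r = k$, which forces $s = k+m+1$; $p = r$, where $c = b_r$ is a single variable; and the empty-product cases $r = 1$ or $s = k+m+1$). These are all routine; everything downstream is a formal manipulation plus the cited lemma and unique factorization.
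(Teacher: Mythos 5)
Your proof is correct and follows essentially the same route as the paper's: isolate the block of indices $r+1,\dots,s-1$ where the two $M$'s differ by the constant $c = M(\varepsilon;b;r)$, factor out the common terms, and apply Lemma~\ref{lemma_alg1}. The only difference is that you explicitly verify (via irreducibility of the linear forms) that $c$ does not coincide with any remaining denominator factor, a point the paper leaves implicit until the proof of Lemma~\ref{lemma_alg3}; this is a welcome extra bit of care but not a different argument.
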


\begin{proof}
    Without loss of generality, suppose $\varepsilon$ has a 0 at position $r$, and $\varepsilon'$ has a 1 at position $r$. Let $s$ be the first index in $\delta^\varepsilon$ or in $\delta^{\varepsilon'}$ with $s > r+1$ such that $\delta_s^\varepsilon = 1 = \delta_s^{\varepsilon'}$; if $\delta_s^\varepsilon = 0$ for all $s > r$, then set $s = k+m + 1$. Then for all $1 \leq t \leq r$ we have $M(\varepsilon; b; t) = M(\varepsilon'; b; t)$. Also for all $s \leq t \leq k+m$ we have $M(\varepsilon; b; t) = M(\varepsilon'; b; t)$. On the other hand, for all $r+1 \leq t < s$, we have $M(\varepsilon; b; t) - M(\varepsilon'; b; t) = M(\varepsilon; b; r)$. We can thus write
    \begin{align*}
        P(\varepsilon; b) - P(\varepsilon'; b) &= \prod_{t = 1}^{r-1} \frac{1}{M(\varepsilon; b; t)} \cdot \prod_{t = s}^{k+m} \frac{1}{M(\varepsilon; b; t)} \\
        & \cdot \frac{1}{M(\varepsilon; b; r)} \Big(\prod_{t= r+1}^{s-1} \frac{1}{M(\varepsilon'; b; t) + M(\varepsilon; b; r)} - \prod_{t = r+1}^{s-1} \frac{1}{M(\varepsilon'; b; t)} \Big).
    \end{align*}
The result now follows from Lemma \ref{lemma_alg1}.
\end{proof}

\begin{example} 
Continuing the example in \eqref{algebraic_example} from before, suppose now $\varepsilon' = (1, 0, 1, 0)$, so then $\delta^{\varepsilon'} = (1, 1, 0, 1, 0, 0)$. We then have
\begin{align*}
    M(\varepsilon'; b; 1) &= a_1 \\
    M(\varepsilon'; b; 2) &= a_2 \\
    M(\varepsilon'; b; 3) &= a_2 + a_3 \\
    M(\varepsilon'; b; 4) &= a_4 \\
    M(\varepsilon'; b; 5) &= a_4 + x_1 \\
    M(\varepsilon'; b; 6) &= a_4 + x_1 + x_2.
\end{align*}
In terms of the proof of Lemma \ref{lemma_alg2}, we have that $r = 1$ and $s = 4$.
\end{example}

Given a binary sequence $\varepsilon$, let $\#1(\varepsilon)$ denote the number of ones occuring in $\varepsilon$. Let $\mathcal{B}_k$ denote the set of all binary sequences of length $k$.

\begin{lemma} \label{lemma_alg3}
    Define the function $Q(a, x)$ by
    \begin{gather*}
        Q(a, x) := \sum_{\varepsilon \in \mathcal{B}_k} (-1)^{\# 1(\varepsilon)} P(\varepsilon; b).
    \end{gather*}
    Then $Q(a, x)$ has an expression as a rational function $f(x, a)/g(x, a)$ such that $g(x, a)$ contains no factors of the form $M(\varepsilon'; b; t)$ for any $\varepsilon'$ and for any $1 \leq t \leq k$.
\end{lemma}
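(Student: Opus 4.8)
The plan is to peel off the first coordinate $\varepsilon_1$ of $\varepsilon$ and induct on $k$, after strengthening the statement so as to control the precise shape of the denominator of $Q$. The first step is a splitting identity. Write $\varepsilon=(\varepsilon_1,\varepsilon')$ with $\varepsilon'=(\varepsilon_2,\dots,\varepsilon_k)\in\mathcal{B}_{k-1}$. Since $\delta_1^\varepsilon=1$ we always have $M(\varepsilon;b;1)=a_1$. If $\varepsilon_1=1$ then $\delta_2^\varepsilon=1$, so $L(\delta^\varepsilon,j)\ge 2$ for every $j\ge 2$ and none of $M(\varepsilon;b;2),\dots,M(\varepsilon;b;k+m)$ involves $a_1$; after relabelling the entries $(a_2,\dots,a_k,x_1,\dots,x_m)$ this gives
\[ P\big((1,\varepsilon');(a_1,\dots,a_k,x)\big)=\frac{1}{a_1}\,P\big(\varepsilon';(a_2,\dots,a_k,x)\big). \]
If $\varepsilon_1=0$ then $\delta_2^\varepsilon=0$, so in $\delta^\varepsilon$ the leading block absorbs position $2$; comparing the corresponding $M$'s gives
\[ P\big((0,\varepsilon');(a_1,\dots,a_k,x)\big)=\frac{1}{a_1}\,P\big(\varepsilon';(a_1+a_2,a_3,\dots,a_k,x)\big), \]
with the evident modification when $k=1$ (the merged first entry being $a_1+x_1$, followed by $x_2,\dots,x_m$). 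Summing against the signs $(-1)^{\#1(\varepsilon)}$, and using $\#1((1,\varepsilon'))=\#1(\varepsilon')+1$ and $\#1((0,\varepsilon'))=\#1(\varepsilon')$, yields the peeling recursion
\[ Q(a_1,a_2,\dots,a_k,x)=\frac{1}{a_1}\Big[\,Q(a_1+a_2,a_3,\dots,a_k,x)-Q(a_2,a_3,\dots,a_k,x)\,\Big], \]
where the two $Q$'s on the right are the analogous sums over $\mathcal{B}_{k-1}$.

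Next I would prove by induction on $k$ the following strengthening: $Q$ admits a representation $f/g$ in which $g$ is a product of linear forms, each of which is a sum $b_i+b_{i+1}+\dots+b_j$ of consecutive entries of $b=(a_1,\dots,a_k,x_1,\dots,x_m)$ with $j\ge k+1$, i.e.\ each factor of $g$ involves at least one of the $x_\ell$. Since the linear forms $M(\varepsilon';b;t)$ with $1\le t\le k$ are exactly the sums of consecutive entries $b_i+\dots+b_t$ with $t\le k$, this statement implies the lemma. The base case $k=0$ is immediate: there $Q=P(\varepsilon;b)$ for the empty sequence $\varepsilon$, which equals $\prod_{j=1}^m(x_1+\dots+x_j)^{-1}$. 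For the inductive step, apply the peeling recursion. By the inductive hypothesis, both $Q(a_1+a_2,a_3,\dots,a_k,x)$ and $Q(a_2,a_3,\dots,a_k,x)$ have denominators that are products of linear forms of the allowed shape for the corresponding reduced list of variables; substituting back (the merge $c_1\mapsto a_1+a_2$, $c_i\mapsto a_{i+1}$ in the first case, the shift $c_i\mapsto a_{i+1}$ in the second) sends each such linear form to a sum of consecutive entries of $b$ that still reaches into the $x$'s, hence is again of the allowed shape. Thus $Q(a_1+a_2,\dots)-Q(a_2,\dots)$ can be written as $F/G$ with $G$ a product of such linear forms, none of them a sum involving only the $a$'s.

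It remains to check that dividing by $a_1$ does not reintroduce a bad factor: setting $a_1=0$ makes the two arguments $(a_1+a_2,a_3,\dots,a_k,x)$ and $(a_2,a_3,\dots,a_k,x)$ coincide, so $F$ vanishes identically on $\{a_1=0\}$ and hence $a_1\mid F$. Writing $F=a_1\hat F$ gives $Q=\hat F/G$, and since every factor of $G$ involves some $x_\ell$, none of them is of the form $M(\varepsilon';b;t)$ with $1\le t\le k$, which closes the induction. I expect the main obstacle to be the bookkeeping in the splitting identity — tracking how $L(\delta^\varepsilon,j)$ and the blocks $M(\varepsilon;b;j)$ transform under deleting (case $\varepsilon_1=1$) versus merging (case $\varepsilon_1=0$) the first entry of $b$ — together with pinning down the strengthened hypothesis tightly enough that it survives those substitutions. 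The cancellation of $a_1$ is the same mechanism exploited one factor at a time in Lemma~\ref{lemma_alg1} and Lemma~\ref{lemma_alg2}, here organized globally.
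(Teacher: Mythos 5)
Your proof is correct, but it takes a genuinely different route from the paper's. The paper fixes one potentially bad factor $a_j+\dots+a_\ell$ at a time, pairs up the sequences in $\mathcal{B}_k(j,\ell)$ by flipping the bit at position $\ell$, applies Lemma \ref{lemma_alg2} to each pair to clear that single factor, and then invokes unique factorization in the polynomial ring to conclude that no bad factor survives. You instead peel off $\varepsilon_1$, obtaining the recursion $Q(a_1,\dots,a_k,x)=a_1^{-1}\big[Q(a_1+a_2,a_3,\dots,a_k,x)-Q(a_2,\dots,a_k,x)\big]$, and carry a strengthened inductive hypothesis that describes the whole denominator at once: every factor is a consecutive sum $b_i+\dots+b_j$ reaching into the $x$-range. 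Both arguments rest on the same cancellation mechanism --- the numerator of a difference vanishes on the hyperplane where a linear form vanishes because the two terms coincide there (Lemma \ref{lemma_alg1} in the paper; your divisibility $a_1\mid F$, which is legitimate since no factor of your common denominator $G$ vanishes identically on $\{a_1=0\}$) --- but you organize it globally through one induction on $k$ rather than through a separate pairing argument for each candidate factor. What your approach buys is a sharper structural statement (an explicit class of admissible denominators, closed under the merge and shift substitutions), at the cost of verifying the two splitting identities for $P(\varepsilon;b)$; these do check out against the definitions of $\delta^\varepsilon$, $L$, and $M$, including the $k=1$ boundary case where $a_1$ merges with $x_1$. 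The paper's route avoids any recursion but yields only the negative conclusion that each bad factor can be removed.
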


\begin{proof}
    This follows almost immediately from Lemma \ref{lemma_alg2} and from the observation that all of the linear polynomials $M(\varepsilon; b; t)$ are irreducible and relatively prime to each other. Since polynomial rings are unique factorization domains, it suffices to show that for every factor $M(\varepsilon; b; t)$ that could show up in the denominator for $Q(a, x)$, we can find an expression for $Q(a, x)$ that does not have that factor in the denominator, but we can clearly do this using Lemma \ref{lemma_alg2}.

    More specifically, suppose we fix a factor $a_j + a_{j+1} + \dots + a_{\ell}$ that could show up as $M(\varepsilon; b; t)$ for some $\varepsilon$ and for some $1 \leq t \leq k$. This factor exactly shows up as $M(\varepsilon; b; \ell)$ for those $\varepsilon$ whose associated $\delta^\varepsilon$ satisfies $\delta_j^\varepsilon = 1$ and $\delta_{j+1}^\varepsilon = \dots = \delta_\ell^\varepsilon = 0$; let $\mathcal{B}_k(j, \ell)$ denote this set of binary sequences $\varepsilon$. Given such a $\varepsilon$, let $\varepsilon^\vee$ be the binary sequence which is identical to $\varepsilon$ except that $\delta^{\varepsilon^\vee}_{\ell+1} = \varepsilon^\vee_{\ell}$ has its bit flipped. We can thus pair up every element in $\mathcal{B}_k(j, \ell)$ with another element such that the binary sequences only differ in position $\ell$. Obviously we have $(-1)^{\#1(\varepsilon)} = (-1) \cdot (-1)^{\#1(\varepsilon^\vee)}$. Let $\varepsilon^1, \dots \varepsilon^{2^{k-(\ell-j)-1}}$ be a complete set of representatives for this pairing into two element subsets $\{\varepsilon, \varepsilon^\vee\}$. We thus get
    \begin{gather*}
        \sum_{\varepsilon \in \mathcal{B}_k} (-1)^{\#1(\varepsilon)} P(\varepsilon; b) = \sum_{\varepsilon \in \mathcal{B}_k \setminus \mathcal{B}_k(j, \ell)} (-1)^{\#1(\varepsilon)} P(\varepsilon; b) + \sum_{i = 1}^{2^{k-(\ell-j)-1}} (-1)^{\#1(\varepsilon)}(P(\varepsilon^i; b) - P((\varepsilon^i)^\vee; b)).
    \end{gather*}
    The first term can be brought to an expression $f(x, a)/g(x, a)$ such that $g(x, a)$ does not contain $a_j + \dots + a_\ell$. By Lemma \ref{lemma_alg2}, every term in the second sum can as well. Therefore, the left hand side can as well.  
\end{proof}

Lemma \ref{lemma_alg3} implies that the function $Q(a, x)$ is holomorphic at any point such that $a_1 = \dots = a_k = 0$ and $x_1, \dots, x_m \neq 0$.

\subsection{Proof of Theorem \ref{thm_holomorphic}}

\begin{proof}[Proof of Theorem \ref{thm_holomorphic}]
Let $\mathfrak{g}$ be $\xi$-constant. Let $\mathfrak{H}$ denote a $\xi$-maladapted flag $\mathfrak{g} = \mathfrak{h}_0 \subset \dots \subset \mathfrak{h}_\ell$. Let $\textnormal{eFl}_{\mathfrak{g}}^{\mathfrak{p}}(\mathfrak{H})$ denote the set of all extensions of this flag to an element in $\textnormal{sFl}_{\mathfrak{g}}^{\mathfrak{p}}$. Let
\begin{gather*}
    {}^0\mathfrak{t}_{\mathfrak{H}} := {}^0\mathfrak{t}_{\mathfrak{g}}^{\mathfrak{h}_1} \times {}^0\mathfrak{t}_{\mathfrak{h}_1}^{\mathfrak{h}_2} \times \dots \times {}^0\mathfrak{t}_{\mathfrak{h}_{\ell-1}}^{\mathfrak{h}_\ell} \times {}^0\mathfrak{t}_{\mathfrak{h}_\ell}^{\mathfrak{p}}.
\end{gather*}
This is a cone based at the origin. 

Let $\tau \in (\textnormal{lin}(\mathfrak{g})^\perp)^*_{\mathbb{C}}$ be generic with respect to ${}^0\mathfrak{t}_{\mathfrak{H}}$ (i.e. not constant on any extremal ray). By Proposition \ref{prop_multiplicative}, we know that if we write $\tau = \tau_0 + \tau_1 + \dots + \tau_{\ell}$ where $\tau_j$ is the restriction of $\tau$ to $\textnormal{lin}({}^0 \mathfrak{t}_{\mathfrak{h}_{j}}^{\mathfrak{h}_{j+1}})$, we have that
\begin{gather*}
    I({}^0\mathfrak{t}_{\mathfrak{H}}; \tau) = I({}^0 \mathfrak{t}_{\mathfrak{g}}^{\mathfrak{h}_1}; \tau_0) \cdots I({}^0 \mathfrak{t}_{\mathfrak{h}_\ell}^{\mathfrak{p}}; \tau_{\ell}).
\end{gather*}
Let $d_j = \textnormal{dim}(\mathfrak{h}_j) - \textnormal{dim}(\mathfrak{g})$ and $d_{\ell+1} = \textnormal{dim}(\mathfrak{p}) - \textnormal{dim}(\mathfrak{g})$. Therefore, using Proposition \ref{barvinok_prop}, we get that
\begin{align}
    & I({}^0\mathfrak{t}_{\mathfrak{H}}; \tau) \notag \\
    &= \Big(\sum_{(\mathfrak{g} = \mathfrak{q}_0 \subset \mathfrak{q}_1 \subset \dots \subset \mathfrak{q}_{d_1} = \mathfrak{h}_1) \in \textnormal{sFl}_{\mathfrak{g}}^{\mathfrak{h}_1}}
    \prod_{j = 1}^{d_1} \frac{\langle \tau, \eta_{\mathfrak{q}_{j-1}}^{\mathfrak{q}_{j}} \rangle_H}{ \|\tau^{\mathfrak{t}_{\mathfrak{g}}^{\mathfrak{q}_{j}}}\|^2} \Big) \dots \Big( \sum_{(\mathfrak{h}_\ell = \mathfrak{q}_{d_\ell} \subset \dots \subset \mathfrak{q}_{d_{\ell+1}} = \mathfrak{p}) \in \textnormal{sFl}_{\mathfrak{h}_\ell}^{\mathfrak{p}}} \prod_{j = d_\ell+1}^{d_{\ell+1}} \frac{\langle \tau, \eta_{\mathfrak{q}_{j-1}}^{\mathfrak{q}_{j}} \rangle_H}{\|\tau^{\mathfrak{t}_{\mathfrak{h}_\ell}^{\mathfrak{q}_j}}\|^2} \Big). \label{prod_flags}
\end{align}
We have used that if $\mathfrak{h}_t \subseteq \mathfrak{q}_1 \subset \mathfrak{q}_2 \subseteq \mathfrak{h}_{t+1}$, then $\langle \tau_t, \eta_{\mathfrak{q}_1}^{\mathfrak{q}_2} \rangle_H = \langle \tau, \eta_{\mathfrak{q}_1}^{\mathfrak{q}_2} \rangle_H$ and $\tau^{\mathfrak{t}_{\mathfrak{h}_t}^{\mathfrak{q}_2}} = (\tau_t)^{\mathfrak{t}_{\mathfrak{h}_t}^{\mathfrak{q}_2}}$. When we expand out \eqref{prod_flags}, the individual terms are parametrized by elements of $\textnormal{eFl}(\mathfrak{H})$. 

We ultimately are interested in the expression
\begin{gather}
    I({}^0 \textnormal{LC}^\mathfrak{p}_\mathfrak{g}(\xi); \tau) = \sum_{\mathfrak{H} = (\mathfrak{g} = \mathfrak{h}_0 \subset \dots \subset \mathfrak{h}_\ell) \in \textnormal{mFl}_{\mathfrak{g}}^{\mathfrak{p}}(\xi)} (-1)^\ell I({}^0\mathfrak{t}_{\mathfrak{H}}; \tau). \label{ult_interest}
\end{gather}
If we replace each summand in \eqref{ult_interest} with the expanded out version of \eqref{prod_flags}, we obtain a double sum where the outer sum is over $\xi$-maladapted flags starting at $\mathfrak{g}$, and an inner sum over refinements of this flag to a saturated flag from $\mathfrak{g}$ to $\mathfrak{p}$. We shall ``change the order of summations'', i.e. have the outer sum be over elements in $\textnormal{sFl}_{\mathfrak{g}}^{\mathfrak{p}}$, and the inner sum over $\xi$-maladapted flags that are refined by this flag.

Let $d = \textnormal{dim}(\mathfrak{p}) - \textnormal{dim}(\mathfrak{g})$. Fix an element $\mathfrak{M} \in \textnormal{sFl}_{\mathfrak{g}}^{\mathfrak{p}}$ with $\mathfrak{M} = (\mathfrak{g} = \mathfrak{m}_0 \subset \mathfrak{m}_1 \subset \dots \subset \mathfrak{m}_d = \mathfrak{p}$. Let $k({\mathfrak{M}})$ be the index such that $\mathfrak{m}_{k({\mathfrak{M}})}$ is $\xi$-constant, but no higher element in the flag is $\xi$-constant. Notice that $k({\mathfrak{M}}) = 0$ if and only if the flag is $\xi$-adapted. Let $m(\mathfrak{M}) = d - k(\mathfrak{M})$. 

We now describe a binary encoding to $\xi$-maladapted flags which are refined by $\mathfrak{M}$. They are parametrized by binary sequences $\varepsilon = (\varepsilon_1, \dots, \varepsilon_{k(\mathfrak{M})})$ of length $k(\mathfrak{M})$. Let $\ell(\varepsilon)$ be the number of ones in $\varepsilon$, and let $i_1, i_2, \dots, i_{\ell(\varepsilon)}$ be the indices of the 1s in $\varepsilon$. We define
\begin{align*}
    D(\mathfrak{g}, \mathfrak{M}, \varepsilon; \tau) :&= \Bigg(\frac{\langle \tau, \eta_{\mathfrak{g}}^{\mathfrak{m}_1} \rangle_H}{\|\tau^{{}^0\mathfrak{t}_{\mathfrak{g}}^{\mathfrak{m}_1}}\|^2} \frac{\langle \tau, \eta_{\mathfrak{m}_1}^{\mathfrak{m}_2} \rangle_H}{\|\tau^{{}^0\mathfrak{t}_{\mathfrak{g}}^{\mathfrak{m}_2}}\|^2} \dots \frac{\langle \tau, \eta_{\mathfrak{m}_{i_1 - 1}}^{\mathfrak{m}_{i_1}} \rangle_H}{\|\tau^{{}^0\mathfrak{t}_{\mathfrak{g}}^{\mathfrak{m}_{i_1}}}\|^2} \Bigg) \\
    &\cdot \Bigg(\frac{\langle \tau, \eta_{\mathfrak{m}_{i_1}}^{\mathfrak{m}_{i_1+1}} \rangle_H}{\|\tau^{{}^0\mathfrak{t}_{\mathfrak{m}_{i_1}}^{\mathfrak{m}_{i_1+1}}}\|^2} \dots \frac{\langle \tau, \eta_{\mathfrak{m}_{i_2 - 1}}^{\mathfrak{m}_{i_2}} \rangle_H}{\|\tau^{{}^0\mathfrak{t}_{\mathfrak{m}_{i_1}}^{\mathfrak{m}_{i_2}}}\|^2} \Bigg) \dots \Bigg(\frac{\langle \tau, \eta_{\mathfrak{m}_{i_\ell}}^{\mathfrak{m}_{i_\ell+1}} \rangle_H}{\|\tau^{{}^0\mathfrak{t}_{\mathfrak{m}_{i_\ell}}^{\mathfrak{m}_{i_\ell+1}}}\|^2} \dots \frac{\langle \tau, \eta_{\mathfrak{m}_{d-1}}^{\mathfrak{p}} \rangle_H}{\|\tau^{{}^0\mathfrak{t}_{\mathfrak{m}_{i_\ell}}^{\mathfrak{p}}}\|^2} \Bigg).
\end{align*}
Let $N(\mathfrak{g}, \mathfrak{M}; \tau)$ be the numerator:
\begin{gather*}
    N(\mathfrak{g}, \mathfrak{M}; \tau) := \prod_{j = 1}^d \langle \tau, \eta_{\mathfrak{m}_{j-1}}^{\mathfrak{m}_j} \rangle_H.
\end{gather*}Notice that in general we have
\begin{gather*} 
    \|\tau_{\mathfrak{t}_{\mathfrak{m}_i}^{\mathfrak{m}_j}}\|^2 = \sum_{r = i}^{j-1} \|\tau_{\mathfrak{t}_{\mathfrak{m}_r}^{\mathfrak{m}_{r+1}}}\|^2,
\end{gather*}
which comes from expressing the norm for $\tau_{\mathfrak{t}_{\mathfrak{m}_i}^{\mathfrak{m}_j}}$ with respect to its constituents in the orthonormal basis $\{\eta_{\mathfrak{m}_i}^{\mathfrak{m}_{i+1}}, \dots, \eta_{\mathfrak{m}_{j-1}}^{\mathfrak{m}_j}\}$ for $\textnormal{lin}(\mathfrak{m}_j)/\textnormal{lin}(\mathfrak{m}_i) \simeq \big(\textnormal{lin}(\mathfrak{m}_i)^\perp \cap \textnormal{lin}(\mathfrak{m}_j) \big)$. Let $a(\mathfrak{M}) = (a_1, \dots, a_{k(\mathfrak{M})})$, and $x(\mathfrak{M}) = (x_1, \dots, x_{m(\mathfrak{M})})$, and $b(\mathfrak{M}) = (a_1, \dots a_{k(\mathfrak{M})}, x_1, \dots, x_{m(\mathfrak{M})})$. Using the notation from \eqref{p_fn}, we have that 
\begin{gather*}
    D(\mathfrak{g}, \mathfrak{M}, \varepsilon; \tau) = N(\mathfrak{g}, \mathfrak{M}; \tau) \cdot P(\varepsilon; b(\mathfrak{M})),
\end{gather*}
where we plug in at
\begin{align*}
    a(\mathfrak{M}) &= (\|\tau^{{}^0\mathfrak{t}_{\mathfrak{m}_0}^{\mathfrak{m}_1}}\|^2, \dots, \|\tau^{{}^0\mathfrak{t}_{\mathfrak{m}_{k-1}}^{\mathfrak{m}_k}}\|^2), \\
    x(\mathfrak{M}) &= (\|\tau^{{}^0\mathfrak{t}_{\mathfrak{m}_{k(\mathfrak{M})}}^{\mathfrak{m}_{k(\mathfrak{M})+1}}}\|^2, \dots, \|\tau^{{}^0\mathfrak{t}_{\mathfrak{m}_{d-1}}^{\mathfrak{m}_{d}}}\|^2)).
\end{align*}
We may thus use Lemma \ref{lemma_alg3} to conclude that
\begin{gather*}
    \sum_{\varepsilon \in \mathcal{B}_{k(\mathfrak{M})}} (-1)^{\#1(\varepsilon)} D(\mathfrak{g}, \mathfrak{M}, \varepsilon; \tau)
\end{gather*}
is analytic at any $\tau$ such that $\|\tau_{\mathfrak{t}_{\mathfrak{m}_0}^{\mathfrak{m}_1}}\|^2 = \dots = \|\tau_{\mathfrak{t}_{\mathfrak{m}_{k-1}}^{\mathfrak{m}_k}}\|^2 = 0$ and $\|\tau_{\mathfrak{t}_{\mathfrak{m}_{j-1}}^{\mathfrak{m}_j}}\|^2 \neq 0$ for all $j > k$. For example at $\tau = \xi$. Furthermore, $N(\mathfrak{g}, \mathfrak{M}; \xi) = 0$ unless $\mathfrak{M}$ is $\xi$-adapted. 

Thus, after switching the order of summation in \eqref{ult_interest}, we get that
\begin{gather*}
    I({}^0 \textnormal{LC}^\mathfrak{p}_\mathfrak{g}(\xi); \tau) = \sum_{\mathfrak{M} \in \textnormal{sFl}_{\mathfrak{g}}^{\mathfrak{p}}} \ \sum_{\varepsilon \in \mathcal{B}_{k(\mathfrak{M})}} (-1)^{\#1(\varepsilon)} D(\mathfrak{g}, \mathfrak{M}, \varepsilon; \tau).
\end{gather*}
This function is analytic at $\tau = \xi$ and its value there is exactly 
\begin{gather*}
    \sum_{\mathfrak{M} \in \textnormal{sFl}^\mathfrak{p}_\mathfrak{g}(\xi)} D(\mathfrak{g}, \mathfrak{M}, (); \xi) = {}^0 \textnormal{DBI}^\mathfrak{p}_\mathfrak{g}(\xi).
\end{gather*}
By the notation ``$()$'', we mean the empty binary sequence.
\end{proof}

\section{Local Euler-Maclaurin formula of Berline-Vergne}
\subsection{The classical Euler-Maclaurin formula}
Classically the Euler-Maclaurin formula relates the integral of a smooth function $f(x)$ on an interval $[a, b]$ with $a, b \in \mathbb{Z}$ to the sum of the values of $f(x)$ over the integer points in $[a, b]$. Let $p \in \mathbb{N}$. Then
\begin{gather}
    \sum_{n = a}^b f(n) - \int_{a}^b f(x) dx = \sum_{k = 1}^p \frac{B_k}{k!} \Big(f^{(k-1)}(b) - f^{(k-1)}(a) \Big) + R_p. \label{classical_EM}
\end{gather}
Here $R_p$ is a remainder term that can be explicitly bounded in terms of the size of $f^{(p)}$ on $[a, b]$, and $B_k$ are the Bernoulli numbers defined by the relation:
\begin{gather}
    \frac{\alpha}{1-e^{-\alpha}} = \sum_{k = 0}^\infty \frac{B_k \alpha^k}{k!}. \label{bernoulli}
\end{gather}
For certain functions $f(\alpha)$ such as polynomials and exponential functions, in letting $p \to \infty$ in \eqref{classical_EM}, we get an exact equality of the two sides (i.e. no remainder). 

\begin{example} 
We can of course interpret $[a, b]$ as a lattice polytope in $\mathbb{R}$. Then if $f(x) = e^{\alpha \cdot x}$, the Euler-Maclaurin formula can be interpreted as giving the relationship between $I([a, b]; \alpha)$ and $S_{\mathbb{Z}}([a, b]; \alpha)$. The exact Euler-Maclaurin formula is also valid for rays in $\mathbb{R}$. For example, if we take the infinite interval $(-\infty, 0]$, then we get for $\alpha > 0$ and sufficiently small:
\begin{align*}
    \sum_{n = -\infty}^0 e^{\alpha \cdot n} - \int_{-\infty}^0 e^{\alpha \cdot x} dx  &= \frac{1}{1 - e^{-\alpha}} - \frac{1}{\alpha} \\
    &= \frac{1}{\alpha} (\frac{\alpha}{1 - e^{-\alpha}} - 1) \\
    &= \sum_{k = 1}^\infty \frac{B_k}{k!} \alpha^{k-1}.
\end{align*}
\end{example}

\subsection{Euler-Maclaurin formula for rational polytopes}
In the more general setting of integral (or rational) polytopes, this formula has been vastly generalized to give precise relationships between $I(\mathfrak{p}; \alpha)$ and $S_\Lambda(\mathfrak{p}; \alpha)$. Much of the development traces back to the work of Pukhlikov-Khovanskii \cite{pukhlikov_khovanskii} who connected the Euler-Maclaurin formula to an explicit version of the Riemann-Roch theorem for the toric variety associated to a simple unimodular lattice polytope. They re-express \eqref{classical_EM} as
\begin{gather*}
    S_{\mathbb{Z}}([a, b]; \alpha) = \textnormal{Td}(\frac{\partial}{\partial h_1}) \textnormal{Td}(\frac{\partial}{\partial h_2}) \bigg|_{h_1 = h_2 = 0} I([a - h_1, b + h_2]; \alpha),
\end{gather*}
where $\textnormal{Td}(\frac{\partial}{\partial h})$ is the so-called \textit{Todd operator} which is the differential operator whose symbol is given by \eqref{bernoulli}. This operator is closely related to the \textit{Todd classes} that show up in the Riemann-Roch formula.

Recalling the notation in Section \ref{sec_polytopes}, suppose $\mathfrak{p} = \mathfrak{p}(h^0)$ is a unimodular simple lattice polytope. Pukhlikov-Khovanskii \cite{pukhlikov_khovanskii} prove that
\begin{gather}
    S_{\mathbb{Z}^n}(\mathfrak{p}(h^0); \alpha) = \textnormal{Td}(\frac{\partial}{\partial h_1}) \dots \textnormal{Td}(\frac{\partial}{\partial h_k}) \bigg|_{h = h^0} I(\mathfrak{p}(h); \alpha). \label{PK_EM}
\end{gather}
Because Theorem \ref{thm_degen_brion_cont_intro} is of a nice form for understanding how $I(\mathfrak{p}(h); \alpha)$ varies in $h$, one could use \eqref{PK_EM} to obtain a discrete version of the degenerate Brion's formula, at least for unimodular lattice polytopes. However we ultimately instead use the Euler-Maclaurin formula of Berline-Vergne as it works in greater generality.

The exact Euler-Maclaurin formula for polytopes was extended by Cappell-Shaneson \cite{cappell_shaneson} and Brion-Vergne \cite{brion_vergne} to any rational polytope. In Berline-Vergne \cite{berline_vergne}, the authors take an alternate approach which is elementary, in the sense that it does not use the theory of toric varieties, and which results in a \textit{local} version of the Euler-Maclaurin formula. Roughly speaking, this means that they give a way to express $S_{\Lambda}(\mathfrak{p}; \alpha)$ as a weighted sum of the values of $I(\mathfrak{f}; \alpha)$ where $\mathfrak{f} \in \textnormal{Face}(\mathfrak{p})$, and furthermore, the weight given to $I(\mathfrak{f}; \alpha)$ only depends on the local geometry of $\mathfrak{f}$ in $\mathfrak{p}$ (and on $\alpha$), i.e. on the transverse cone $\mathfrak{t}_{\mathfrak{f}}^{\mathfrak{p}}$. This work builds on earlier work of Pommersheim-Thomas \cite{pommersheim_thomas} who gave such an expression in case $\alpha = 0$, i.e. a way to express the number of lattice points in $\mathfrak{p}$ as a weighted sum of the volumes of the faces of $\mathfrak{p}$.

\subsection{The local Euler-Maclaurin formula of Berline-Vergne} \label{sec_BV_euler_maclaurin}
The work of Berline-Vergne \cite{berline_vergne} requires a non-canonical choice of a rational inner product. Approaches to the problem without choosing an inner product can be found in \cite{garoufalidis_pommersheim} and \cite{fischer_pommersheim}. However, we have been generously and fruitfully utilizing the choice of an inner product throughout this work, and shall continue to do so. The main result of Berline-Vergne is the following:
\begin{theorem} [\cite{berline_vergne}] \label{thm_berline_vergne}
    There exists a unique family of maps $\mu_{W}^{\Gamma, \langle \cdot, \cdot \rangle'}$ indexed by rational inner product spaces, i.e. a vector space $W$ with a choice of lattice $\Gamma$ and a choice of rational inner product $\langle \cdot, \cdot \rangle'$, mapping rational cones in $W$ to meromorphic functions on $W_{\mathbb{C}}^*$ satisfying the following properties:
    \begin{enumerate}
    \item
        For any rational polyhedron $\mathfrak{q} \subset W$, we have the identity of meromorphic functions
        \begin{gather*}
            S_{\Gamma}(\mathfrak{q}; \alpha) = \sum_{\mathfrak{f} \in \textnormal{Face}(\mathfrak{q})} \mu_{\mathfrak{f}^\perp}^{\Gamma^{\mathfrak{f}^\perp}, \langle \cdot, \cdot \rangle'} (\mathfrak{t}_{\mathfrak{f}}^{\mathfrak{q}}; \alpha) \cdot I^{\Gamma_{\mathfrak{f}}}(\mathfrak{f}; \alpha).
        \end{gather*}
        Here we have extended $\mu_{\mathfrak{f}^\perp}^{\Gamma^{\mathfrak{f}^\perp}, \langle \cdot, \cdot \rangle} (\mathfrak{t}_{\mathfrak{f}}^{\mathfrak{q}}; \zeta)$ with $\zeta \in (\textnormal{lin}(\mathfrak{f})^\perp)^*_{\mathbb{C}}$ to $\alpha \in W^*_{\mathbb{C}}$ by pullback of the orthogonal projection $W^*_{\mathbb{C}} \to (\textnormal{lin}(\mathfrak{f})^\perp)^*_{\mathbb{C}}$, and we take the restriction of the inner product to $\mathfrak{f}^\perp$.
        \item For any rational cone $\mathfrak{k}$ and any element $\gamma \in \Gamma$, we have that
        \begin{gather*}
            \mu^{\Gamma, \langle \cdot, \cdot \rangle'}_W(\gamma + \mathfrak{k}; \alpha) = \mu^{\Gamma, \langle \cdot, \cdot \rangle'}_W (\mathfrak{k}; \alpha).
        \end{gather*}
        \item For any rational cone $\mathfrak{k}$, we have that $\mu_{W}^{\Gamma, \langle \cdot, \cdot \rangle'}(\mathfrak{k}; \alpha)$ is holomorphic on a neighborhood of $\alpha = 0$.
        \item If the rational cone $\mathfrak{k}$ contains a line, then $\mu_{W}^{\Gamma, \langle \cdot, \cdot \rangle'}(\mathfrak{k}; \alpha) = 0$.
        \item If $W = 0$ (so then $\Gamma = 0$ necessarily), then $\mu_{W}^{\Gamma, \langle \cdot, \cdot \rangle'}(W; \alpha) = 1$. 
        \item The map $\mu_{W}^{\Gamma, \langle \cdot, \cdot \rangle}(\mathfrak{k}; \alpha)$ is a valuation on all rational cones $\mathfrak{k}$ with any particular fixed vertex.
    \end{enumerate}
\end{theorem}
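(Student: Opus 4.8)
The statement is the main theorem of \cite{berline_vergne}; below is the strategy one would follow to prove it, using freely the facts recalled above about the meromorphic continuations of $S_\Gamma(\cdot;\cdot)$ and $I(\cdot;\cdot)$ on rational cones, the valuation property of $S_\Gamma(\cdot;\xi)$, and Theorem \ref{brianchon_gram}.

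\textbf{Uniqueness.} The plan is to induct on $n = \dim W$. When $n = 0$, the only cone is $W$ itself and property (5) forces $\mu_W^{\Gamma}(W;\xi) = 1$, so there is no freedom. Assume the family is unique on all rational inner product spaces of dimension $< n$, and let $\mathfrak{k} \subseteq W$ be a rational cone with $\dim W = n$. If $\mathfrak{k}$ contains a line, property (4) gives $\mu_W^\Gamma(\mathfrak{k};\xi) = 0$. If $\mathfrak{k}$ is pointed with apex $s$ and is full-dimensional, apply property (1) to the polyhedron $\mathfrak{q} = \mathfrak{k}$:
\begin{gather*}
    S_\Gamma(\mathfrak{k};\xi) = \sum_{\mathfrak{f} \in \textnormal{Face}(\mathfrak{k})} \mu_{\textnormal{lin}(\mathfrak{f})^\perp}^{\Gamma^{\mathfrak{f}^\perp}}(\mathfrak{t}_{\mathfrak{f}}^{\mathfrak{k}};\xi) \cdot I^{\Gamma_{\mathfrak{f}}}(\mathfrak{f};\xi).
\end{gather*}
The face $\mathfrak{f} = \{s\}$ contributes exactly $\mu_W^\Gamma(\mathfrak{k};\xi)\cdot e^{\langle \xi, s\rangle}$, because $\textnormal{lin}(\{s\})^\perp = W$, $\mathfrak{t}_{\{s\}}^{\mathfrak{k}} = \mathfrak{k}$, and $I^{\Gamma_{\{s\}}}(\{s\};\xi) = e^{\langle \xi, s\rangle}$; every other face $\mathfrak{f}$ has $\dim \mathfrak{f} \geq 1$, hence $\dim \textnormal{lin}(\mathfrak{f})^\perp \leq n-1$, so its term is already determined by the inductive hypothesis. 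Since $S_\Gamma(\mathfrak{k};\xi)$ is a prescribed meromorphic function, one solves for $\mu_W^\Gamma(\mathfrak{k};\xi)$ uniquely. For a pointed cone that is not full-dimensional one first checks, again from property (1), that $\mu_W^\Gamma(\mathfrak{k};\xi)$ must coincide with $\mu$ computed inside $\textnormal{lin}(\mathfrak{k})$, of dimension $< n$. This pins down the entire family.

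\textbf{Existence.} One then reads the displayed identity as a recursion and \emph{defines} $\mu$ by it, and must verify properties (1)--(6). Properties (2) and (5) are immediate, and (4), (6) follow from valuation arguments once (1) is established. The two substantive points are: (i) that the recursively defined $\mu$ satisfies (1) for \emph{all} rational polyhedra $\mathfrak{q}$, not merely for the cones used in its construction — here one uses that $S_\Gamma(\cdot;\xi)$ is a valuation annihilating polyhedra that contain lines, combined with the Brianchon--Gram decomposition (Theorem \ref{brianchon_gram}) and a patching of tangent-cone data across the face poset; and (ii) property (3), holomorphy of $\mu_W^\Gamma(\mathfrak{k};\xi)$ near $\xi = 0$.

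I expect (ii) to be the main obstacle. In the recursion each $I^{\Gamma_\mathfrak{f}}(\mathfrak{f};\xi)$ is genuinely singular at $\xi = 0$ in the directions transverse to $\textnormal{lin}(\mathfrak{f})$, so holomorphy of the alternating combination is a delicate cancellation — in the same spirit as the cancellation behind the alternating Levi cone in Theorem \ref{thm_holomorphic}. The route is to exploit the rational inner product: it supplies a canonical orthogonal splitting of $W$ along each face, making $S_\Gamma(\mathfrak{k};\xi)$ factor compatibly with the face structure, so that one can isolate the polar part of each generating function and show that the poles telescope. This is exactly the step where the non-canonical choice of $\langle\cdot,\cdot\rangle'$ intervenes, and it is the technical core of \cite{berline_vergne}.
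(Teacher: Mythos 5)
This theorem is not proved in the paper at all: it is imported wholesale from \cite{berline_vergne}, and the only part of its proof the paper revisits is the defining recursion \eqref{def_BV}, which it later exploits to extend holomorphicity of $\mu$ from a neighborhood of $0$ to all of $(V_{\mathbb{C}}^*)^\Gamma$. Your uniqueness argument is precisely that recursion read backwards --- isolate the vertex term $\mu_W^{\Gamma}(\mathfrak{k};\xi)\,e^{\langle \xi, \mathfrak{s}\rangle}$ in property (1) applied to $\mathfrak{q}=\mathfrak{k}$, with all other terms determined by induction on dimension --- so the architecture is the correct one.

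There are, however, two problems. First, your reduction for pointed cones that are not full-dimensional is wrong: it is not true that $\mu_W^{\Gamma}(\mathfrak{k};\xi)$ coincides with the $\mu$ of $\mathfrak{k}$ computed inside $\textnormal{lin}(\mathfrak{k})$. Take $W=\mathbb{R}$, $\Gamma=\mathbb{Z}$, $\mathfrak{k}=\{s\}$ with $s$ rational: property (1) applied to $\mathfrak{q}=\{s\}$ forces $\mu_{\mathbb{R}}^{\mathbb{Z}}(\{s\};\xi)$ to equal $1$ if $s\in\mathbb{Z}$ and $0$ otherwise, whereas the zero-dimensional $\mu$ is identically $1$ by property (5). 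The correct fix is simply to delete this step: the same vertex-term recursion pins down $\mu_W^{\Gamma}(\mathfrak{k};\xi)$ for every pointed cone, full-dimensional or not. Second, the existence half is a road map rather than a proof. The substantive assertions --- well-definedness of the recursion on non-simplicial cones (which in \cite{berline_vergne} goes through property (6) and Barvinok's signed decomposition into unimodular simplicial cones), validity of (1) for arbitrary polyhedra rather than just cones, and above all property (3) --- are exactly the content of \cite{berline_vergne}, and ``isolate the polar part so that the poles telescope'' is not an argument. The actual mechanism, which the paper recounts when proving holomorphicity on $(V^*_{\mathbb{C}})^{\Gamma}$, is to write $\mu_V^{\Gamma}(\mathfrak{k};\xi)$ with denominator $\prod_j \langle \xi, v_j\rangle^2$, show that $\bigl(\prod_j\langle\xi,v_j\rangle\bigr)\mu_V^{\Gamma}(\mathfrak{k};\xi)$ is holomorphic near $0$ while $\langle\xi,v_k\rangle\,\mu_V^{\Gamma}(\mathfrak{k};\xi)$ vanishes identically on each hyperplane $\{\langle\xi,v_k\rangle=0\}$, and then strip the linear factors one at a time. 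Since this is a cited external theorem, your sketch is an acceptable orientation, but it neither closes the holomorphy argument nor survives the non-full-dimensional case as written.
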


Throughout this paper we have chosen a fixed inner product and, when dealing with exponential sums, we always assume it is rational. As such we shall often omit the data of the inner product in the superscript for $\mu$. Recall that $\mathfrak{f}^\perp$ is shorthand for the space $\textnormal{lin}(\mathfrak{f})^\perp$.

Property (1) in Theorem \ref{thm_berline_vergne} is obviously the most important. However before applying the theorem, we wish to elaborate further on Property (3), i.e. the holomorphicity of $\mu$. In the case where $W = \mathbb{R}$ and $\Gamma = \mathbb{Z}$, then the only rational cones are of the form $[s, \infty)$ or $(-\infty, s]$ with $s$ rational. As computed in \cite{berline_vergne},
\begin{gather}
    \mu_{\mathbb{R}}^{\mathbb{Z}}([s, \infty); \alpha) = \frac{e^{[[s]] \alpha}}{1 - e^{\alpha}} + \frac{1}{\alpha}, \label{1d_example}
\end{gather}
where $[[s]]$ is the unique representative of $-s + \mathbb{Z}$ in the interval $[0, 1)$. This function has potential singularities on $2 \pi i \mathbb{Z}$. On the other hand, it is actually holomorphic at $\alpha = 0$, and the value there is $\frac{1}{2} - [[s]]$. However, \eqref{1d_example} is genuinely singular at any other point in $2 \pi i \mathbb{Z}$. The point 0 is clearly in $(V_{\mathbb{C}}^*)^{\mathbb{Z}}$, but no other point in $2 \pi i \mathbb{Z}$ is. We wish to generalize this observation.

\subsection{Holomorphicity of the functions $\mu_W^{\Gamma}$}

\begin{proposition} \label{prop_mu_holomorphic}
    Let $\Gamma$ be a lattice in $V$. Suppose $\xi \in (V^*_{\mathbb{C}})^\Gamma$. Then for any rational cone $\mathfrak{k}$, we have that $\mu_V^{\Gamma}(\mathfrak{k}; \alpha)$ is holomorphic at $\alpha = \xi$.
\end{proposition}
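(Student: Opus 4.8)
The plan is to deduce the proposition from a stronger, more structural statement proved by induction on $\dim V$: \emph{every polar hyperplane of the meromorphic function $\mu_V^{\Gamma}(\mathfrak{k};\cdot)$ is of the form $\{\xi\in V^*_{\mathbb{C}} : \langle\xi,\gamma\rangle = 2\pi i m\}$ for some $\gamma\in\Gamma\setminus\{0\}$ and some $m\in\mathbb{Z}\setminus\{0\}$.} Granting this, the proposition is immediate: if $\alpha\in(V^*_{\mathbb{C}})^{\Gamma}$ lay on such a hyperplane then $e^{\langle\alpha,\gamma\rangle}=e^{2\pi i m}=1$ while $\langle\alpha,\gamma\rangle=2\pi i m\neq 0$, contradicting the definition of $(V^*_{\mathbb{C}})^{\Gamma}$; hence $\alpha$ avoids every polar hyperplane of $\mu_V^{\Gamma}(\mathfrak{k};\cdot)$, which is therefore holomorphic there. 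The induction is carried out uniformly over all rational inner product spaces, so that it can be applied to the subspaces $\textnormal{lin}(\mathfrak{f})^\perp$ with lattice $\Gamma^{\mathfrak{f}^\perp}$ and the restricted inner product; the case $\dim V=0$ is vacuous since then $\mu\equiv 1$.

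For the inductive step I would first reduce to $\mathfrak{k}$ pointed: if $\mathfrak{k}$ contains a line then $\mu_V^{\Gamma}(\mathfrak{k};\cdot)=0$ by Property (4) of Theorem~\ref{thm_berline_vergne}, which has no poles. With $\mathfrak{k}$ pointed, with unique vertex $s$, I apply the local Euler--Maclaurin formula (Property (1)) to $\mathfrak{q}=\mathfrak{k}$. The face $\mathfrak{f}=\{s\}$ satisfies $\textnormal{lin}(\{s\})^\perp=V$, $\Gamma^{\{s\}^\perp}=\Gamma$, $\mathfrak{t}_{\{s\}}^{\mathfrak{k}}=\mathfrak{k}$ and $I^{\Gamma_{\{s\}}}(\{s\};\xi)=e^{\langle\xi,s\rangle}$, so its contribution is $e^{\langle\xi,s\rangle}\mu_V^{\Gamma}(\mathfrak{k};\xi)$, and therefore
\[
    \mu_V^{\Gamma}(\mathfrak{k};\xi) = e^{-\langle\xi,s\rangle}\Bigl( S_{\Gamma}(\mathfrak{k};\xi) - \sum_{\mathfrak{f}\in\textnormal{Face}(\mathfrak{k}),\ \mathfrak{f}\neq\{s\}} \mu_{\textnormal{lin}(\mathfrak{f})^\perp}^{\Gamma^{\mathfrak{f}^\perp}}(\mathfrak{t}_{\mathfrak{f}}^{\mathfrak{k}};\xi)\, I^{\Gamma_{\mathfrak{f}}}(\mathfrak{f};\xi) \Bigr).
\]
Each function on the right is a quotient of an exponential polynomial by a product of factors of the shapes $\langle\xi,v\rangle$ and $1-e^{\langle\xi,v\rangle}$ (for $S_{\Gamma}(\mathfrak{k};\cdot)$ by Barvinok's description; for $I^{\Gamma_{\mathfrak{f}}}(\mathfrak{f};\cdot)$ by Proposition~\ref{barvinok_homogeneous} together with \eqref{eqn_renormalize_integral}; for the lower-dimensional $\mu$'s by the inductive hypothesis), and this shape is preserved under sums and products, so $\mu_V^{\Gamma}(\mathfrak{k};\cdot)$ has the same shape, its singular locus is a locally finite union of hyperplanes, and "polar hyperplane" is unambiguous. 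Putting everything on a common denominator, and noting that the entire nonvanishing factor $e^{-\langle\xi,s\rangle}$ does not affect poles, every polar hyperplane of $\mu_V^{\Gamma}(\mathfrak{k};\cdot)$ lies among the denominator hyperplanes of the individual terms on the right.

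It remains to identify those denominator hyperplanes. For $S_{\Gamma}(\mathfrak{k};\cdot)$ they are $\{\langle\xi,v_j\rangle=2\pi i m\}$ with $v_j\in\Gamma\setminus\{0\}$ a primitive generator of an extremal ray of $\mathfrak{k}$ and $m\in\mathbb{Z}$; for the $I^{\Gamma_{\mathfrak{f}}}(\mathfrak{f};\cdot)$ they are $\{\langle\xi,w\rangle=0\}$ with $w$ on an extremal ray of $\mathfrak{f}$, hence of $\mathfrak{k}$; and for $\mu_{\textnormal{lin}(\mathfrak{f})^\perp}^{\Gamma^{\mathfrak{f}^\perp}}(\mathfrak{t}_{\mathfrak{f}}^{\mathfrak{k}};\cdot)$ with $\mathfrak{f}\neq\{s\}$ (where $\dim\textnormal{lin}(\mathfrak{f})^\perp<\dim V$), the inductive hypothesis gives hyperplanes $\{\langle\xi^{\mathfrak{f}^\perp},\gamma'\rangle=2\pi i m\}$ with $\gamma'\in\Gamma^{\mathfrak{f}^\perp}\setminus\{0\}$ and $m\neq 0$; pulling these back via the orthogonal projection (so $\langle\xi^{\mathfrak{f}^\perp},\gamma'\rangle=\langle\xi,\gamma'\rangle$ since $\gamma'\in\textnormal{lin}(\mathfrak{f})^\perp$) and multiplying $\gamma'$ by the positive integer placing it in $\Gamma_{\mathfrak{f}^\perp}\subseteq\Gamma$, one obtains exactly the required form $\{\langle\xi,\gamma\rangle=2\pi i m\}$ with $\gamma\in\Gamma\setminus\{0\}$, $m\neq 0$. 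Hence every polar hyperplane of $\mu_V^{\Gamma}(\mathfrak{k};\cdot)$ is either already of the required form, or else of the exceptional form $\{\langle\xi,v\rangle=0\}$ with $v\in\Gamma\setminus\{0\}$.

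The main obstacle — and the only non-formal point — is eliminating this exceptional form, i.e.\ showing that $\mu_V^{\Gamma}(\mathfrak{k};\cdot)$ has no polar hyperplane through the origin. For this I would invoke Property (3) of Theorem~\ref{thm_berline_vergne}: $\mu_V^{\Gamma}(\mathfrak{k};\cdot)$ is holomorphic on a neighborhood of $0$. Combined with the elementary remark that a meromorphic function whose singular locus is a locally finite union of hyperplanes cannot be holomorphic at any point of one of its polar hyperplanes — if it were holomorphic on an open set $U$ containing a point $p$ of a polar hyperplane $H$, its set of non-holomorphic points would be disjoint from the nonempty open subset $U\cap H$ of $H$ and hence not dense in $H$ — this forces every polar hyperplane of $\mu_V^{\Gamma}(\mathfrak{k};\cdot)$ to avoid the origin, and in particular excludes all hyperplanes $\{\langle\xi,v\rangle=0\}$. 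This closes the induction, and the proposition follows.
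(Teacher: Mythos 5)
Your proof is correct, and while it shares the overall skeleton with the paper's argument (induction on dimension via the defining recursion of Property (1) of Theorem~\ref{thm_berline_vergne}, combined with the known singularity structure of $S_\Gamma$ and of $I$), it diverges at the one genuinely delicate step: ruling out poles along the hyperplanes through the origin. The paper first reduces to simplicial unimodular cones via Barvinok's signed decomposition, isolates $\mu_V^\Gamma(\mathfrak{k};\xi)$ as $g(\xi)/\prod_j\langle\xi,v_j\rangle^2$ with $g$ holomorphic on $(V^*_{\mathbb{C}})^\Gamma$, and then removes the linear factors one at a time by invoking two facts extracted from the \emph{internals} of Berline--Vergne's proof of Property (3) — that $\bigl(\prod_j\langle\xi,v_j\rangle\bigr)\mu$ is holomorphic on $(V^*_{\mathbb{C}})^\Gamma$ and that $\langle\xi,v_k\rangle\mu$ vanishes identically on $\{\langle\xi,v_k\rangle=0\}$ — followed by a Taylor-series argument. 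You instead strengthen the induction hypothesis to a classification of the entire polar locus (only hyperplanes $\{\langle\xi,\gamma\rangle=2\pi i m\}$ with $\gamma\in\Gamma\setminus\{0\}$, $m\neq 0$) and use Property (3) purely as a black box: since the polar set of a meromorphic function whose singularities lie in a locally finite union of hyperplanes is a union of \emph{entire} hyperplanes, holomorphy in a single neighborhood of the origin kills every candidate hyperplane through the origin at once. This buys a cleaner, more self-contained argument (no signed decomposition, no re-entry into the Berline--Vergne proof) and yields a strictly stronger conclusion about the polar locus; the paper's approach, by contrast, stays closer to the explicit denominator and is the one that generalizes to the quantitative statements it needs elsewhere. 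Two small points worth tightening in a write-up: state the induction hypothesis so that it also records that each $\mu$ has polar set \emph{contained in} a locally finite union of hyperplanes (this is all you need for sums and products of the constituents, so the "exponential polynomial over standard factors" shape can be dropped), and note explicitly that passing from $\gamma'\in\Gamma^{\mathfrak{f}^\perp}$ to a multiple in $\Gamma_{\mathfrak{f}^\perp}\subseteq\Gamma$ uses the rationality of the inner product.
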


\begin{proof}
    The proof essentially follows in the same way that Property (3) in Theorem \ref{thm_berline_vergne} is proven in \cite{berline_vergne}.

    Suppose $\xi \in (V^*_{\mathbb{C}})^\Gamma$. Let $W$ be some rational subspace of $V$. We claim that $\xi^W \in (W^*_{\mathbb{C}})^{\Gamma^W}$. To see this: suppose $\gamma \in \Gamma^W$ and that $e^{\langle \xi, \gamma \rangle} = 1$. Because $\Gamma_W$ is finite index in $\Gamma^W$, we must have that some non-zero integer multiple $k \cdot \gamma$ is in $\Gamma_W$, and hence in $\Gamma$. We then clearly have that $e^{\langle \xi, k \cdot \gamma \rangle} = 1$, and thus $\langle \xi, k \cdot \gamma \rangle = 0$, which implies that $\langle \xi, \gamma \rangle = 0$. Therefore $\xi^W \in (W^*_{\mathbb{C}})^{\Gamma^W}$.

    We now proceed by induction on the dimension of $V$. The base case amounts to the discussion in Section \ref{sec_BV_euler_maclaurin}. Let $\mathfrak{k}$ be a rational pointed cone with vertex $\mathfrak{s}$. By the valuation property of $\mu$, i.e. Property (6) in Theorem \ref{thm_berline_vergne}, together with Barvinok's signed decomposition of a rational cone into simplicial unimodular cones \cite{barvinok_signed_decomp}, we can assume that $\mathfrak{k}$ is simplicial and unimodular. The function $\mu_{V}^{\Gamma}(\mathfrak{k}; \alpha)$ is defined in \cite{berline_vergne} inductively by the formula
    \begin{gather}
        S_\Gamma(\mathfrak{k}; \alpha) = e^{\langle \alpha, \mathfrak{s} \rangle} \mu_{V}^{\Gamma}(\mathfrak{k}; \alpha) + \sum_{\mathfrak{f} \in \textnormal{Face}(\mathfrak{k}), \textnormal{dim}(\mathfrak{f}) > 0} \mu_{\mathfrak{f}^\perp}^{\Gamma^{\mathfrak{f}^\perp}}(\mathfrak{t}_{\mathfrak{f}}^{\mathfrak{k}}; \alpha) \cdot I^{\Gamma^{\mathfrak{f}^\perp}}(\mathfrak{f}; \alpha). \label{def_BV}
    \end{gather}
    Utilizing the discussion in the previous paragraph, we inductively know that $\mu_{\mathfrak{f}^\perp}^{\Gamma^{\mathfrak{f}^\perp}}(\mathfrak{t}_{\mathfrak{f}}^{\mathfrak{k}}; \alpha)$ is holomorphic at $\alpha = \xi$. When we isolate $\mu_{V}^{\Gamma}(\mathfrak{k}; \alpha)$ in \eqref{def_BV}, we obtain an expression
    \begin{gather*}
        \mu_{V}^{\Gamma}(\mathfrak{k}; \alpha) = \frac{f(\alpha)}{\prod_j (1 - e^{\langle \alpha, v_j \rangle}) \cdot \langle \alpha, v_j \rangle},
    \end{gather*}
    where $v_j$ are the primitive lattice points on extremal rays of $\mathfrak{k}$, and where $f(\alpha)$ is holomorphic at any point of $(V^*_{\mathbb{C}})^\Gamma$. We furthermore have that the function
    \begin{gather*}
        \frac{\langle \alpha, v_j \rangle}{1 - e^{\langle \alpha, v_j \rangle}}
    \end{gather*}
    is holomorphic on the locus $\langle \alpha, v_j \rangle \neq 2 \pi i \mathbb{Z} \setminus \{0\}$; in particular it is holomorphic on the locus $\langle \alpha, v_j \rangle = 0$. Thus it is holomorphic at any point of $(V^*_{\mathbb{C}})^\Gamma$. We therefore have
    \begin{gather*}
        \mu_{V}^{\Gamma}(\mathfrak{k}; \alpha) = \frac{g(\alpha)}{\prod_j \langle \alpha, v_j \rangle^2},
    \end{gather*}
    where $g(\alpha)$ is holomorphic at any point in $(V^*_{\mathbb{C}})^\Gamma$. Thus the only points in $(V^*_{\mathbb{C}})^\Gamma$ on which $\mu_{V}^{\Gamma}(\mathfrak{k}; \alpha)$ could be singular are points also lying on some $\langle \alpha, v_j \rangle = 0$.

    In \cite{berline_vergne}, in the course of proving Property (3), Berline-Vergne show that $\Big( \prod_j \langle \alpha, v_j \rangle \Big) \mu_V^\Gamma(\mathfrak{k}; \alpha)$ is holomorphic near zero; in fact their proof shows that it is holomorphic on $(V_{\mathbb{C}}^*)^\Gamma$. They also show that $\langle \alpha, v_k \rangle \mu_V^\Gamma(\mathfrak{k}; \alpha)$ is identically zero on $\langle \alpha, v_k \rangle = 0$. Thus, by considering the Taylor series for $\Big( \prod_j \langle \alpha, v_j \rangle \Big) \mu_V^\Gamma(\mathfrak{k}; \alpha)$ at any point on $\{\langle \alpha, v_k \rangle = 0\} \cap (V_\mathbb{C}^*)^\Gamma$, we conclude that the function $\Big( \prod_{j \neq k} \langle \alpha, v_j \rangle \Big) \mu_V^\Gamma(\mathfrak{k}; \alpha)$ is also holomorphic on $(V_{\mathbb{C}}^*)^\Gamma$. We may thus repeatedly remove linear factors to finally conclude that $\mu_V^\Gamma(\mathfrak{k}; \alpha)$ is holomorphic on $(V_{\mathbb{C}}^*)^\Gamma$.
\end{proof}

\begin{remark}
    It may seem strange to discuss holomorphicity of functions on the set $(V_{\mathbb{C}}^*)^\Gamma$ as this set is not open. For a fixed rational cone $\mathfrak{k}$ and a point $\xi \in (V_{\mathbb{C}}^*)^\Gamma$ we can find an open neighborhood of $\xi$ on which $\mu_V^{\Gamma}(\mathfrak{k}; \alpha)$ is holomorphic, but the size of that neighborhood depends on $\mathfrak{k}$. Thus, when considering \textit{all} rational cones $\mathfrak{k}$, we cannot always find a common open set around $\xi$ on which all of the functions $\mu_V^{\Gamma}(\mathfrak{k}; \alpha)$ are holomorphic.
\end{remark}

\section{Degenerate Brion's formula: discrete setting}

Throughout this section we assume that we are working in a rational inner product space $V$ with a choice of lattice $\Lambda$. We shall furthermore usually assume that $\xi \in (V^*_{\mathbb{C}})^{\Lambda}$. The results of this section easily extend to arbitrary $\xi \in V^*_{\mathbb{C}}$ using \eqref{eqn_reduce_to_adapted}.

\subsection{Proto-degenerate Brion's formula: discrete setting}

The valuation property of $S_{\Lambda}(\cdot; \cdot)$ and the fact that it sends polyhedra containing lines to zero, together with Theorem \ref{polytope_decomp}, immediately imply:

\begin{proposition}[Proto-degenerate Brion's formula: discrete setting] \label{prop_proto_brion_discrete}
    Let $\mathfrak{p}$ be a rational polytope with respect to $\Lambda$, and let $\xi \in V^*_{\mathbb{C}}$. Then we have the identity of meromorphic functions:
    \begin{gather*}
        S_{\Lambda}(\mathfrak{p}; \alpha) = \sum_{\mathfrak{g} \in \{\mathfrak{p}\}_\xi} S_{\Lambda}(\mathfrak{g}^{\mathfrak{g}} \times \textnormal{LC}_{\mathfrak{g}}^{\mathfrak{p}}(\xi); \alpha).
    \end{gather*}
\end{proposition}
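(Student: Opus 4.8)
The plan is to deduce this proposition directly from the virtual-polytope identity of Theorem~\ref{thm_decomp} by pushing it through the valuation $S_{\Lambda}(\,\cdot\,;\cdot\,)$, exactly as Theorem~\ref{thm_brion_discrete} is obtained from Theorem~\ref{brianchon_gram}. First I would check that the decomposition \eqref{polyhedral_decomp} lives in the rational world $\mathcal{P}(V_{\mathbb{Q}})$. Since $\mathfrak{p}$ is rational with respect to $\Lambda$, every face $\mathfrak{g}$ is a rational polytope, so $\textnormal{lin}(\mathfrak{g})$ and $\textnormal{lin}(\mathfrak{g})^{\perp}$ are rational subspaces (using that $\langle\cdot,\cdot\rangle$ is rational), the orthogonal projection $\mathfrak{g}^{\mathfrak{g}}$ is a rational polytope, and each tangent/transverse cone ${}^{0}\mathfrak{t}_{\mathfrak{h}_{j-1}}^{\mathfrak{h}_{j}}$ and ${}^{0}\mathfrak{t}_{\mathfrak{h}_{\ell}}^{\mathfrak{p}}$ occurring in the expansion \eqref{levi_cone2} of ${}^{0}\textnormal{LC}_{\mathfrak{g}}^{\mathfrak{p}}(\xi)$ is a rational cone. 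Hence each product $\mathfrak{g}^{\mathfrak{g}}\times\textnormal{LC}_{\mathfrak{g}}^{\mathfrak{p}}(\xi)$ is a $\mathbb{Z}$-linear combination of rational polyhedra, i.e.\ an element of $\mathcal{P}(V_{\mathbb{Q}})$, and so is the whole right-hand side of \eqref{polyhedral_decomp}. Moreover, re-reading the proof of Theorem~\ref{thm_decomp} one sees that every line-containing polyhedron produced there (via Theorem~\ref{brianchon_gram} applied to rational polyhedra) is itself rational, so the congruence $\equiv$ in \eqref{polyhedral_decomp} may be read modulo $\mathcal{P}_{0}(V_{\mathbb{Q}})$: that is, $[\mathfrak{p}]-\sum_{\mathfrak{g}\in\{\mathfrak{p}\}_{\xi}}[\mathfrak{g}^{\mathfrak{g}}\times\textnormal{LC}_{\mathfrak{g}}^{\mathfrak{p}}(\xi)]\in\mathcal{P}_{0}(V_{\mathbb{Q}})$.

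Next I would invoke the two structural facts about $S_{\Lambda}$ recorded in the discussion preceding Theorem~\ref{thm_brion_discrete}: that $S_{\Lambda}(\,\cdot\,;\cdot\,)$ is a valuation, i.e.\ a linear map from $\mathcal{P}(V_{\mathbb{Q}})$ to meromorphic functions on $V_{\mathbb{C}}^{*}/\Lambda^{*}$, and that it annihilates $\mathcal{P}_{0}(V_{\mathbb{Q}})$. Applying this linear map to the rational identity of the previous paragraph immediately gives the equality of meromorphic functions
\[
S_{\Lambda}(\mathfrak{p};\alpha)=\sum_{\mathfrak{g}\in\{\mathfrak{p}\}_{\xi}}S_{\Lambda}\bigl(\mathfrak{g}^{\mathfrak{g}}\times\textnormal{LC}_{\mathfrak{g}}^{\mathfrak{p}}(\xi);\alpha\bigr)
\]
in the auxiliary exponent variable $\alpha\in V_{\mathbb{C}}^{*}$; note that the combinatorial data $\{\mathfrak{p}\}_{\xi}$ and the virtual cone $\textnormal{LC}_{\mathfrak{g}}^{\mathfrak{p}}(\xi)$ are fixed once $\xi$ is fixed, so the right-hand side really is a sum of fixed meromorphic functions of $\alpha$. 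Since $\mathfrak{p}$ is a polytope, the left-hand side is entire in $\alpha$, hence so is the total sum on the right, and specializing $\alpha=\xi$ yields the asserted identity. (Individual summands may well be singular at $\alpha=\xi$ — this is precisely the phenomenon Theorems~\ref{thm_intro_degen_brion_2} and \ref{thm_intro_degen_brion_3} later resolve — but the sum is not, so the equation makes sense as stated once the summation is performed before the substitution.)

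I expect no genuine obstacle in this argument: all the content sits in Theorem~\ref{thm_decomp} and in the already-established valuation and kernel properties of $S_{\Lambda}$, so the proposition is essentially a one-line corollary. The only places needing a sentence of care are (i) confirming the whole decomposition can be kept inside $\mathcal{P}(V_{\mathbb{Q}})$, so that $S_{\Lambda}$ — defined only on rational virtual polyhedra — may be applied; and (ii) being explicit that the equality is first proved at the level of meromorphic functions of the exponent and only afterwards evaluated at $\xi$, with entirety of the left-hand side guaranteeing the evaluation is legitimate. Neither of these is a serious difficulty.
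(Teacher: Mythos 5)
Your proposal is correct and is exactly the paper's argument: the paper derives Proposition~\ref{prop_proto_brion_discrete} in one line by applying the valuation $S_{\Lambda}(\cdot;\cdot)$, which kills $\mathcal{P}_0(V_{\mathbb{Q}})$, to the decomposition of Theorem~\ref{thm_decomp}. Your additional remarks on keeping everything inside $\mathcal{P}(V_{\mathbb{Q}})$ and on first working with a free exponent variable $\alpha$ are sensible points of care that the paper leaves implicit.
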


First off, it is not yet apparent just from this formula that each individual term is holomorphic at $\xi$. However, we shall ultimately prove this when $\xi \in (V^*_{\mathbb{C}})^{\Lambda}$. Furthermore, it would be nice to re-rexpress $S_{\Lambda}(\mathfrak{g}^{\mathfrak{g}} \times \textnormal{LC}_{\mathfrak{g}}^{\mathfrak{p}}(\xi); \xi)$ in terms of ``$S_\Lambda(\mathfrak{g}^{\mathfrak{g}}; \xi)$'' and ``$S_\Lambda(\textnormal{LC}_{\mathfrak{g}}^{\mathfrak{p}}(\xi); \xi)$''. However, the subspaces $\textnormal{lin}(\mathfrak{g})$ and $\textnormal{lin}(\mathfrak{g})^\perp$ carry two natural lattices, namely the intersection of $\Lambda$ with the subspace, and the projection of $\Lambda$ onto the subspace. It is not immediately clear which of these lattices we should use in such a decomposition; in Section \ref{sec_decomposing} we will see that in some sense we should use both.

\subsection{Holomorphicity of $S_\Lambda(\textnormal{LC}_{\mathfrak{g}}^{\mathfrak{p}}(\xi); \tau)$ at $\tau = \xi$}

Let $\xi \in (V^*_{\mathbb{C}})^{\Lambda}$. We first prove that $S_\Lambda(\textnormal{LC}_{\mathfrak{g}}^{\mathfrak{p}}(\xi); \tau)$ with $\tau \in (\textnormal{lin}(\mathfrak{g})^\perp)^*_{\mathbb{C}}$ is holomorphic at $\tau = \xi$. We use the notation $\{(\mathfrak{g}, \mathfrak{p})\}_\xi$ as follows:
\begin{gather*}
    \{(\mathfrak{g}, \mathfrak{p})\}_\xi := \{(\mathfrak{f}, \mathfrak{m}) : \mathfrak{g} \subseteq \mathfrak{f} \subseteq \mathfrak{m}, \textnormal{ and } \mathfrak{f} \in \{\mathfrak{p}\}_\xi, \textnormal{ and either } \mathfrak{m} = \mathfrak{f} \textnormal{ or } \mathfrak{m} \notin \{\mathfrak{p}\}_\xi \}.
\end{gather*}
Given $(\mathfrak{f}, \mathfrak{m}) \in \{(\mathfrak{g}, \mathfrak{p})\}_\xi$, we define
\begin{align*}
    W(\mathfrak{f}, \mathfrak{m}) &:= \textnormal{lin}(\mathfrak{f})^\perp \cap \textnormal{lin}(\mathfrak{m}) \subseteq \textnormal{lin}(\mathfrak{g})^\perp, \\
    W(\mathfrak{f}, \mathfrak{m})^\perp & := (\textnormal{lin}(\mathfrak{f}) \cap \textnormal{lin}(\mathfrak{g})^\perp) \oplus \textnormal{lin}(\mathfrak{m})^\perp \subseteq \textnormal{lin}(\mathfrak{g})^\perp.
\end{align*}
Note that $\textnormal{lin}(\mathfrak{g})^\perp = W(\mathfrak{f}, \mathfrak{m}) \oplus W(\mathfrak{f}, \mathfrak{m})^\perp$. We shall use the notation
\begin{align*}
    \mu^{(\mathfrak{f}, \mathfrak{m})}(\cdot; \cdot) &:= \mu^{\Lambda^{W(\mathfrak{f}, \mathfrak{m})^\perp}}_{W(\mathfrak{f}, \mathfrak{m})^\perp}(\cdot; \cdot), \\
    I^{(\mathfrak{f}, \mathfrak{m})}(\cdot, \cdot) & := I^{\Lambda_{W(\mathfrak{f}, \mathfrak{m})}}(\cdot; \cdot).
\end{align*}
Let $\textnormal{pFl}_{\mathfrak{g}}^{\mathfrak{f}}$ denote the set of all partial flags that start at $\mathfrak{g}$ and end at $\mathfrak{f}$.

\begin{theorem} \label{thm_levi_cone_euler_maclaurin}
    Suppose $\mathfrak{p}$ is a rational polytope with respect to $\Lambda$. Let $\xi \in V^*_{\mathbb{C}}$, and let $\mathfrak{g} \in \{\mathfrak{p}\}_\xi$. Then we have the identity of meromorphic functions:
    \begin{align}
    & S_{\Lambda}(\textnormal{LC}_{\mathfrak{g}}^{\mathfrak{p}}(\xi); \tau) =  \notag \\
    & \sum_{(\mathfrak{f}, \mathfrak{m}) \in \{(\mathfrak{g}, \mathfrak{p})\}_\xi} \mu^{(\mathfrak{f}, \mathfrak{m})} \Big( \sum_{(\mathfrak{g} = \mathfrak{h}_0 \subset \mathfrak{h}_1 \subset \dots \subset \mathfrak{h}_\ell = \mathfrak{f}) \in \textnormal{pFl}_{\mathfrak{g}}^{\mathfrak{f}}} (-1)^\ell \mathfrak{t}_{\mathfrak{g}}^{\mathfrak{h}_1} \times \mathfrak{t}_{\mathfrak{h}_1}^{\mathfrak{h}_2} \times \dots \times \mathfrak{t}_{\mathfrak{h}_{\ell-1}}^{\mathfrak{f}} \times \mathfrak{t}_{\mathfrak{m}}^{\mathfrak{p}}; \tau \Big) I^{(\mathfrak{f}, \mathfrak{m})}(\textnormal{LC}_{\mathfrak{f}}^{\mathfrak{m}}(\xi); \tau). \label{eqn_EM_LC}
\end{align}
If in fact $\xi \in (V^*_{\mathbb{C}})^\Lambda$, then $S_\Lambda(\textnormal{LC}_{\mathfrak{g}}^{\mathfrak{p}}; \tau)$ is holomorphic at $\tau = \xi$.
\end{theorem}

\begin{proof}
    We are tasked with computing $S_\Lambda(\textnormal{LC}_{\mathfrak{g}}^{\mathfrak{p}}(\xi); \tau)$, where
\begin{gather}
    \textnormal{LC}_{\mathfrak{g}}^{\mathfrak{p}}(\xi) = \sum_{(\mathfrak{g} = \mathfrak{h}_0 \subset \dots \subset \mathfrak{h}_\ell) \in \textnormal{mFl}_{\mathfrak{g}}^{\mathfrak{p}}(\xi)} (-1)^{\ell} \mathfrak{t}_{\mathfrak{g}}^{\mathfrak{h}_1} \times \dots \times \mathfrak{t}_{\mathfrak{h}_\ell}^{\mathfrak{p}}. \label{eqn_full_cone}
\end{gather}
We can apply the Euler-Maclaurin formula to each individual term. The faces of a product of polyhedra naturally correspond to products of faces with a choice of one face from each constituent polyhedron. Given a flag $\mathfrak{H} = (\mathfrak{g} = \mathfrak{h}_0 \subset \mathfrak{h}_1 \subset \dots \subset \mathfrak{h}_\ell \subset \mathfrak{p} = \mathfrak{h}_{\ell+1})$, let $\textnormal{iFl}(\mathfrak{H})$ denote all chains of the form $\mathfrak{m}_0 \subseteq \mathfrak{m}_1 \subseteq \dots \subseteq \mathfrak{m}_\ell$ satisfying the interlacing property $\mathfrak{h}_j \subseteq \mathfrak{m}_j \subseteq \mathfrak{h}_{j+1}$. We then get that
\begin{gather}
    S_{\Lambda}(\mathfrak{t}_{\mathfrak{g}}^{\mathfrak{h}_1} \times \dots \times \mathfrak{t}_{\mathfrak{h}_\ell}^{\mathfrak{p}}; \tau) = \sum_{(\mathfrak{m}_0 \subseteq \dots \subseteq \mathfrak{m}_\ell) \in \textnormal{iFl}(\mathfrak{H})} \mu'(\mathfrak{t}_{\mathfrak{m}_0}^{\mathfrak{h}_1} \times \dots \times \mathfrak{t}_{\mathfrak{m}_\ell}^{\mathfrak{p}}; \tau) \cdot I'(\mathfrak{t}_{\mathfrak{g}}^{\mathfrak{m}_0} \times \dots \times \mathfrak{t}_{\mathfrak{h}_{\ell}}^{\mathfrak{m}_\ell}; \tau), \label{eqn_one_cone}
\end{gather}
where $\mu'$ means the $\mu$ function with respect to the proper choice of lattice and subspace, and $I'(\cdot; \cdot)$ means we integrate with respect to the proper normalization of the Lebesgue measure as in Theorem \ref{thm_berline_vergne}.

We can use \eqref{eqn_one_cone} to expand out all terms in \eqref{eqn_full_cone}. Suppose we fix a cone $\mathfrak{k} = \mathfrak{t}_{\mathfrak{m}_0}^{\mathfrak{h}_1} \times \dots \times \mathfrak{t}_{\mathfrak{m}_{\ell-1}}^{\mathfrak{h}_\ell} \times \mathfrak{t}_{\mathfrak{m}_\ell}^{\mathfrak{p}}$. We wish to understand the coefficient of $\mu'(\mathfrak{k}; \tau)$ when we group terms after this expansion. Given a flag $\mathfrak{F} = (\mathfrak{g} = \mathfrak{f}_0 \subset \mathfrak{f}_1 \subset \dots \subset \mathfrak{f}_t \subset \mathfrak{p} = \mathfrak{f}_{t+1})$ with $(\mathfrak{f}_0 \subset \dots \subset \mathfrak{f}_{t}) \in \textnormal{mFl}_{\mathfrak{g}}^{\mathfrak{p}}(\xi)$, the corresponding terms of the form $\mu'(\mathfrak{a}; \tau)$ that we can get from expanding \eqref{eqn_full_cone} are cones $\mathfrak{a}$ of the form
\begin{gather*}
    \mathfrak{a} = \mathfrak{t}_{\mathfrak{n}_0}^{\mathfrak{f}_1} \times \mathfrak{t}_{\mathfrak{n}_1}^{\mathfrak{f}_2} \times \dots \times \mathfrak{t}_{\mathfrak{n}_{t-1}}^{\mathfrak{f}_t} \times \mathfrak{t}_{\mathfrak{n}_t}^{\mathfrak{p}},
\end{gather*}
where $(\mathfrak{n}_0 \subseteq \dots \subseteq \mathfrak{n}_t) \in \textnormal{iFl}(\mathfrak{F})$. We see that if $\mathfrak{a} = \mathfrak{k}$, then there must exist $k_0, k_1, \dots, k_\ell$ such that $\mathfrak{n}_{k_j} = \mathfrak{m}_{j}$ and $\mathfrak{f}_{k_j+1} = \mathfrak{h}_{j+1}$. Furthermore, for the remaining indices we must have that $\mathfrak{n}_0 = \mathfrak{f}_1, \mathfrak{n}_1 = \mathfrak{f}_2, \dots, \mathfrak{n}_{k_0-1} = \mathfrak{f}_{k_0}$, and $\mathfrak{n}_{k_0+1} = \mathfrak{f}_{k_0+2}, \dots, \mathfrak{n}_{k_1 - 1} = \mathfrak{f}_{k_1}$, etc. all the way to $\mathfrak{n}_{k_{\ell-1}+1} = \mathfrak{f}_{k_{\ell-1}+2}, \dots, \mathfrak{n}_{k_{\ell}-1} = \mathfrak{f}_{k_\ell}$. This contributes the following term towards the coefficient of $\mu'(\mathfrak{k}; \xi)$:
\begin{gather*}
    (-1)^t I'(\mathfrak{t}_{\mathfrak{g}}^{\mathfrak{f}_1} \times \mathfrak{t}_{\mathfrak{f}_1}^{\mathfrak{f}_2} \times \dots \times \mathfrak{t}_{\mathfrak{f}_{k_0}}^{\mathfrak{m}_0} \times \mathfrak{t}_{\mathfrak{h}_1 = \mathfrak{f}_{k_0+1}}^{\mathfrak{f}_{k_0+2}} \times \dots \times \mathfrak{t}_{\mathfrak{f}_{k_1}}^{\mathfrak{m}_1} \times \mathfrak{t}_{\mathfrak{h}_2 = \mathfrak{f}_{k_1+1}}^{\mathfrak{f}_{k_1+2}} \times \dots \times \mathfrak{t}_{\mathfrak{f}_{k_\ell} = \mathfrak{f}_t}^{\mathfrak{m}_\ell}; \tau).
\end{gather*}
Let $\textnormal{len}(\mathfrak{F}) = \ell$, and let $\mathfrak{t}_{\mathfrak{F}} := \mathfrak{t}_{\mathfrak{f}_0}^{\mathfrak{f}_1} \times \dots \times \mathfrak{t}_{\mathfrak{f}_\ell}^{\mathfrak{p}}$. We thus get that altogether the coefficient of $\mu'(\mathfrak{k}; \tau)$ is
\begin{align*}
    & I'\Bigg((-1)^\ell \Big(\sum_{\mathfrak{F}_0 \in \textnormal{mFl}_{\mathfrak{g}}^{\mathfrak{m}_0}(\xi)} (-1)^{\textnormal{len}(\mathfrak{F}_0)} \mathfrak{t}_{\mathfrak{F}_0} \Big) \Big(\sum_{\mathfrak{F}_1 \in \textnormal{mFl}_{\mathfrak{h}_1}^{\mathfrak{m}_1}(\xi)} (-1)^{\textnormal{len}(\mathfrak{F}_1)} \mathfrak{t}_{\mathfrak{F}_1}\Big) \dots \Big(\sum_{\mathfrak{F}_\ell \in \textnormal{mFl}_{\mathfrak{h}_\ell}^{\mathfrak{m}_\ell}(\xi)} (-1)^{\textnormal{len}(\mathfrak{F}_\ell)} \mathfrak{t}_{\mathfrak{F}_\ell}\Big); \tau \Bigg) \\
    & = (-1)^\ell I'(\textnormal{LC}_{\mathfrak{g}}^{\mathfrak{m}_0}(\xi); \tau^{\textnormal{lin}(\mathfrak{m}_0) \cap \textnormal{lin}(\mathfrak{g})^\perp}) \cdot I'(\textnormal{LC}_{\mathfrak{h}_1}^{\mathfrak{m}_1}(\xi); \tau^{\textnormal{lin}(\mathfrak{m}_1) \cap \textnormal{lin}(\mathfrak{h}_1)^\perp}) \dots I'(\textnormal{LC}_{\mathfrak{h}_\ell}^{\mathfrak{m}_\ell}(\xi); \tau^{\textnormal{lin}(\mathfrak{m}_\ell) \cap \textnormal{lin}(\mathfrak{h}_\ell)^\perp}).
\end{align*}
Making use of Proposition \ref{prop_triv_or_one}, notice that this expression is zero unless $\mathfrak{g} = \mathfrak{m}_0, \mathfrak{h}_1 = \mathfrak{m}_1, \dots, \mathfrak{h}_{\ell-1} = \mathfrak{m}_{\ell-1}$ which puts restrictions on which $\mu'(\mathfrak{k}; \tau)$ have non-zero coefficient. The non-vanishing terms are thus of the form
\begin{gather*}
    (-1)^\ell \mu'(\mathfrak{t}_{\mathfrak{g}}^{\mathfrak{h}_1} \times \mathfrak{t}_{\mathfrak{h}_1}^{\mathfrak{h}_2} \times \dots \times \mathfrak{t}_{\mathfrak{h}_{\ell-1}}^{\mathfrak{h}_\ell} \times \mathfrak{t}_{\mathfrak{m}_\ell}^{\mathfrak{p}}; \tau) \cdot I'(\textnormal{LC}_{\mathfrak{h}_\ell}^{\mathfrak{m}_\ell}(\xi); \tau).
\end{gather*}
We can now group the terms that involve a given $I'(\textnormal{LC}_{\mathfrak{f}}^{\mathfrak{m}}(\xi); \tau)$. We then get that
\begin{align*}
    & S_{\Lambda}(\textnormal{LC}_{\mathfrak{g}}^{\mathfrak{p}}(\xi); \tau) \\
    &= \sum_{(\mathfrak{f}, \mathfrak{m}) \in \{(\mathfrak{g}, \mathfrak{p})\}_\xi} \mu' \Big( \sum_{(\mathfrak{g} = \mathfrak{h}_0 \subset \mathfrak{h}_1 \subset \dots \subset \mathfrak{h}_\ell = \mathfrak{f}) \in \textnormal{pFl}_{\mathfrak{g}}^{\mathfrak{f}}} (-1)^\ell \mathfrak{t}_{\mathfrak{g}}^{\mathfrak{h}_1} \times \mathfrak{t}_{\mathfrak{h}_1}^{\mathfrak{h}_2} \times \dots \times \mathfrak{t}_{\mathfrak{h}_{\ell-1}}^{\mathfrak{f}} \times \mathfrak{t}_{\mathfrak{m}}^{\mathfrak{p}}; \tau \Big) I'(\textnormal{LC}_{\mathfrak{f}}^{\mathfrak{m}}(\xi); \tau).
\end{align*}

By Theorem \ref{thm_holomorphic}, we know that $I^{(\mathfrak{f}, \mathfrak{m})}(\textnormal{LC}_{\mathfrak{f}}^{\mathfrak{m}}(\xi); \tau)$ is holomorphic at $\tau = \xi$. By Proposition \ref{prop_mu_holomorphic}, we also know that $\mu^{(\mathfrak{f}, \mathfrak{m})}(\mathfrak{k}; \tau)$ is holomorphic at $\tau = \xi$ if $\xi \in (V^*_{\mathbb{C}})^\Lambda$, with $\mathfrak{k}$ as one of the cones appearing in \eqref{eqn_EM_LC}. From the formula we thus obtain that $S_\Lambda(\textnormal{LC}_{\mathfrak{g}}^{\mathfrak{p}}(\xi); \tau)$ is holomorphic at $\tau = \xi$ if $\xi \in (V^*_{\mathbb{C}})^\Lambda$. 
\end{proof}

\begin{example}
Let's do an example illustrating Theorem \ref{thm_levi_cone_euler_maclaurin}. Suppose we have that $\mathfrak{f}$ is $\xi$-maximal, $\mathfrak{g}$ is a facet in $\mathfrak{f}$, and $\mathfrak{f}$ is the only $\xi$-maximal face containing $\mathfrak{g}$. Then $\textnormal{LC}_{\mathfrak{g}}^{\mathfrak{p}}(\xi) = \mathfrak{t}_{\mathfrak{g}}^{\mathfrak{p}} - \mathfrak{t}_{\mathfrak{g}}^{\mathfrak{f}} \times \mathfrak{t}_{\mathfrak{f}}^{\mathfrak{p}}$. We get
\begin{align*}
    S_{\Lambda}(\mathfrak{t}_{\mathfrak{g}}^{\mathfrak{p}}; \tau) &= \sum_{\mathfrak{f} \subseteq \mathfrak{m}} \mu'(\mathfrak{t}_{\mathfrak{m}}^{\mathfrak{p}}; \tau) \cdot I'(\mathfrak{t}_{\mathfrak{g}}^{\mathfrak{m}}; \xi) + \sum_{\mathfrak{g} \subseteq \mathfrak{m}, \mathfrak{f} \nsubseteq \mathfrak{m}} \mu'(\mathfrak{t}_{\mathfrak{m}}^{\mathfrak{p}}; \tau) \cdot I'(\mathfrak{t}_{\mathfrak{g}}^{\mathfrak{m}}; \tau) \\
    S_{\Lambda}(\mathfrak{t}_{\mathfrak{g}}^{\mathfrak{f}} \times \mathfrak{t}_{\mathfrak{f}}^{\mathfrak{p}}; \tau) &= \sum_{\mathfrak{f} \subseteq \mathfrak{m}} \mu'(\mathfrak{t}_{\mathfrak{g}}^{\mathfrak{f}} \times \mathfrak{t}_{\mathfrak{m}}^{\mathfrak{p}}; \tau) \cdot I'(\mathfrak{t}_{\mathfrak{g}}^{\mathfrak{g}} \times \mathfrak{t}_{\mathfrak{f}}^{\mathfrak{m}}; \tau) + \mu'(\mathfrak{t}_{\mathfrak{f}}^{\mathfrak{f}} \times \mathfrak{t}_{\mathfrak{m}}^{\mathfrak{p}}; \tau) \cdot I'(\mathfrak{t}_{\mathfrak{g}}^{\mathfrak{f}} \times \mathfrak{t}_{\mathfrak{f}}^{\mathfrak{m}}; \tau) \\
    S_{\Lambda}(\textnormal{LC}_{\mathfrak{g}}^{\mathfrak{p}}(\xi); \tau) &= \sum_{\mathfrak{g} \subseteq \mathfrak{m}, \mathfrak{f} \nsubseteq \mathfrak{m}} \mu'(\mathfrak{t}_{\mathfrak{m}}^{\mathfrak{p}}; \tau) \cdot I'(\mathfrak{t}_{\mathfrak{g}}^{\mathfrak{m}}; \tau) + \sum_{\mathfrak{f} \subseteq \mathfrak{m}} \mu'(\mathfrak{t}_{\mathfrak{m}}^{\mathfrak{p}}; \tau) \cdot I'(\mathfrak{t}_{\mathfrak{g}}^{\mathfrak{m}} - \mathfrak{t}_{\mathfrak{g}}^{\mathfrak{f}} \times \mathfrak{t}_{\mathfrak{f}}^{\mathfrak{m}}; \tau) \\
    & \ \ - \sum_{\mathfrak{f} \subseteq \mathfrak{m}} \mu'(\mathfrak{t}_{\mathfrak{g}}^{\mathfrak{f}} \times \mathfrak{t}_{\mathfrak{m}}^{\mathfrak{p}}; \tau) \cdot I' (\mathfrak{t}_{\mathfrak{f}}^{\mathfrak{m}}; \tau) \\
    &= \sum_{(\mathfrak{g}, \mathfrak{m}) \in \{(\mathfrak{g}, \mathfrak{p})\}_{\xi}} \mu'(\mathfrak{t}_{\mathfrak{g}}^{\mathfrak{g}} \times \mathfrak{t}_\mathfrak{m}^{\mathfrak{p}}; \tau) \cdot I'(\textnormal{LC}_{\mathfrak{g}}^{\mathfrak{m}}(\xi); \tau) \\
    & \ \ + \sum_{(\mathfrak{f}, \mathfrak{m}) \in \{(\mathfrak{g}, \mathfrak{p})\}_{\xi}} \mu'(-(\mathfrak{t}_{\mathfrak{g}}^{\mathfrak{f}} \times \mathfrak{t}_{\mathfrak{m}}^{\mathfrak{p}}); \tau) \cdot I'(\textnormal{LC}_{\mathfrak{f}}^{\mathfrak{m}}(\xi); \tau).
\end{align*}
\end{example}

\subsection{Decomposing $S_{\Lambda}(\mathfrak{q}_1 \times \mathfrak{q}_2; \alpha)$} \label{sec_decomposing}

Suppose $\Lambda = \Lambda_1 \oplus \Lambda_2$ and $\Lambda_1$ and $\Lambda_2$ are orthogonal. Let $W_i = \Lambda_i \otimes_{\mathbb{Z}} \mathbb{R}$, so then we get the orthogonal decomposition $V = W_1 \oplus W_2$. Suppose $\alpha = \alpha_1 + \alpha_2$ with respect to this decomposition. If we have rational polyhedra $\mathfrak{q}_i \subset W_i$, then $S_\Lambda(\mathfrak{q}_1 \times \mathfrak{q}_2; \alpha) = S_{\Lambda_1}(\mathfrak{q}_1; \alpha_1) \cdot S_{\Lambda_2}(\mathfrak{q}_2; \alpha_2)$. This is the direct analogue of Proposition \ref{prop_multiplicative}.

However, suppose we merely have a decomposition of $V$ into orthogonal rational subspaces $V = W_1 \oplus W_2$, and suppose $\mathfrak{q}_i \subset W_i$ are rational polyhedra. Then we no longer have multiplicativity of $S_\Lambda(\mathfrak{p}_1 \times \mathfrak{p}_2; \alpha)$. In fact it is not even clear what such multiplicativity would mean as each subspace $W_i$ contains two distinct natural choices of lattice, namely $\Lambda^i := \Lambda^{W_i}$ and $\Lambda_i := \Lambda_{W_i}$.

\begin{proposition} \label{prop_ortho_lattice}
    Suppose $V = W_1 \oplus W_2$ decomposes into a direct sum of orthogonal rational subspaces. Then
    \begin{gather}
        \Lambda^{1}/\Lambda_{1} \xleftarrow[\phi_1]{\sim} \Lambda/(\Lambda_{1} \oplus \Lambda_{2}) \xrightarrow[\phi_2]{\sim} \Lambda^{2}/\Lambda_{2}. \label{eqn_maps_phi}
    \end{gather}
\end{proposition}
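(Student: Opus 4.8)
The plan is to take both $\phi_1$ and $\phi_2$ to be the maps induced on quotients by the orthogonal projections $P_{W_1}$ and $P_{W_2}$. First I would check well-definedness: by definition $\Lambda^i = P_{W_i}(\Lambda)$, so $P_{W_i}$ restricts to a \emph{surjection} $\Lambda \twoheadrightarrow \Lambda^i$; and since $W_1 \perp W_2$ we have $P_{W_1}(\Lambda_1) = \Lambda_1$, $P_{W_1}(\Lambda_2) = 0$ (and symmetrically), so $P_{W_i}$ carries $\Lambda_1 \oplus \Lambda_2$ onto $\Lambda_i$. Also $\Lambda_1 \oplus \Lambda_2 \subseteq \Lambda$ and $\Lambda_i \subseteq \Lambda^i$ (recorded in the excerpt), so $P_{W_i}$ descends to a well-defined surjective homomorphism $\phi_i \colon \Lambda/(\Lambda_1 \oplus \Lambda_2) \to \Lambda^i/\Lambda_i$.

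The only genuine content is injectivity, and it rests on the elementary observation that for $\lambda \in \Lambda$ one has $P_{W_1}(\lambda) \in \Lambda$ if and only if $P_{W_2}(\lambda) \in \Lambda$, because $P_{W_1}(\lambda) + P_{W_2}(\lambda) = \lambda \in \Lambda$. So if $[\lambda] \in \ker\phi_1$, i.e. $P_{W_1}(\lambda) \in \Lambda_1 \subseteq \Lambda$, then $P_{W_2}(\lambda) = \lambda - P_{W_1}(\lambda) \in \Lambda$, and since this element lies in $W_2$ it lies in $\Lambda \cap W_2 = \Lambda_2$; hence $\lambda = P_{W_1}(\lambda) + P_{W_2}(\lambda) \in \Lambda_1 \oplus \Lambda_2$, so $[\lambda] = 0$. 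The same argument with $W_1$ and $W_2$ interchanged shows $\phi_2$ is injective. Combining with the surjectivity established above, $\phi_1$ and $\phi_2$ are both isomorphisms, which is exactly the claimed chain.

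I do not anticipate a real obstacle here — the statement is a short diagram chase whose single nontrivial ingredient is the splitting observation in the previous paragraph. The one point to state carefully (rather than a difficulty) is that $\Lambda^i$ and $\Lambda_i$ are honest lattices and $\Lambda_i$ is finite index in $\Lambda^i$, so that all four quotients in \eqref{eqn_maps_phi} make sense; this is precisely the rationality hypothesis on $W_1, W_2$ already in force, and it is the only hypothesis used.
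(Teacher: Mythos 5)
Your proof is correct and follows essentially the same approach as the paper: both descend the orthogonal projection $P_{W_i}\colon \Lambda \twoheadrightarrow \Lambda^i$ to the quotients and identify the kernel with $\Lambda_1 \oplus \Lambda_2$. The paper presents this more tersely as the composite $\Lambda \to \Lambda^1 \to \Lambda^1/\Lambda_1$ and leaves the kernel computation implicit, whereas you spell out the one nontrivial point, namely the splitting observation that $P_{W_1}(\lambda) \in \Lambda$ if and only if $P_{W_2}(\lambda) \in \Lambda$.
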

\begin{proof}
    We clearly have a homomorphism $\Lambda \to \Lambda^1$ via orthogonal projection whose kernel is $\Lambda_2$. We have a further homomorphism from $\Lambda^1 \to \Lambda^1/\Lambda_1$ whose kernel is of course $\Lambda_1$. In composing these maps we get the result.
\end{proof}

Note that the maps $\phi_1$ and $\phi_2$ in \eqref{eqn_maps_phi} simply correspond to mapping $[\gamma] \in \Lambda/(\Lambda_1 \oplus \Lambda_2)$ to $[\gamma^{W_1}] + \Lambda_1$ and $[\gamma^{W_2}] + \Lambda_2$ respectively. See Figure \ref{fig_lattice} for an illustration of Proposition \ref{prop_ortho_lattice}. Recall that if $V$ carries an inner product and $\Gamma \subset W$ is a lattice, then $\textnormal{gr}(\Gamma)$ is the covolume of $\Gamma$ with respect to the volume form on $W$ coming from the inner product.

\begin{figure} 
\centering
\begin{tikzpicture}[scale=1.0]
\begin{scope}[xshift = 8 cm]
\node[yshift = -5mm] at (3, -1) {\textcolor{blue}{$W_1$}};
\node[yshift = 5mm, xshift = -3mm] at (1, 3) {\textcolor{red}{$W_2$}};
\node at (0,-1) [circle,fill,inner sep=1.0pt]{};
\node at (1,-1) [circle,fill,inner sep=1.0pt]{};
\node at (2,-1) [circle,fill,inner sep=1.0pt]{};
\node at (3,-1) [circle,fill,inner sep=1.0pt]{};
\node at (4,-1) [circle,fill,inner sep=1.0pt]{};
\node at (0,0) [circle,fill,inner sep=1.0pt]{};
\node at (1,0) [circle,fill,inner sep=1.0pt]{};
\node at (2,0) [circle,fill,inner sep=1.0pt]{};
\node at (3,0) [circle,fill,inner sep=1.0pt]{};
\node at (4,0) [circle,fill,inner sep=1.0pt]{};
\node at (0,1) [circle,fill,inner sep=1.0pt]{};
\node at (1,1) [circle,fill,inner sep=1.0pt]{};
\node at (2,1) [circle,fill,inner sep=1.0pt]{};
\node at (3,1) [circle,fill,inner sep=1.0pt]{};
\node at (4,1) [circle,fill,inner sep=1.0pt]{};
\node at (0,2) [circle,fill,inner sep=1.0pt]{};
\node at (1,2) [circle,fill,inner sep=1.0pt]{};
\node at (2,2) [circle,fill,inner sep=1.0pt]{};
\node at (3,2) [circle,fill,inner sep=1.0pt]{};
\node at (4,2) [circle,fill,inner sep=1.0pt]{};
\node at (0,3) [circle,fill,inner sep=1.0pt]{};
\node at (1,3) [circle,fill,inner sep=1.0pt]{};
\node at (2,3) [circle,fill,inner sep=1.0pt]{};
\node at (3,3) [circle,fill,inner sep=1.0pt]{};
\node at (4,3) [circle,fill,inner sep=1.0pt]{};
\draw[color = blue] (-0.5, 0.166667) -- (4.5, -1.5);
\draw[color = red] (1.16667, 3.5) -- (-0.5, -1.5);
\draw[color = blue, dashed] (1, 3) -- (4, 2);
\draw[color = red, dashed] (3, -1) -- (4, 2);

\draw[dotted] (1, 0) -- (0.1, 0.3);
\draw[dotted] (2, 0) -- (0.2, 0.6);
\draw[dotted] (3, 0) -- (0.3, 0.9);
\draw[dotted] (1, 1) -- (0.4, 1.2);
\draw[dotted] (2, 1) -- (0.5, 1.5);
\draw[dotted] (3, 1) -- (0.6, 1.8);
\draw[dotted] (1, 2) -- (0.7, 2.1);
\draw[dotted] (2, 2) -- (0.8, 2.4);
\draw[dotted] (3, 2) -- (0.9, 2.7);
\node at (0.1,0.3) [circle,fill,inner sep=0.7pt]{};
\node at (0.2,0.6) [circle,fill,inner sep=0.7pt]{};
\node at (0.3,0.9) [circle,fill,inner sep=0.7pt]{};
\node at (0.4,1.2) [circle,fill,inner sep=0.7pt]{};
\node at (0.5,1.5) [circle,fill,inner sep=0.7pt]{};
\node at (0.6,1.8) [circle,fill,inner sep=0.7pt]{};
\node at (0.7,2.1) [circle,fill,inner sep=0.7pt]{};
\node at (0.8,2.4) [circle,fill,inner sep=0.7pt]{};
\node at (0.9,2.7) [circle,fill,inner sep=0.7pt]{};
\end{scope}

\node[yshift = -5mm] at (3, -1) {\textcolor{blue}{$W_1$}};
\node[yshift = 5mm, xshift = -3mm] at (1, 3) {\textcolor{red}{$W_2$}};
\node at (0,-1) [circle,fill,inner sep=1.0pt]{};
\node at (1,-1) [circle,fill,inner sep=1.0pt]{};
\node at (2,-1) [circle,fill,inner sep=1.0pt]{};
\node at (3,-1) [circle,fill,inner sep=1.0pt]{};
\node at (4,-1) [circle,fill,inner sep=1.0pt]{};
\node at (0,0) [circle,fill,inner sep=1.0pt]{};
\node at (1,0) [circle,fill,inner sep=1.0pt]{};
\node at (2,0) [circle,fill,inner sep=1.0pt]{};
\node at (3,0) [circle,fill,inner sep=1.0pt]{};
\node at (4,0) [circle,fill,inner sep=1.0pt]{};
\node at (0,1) [circle,fill,inner sep=1.0pt]{};
\node at (1,1) [circle,fill,inner sep=1.0pt]{};
\node at (2,1) [circle,fill,inner sep=1.0pt]{};
\node at (3,1) [circle,fill,inner sep=1.0pt]{};
\node at (4,1) [circle,fill,inner sep=1.0pt]{};
\node at (0,2) [circle,fill,inner sep=1.0pt]{};
\node at (1,2) [circle,fill,inner sep=1.0pt]{};
\node at (2,2) [circle,fill,inner sep=1.0pt]{};
\node at (3,2) [circle,fill,inner sep=1.0pt]{};
\node at (4,2) [circle,fill,inner sep=1.0pt]{};
\node at (0,3) [circle,fill,inner sep=1.0pt]{};
\node at (1,3) [circle,fill,inner sep=1.0pt]{};
\node at (2,3) [circle,fill,inner sep=1.0pt]{};
\node at (3,3) [circle,fill,inner sep=1.0pt]{};
\node at (4,3) [circle,fill,inner sep=1.0pt]{};
\draw[color = blue] (-0.5, 0.166667) -- (4.5, -1.5);
\draw[color = red] (1.16667, 3.5) -- (-0.5, -1.5);
\draw[color = blue, dashed] (1, 3) -- (4, 2);
\draw[color = red, dashed] (3, -1) -- (4, 2);

\draw[dotted] (1, 2) -- (0.3, -0.1);
\draw[dotted] (1, 1) -- (0.6, -0.2);
\draw[dotted] (1, 0) -- (0.9, -0.3);
\draw[dotted] (2, 2) -- (1.2, -0.4);
\draw[dotted] (2, 1) -- (1.5, -0.5);
\draw[dotted] (2, 0) -- (1.8, -0.6);
\draw[dotted] (3, 2) -- (2.1, -0.7);
\draw[dotted] (3, 1) -- (2.4, -0.8);
\draw[dotted] (3, 0) -- (2.7, -0.9);

\node at (0.3, -0.1) [circle,fill,inner sep=0.7pt]{};
\node at (0.6,-0.2) [circle,fill,inner sep=0.7pt]{};
\node at (0.9,-0.3) [circle,fill,inner sep=0.7pt]{};
\node at (1.2, -0.4) [circle,fill,inner sep=0.7pt]{};
\node at (1.5,-0.5) [circle,fill,inner sep=0.7pt]{};
\node at (1.8,-0.6) [circle,fill,inner sep=0.7pt]{};
\node at (2.1,-0.7) [circle,fill,inner sep=0.7pt]{};
\node at (2.4,-0.8) [circle,fill,inner sep=0.7pt]{};
\node at (2.7,-0.9) [circle,fill,inner sep=0.7pt]{};

\end{tikzpicture}
\caption{The space $\mathbb{R}^2$ decomposes into $W_1 \oplus W_2$, i.e. the blue and red lines. Lattice points in the square form a set of representatives for $\mathbb{Z}^2/\big((\mathbb{Z}^2 \cap W_1) \oplus (\mathbb{Z}^2 \cap W_2)\big)$. On the left we orthogonally project onto $W_1$ and on the right onto $W_2$. In both cases the map provides a bijection onto a set of representatives for $(\mathbb{Z}^2 \cap W_1)/P_{W_1}(\mathbb{Z}^2)$ and $(\mathbb{Z}^2 \cap W_2)/P_{W_2}(\mathbb{Z}^2)$, respectively.}
\label{fig_lattice}
\end{figure}

\begin{corollary} \label{cor_lattices}
    Suppose $V = W_1 \oplus W_2$ decomposes into a direct sum of orthogonal rational subspaces. Then
    \begin{gather*}
        \textnormal{gr}(\Lambda^1) \cdot \textnormal{gr}(\Lambda_2) = \textnormal{gr}(\Lambda_1) \cdot \textnormal{gr}(\Lambda^2) = \textnormal{gr}(\Lambda).
    \end{gather*}
\end{corollary}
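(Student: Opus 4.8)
The idea is to combine Proposition \ref{prop_ortho_lattice} with two elementary facts about the quantity $\textnormal{gr}$: first, that for a finite-index sublattice $\Gamma' \subseteq \Gamma$ spanning the same subspace one has $\textnormal{gr}(\Gamma') = [\Gamma : \Gamma'] \cdot \textnormal{gr}(\Gamma)$; and second, that $\textnormal{gr}$ is multiplicative over orthogonal direct sums, i.e. $\textnormal{gr}(\Lambda_1 \oplus \Lambda_2) = \textnormal{gr}(\Lambda_1) \cdot \textnormal{gr}(\Lambda_2)$ when $\Lambda_1 \subset W_1$ and $\Lambda_2 \subset W_2$ with $W_1 \perp W_2$. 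The first follows from the change-of-basis formula for the Gram determinant (the transition matrix between a basis of $\Gamma$ and a basis of $\Gamma'$ has determinant $\pm[\Gamma:\Gamma']$), and the second from the fact that a basis of $\Lambda_1 \oplus \Lambda_2$ obtained by concatenating bases of $\Lambda_1$ and $\Lambda_2$ has block-diagonal Gram matrix since $W_1 \perp W_2$.

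First I would set $m := [\Lambda : \Lambda_1 \oplus \Lambda_2]$, which is finite because $\Lambda_1 \oplus \Lambda_2$ is a full-rank sublattice of $\Lambda$ (both span $V = W_1 \oplus W_2$). By Proposition \ref{prop_ortho_lattice} the isomorphisms $\phi_1, \phi_2$ give $[\Lambda^1 : \Lambda_1] = |\Lambda/(\Lambda_1 \oplus \Lambda_2)| = m$ and likewise $[\Lambda^2 : \Lambda_2] = m$. Applying the finite-index formula twice gives $\textnormal{gr}(\Lambda_1) = m \cdot \textnormal{gr}(\Lambda^1)$ and $\textnormal{gr}(\Lambda_2) = m \cdot \textnormal{gr}(\Lambda^2)$, while the direct-sum multiplicativity together with the finite-index formula applied to $\Lambda_1 \oplus \Lambda_2 \subseteq \Lambda$ gives $\textnormal{gr}(\Lambda_1) \cdot \textnormal{gr}(\Lambda_2) = \textnormal{gr}(\Lambda_1 \oplus \Lambda_2) = m \cdot \textnormal{gr}(\Lambda)$.

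Then the two claimed identities are immediate algebra:
\[
\textnormal{gr}(\Lambda^1) \cdot \textnormal{gr}(\Lambda_2) = \frac{\textnormal{gr}(\Lambda_1)}{m} \cdot \textnormal{gr}(\Lambda_2) = \frac{\textnormal{gr}(\Lambda_1) \cdot \textnormal{gr}(\Lambda_2)}{m} = \textnormal{gr}(\Lambda),
\]
and symmetrically $\textnormal{gr}(\Lambda_1) \cdot \textnormal{gr}(\Lambda^2) = \textnormal{gr}(\Lambda_1) \cdot \frac{\textnormal{gr}(\Lambda_2)}{m} = \textnormal{gr}(\Lambda)$. There is no serious obstacle here; the only point requiring a little care is invoking the two basic properties of $\textnormal{gr}$ (scaling by index, multiplicativity over orthogonal sums), both of which are routine consequences of the definition of $\textnormal{gr}(\Gamma)$ as the square root of the Gram determinant, and the fact that the index $m$ appearing in all three places is the \emph{same} integer, which is exactly the content of Proposition \ref{prop_ortho_lattice}.
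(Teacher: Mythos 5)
Your proof is correct and is precisely the argument the paper implicitly intends by labeling the statement a corollary of Proposition~\ref{prop_ortho_lattice} (the paper gives no written proof). The two facts you invoke about $\textnormal{gr}$ — scaling by index under a finite-index sublattice, and multiplicativity over orthogonal direct sums — are the right ingredients, and Proposition~\ref{prop_ortho_lattice} is exactly what forces the three indices $[\Lambda^1:\Lambda_1]$, $[\Lambda^2:\Lambda_2]$, $[\Lambda:\Lambda_1\oplus\Lambda_2]$ to coincide.
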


\begin{proposition} \label{prop_multiplicative_discrete}
    Let $\mathfrak{q}_1$ and $\mathfrak{q}_2$ be polyhedra. We have the identity of meromorphic functions:
\begin{align*}
    S_\Lambda(\mathfrak{q}_1 \times \mathfrak{q}_2; \alpha) &= \sum_{[\gamma] \in \Lambda/(\Lambda_1 \oplus \Lambda_2)} S_{\phi_1([\gamma]) + \Lambda_1} (\mathfrak{q}_1; \alpha_1) \cdot S_{\phi_2([\gamma]) + \Lambda_2}(\mathfrak{q}_2; \alpha_2)  \\
    &= \sum_{[\gamma] \in \Lambda/(\Lambda_1 \oplus \Lambda_2)} e^{\langle \alpha, \gamma \rangle} S_{\Lambda_1}(-\gamma_1 + \mathfrak{q}_1; \alpha_1) S_{\Lambda_2}(-\gamma_2 + \mathfrak{q}_2; \alpha_2)
\end{align*}
In the second line we choose any representative $\gamma$ of $[\gamma] \in \Lambda/(\Lambda_1 \oplus \Lambda_2)$ and write $\gamma = \gamma_1 + \gamma_2$ with respect to the decomposition $V = W_1 \oplus W_2$.
\end{proposition}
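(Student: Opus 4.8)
The plan is to establish the identity first on the nonempty open subset of $V^*_{\mathbb{C}}$ where every exponential sum in sight converges absolutely, and then propagate it to all of $V^*_{\mathbb{C}}$ by meromorphic continuation. First I would dispose of the degenerate case: if either $\mathfrak{q}_i$ contains a straight line then so does $\mathfrak{q}_1 \times \mathfrak{q}_2$, and so does each translate $-\gamma_i + \mathfrak{q}_i$, so by definition every term on both sides vanishes. Otherwise the recession cones $\mathfrak{k}_i$ of $\mathfrak{q}_i$ are pointed, the recession cone of $\mathfrak{q}_1 \times \mathfrak{q}_2$ is $\mathfrak{k}_1 \times \mathfrak{k}_2$, and since the dual of a product cone is the product of the duals (inside $V^* = W_1^* \oplus W_2^*$), the convergence locus of $S_\Lambda(\mathfrak{q}_1 \times \mathfrak{q}_2; \cdot)$ is exactly the set of $\xi = \xi_1 + \xi_2$ with $\xi_i \in \textnormal{Int}(-({}^0\mathfrak{k}_i)^*)$ for $i = 1, 2$, which is precisely where all the translated sums on the right-hand side converge. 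Working on this open set makes all rearrangements legitimate.

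The combinatorial heart is a coset bookkeeping argument. Fixing $\xi = \xi_1 + \xi_2$ in the convergence region, I would partition $(\mathfrak{q}_1 \times \mathfrak{q}_2) \cap \Lambda$ according to which coset of $\Lambda_1 \oplus \Lambda_2$ each point lies in. For $\lambda \in \Lambda$, writing $\lambda = \lambda^{W_1} + \lambda^{W_2}$ for the orthogonal decomposition, one has $\lambda \in \mathfrak{q}_1 \times \mathfrak{q}_2$ if and only if $\lambda^{W_i} \in \mathfrak{q}_i$ for both $i$, and $\langle \xi, \lambda \rangle = \langle \xi_1, \lambda^{W_1} \rangle + \langle \xi_2, \lambda^{W_2} \rangle$ because $W_1 \perp W_2$. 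If $\lambda$ lies in the coset $[\gamma]$ then $\lambda^{W_i} \in \gamma^{W_i} + \Lambda_i$, and conversely any pair $(w_1, w_2)$ with $w_i \in \gamma^{W_i} + \Lambda_i$ has $w_1 + w_2 \in \Lambda$ and lies in $[\gamma]$; this is exactly the content of (the proof of) Proposition \ref{prop_ortho_lattice}, whose maps $\phi_i$ send $[\gamma]$ to $\gamma^{W_i} + \Lambda_i$. Thus $\lambda \mapsto (\lambda^{W_1}, \lambda^{W_2})$ is a bijection from the points of the coset $[\gamma]$ lying inside $\mathfrak{q}_1 \times \mathfrak{q}_2$ onto $\big((\gamma^{W_1} + \Lambda_1) \cap \mathfrak{q}_1\big) \times \big((\gamma^{W_2} + \Lambda_2) \cap \mathfrak{q}_2\big)$, and summing the now-factored exponential over this product gives $S_{\phi_1([\gamma]) + \Lambda_1}(\mathfrak{q}_1; \xi_1) \cdot S_{\phi_2([\gamma]) + \Lambda_2}(\mathfrak{q}_2; \xi_2)$. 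Summing over all cosets $[\gamma] \in \Lambda/(\Lambda_1 \oplus \Lambda_2)$ yields the first equality.

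For the second equality I would apply the translation identity $S_{s + \Lambda'}(\mathfrak{q}; \alpha) = e^{\langle \alpha, s \rangle} S_{\Lambda'}(-s + \mathfrak{q}; \alpha)$ to each factor with $s = \gamma_i := \gamma^{W_i}$, and observe that the cross terms vanish, $\langle \xi_1, \gamma_2 \rangle = \langle \xi_2, \gamma_1 \rangle = 0$, again because $W_1 \perp W_2$ and $\xi_i$ is supported on $W_i$; hence $\langle \xi_1, \gamma_1 \rangle + \langle \xi_2, \gamma_2 \rangle = \langle \xi, \gamma \rangle$. Finally, both sides of each asserted identity are finite sums of meromorphic functions on $V^*_{\mathbb{C}}$, by the structure of the $S$-functions recalled in the discussion of the discrete version of Brion's formula, so agreement on an open set forces agreement everywhere. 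The only point requiring care — and it is not deep — is keeping straight which of the two natural lattices in $W_i$ appears: the bijection forces the intersection lattice $\Lambda_i = \Lambda_{W_i}$, shifted by the projection $\gamma^{W_i}$ of a coset representative, and it is precisely Proposition \ref{prop_ortho_lattice} that guarantees these shifted intersection lattices reassemble to all of $\Lambda$ as $[\gamma]$ ranges over $\Lambda/(\Lambda_1 \oplus \Lambda_2)$.
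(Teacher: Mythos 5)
Your proof is correct and takes essentially the same approach as the paper: both decompose the lattice sum according to the cosets of $\Lambda_1 \oplus \Lambda_2$ in $\Lambda$ and factor each coset's contribution using the orthogonality of $W_1$ and $W_2$. The paper's version does this by a "Fubini-style" iterated sum — fixing the $W_1$-coordinate and summing over the fiber in $W_2$ first — whereas you go straight to the partition and exhibit the bijection $\lambda \mapsto (\lambda^{W_1}, \lambda^{W_2})$ onto $\big((\gamma^{W_1}+\Lambda_1)\cap\mathfrak{q}_1\big)\times\big((\gamma^{W_2}+\Lambda_2)\cap\mathfrak{q}_2\big)$; these are the same argument organized differently. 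Your explicit handling of the degenerate (line-containing) case, your identification of the common convergence domain via the factorization of the dual of a product cone, and your appeal to meromorphic continuation at the end are all correct and make explicit steps that the paper leaves implicit.
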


\begin{proof}
    Translates of $W_2$ are parametrized by elements in $W_1$. Given $x \in W_1$, we have that $x + W_2$ intersects $\Lambda$ if and only if $x \in \Lambda^1$. In that case there exists a $\lambda \in \Lambda$ such that $\lambda = x + y$ with $y \in W_2$. The set $(x + W_2) \cap \Lambda$ is equal to $\lambda + \Lambda_2$. If we orthogonally project $\lambda + \Lambda_2$ onto $W_2$, we get $y + \Lambda_2$.

    We now compute
    \begin{align*}
        \sum_{\theta \in (x +  \mathfrak{q}_2) \cap \Lambda} e^{\langle \alpha, \theta \rangle} &= \sum_{y + \pi \in \mathfrak{q}_2 \cap (y + \Lambda_2)} e^{\langle \alpha, x + y + \pi \rangle} \\
        &= e^{\langle \alpha_1, x \rangle} \sum_{\pi \in (-y + \mathfrak{q}_2) \cap \Lambda_2}  e^{\langle \alpha_2, y + \pi \rangle} \\
        &= e^{\langle \alpha_1, x \rangle} e^{\langle \alpha_2, y \rangle} S_{\Lambda_2}(-y + \mathfrak{q}_2; \alpha_2).
    \end{align*}
    We now need to sum up over $(x + \Lambda_1) \cap \mathfrak{q}_1$. We then get
    \begin{align*}
        e^{\langle \alpha_2, y \rangle} S_{\Lambda_2}(-y + \mathfrak{q}_2; \alpha_2) \sum_{x + \pi \in (x + \Lambda_1) \cap \mathfrak{q}_1} e^{\langle \alpha_1, x + \pi \rangle} &= e^{\langle \alpha_2, y \rangle} S_{\Lambda_2}(-y + \mathfrak{q}_2; \alpha_2) e^{\langle \alpha_1, x \rangle} \sum_{\theta \in (-x + \mathfrak{q}_1) \cap \Lambda_1} e^{\langle \alpha_1, \theta \rangle} \\
        &= e^{\langle \alpha, \lambda \rangle} S_{\Lambda_2}(-y + \mathfrak{q}_2; \alpha_2) S_{\Lambda_1}(-x + \mathfrak{q}_1; \alpha_1) \\
        &= S_{y + \Lambda_2}(\mathfrak{q}_2; \alpha_2) S_{x + \Lambda_1}(\mathfrak{q}_1; \alpha_1)
    \end{align*}
    We then sum up over a collection of representatives of $\Lambda/(\Lambda_1 \oplus \Lambda_2)$.
\end{proof}

\subsection{Degenerate Brion's formula: discrete setting}

We can now prove the various versions of degenerate Brion's formula in the discrete setting discussed in the introduction. Propositions \ref{prop_proto_brion_discrete} and \ref{prop_multiplicative_discrete} immediately imply Corollary \ref{cor_discrete_1}. The Berline-Vergne local Euler-Maclaurin formula allows us to express
\begin{gather}
    S_{\Gamma}(\mathfrak{f}; 0) = \sum_{\mathfrak{g} \in \textnormal{Face}(\mathfrak{f})} \mu_{\mathfrak{g}^\perp}^{\Gamma^{\mathfrak{g}^\perp}}(\mathfrak{t}_\mathfrak{g}^{\mathfrak{f}}; 0) \cdot \textnormal{vol}^{\Gamma_{\mathfrak{g}}}(\mathfrak{g}). \label{lattice_point_vol}
\end{gather}
When we expand the terms in \eqref{eqn_brion_1} using \eqref{lattice_point_vol} and then collect terms, we obtain Corollary \ref{thm_intro_degen_brion_2}.

The proof of Corollary \ref{thm_intro_degen_brion_3} is a bit more involved. We obtain it by applying the local Euler-Maclaurin formula directly to $I(\mathfrak{p}; \xi)$.

\begin{proof}[Proof of Corollary \ref{thm_intro_degen_brion_3}]
    We begin by discussing our notation for different normalizations of Lebesgue measures on rational subspaces of $V$. The volume form on $V$ coming from $\langle \cdot, \cdot \rangle$ is denoted by $\textnormal{vol}$. The volume form coming from the restriction of $\langle \cdot, \cdot \rangle$ to a subspace parallel to a face $\mathfrak{g}$ is denoted $\textnormal{vol}_{\textnormal{lin}(\mathfrak{g}), \langle \cdot, \cdot \rangle}$. The volume form on a subspace parallel to a face $\mathfrak{g}$ normalized so that a lattice $\Gamma$ in $\textnormal{lin}(\mathfrak{g})$ has covolume 1 is denoted by $\textnormal{vol}^{\Gamma}$. If $\mathfrak{g} \subseteq \mathfrak{f}$ are faces and $\Gamma$ is a lattice in $\textnormal{lin}(\mathfrak{f})$, we let $\textnormal{vol}^{\Gamma}_{\textnormal{lin}(\mathfrak{g}), \langle \cdot, \cdot \rangle}$ denote the Lebesgue measure on $\textnormal{lin}(\mathfrak{g})$ obtained by first taking the restriction of the inner product to $\textnormal{lin}(\mathfrak{f})$, rescaling it so that $\Gamma$ has covolume 1, and then restricting the rescaled inner product to the subspace $\textnormal{lin}(\mathfrak{f})$. We use analogous notation $I_{\textnormal{lin}(\mathfrak{g}), \langle \cdot, \cdot \rangle}(\cdot; \cdot)$, $I^{\Gamma}(\cdot; \cdot)$, and $I^\Gamma_{\textnormal{lin}(\mathfrak{g}), \langle \cdot, \cdot \rangle} (\cdot; \cdot)$ for the usual integral defining $I(\cdot; \cdot)$ but with respect to the measures $\textnormal{vol}_{\textnormal{lin}(\mathfrak{g}), \langle \cdot, \cdot \rangle}$, $\textnormal{vol}^{\Gamma}$ and $\textnormal{vol}^{\Gamma}_{\textnormal{lin}(\mathfrak{f}), \langle \cdot, \cdot \rangle}$, respectively.

    The remainder of the proof is primarily a matter of bookkeeping. Using Theorem \ref{thm_berline_vergne} (local Euler-Maclaurin formula) and Corollary \ref{thm_degen_brion_cont_intro} (degenerate Brion's formula: continuous setting), we have:
    \begin{align*}
        S_{\Lambda}(\mathfrak{p}; \xi) &= \sum_{\mathfrak{f} \in \textnormal{Face}(\mathfrak{p})} \mu_{\textnormal{lin}(\mathfrak{f})^\perp}^{\Lambda^{\mathfrak{f}^\perp}}(\mathfrak{t}_{\mathfrak{f}}^{\mathfrak{p}}; \xi) \cdot I^{\Lambda_{\mathfrak{f}}}(\mathfrak{f}; \xi) \\
        &= \sum_{\mathfrak{f} \in \textnormal{Face}(\mathfrak{p})} \mu_{\textnormal{lin}(\mathfrak{f})^\perp}^{\Lambda^{\mathfrak{f}^\perp}}(\mathfrak{t}_{\mathfrak{f}}^{\mathfrak{p}}; \xi) \cdot \sum_{\mathfrak{g} \in \{\mathfrak{f}\}_\xi} \textnormal{vol}^{\Lambda_{\mathfrak{f}}}_{\textnormal{lin}(\mathfrak{g}), \langle \cdot, \cdot \rangle}(\mathfrak{g}) \cdot I^{\Lambda_{\mathfrak{f}}}_{\textnormal{lin}(\mathfrak{f}) \cap \textnormal{lin}(\mathfrak{g})^\perp, \langle \cdot, \cdot \rangle} (\textnormal{LC}_{\mathfrak{g}}^{\mathfrak{f}}(\xi); \xi) \\
        &= \sum_{\mathfrak{g} \in \{\mathfrak{p}\}_\xi} \ \ \ \sum_{\mathfrak{f} \supseteq \mathfrak{g} \textnormal{ s.t. } \mathfrak{f} = \mathfrak{g} \textnormal{ or } \mathfrak{f} \notin \{\mathfrak{p}\}_\xi}
        \mu_{\textnormal{lin}(\mathfrak{f})^\perp}^{\Lambda^{\mathfrak{f}^\perp}}(\mathfrak{t}_{\mathfrak{f}}^{\mathfrak{p}}; \xi) \cdot \textnormal{vol}^{\Lambda_{\mathfrak{f}}}_{\textnormal{lin}(\mathfrak{g}), \langle \cdot, \cdot \rangle}(\mathfrak{g}) \cdot I^{\Lambda_{\mathfrak{f}}}_{\textnormal{lin}(\mathfrak{f}) \cap \textnormal{lin}(\mathfrak{g})^\perp, \langle \cdot, \cdot \rangle}(\textnormal{LC}_{\mathfrak{g}}^{\mathfrak{f}}(\xi); \xi).
    \end{align*}

Note that we have by \eqref{eqn_renormalize_integral}:
    \begin{align*}
        \textnormal{vol}_{\textnormal{lin}(\mathfrak{g}), \langle \cdot, \cdot \rangle}^{\Lambda_{\mathfrak{f}}}(\mathfrak{g}) \cdot I_{\textnormal{lin}(\mathfrak{f}) \cap \textnormal{lin}(\mathfrak{g})^\perp, \langle \cdot, \cdot \rangle}^{\Lambda_{\mathfrak{f}}}(\textnormal{LC}_{\mathfrak{g}}^{\mathfrak{f}}(\xi); \xi) &= \textnormal{gr}(\Lambda_{\mathfrak{f}})^{-1} \cdot \textnormal{vol}_{\textnormal{lin}(\mathfrak{g}), \langle \cdot, \cdot \rangle}(\mathfrak{g}) \cdot I_{\textnormal{lin}(\mathfrak{f}) \cap \textnormal{lin}(\mathfrak{g})^\perp, \langle \cdot, \cdot \rangle}(\textnormal{LC}_{\mathfrak{g}}^{\mathfrak{f}}(\xi); \xi) \\       \textnormal{vol}_{\textnormal{lin}(\mathfrak{g}), \langle \cdot, \cdot \rangle}(\mathfrak{g}) &= \textnormal{gr}(\Lambda_{\mathfrak{g}}) \cdot \textnormal{vol}^{\Lambda_{\mathfrak{g}}}(\mathfrak{g}) \\
        I_{\textnormal{lin}(\mathfrak{f}) \cap \textnormal{lin}(\mathfrak{g})^\perp, \langle \cdot, \cdot \rangle} (\textnormal{LC}_{\mathfrak{g}}^{\mathfrak{f}}(\xi); \xi) &= \textnormal{gr}\big((\Lambda^{\mathfrak{g}^\perp})_{\mathfrak{f}} \big) \cdot I^{(\Lambda^{\mathfrak{g}^\perp})_{\mathfrak{f}}}(\textnormal{LC}_{\mathfrak{g}}^{\mathfrak{f}}(\xi); \xi).
    \end{align*}
    Since $\mathfrak{g} \subseteq \mathfrak{f}$, we have that $\textnormal{lin}(\mathfrak{g})^\perp \supseteq \textnormal{lin}(\mathfrak{f})^\perp$. We also have the orthogonal decompositions $V = \textnormal{lin}(\mathfrak{f}) \oplus \textnormal{lin}(\mathfrak{f})^\perp$, $V = \textnormal{lin}(\mathfrak{g}) \oplus \textnormal{lin}(\mathfrak{g})^\perp$, and $\textnormal{lin}(\mathfrak{g})^\perp = \textnormal{lin}(\mathfrak{f})^\perp \oplus (\textnormal{lin}(\mathfrak{f}) \cap \textnormal{lin}(\mathfrak{g})^\perp)$. Using Corollary \ref{cor_lattices} these imply:
    \begin{align*}
        \textnormal{gr}(\Lambda_{\mathfrak{f}}) \cdot \textnormal{gr}(\Lambda^{\mathfrak{f}^\perp}) &= \textnormal{gr}(\Lambda) \\
        \textnormal{gr}(\Lambda_{\mathfrak{g}}) \cdot \textnormal{gr}(\Lambda^{\mathfrak{g}^\perp})& = \textnormal{gr}(\Lambda) \\
        \textnormal{gr}\big( (\Lambda^{\mathfrak{g}^\perp})_\mathfrak{f} \big) \cdot \textnormal{gr} \big( (\Lambda^{\mathfrak{g}^\perp})_{\mathfrak{f}^\perp} \big) &= \textnormal{gr}(\Lambda^{\mathfrak{g}^\perp}) \\
        (\Lambda^{\mathfrak{g}^\perp})^{\mathfrak{f}^\perp} &= \Lambda^{\mathfrak{f}^\perp}.
    \end{align*}
    We therefore get that
    \begin{align*}
        \frac{\textnormal{gr}(\Lambda_{\mathfrak{g}}) \cdot \textnormal{gr}\big( (\Lambda^{\mathfrak{g}^\perp})_\mathfrak{f} \big)}{\textnormal{gr}(\Lambda_{\mathfrak{f}})} &= \frac{\textnormal{gr}(\Lambda_{\mathfrak{g}}) \cdot \textnormal{gr}(\Lambda^{\mathfrak{g}^\perp})}{\textnormal{gr}(\Lambda_{\mathfrak{f}}) \cdot \textnormal{gr}\big( (\Lambda^{\mathfrak{g}^\perp})^{\mathfrak{f}^\perp} \big)} \\
        &= \frac{\textnormal{gr}(\Lambda)}{\textnormal{gr}(\Lambda_{\mathfrak{f}}) \cdot \textnormal{gr}(\Lambda^{\mathfrak{f}^\perp})} \\
        &= 1.
    \end{align*}
    We then finally get that
    \begin{align*}
        \textnormal{vol}_{\textnormal{lin}(\mathfrak{g}), \langle \cdot, \cdot \rangle}(\mathfrak{g}) \cdot I^{\Lambda_\mathfrak{f}}_{\textnormal{lin}(\mathfrak{f}) \cap \textnormal{lin}(\mathfrak{g})^\perp, \langle \cdot, \cdot \rangle}(\textnormal{LC}_{\mathfrak{g}}^{\mathfrak{f}}(\xi); \xi) &= \frac{\textnormal{gr}(\Lambda_{\mathfrak{g}}) \cdot \textnormal{gr}((\Lambda^{\mathfrak{g}^\perp})_{\mathfrak{f}})}{\textnormal{gr}(\Lambda_{\mathfrak{f}})} \cdot \textnormal{vol}^{\Lambda_{\mathfrak{g}}}(\mathfrak{g}) \cdot I^{(\Lambda^{\mathfrak{g}^\perp})_{\mathfrak{f}}}(\textnormal{LC}_{\mathfrak{g}}^{\mathfrak{f}}(\xi); \xi) \\
        &= \textnormal{vol}^{\Lambda_{\mathfrak{g}}}(\mathfrak{g}) \cdot I^{(\Lambda^{\mathfrak{g}^\perp})_{\mathfrak{f}}}(\textnormal{LC}_{\mathfrak{g}}^{\mathfrak{f}}(\xi); \xi)
    \end{align*}
    This proves \eqref{eqn_intro_degen_brion_3}.
\end{proof}

Suppose $\mathfrak{g}$ is $\xi$-maximal in $\mathfrak{p}$. This implies that it is $\xi$-maximal in any face $\mathfrak{f} \supseteq \mathfrak{g}$. Thus the coefficient of $\textnormal{vol}^{\Lambda_{\mathfrak{g}}}(\mathfrak{g})$ in \eqref{eqn_intro_degen_brion_3} is
\begin{gather}
    \sum_{\mathfrak{f} \supseteq \mathfrak{g}} \mu_{\textnormal{lin}(\mathfrak{f})^\perp}^{\Lambda^{\mathfrak{f}^\perp}}(\mathfrak{t}_{\mathfrak{f}}^{\mathfrak{p}}; \xi) \cdot I^{(\Lambda^{\mathfrak{g}^\perp})_\mathfrak{f}}(\mathfrak{t}_{\mathfrak{g}}^{\mathfrak{f}}; \xi) = \sum_{\mathfrak{f} \supseteq \mathfrak{g}} \mu_{\textnormal{lin}(\mathfrak{f})^\perp}^{(\Lambda^{\mathfrak{g}^\perp})^{\mathfrak{f}^\perp}}(\mathfrak{t}_{\mathfrak{f}}^{\mathfrak{p}}; \xi) \cdot I^{(\Lambda^{\mathfrak{g}^\perp})_\mathfrak{f}}(\mathfrak{t}_{\mathfrak{g}}^{\mathfrak{f}}; \xi). \label{eqn_degen_brion_xi_max}
\end{gather}
We recognize that $\mathfrak{t}_{\mathfrak{g}}^{\mathfrak{f}}$ are exactly the faces of the cone $\mathfrak{t}_{\mathfrak{g}}^{\mathfrak{p}}$. Thus \eqref{eqn_degen_brion_xi_max} is exactly equal to $S_{\Lambda^{\mathfrak{g}^\perp}}(\mathfrak{t}_{\mathfrak{g}}^{\mathfrak{p}}; \xi)$ by Theorem \ref{thm_berline_vergne}. Of course this formula is ultimately equivalent to \eqref{eqn_intro_degen_brion_2} from which we already showed this.

Now suppose we set $\xi = 0$. Then $\textnormal{LC}_{\mathfrak{g}}^{\mathfrak{f}}(0) = \emptyset$ unless $\mathfrak{f} = \mathfrak{g}$ in which case we get that $I^{(\Lambda^{\mathfrak{g}^\perp})_{\mathfrak{f}}} (\textnormal{LC}_{\mathfrak{g}}^{\mathfrak{f}}(\xi); \xi) = 1$. Therefore the formula turns into
\begin{gather*}
    S_{\Lambda}(\mathfrak{p}; 0) = \sum_{\mathfrak{g} \in \textnormal{Face}(\mathfrak{p})} \textnormal{vol}^{\Lambda_{\mathfrak{g}}}(\mathfrak{g}) \cdot \mu_{\textnormal{lin}(\mathfrak{g})^\perp}^{\Lambda^{\mathfrak{g}^\perp}}(\mathfrak{t}_{\mathfrak{g}}^{\mathfrak{p}}; 0),
\end{gather*}
just like we got before. One can show using an adapted version of the argument used to prove Theorem \ref{thm_levi_cone_euler_maclaurin} that the corresponding terms in \eqref{eqn_intro_degen_brion_2} and \eqref{eqn_intro_degen_brion_3} are in fact equal in general.

\section{Laplace's method and the method of stationary phase}

We now explain how our degenerate Brion's formula is analogous to Laplace's method and the method of stationary phase.

\subsection{Laplace's method}
Laplace's method is concerned with the behavior of integrals of the form $\int_{M} g(x) e^{t \cdot f(x)} dx$ as $t \to \infty$ where $M$ is a manifold and $f(x)$ and $g(x)$ are real-valued functions. Laplace's method states the following:
\begin{proposition}[Laplace's method] \label{prop_laplace}
    Suppose $M$ is a compact $n$-dimensional Riemannian manifold. Suppose $f(x)$ is a $C^2$ real-valued function on $M$, and $g(x)$ is a continuous function on $M$. Suppose $x_0 \in M$ is such that $f(x_0) > f(x)$ for all other $x \in M$. Suppose the Hessian of $f(x)$ at $x_0$ is negative definite and that $g(x_0) > 0$. Then as $t \to \infty$,
    \begin{gather*}
        \int_{M} g(x) e^{t \cdot f(x)} dx \to (2 \pi)^{n/2} \frac{1}{\sqrt{|\det \textnormal{Hess}_{x_0}(f)|}} \cdot g(x_0) \cdot \frac{1}{t^{n/2}} e^{t \cdot f(x_0)}.
    \end{gather*}
\end{proposition}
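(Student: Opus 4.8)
The plan is to use the classical localization-and-rescaling argument, organized so that it relies only on $f\in C^2$ and $g$ continuous. First I would exploit compactness of $M$ together with the strict global maximum: for any open neighborhood $U\ni x_0$ there is $\delta>0$ with $\sup_{M\setminus U} f\le f(x_0)-\delta$, whence
\[
    \Big|\int_{M\setminus U} g(x)\,e^{t f(x)}\,dx\Big|\le \|g\|_{\infty}\,\mathrm{vol}(M)\,e^{t(f(x_0)-\delta)},
\]
and since $t^{n/2}e^{-t\delta}\to 0$ this piece is $o\!\big(t^{-n/2}e^{tf(x_0)}\big)$, hence negligible against the asserted main term (which is a genuine first-order asymptotic precisely because $g(x_0)>0$ makes the right-hand side nonvanishing). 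It therefore suffices to analyze the integral over a single coordinate patch around $x_0$; I would use geodesic normal coordinates centered at $x_0$, in which the Riemannian volume form is $\rho(x)\,dx$ with $\rho$ continuous and $\rho(0)=1$, and in which the metric tensor at the origin is the identity, so that the coordinate Hessian $H:=\textnormal{Hess}_{x_0}(f)$ is a negative-definite symmetric matrix whose determinant equals the coordinate-free quantity $|\det \textnormal{Hess}_{x_0}(f)|$.

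Next I would extract the two consequences of the $C^2$ hypothesis that do all the work. Taylor's formula with integral remainder gives, for $x$ in a small ball $B_r(0)$,
\[
    f(x)-f(x_0)=\int_0^1 (1-s)\,\big\langle \textnormal{Hess}_{sx}(f)\,x,\,x\big\rangle\,ds,
\]
so by continuity of $x\mapsto \textnormal{Hess}_x(f)$ and negative-definiteness at the origin there is $c>0$ with $f(x)-f(x_0)\le -c|x|^2$ on $B_r(0)$; separately, twice-differentiability of $f$ at $x_0$ gives the pointwise expansion $f(x)-f(x_0)=\tfrac12\langle Hx,x\rangle+o(|x|^2)$ as $x\to 0$. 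Substituting $x=y/\sqrt t$ turns the patch integral into
\[
    t^{-n/2}e^{tf(x_0)}\int_{\mathbb{R}^n}\mathbf{1}_{\sqrt t\,B_r(0)}(y)\;g\!\big(\tfrac{y}{\sqrt t}\big)\,\rho\!\big(\tfrac{y}{\sqrt t}\big)\;e^{\,t\,(f(y/\sqrt t)-f(x_0))}\,dy .
\]
For each fixed $y$ the integrand tends, as $t\to\infty$, to $g(x_0)\,e^{\frac12\langle Hy,y\rangle}$, using continuity of $g$ and of $\rho$ together with the pointwise Taylor expansion; and the quadratic upper bound supplies the $t$-independent dominating function $\|g\|_\infty\,(\sup_{B_r}\rho)\,e^{-c|y|^2}\in L^1(\mathbb{R}^n)$. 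Dominated convergence then yields
\[
    \int_M g(x)e^{tf(x)}\,dx\ \sim\ t^{-n/2}e^{tf(x_0)}\,g(x_0)\int_{\mathbb{R}^n}e^{\frac12\langle Hy,y\rangle}\,dy=(2\pi)^{n/2}\,\frac{g(x_0)}{\sqrt{|\det H|}}\,\frac{e^{tf(x_0)}}{t^{n/2}},
\]
the last Gaussian integral being the standard evaluation for negative-definite $H$.

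The hard part will not be any single estimate but keeping the regularity bookkeeping honest: since $f$ is only $C^2$ one cannot invoke the Morse lemma to conjugate $f$ into an exact quadratic form, so the argument must be split between the quadratic \emph{upper} bound (used for domination) and the weaker \emph{pointwise} quadratic expansion (used for the limit of the integrand), and one must verify that the normal-coordinate determinant of $\textnormal{Hess}_{x_0}(f)$ agrees with the invariant $|\det \textnormal{Hess}_{x_0}(f)|$ appearing in the statement — which holds exactly because in geodesic normal coordinates the metric at $x_0$ is the identity matrix and the $\rho(y/\sqrt t)\to 1$ factor therefore does not perturb the leading constant.
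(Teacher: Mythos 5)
The paper states Proposition \ref{prop_laplace} as a classical fact and gives no proof of it, so there is no in-paper argument to compare against; your write-up is the standard localization--rescaling--dominated-convergence proof and is correct as given. The two points you flag as delicate are handled properly: splitting the role of the $C^2$ hypothesis between the uniform quadratic upper bound (for domination) and the pointwise second-order Taylor expansion (for the limit of the integrand) is exactly what is needed when the Morse lemma is unavailable, and working in geodesic normal coordinates legitimately identifies the coordinate Hessian determinant at the critical point with the invariant $|\det \textnormal{Hess}_{x_0}(f)|$ (the second-derivative matrix at non-critical points, which appears in the integral-remainder formula, is used only within the fixed chart, where it is unproblematic).
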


Now suppose $\mathcal{C}$ is a compact strictly convex $n$-dimensional set in $V$ with smooth boundary. Suppose $\xi \in V^*$ is a real functional. We compute the asymptotics as $t \to \infty$ of $\int_{t \cdot \mathcal{C}} e^{\langle \xi, x \rangle} dx$. We shall use Stokes' theorem in a similar way to how Proposition \ref{prop_stokes} is proven. Identifying $V$ with $V^*$, let $u_1, \dots, u_{n-1}$ be such that $u_1, \dots, u_{n-1}, \hat{\xi}$ is an orthonormal basis for $V$. Consider the $(n-1)$-form defined by
\begin{gather*}
    \omega(x) = \frac{1}{\|\xi\|} e^{\langle \xi, x \rangle} u_1 \wedge \dots \wedge u_{n-1}.
\end{gather*}
Then we have
\begin{gather*}
    d \omega(x) = e^{\langle \xi, x \rangle} d \textnormal{vol}.
\end{gather*}
By Stokes' theorem we have
\begin{gather*}
    \int_{\mathcal{C}} e^{\langle \xi, x \rangle} d \textnormal{vol} = \frac{1}{\|\xi\|} \int_{\partial \mathcal{C}} e^{\langle \xi, x \rangle} \cdot \langle \hat{\xi}, \eta_x \rangle dx,
\end{gather*}
where $\eta_x$ is the outward pointing normal at $x$. Let $x_0$ be the unique point on $\partial \mathcal{C}$ maximizing $\langle \xi, x \rangle$. At such a point we necessarily have that $\eta_{x_0} = \hat{\xi}$.

Therefore using Proposition \ref{prop_laplace} we have
\begin{align*}
    \int_{t \cdot \mathcal{C}} e^{\langle \xi, x \rangle} d \textnormal{vol} &= \frac{t^{n-1}}{\|\xi\|} \int_{\partial \mathcal{C}} e^{t \cdot \langle \xi, x \rangle} \cdot \langle \hat{\xi}, \eta_x \rangle dx \to \frac{(2 \pi)^{\frac{(n-1)}{2}}}{\|\xi\| \sqrt{|\det \textnormal{Hess}_{x_0}(e^{\langle \xi, x \rangle})|}} \cdot t^{\frac{(n-1)}{2}} \cdot e^{t \cdot \langle \xi, x_0 \rangle}.
\end{align*}
Furthermore, we have that
\begin{gather*}
    |\det \textnormal{Hess}_{x_0}(e^{\langle \xi, x \rangle})| = \|\xi\|^{n-1} \cdot \kappa(x_0),
\end{gather*}
where $\kappa(x_0)$ is the Gaussian curvature at $x_0$ of the hypersurface $\partial \mathcal{C}$.
Thus we get that as $t \to \infty$:
\begin{gather}
    \int_{t \cdot \mathcal{C}} e^{\langle \xi, x \rangle} d \textnormal{vol} \to \frac{(2 \pi)^{\frac{(n-1)}{2}} }{\|\xi\|^{\frac{(n+1)}{2}} \sqrt{\kappa(x_0)}} \cdot t^{\frac{(n-1)}{2}} \cdot e^{t \langle \xi, x_0 \rangle}. \label{eqn_laplaces_method}
\end{gather}

Now suppose that $\mathfrak{p}$ is a polytope in $V$ and $\xi \in V^*$. We get that
\begin{gather*}
    \int_{t \cdot \mathfrak{p}} e^{\langle \xi, x \rangle} d \textnormal{vol} = \sum_{\mathfrak{g} \in \{\mathfrak{p}\}_\xi} \textnormal{vol}(\mathfrak{g}) \cdot I({}^0 \textnormal{LC}_{\mathfrak{g}}^{\mathfrak{p}}(\xi); \xi) \cdot t^{\textnormal{dim}(\mathfrak{g})} \cdot e^{t \cdot \langle \xi, \mathfrak{g} \rangle}.
\end{gather*}
Let $\mathfrak{g}_0$ be the unique $\xi$-maximal face in $\{\mathfrak{p}\}_\xi$ on which $\langle \xi, \cdot \rangle$ is maximized. We then get that
\begin{gather}
    \int_{t \cdot \mathfrak{p}} e^{\langle \xi, x \rangle} d \textnormal{vol} = \textnormal{vol}(\mathfrak{g}_0) \cdot I({}^0 \mathfrak{t}_{\mathfrak{g}_0}^{\mathfrak{p}}; \xi) \cdot t^{\textnormal{dim}(\mathfrak{g}_0)} \cdot e^{t \langle \xi, \mathfrak{g}_0 \rangle} + O_{t \to \infty}(t^{\textnormal{dim}(\mathfrak{g}_0) - 1} \cdot e^{t \cdot \langle \xi, \mathfrak{g}_0 \rangle}). \label{eqn_polytope_laplace_continuous}
\end{gather}

We now compare \eqref{eqn_laplaces_method} and \eqref{eqn_polytope_laplace_continuous}. In \eqref{eqn_laplaces_method} the value grows like $t^{\frac{n-1}{2}} \cdot e^{t \langle \xi, x_0 \rangle}$, i.e. polynomial times exponential. Furthermore the component $\frac{(2 \pi)^{\frac{(n-1)}{2}}}{\sqrt{\kappa(x_0)}}$ can be interpreted as relating to the size of slices of $\mathcal{C}$ near $x_0$ parallel to the hyperplane $\langle \xi, \cdot \rangle = 0$ (such slices look like ellipsoids). The term $\frac{1}{\|\xi\|^{\frac{(n+1)}{2}}}$ is of course homogeneous in $\xi$ of degree $-\frac{(n+1)}{2} = \frac{(n-1)}{2} - n$. Similarly in \eqref{eqn_polytope_laplace_continuous} the value grows like $t^{\textnormal{dim}(\mathfrak{g}_0)} \cdot e^{t \langle \xi, \mathfrak{g}_0 \rangle}$. The term $\textnormal{vol}(\mathfrak{g}_0)$ is clearly related to the size of slices of $\mathfrak{p}$ near $\mathfrak{g}_0$ parallel to the hyperplane $\langle \xi, \cdot \rangle = 0$. Furthermore $I({}^0 \mathfrak{t}_{\mathfrak{g}_0}^{\mathfrak{p}}; \xi)$ is homogeneous in $\xi$ of degree $\textnormal{dim}(\mathfrak{g}_0) - n$.

Now suppose $\mathfrak{p}$ is a rational polytope with respect to a lattice $\Lambda$. Let $\xi \in V^*$ be a real functional; then we necessarily have $\xi \in (V^*_{\mathbb{C}})^\Lambda$. We thus get that
\begin{align*}
    \sum_{\lambda \in t \cdot \mathfrak{p} \cap \Lambda} e^{\langle \xi, \lambda \rangle} = & \sum_{\mathfrak{g} \in \{\mathfrak{p}\}_\xi} \textnormal{vol}^{\Gamma_{\mathfrak{g}}}(\mathfrak{g}) \cdot t^{\textnormal{dim}(\mathfrak{g})} e^{t \cdot \langle \xi, \mathfrak{g} \rangle} \\
    \times & \Bigg( \sum_{\mathfrak{f} \supseteq \mathfrak{g}, \ \mathfrak{f} = \mathfrak{g} \textnormal{ or } \mathfrak{f} \notin \{\mathfrak{p}\}_\xi} \mu_{\textnormal{lin}(\mathfrak{f})^\perp}^{\Lambda^{\mathfrak{f}^\perp}}(t \cdot \mathfrak{f}^{\mathfrak{f}^\perp} + {}^0 \mathfrak{t}_{\mathfrak{f}}^{\mathfrak{p}}; 0) \cdot I^{(\Lambda^{\mathfrak{g}^\perp})_{\mathfrak{f}}} ({}^0 \textnormal{LC}_{\mathfrak{g}}^{\mathfrak{f}}(\xi); \xi) \Bigg).
\end{align*}
Let $\mathfrak{g}_0$ be the unique $\xi$-maximal face in $\{\mathfrak{p}\}_\xi$ on which $\langle \xi, \cdot \rangle$ is maximized. We then get that
\begin{align}
    \sum_{\lambda \in t \cdot \mathfrak{p} \cap \Lambda} e^{\langle \xi, \lambda \rangle} &= \textnormal{vol}^{\Lambda_{\mathfrak{g}_0}}(\mathfrak{g}_0) \cdot t^{\textnormal{dim}(\mathfrak{g}_0)} \cdot S_{\Lambda^{\mathfrak{g}_0^\perp}}(t \cdot \mathfrak{t}_{\mathfrak{g}_0}^{\mathfrak{p}}; \xi) + O_{t \to \infty}(t^{\textnormal{dim}(\mathfrak{g}_0) - 1} e^{t \cdot \langle \xi, \mathfrak{g}_0 \rangle}) \notag \\
    &= \textnormal{vol}^{\Lambda_{\mathfrak{g}_0}}(\mathfrak{g}_0) \cdot t^{\textnormal{dim}(\mathfrak{g}_0)} \cdot e^{t \cdot \langle \xi, \mathfrak{g}_0 \rangle} \cdot \Big( \sum_{\mathfrak{f} \supseteq \mathfrak{g}_0} \mu^{\Lambda^{\mathfrak{f}^\perp}}_{\textnormal{lin}(\mathfrak{f})^\perp}(t \cdot \mathfrak{f}^{\mathfrak{f}^\perp} + \mathfrak{t}_{\mathfrak{f}}^{\mathfrak{p}}; 0) \cdot I^{(\Lambda^{\mathfrak{g}^\perp})_{\mathfrak{f}}}({}^0 \mathfrak{t}_{\mathfrak{g}_0}^{\mathfrak{f}}; \xi) \Big) \label{eqn_laplace_brion} \\
    & + O_{t \to \infty}(t^{\textnormal{dim}(\mathfrak{g}_0) - 1} e^{t \cdot \langle \xi, \mathfrak{g}_0 \rangle}). \notag
\end{align}
The term in parentheses in \eqref{eqn_laplace_brion} is of course periodic in $t$. In case $\mathfrak{g}_0$ contains a lattice point, then the expression simplifies to
\begin{gather*}
    \sum_{\lambda \in t \cdot \mathfrak{p} \cap \Lambda} e^{\langle \xi, \lambda \rangle} = \textnormal{vol}^{\Lambda_{\mathfrak{g}_0}}(\mathfrak{g}_0) \cdot S_{\Lambda^{\mathfrak{g}_0^\perp}}({}^0 \mathfrak{t}_{\mathfrak{g}_0}^{\mathfrak{p}}; \xi) \cdot
    t^{\textnormal{dim}(\mathfrak{g}_0)} \cdot e^{t \cdot \langle \xi, \mathfrak{g}_0 \rangle} + O_{t \to \infty}(t^{\textnormal{dim}(\mathfrak{g}_0) - 1} e^{t \cdot \langle \xi, \mathfrak{g}_0 \rangle}).
\end{gather*}
This is completely analogous to \eqref{eqn_polytope_laplace_continuous}.

\subsection{Method of stationary phase}

The method of stationary phase is concerned with the behavior of integrals of the form $\int_M g(x) e^{i \cdot t \cdot f(x)} dx$ as $t \to \infty$ where $M$ is a manifold and $f(x)$ is a real-valued function. The method of stationary phase states the following:

\begin{proposition} [method of stationary phase]
    Suppose $M$ is a compact $n$-dimensional Riemannian manifold. Suppose $f(x)$ is a $C^2$ real-valued function on $M$ and $g(x)$ a continuous function. Let $x_1, \dots, x_k$ be the points on $M$ such that $\nabla f = 0$. Suppose $\textnormal{Hess}_{x_j}(f)$ is nondegenerate for all $j$. Then as $t \to \infty$,
    \begin{gather*}
        \int_{M} g(x) e^{i \cdot t \cdot f(x)} dx \to \sum_{j} (2 \pi)^{n/2} \frac{1}{\sqrt{|\det \textnormal{Hess}_{x_j}(f)|}} \cdot e^{\frac{i \pi}{4} \textnormal{sgn}(\textnormal{Hess}_{x_j}(f))} \cdot g(x_j) \cdot \frac{1}{t^{n/2}} \cdot e^{i \cdot t \cdot f(x_j)},
    \end{gather*}
    where $\textnormal{sgn}(\textnormal{Hess}_{x_j}(f))$ is the number of positive eigenvalues of $\textnormal{Hess}_{x_j}(f)$ minus the number of negative eigenvalues.
\end{proposition}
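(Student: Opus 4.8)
The plan is to prove this by the classical \emph{localization plus normal form} argument, reducing the global oscillatory integral to a finite sum of model Fresnel integrals. First I would fix a smooth partition of unity $1 = \psi_0 + \sum_{j=1}^k \psi_j$ on $M$ subordinate to an open cover consisting of small coordinate balls $U_j \ni x_j$, chosen so that $x_j$ is the only critical point of $f$ in $U_j$, together with an open set $U_0$ on which $\nabla f \neq 0$. On $\operatorname{supp}(\psi_0)$ the phase is non-stationary, so repeated integration by parts against the first-order operator $L := \tfrac{1}{it}\,|\nabla f|^{-2}\langle \nabla f, \nabla(\cdot)\rangle$, which satisfies $L(e^{itf}) = e^{itf}$, shows $\int_M \psi_0\, g\, e^{itf}\, dx = O(t^{-N})$ for every $N$ (this step, strictly, wants $g$ smooth, so I would first approximate $g$ uniformly by smooth functions; see the last paragraph). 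Thus the asymptotics come entirely from the pieces $\int_M \psi_j\, g\, e^{itf}\, dx$, $j \geq 1$, which I would show are each $O(t^{-n/2})$ with implied constant controlled by $\sup|g|$.

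Next, for each $j$, I would invoke the Morse lemma: since $\textnormal{Hess}_{x_j}(f)$ is nondegenerate there is a change of coordinates $y = \varphi(x)$ near $x_j$ with $\varphi(x_j) = 0$ and
$$
f\big(\varphi^{-1}(y)\big) = f(x_j) + \tfrac12\big(y_1^2 + \dots + y_p^2 - y_{p+1}^2 - \dots - y_n^2\big),
$$
where $p - (n-p) = \textnormal{sgn}(\textnormal{Hess}_{x_j}(f))$. A short linear-algebra computation at the origin (comparing $\tfrac12(x-x_j)^T \textnormal{Hess}_{x_j}(f)(x-x_j)$ with $\tfrac12\,y^T D y$, $D = \textnormal{diag}(\pm 1)$) gives $\big|\det(\partial x/\partial y)\big|_{y=0} = |\det \textnormal{Hess}_{x_j}(f)|^{-1/2}$. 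After this substitution the $j$-th contribution becomes $e^{itf(x_j)}\int_{\mathbb{R}^n} \widetilde g(y)\, e^{\frac{it}{2}(y_1^2+\dots-y_n^2)}\, dy$ with $\widetilde g$ continuous, compactly supported, and $\widetilde g(0) = g(x_j)\,|\det \textnormal{Hess}_{x_j}(f)|^{-1/2}$.

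Then I would rescale $y = t^{-1/2} z$, turning the above into $t^{-n/2} e^{itf(x_j)}\int_{\mathbb{R}^n} \widetilde g(t^{-1/2} z)\, e^{\frac{i}{2}(z_1^2+\dots-z_n^2)}\, dz$, and let $t\to\infty$: one integration by parts in $z$ away from the origin supplies enough tail decay to apply dominated convergence, and since $\widetilde g(t^{-1/2} z) \to \widetilde g(0)$ the integral tends to $\widetilde g(0)\prod_{i=1}^n \int_{\mathbb{R}} e^{\pm\frac{i}{2} z_i^2}\, dz = \widetilde g(0)\,(2\pi)^{n/2} e^{\frac{i\pi}{4}\textnormal{sgn}(\textnormal{Hess}_{x_j}(f))}$, using the exact one-dimensional evaluation $\int_{\mathbb{R}} e^{\pm\frac{i}{2} z^2}\, dz = \sqrt{2\pi}\, e^{\pm i\pi/4}$. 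Substituting $\widetilde g(0)$ and summing over $j$ yields the claimed formula. The main obstacle is the low regularity: the Morse lemma and the non-stationary estimates in their textbook form presuppose more smoothness than $C^2$, so the cleanest route is to first prove the statement for smooth $f$ and $g$ with an effective remainder bounded by finitely many derivatives of $f$ and by $\sup|g|$, and then remove the smoothness by density, checking that the regularization error is $o(t^{-n/2})$ uniformly; this also retroactively justifies the integration-by-parts step on $\operatorname{supp}(\psi_0)$.
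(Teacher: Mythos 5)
The paper does not actually prove this proposition: in Section 6 it is quoted as a classical fact, purely as background to motivate the polytopal analogues \eqref{eqn_polytope_laplace_continuous} and \eqref{eqn_stationary_phase_brion}. So there is no ``paper proof'' to compare against; what follows is an assessment of your argument on its own.

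Your outline is the standard one (non-stationary phase localization, Morse lemma, rescaling, Fresnel integrals) and the algebra is right, in particular the Jacobian identity $|\det(\partial x/\partial y)|_{y=0}=|\det\textnormal{Hess}_{x_j}(f)|^{-1/2}$ from $\psi'(0)^T\,\textnormal{Hess}_{x_j}(f)\,\psi'(0)=D$, and the one-dimensional evaluation $\int_{\mathbb R}e^{\pm\frac{i}{2}z^2}\,dz=\sqrt{2\pi}\,e^{\pm i\pi/4}$. Two steps deserve more than the one-sentence treatment you give them. First, the limit $\int\widetilde g(t^{-1/2}z)e^{\frac i2(z_1^2+\cdots-z_n^2)}\,dz\to\widetilde g(0)(2\pi)^{n/2}e^{\frac{i\pi}{4}\textnormal{sgn}}$ is not a dominated-convergence statement in the usual sense, since the limiting Fresnel integral is only conditionally convergent; you need to either (i) split off a fixed neighborhood of the origin and integrate by parts in the complement with a \emph{quantitative} remainder uniform in $t$, or (ii) insert a Gaussian mollifier $e^{-\varepsilon|z|^2}$, pass to the limit in $t$ for fixed $\varepsilon$, and then let $\varepsilon\to 0$; either works, but ``one integration by parts supplies enough tail decay'' is not yet a proof. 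Second, the regularity reduction at the end is where the real work is. The Morse lemma for $f\in C^2$ produces only a $C^0$ (or with effort $C^1$) coordinate change, which is borderline for a change-of-variables with a continuous Jacobian; and mollifying $f$ moves the critical points, so the density argument must show that the leading coefficient is jointly continuous in $(f,g)$ in the $C^2\times C^0$ topology, with the $o(t^{-n/2})$ error uniform along the approximation. You correctly flag this, but it is genuinely the hard part for the low-regularity hypotheses as stated (indeed an alternative that avoids the Morse lemma entirely, by Taylor expanding and freezing the Hessian, is cleaner here). None of this affects the paper, which only uses the proposition for smooth convex bodies where $f$ and $g$ are effectively $C^\infty$.
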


As before, suppose now $\mathcal{C}$ is a compact strictly convex $n$-dimensional set in $V$ with smooth boundary. Suppose $\xi \in V^*$ is a real functional. We compute the asymptotics as $t \to \infty$ of $\int_{t \cdot \mathcal{C}} e^{i \cdot \langle \xi, x \rangle} dx$. Let $u_1, \dots, u_{n-1}$ be such that $u_1, \dots, u_{n-1}, \hat{\xi}$ is an orthonormal basis for $V$. Consider the $(n-1)$-form defined by
\begin{gather*}
    \omega(x) = \frac{1}{i \|\xi\|} e^{\langle i \cdot \xi, x \rangle} u_1 \wedge \dots \wedge u_{n-1}.
\end{gather*}
We then have
\begin{gather*}
    d \omega(x) = e^{i \langle \xi, x \rangle} d \textnormal{vol}.
\end{gather*}
By Stokes' theorem we have
\begin{gather*}
    \int_{C} e^{i \langle \xi, x \rangle} d \textnormal{vol} = \frac{1}{i \|\xi\|} \int_{\partial \mathcal{C}} e^{i \langle \xi, x \rangle} \cdot \langle \hat{\xi}, \eta_x \rangle dx.
\end{gather*}
Let $x_1$ be the unique point on $\partial \mathcal{C}$ maximizing $\langle \xi, \cdot \rangle$, and let $x_2$ be the unique point minimizing it. These are the only stationary points. Then $\eta_{x_1} = \hat{\xi}$, and $\eta_{x_2} = -\hat{\xi}$. The Hessian at $x_1$ is negative definite, and at $x_2$ is positive definite. We therefore get that
\begin{align*}
    \int_{t \cdot \mathcal{C}} e^{i \langle \xi, x \rangle} dx & = \frac{t^{n-1}}{i \|\xi\|} \int_{\partial \mathcal{C}} e^{i \cdot t \cdot \langle \xi, x \rangle} \cdot \langle \hat{\xi}, \eta_x \rangle dx \\
    & \to \frac{(2 \pi)^{\frac{n-1}{2}}}{i \|\xi\|^{\frac{n+1}{2}}} \Big( \frac{e^{-\frac{i \pi (n-1)}{4}} \cdot e^{i \cdot t \cdot \langle \xi, x_1 \rangle}}{\sqrt{\kappa(x_1)}} - \frac{e^{\frac{i \pi (n-1)}{4}} \cdot e^{i \cdot t \cdot \langle \xi, x_2 \rangle}}{\sqrt{\kappa(x_2)}} \Big) \cdot t^{\frac{n-1}{2}}.
\end{align*}

Now suppose that $\mathfrak{p}$ is a polytope in $V$ and $\xi \in V^*$. Let $\mathfrak{g}_1, \dots, \mathfrak{g}_r$ be the faces of maximal dimension in $\{\mathfrak{p}\}_\xi$. All such faces must be $\xi$-maximal. Suppose the dimension of all of these faces is $k$. Then
\begin{gather*}
    \int_{t \cdot \mathfrak{p}} e^{i \langle \xi, x \rangle} d \textnormal{vol} = \big( \sum_j \textnormal{vol}(\mathfrak{g}_j) \cdot I({}^0 \mathfrak{t}_{\mathfrak{g}_j}^{\mathfrak{p}}; i \cdot \xi) \cdot e^{i \cdot t \cdot \langle \xi, \mathfrak{g}_j \rangle} \Big) \cdot t^k + O_{t \to \infty}(t^{k-1}).
\end{gather*}

Suppose $\mathfrak{p}$ is a rational polytope with respect to the lattice $\Lambda$. Suppose $\xi \in i V^*$ and $\xi \in (V^*_{\mathbb{C}})^\Lambda$. Let $\mathfrak{g}_1, \dots \mathfrak{g}_r$ be the $\xi$-constant faces of maximal dimension. All such faces must be $\xi$-maximal. Let $k$ be the dimension of these faces. We then get
\begin{gather*}
    \sum_{\lambda \in t \cdot \mathfrak{p} \cap \Lambda} e^{ \langle \xi, \lambda \rangle} = \Big(\sum_{j} \textnormal{vol}^{\Lambda_{\mathfrak{g}_j}}(\mathfrak{g}_j) \cdot S_{\Lambda^{\mathfrak{g}_j^\perp}}(t \cdot \mathfrak{t}_{\mathfrak{g}_j}^{\mathfrak{p}}; \xi) \Big) \cdot t^k + O_{t \to \infty}(t^{k-1}). 
\end{gather*}
If all of the faces $\mathfrak{g}_j$ contain a lattice point, then this simplifies to
\begin{gather}
    \sum_{\lambda \in t \cdot \mathfrak{p} \cap \Lambda} e^{\langle \xi, \lambda \rangle} = \Big(\sum_{j} \textnormal{vol}^{\Lambda_{\mathfrak{g}_j}}(\mathfrak{g}_j) \cdot S_{\Lambda^{\mathfrak{g}_j^\perp}}({}^0 \mathfrak{t}_{\mathfrak{g}_j}^{\mathfrak{p}}; \xi) \cdot e^{\cdot t \cdot \langle \xi, \mathfrak{g}_j \rangle} \Big) \cdot t^k + O_{t \to \infty}(t^{k-1}). \label{eqn_stationary_phase_brion}
\end{gather}
If, for example, one of the $\mathfrak{t}^{\mathfrak{p}}_{\mathfrak{g}_j}$ is unimodular with respect to $\Lambda^{\mathfrak{g}_j^\perp}$, it implies that the coefficient of $t^k$ in \eqref{eqn_stationary_phase_brion} is not identically zero as $t$ varies. We suspect that in general the coefficient of $t^k$ is never identically zero if $\xi \in (V^*_{\mathbb{C}})^\Lambda$, though we do not see an immediate explanation for this.

We can use \eqref{eqn_reduce_to_adapted} to get that for general $\xi \in i V^*$:
\begin{gather*}
    \sum_{\lambda \in t \cdot \mathfrak{p} \cap \Lambda} e^{\langle \xi, \lambda \rangle} = \Big( \sum_j \textnormal{vol}^{\tilde{\Lambda}_{\mathfrak{g}_j}}(\mathfrak{g}_j) \big( \sum_{[\gamma] \in \Lambda/\tilde{\Lambda}} e^{\langle \xi, \gamma^{\tilde{V}} \rangle} S_{[\gamma^{\mathfrak{g}_j^\perp}] + (\tilde{\Lambda})^{\mathfrak{g}_j^\perp}}(t \cdot \mathfrak{t}_{\mathfrak{g}_j}^{\mathfrak{p}}; \tilde{\xi}) \big) \Big) \cdot t^k + O_{t \to \infty}(t^{k-1}).
\end{gather*}
However, in this case it is certainly possible that the coefficient of $t^k$ is identically zero. For example, suppose that $\xi \in \pi i \cdot V^*_{\mathbb{Q}}$. Then $\tilde{\xi} = 0$. Our formulas then express $S_{\Lambda}(t \cdot \mathfrak{p}; \xi)$ as
\begin{gather*}
    S_{\Lambda}(t \cdot \mathfrak{p}; \xi) = \sum_{[\gamma] \in \Lambda/\tilde{\Lambda}} e^{\langle \xi, \gamma \rangle} S_{[\gamma] + \tilde{\Lambda}}(t \cdot \mathfrak{p}; 0) = \sum_{[\gamma] \in \Lambda/\tilde{\Lambda}} e^{\langle \xi, \gamma \rangle} \textnormal{vol}^{\tilde{\Lambda}}(\mathfrak{p}) \cdot t^n + O_{t \to \infty}(t^{n-1}).
\end{gather*}
However, unless $\Lambda = \tilde{\Lambda}$, i.e. $\xi \in 2 \pi i \cdot \Lambda^*$, we have that
\begin{gather*}
    \sum_{[\gamma] \in \Lambda/\tilde{\Lambda}} e^{\langle \xi, \gamma \rangle} = 0,
\end{gather*}
as $e^{\langle \xi, \cdot \rangle}$ is a non-trivial character of $\Lambda/\tilde{\Lambda}$ and thus its average value is zero. Therefore we actually have that
\begin{gather*}
    S_{\Lambda}(t \cdot \mathfrak{p}; \xi) = O_{t \to \infty}(t^{n-1}).
\end{gather*}
Finding the right asymptotic growth in this case and in general requires a more refined understanding of the relationship between $\mathfrak{p}$,  $\Lambda$, and $\xi$.

\begin{example}
A simple example of the above phenomenon is letting $\mathfrak{p}$ be the interval $[a, b]$ with $a, b \in \mathbb{Z}$, letting $\Lambda = \mathbb{Z}$, and letting $\xi = \pi i$. Then $\tilde{\xi} = 0$ and $\tilde{\Lambda} = 2 \mathbb{Z}$. We clearly have that $S_{\mathbb{Z}}(t \cdot [a, b]; \pi i) = (-1)^{t \cdot (a-b)}$ which is of order $t^0$ rather than of order $t^1$. 
\end{example}

\begin{example}
    We now show a more interesting example to show that obtaining the right growth rate of $S_{\Lambda}(t \cdot \mathfrak{p}; \xi)$ for $\xi \in \pi i \cdot V_{\mathbb{Q}}^*$ requires a more refined geometric understanding of the relationship between $\Lambda$, $\tilde{\Lambda}$, and $\mathfrak{p}$. Let $\mathfrak{p}$ be the unit square, i.e. the polytope with vertices at $(0, 0), (1, 0), (0, 1)$, and $(1, 1)$, and let $\Lambda = \mathbb{Z}^2$. First suppose $\xi_1 = \pi i \cdot (1, 0)$. Then $\tilde{\Lambda} = \{(x, y): x, y \in \mathbb{Z}, \ x \textnormal{ even} \}$. We get that 
    \begin{gather*}
        S_{\Lambda}(t \cdot \mathfrak{p}; \xi_1) = \begin{cases}
            (t+1) & \textnormal{$t$ even} \\
            0 & \textnormal{$t$ odd}
        \end{cases} \ \ = O_{t \to \infty}(t).
    \end{gather*}
    Now suppose $\xi_2 = \pi i \cdot (1, 1)$. Then $\tilde{\Lambda} = \{(x, y) : x, y \in \mathbb{Z}, \ x + y \textnormal{ even}\}$, and thus 
    \begin{gather*}
        S_\Lambda(t \cdot \mathfrak{p}; \xi_2) = \begin{cases}
            1 & \textnormal{$t$ even} \\
            0 & \textnormal{$t$ odd}
        \end{cases} 
        \ \ = O_{t \to \infty}(1).
    \end{gather*}
    It would be interesting to pin down the precise growth rate of $S_\Lambda(t \cdot \mathfrak{p}; \xi)$ for general rational $\mathfrak{p}$ and $\xi \in \pi i \cdot V_{\mathbb{Q}}^*$, though we do not pursue this question in this paper. This question is clearly closely related to how the Ehrhart polynomial changes upon passage to a finite index sublattice.
\end{example}

\section*{Acknowledgements}
I would like to thank Matthias Beck, who encouraged me at an early stage of this project. I would also like to thank Mikolaj Fraczyk who suggested to me to look into the Euler-Maclaurin formula. I would like to thank the referee for helpful feedback, in particular for helping to improve the notation, and for bringing to my attention the connection of the degenerate Brion's formula in this paper and the Atiyah-Bott-Berline-Vergne localization formula. This project was supported by the Deutsche Forschungsgemeinschaft (DFG, German Research Foundation) Grant SFB-TRR 358/1 2023 - 491392403. The project has received funding from
the European Union’s Horizon 2020 research and
innovation programme under the Marie Skłodowska-Curie
grant agreement No 101034255. This work was partially supported by NSF Grant DMS-2503324.


\bibliographystyle{elsarticle-harv} 
\bibliography{biblio}

\end{document}